 \tikzset{help lines/.style={step=#1cm,very thin, color=gray},
help lines/.default=.5} 
\tikzset{thick grid/.style={step=#1cm,thick, color=gray},
thick grid/.default=1} 
\newtheorem{thm}{Theorem}[subsection]
\newtheorem{lem}[thm]{Lemma}
\newtheorem{cor}[thm]{Corollary}
\newtheorem{prop}[thm]{Proposition}
\theoremstyle{definition}
\newtheorem{defn}[thm]{Definition}
\newtheorem{eg}[thm]{Example}
\newtheorem{rem}[thm]{Remark}
\newtheorem{summ}[thm]{Summary}
\numberwithin{equation}{subsection}
\newcommand{\xcolor}[1]{}
\newcommand{\vs}[1]{\vskip .#1 cm} 
\newcommand\noi{\noindent}
\def\xrarrow{\xrightarrow} 
\def\ot{\leftarrow}
\def\then{\Rightarrow}
\def\-{\text{-}}
\newcommand{\into}{\hookrightarrow}
 \newcommand{\onto}{\twoheadrightarrow}
\newcommand{\Q}{(Q_\vare)}
\def\<{\left<}
\def\>{\right>}
\DeclareMathOperator{\Hom}{Hom}%
\DeclareMathOperator{\Ext}{Ext}%
\DeclareMathOperator{\sgn}{sgn}%
\DeclareMathOperator{\End}{End}%
\DeclareMathOperator{\undim}{\underline{dim}}
\newcommand{\field}[1]{\mathbb{#1}}
\newcommand{\ZZ}{\ensuremath{{\field{Z}}}}
\newcommand{\RR}{\ensuremath{{\field{R}}}}
\newcommand{\QQ}{\ensuremath{{\field{Q}}}}
\newcommand{\NN}{\ensuremath{{\field{N}}}}
\newcommand{\commentout}[1]{}
\newcommand{\cA}{\ensuremath{{\mathcal{A}}}}
\newcommand{\cC}{\ensuremath{{\mathcal{C}}}}
\newcommand{\cW}{\ensuremath{{\mathcal{W}}}}
\newcommand{\cX}{\ensuremath{{\mathcal{X}}}}
\newcommand{\cY}{\ensuremath{{\mathcal{Y}}}}
\newcommand\vare{\varepsilon}
\title{Picture groups of finite type \\and cohomology in type $A_n$}
\author{Kiyoshi Igusa}
\address{Department of Mathematics, Brandeis University, Waltham, MA 02454}\email{igusa@brandeis.edu}
\author{Gordana Todorov}
\address{Department of Mathematics, Northeastern University, Boston, MA 02115}
\email{g.todorov@neu.edu}
\author{Jerzy Weyman}
\address{Department of Mathematics, University of Connecticut, Storrs, CT 06269}
\email{jerzy.weyman@uconn.edu}
\thanks{The third author is supported by NSF Grant \#DMS-1400740}
\date{\today}                      
\subjclass[2010]{
16G20; 20F55}
\begin{document}

\begin{abstract} 
For every quiver (valued) of finite representation type we define a finitely presented group called a picture group. This group is very closely related to the cluster theory of the quiver. For example, positive expressions for the Coxeter element in the group are in bijection with maximal green sequences \cite{IT14}. The picture group is derived from the semi-invariant picture for the quiver. We use this picture to construct a finite CW complex which (by \cite{IT13}) is a $K(\pi,1)$ for this group. The cells are in bijection with cluster tilting objects. For example, in type $A_n$ there are a Catalan number of cells.

The main result of this paper is the computation of the  cohomology ring of all picture groups of type $A_n$ with any orientation and any coefficient ring.
\end{abstract}

\maketitle




\section*{Introduction}

Let $\Lambda$ be a finite dimensional hereditary algebra of finite representation type with $n$ simple modules and let $Q$ be the associated modulated quiver with $n$ vertices. Most of the notions and results of this paper depend only of the quiver $Q$ and not on the algebra $\Lambda$ which will be reflected in the notation. To each such quiver there is a well-known associated unipotent group $U_Q(\ZZ)$ (see Definition \ref{def: unipotent group of Q}). This group is given by generators and relations. For any group given by generators and relations there is the notion of ``spherical diagram'' which is a labeled subset of the 2-sphere $S^2$. We extend this definition to the $k$-sphere $S^k$ and define $k$-dimensional ``pictures'' for a group with a presentation (see Definition \ref{def of picture for a group G=(X,Y)}). The definition depends on the specific choice of generators since these generators are used as labels for $(k-1)$-dimensional simplices which we call ``walls'' in the picture. These walls partition the $k$-sphere into regions and, once we choose a basepoint region, each region can be labelled with an element of the group $G$ which can be read off of the generators on the walls of the picture.

For any nontrivial group with fixed presentation $G=\<\cX|\cY\>$, there are infinitely many pictures in each dimension \cite{Ithesis},\cite{IK},\cite{IOr},\cite{Loday},\cite{LyndonS}. However, for each of these pictures $L$ there is a canonically associated group $ G_0(L)=\<\cX_0|\cY_0\>$ where $\cX_0\subseteq \cX$ and $\cY_0\subseteq\cY$ are the generators and relations which actually occur in the picture $L$. We call this group the ``picture group'' of $L$ (see Section \ref{sec2}). 

In this paper we consider the unipotent group $U_Q(\ZZ)$ (section \ref{sec1}) and one particular picture $L(Q)$ for $U_Q(\ZZ)$. This picture is given by domains of semi-invariants on presentation spaces of $\Lambda$. It has dimension $n-1$ where $n$ is the number of vertices of $Q$. The picture $L(Q)$ has the property that the regions are in bijection with the cluster tilting objects of $\Lambda$. So, our construction gives another way to associate an element of this unipotent group to every cluster-tilting object of $\Lambda$. This is already known (see, e.g., \cite{LamPasha}) and this (assignment of an element of $U_Q(\ZZ)$ to cluster tilting objects) can easily be done in other ways. Our purpose is different as we now explain.

The purpose of this paper is to study the picture group $G_0(Q)$ (Definition \ref{def of picture group of a picture}) of the semi-invariant picture $L(Q)$ (Definition \ref{definition of s-i picture L(Q)}). This group has the same set of generators as the unipotent group $U_Q(\ZZ)$ but fewer relations. The picture group has many good properties. For example, it is a $CAT(0)$-group (see \cite{IT13} and \cite{Cat0}), it has finite cohomological dimension just like the unipotent group but, unlike $U_Q(\ZZ)$ it has no torsion in its homology (section \ref{sec4}). There is also a bijection between the set of positive expressions for the ``Coxeter element'' in the picture group of $Q$ and the set of ``maximal green sequences'' for the $Q$ \cite{IT14} which were introduced in \cite{Keller}.

As an example, consider the group $U_{A_n}(\ZZ)$ which is the group of $(n+1)\times (n+1)$ unipotent matrices with integer entries. This group has a presentation given by generators $x_{ij}$ for $0\le i<j\le n$ and relations
\begin{enumerate}
\item $[x_{ij},x_{k\ell}]=1$ if $i\neq \ell$ and $j\neq k$.
\item $[x_{ij},x_{jk}]=x_{ik}$ if $i<j<k$
\end{enumerate}
where we use the notation $[x,y]:=y^{-1}xyx^{-1}$ thoughout this paper. The following labeled diagram is a picture for this group for any $n\ge3$ if $a=x_{01}$, $b=x_{12}$, $c=x_{23}$, $x=x_{02}$, $y=x_{13}$ and $z=x_{03}$. This is an example of the semi-invariant picture $L(Q)$ defined in \ref{definition of s-i picture L(Q)} and shown in Proposition \ref{prop: L determines a group G} and Corollary \ref{cor: L(Lambda) is a picture} to be a picture for the unipotent group $U_{A_3}(\ZZ)$ and thus also a picture for the picture group $G_0(A_3)$.
\begin{center}
\begin{tikzpicture}[scale=.4] 
	\draw (-4.5,4)+(.6,-.8) node{$a$};
	\draw (3,4)+(0.2,-2) node{$b$};
	\draw (-3,-4)+(2,.1) node{$c$};
	\draw (2,3)+(-.6,-.7) node{$x$};
	\draw (-1.5,-3)+(2.5,.8) node{$y$};
	\draw (.5,-2.2)+(-1.2,.3) node{$z$};
%
	\clip (-5.75,-4.5) rectangle (4.25,4.8);
		\draw[thick] (.75,1.3) circle [radius=3cm];
		\draw[thick] (-2.25,1.3) circle [radius=3cm];
		\draw[thick] (-.75,-1.3) circle [radius=3cm];
		\begin{scope}
		\clip (-.75,-5.2) rectangle (4.25,5);
		\draw[thick] (-.75,1.3) ellipse [x radius=3cm,y radius=2.6cm];
		\end{scope}
		\begin{scope}[rotate=60]
		\clip (0,-5) rectangle (-5,5);
		\draw[thick] (0,0) ellipse [x radius=3cm,y radius=2.6cm];
		\end{scope}
\begin{scope}
\clip (-2.4,-3) rectangle (2,.1);
		\draw[thick] (-.75,.43)  circle [radius=2.68cm];
		\end{scope}
\end{tikzpicture}
\end{center}
This picture has 6 smooth curves without inflection points including 3 circles. These curves meet transversely at 9 vertices breaking each smooth curve into segments. We use the convention that the labels are the same on all of segments of the same curve. For example, there are 5 segments labeled $a$ but with this convention we only need to draw the label on one of these segments. The curvature is constant on each curve and we use it to give the normal orientation of the curves.

Although this picture uses all six generators of $U_{A_3}(\ZZ)$ as labels only six of the $\binom{6}{2}=15$ relations appear at the vertices. These are the relations
\[
	[a,b]=x,[b,c]=y,[a,y]=z,[x,c]=z, [a,c]=1 ,[b,z] =1
\]
The picture group for this picture is therefore the group with the six generators $a,b,c,x,y,z$ and six relations as above.

Following the construction in \cite{IOr} of the nilmanifold for a torsion-free nilpotent group, we view an $(n-1)$-dimensional picture as the attaching map for an $n$-cell in a finite CW complex. The \emph{picture space} $X(Q)$ is the minimal CW complex which supports the attachment of the single $n$-cell given by the spherical semi-invariant picture $L(Q)$, together with this $n$-cell (Section \ref{sec3}). In the paper \cite{IT13} we prove that this is an Eilenberg-MacLane space $K(\pi,1)$ with $\pi_1=G_0(Q)$.

For the quiver $A_n$ with straight orientation: $1\ot 2\ot \cdots\ot n$, the semi-invariant picture group $G_0(A_n)$ has generators $x_{ij}$ for all $0\le i<j\le n$ subject to the following relations.
\begin{enumerate}
\item $x_{ij},x_{k\ell}$ commute if either $j<k$ or $i<k<\ell<j$.
\item $[x_{ij},x_{jk}]=x_{ik}$ for all $i<j<k$ where $[x,y]:=y^{-1}xyx^{-1}$.
\end{enumerate}
Note that the generating set is identical to that of the unipotent group $U_{A_n}(\ZZ)$ but the relations form a subset of the relations for $U_{A_n}(\ZZ)$. So, there is a natural epimorphism $G_0(A_n)\onto U_{A_n}(\ZZ)$.
These relations imply that $G_0(A_n)$ is generated by the $n$ elements $x_{j-1,j}$ for $j=1,\cdots,n$. The cohomology group $H^k(G_0(A_n))$ is free abelian of rank given by ``ballot numbers'' $b(n,n-2k)$. For any quiver $Q$ of type $A_n$, we show that the cohomology groups $H^k(G_0(Q))$ are isomorphic to that of the group $G_0(A_n)$ (Section \ref{sec4}). In Section \ref{sec5} we determine the cup product structure of the cohomology and show it is independent of the orientation of the quiver.

Finally, we recall some of the original motivation for the study of pictures although these comments will not be needed for the rest of this paper. Two dimensional pictures, also called ``spherical diagrams,'' were introduced by Lyndon and Schupp in \cite{LyndonS} to study identities among relations in groups. In \cite{Ithesis} pictures were used to define a K-theory invariant for $\pi_1$ of the diffeomorphism space $\cC(M)=Dif\!f(M\times [0,1] \text{ rel }M\times 0)$ for compact smooth manifolds $M$. The key idea was that elements of $K_3(\ZZ[\pi])$ for any group $\pi$ are represented by pictures for the Steinberg group of the group ring $\ZZ[\pi]$. This idea was used later in \cite{IK} to define and compute the higher Reidemeister torsion invariant for circle bundles over a $2$-sphere.


\section{Spherical semi-invariant picture $L(Q)$}\label{sec1}

We construct the {spherical semi-invariant picture} $L(Q)$ for any valued quiver $Q$ of finite representation type. This is a codimension one subcomplex of the $(n-1)$-sphere with suitable simplicial decomposition where $n$ is the number of vertices of $Q$. This is defined in terms of the representations of a hereditary algebra $\Lambda$ of finite representation type. In this case, indecomposable modules are uniquely determined by their dimension vectors and the dimensions of $\Hom$ and $\Ext$ between these modules can be computed using the Euler-Ringel form $\<\cdot,\cdot\>$. These vectors and the form $\<\cdot,\cdot\>$ can be computed from the underlying valued quiver $Q$ of $\Lambda$. So, we usually denote the semi-invariant picture by $L(Q)$ instead of $L(\Lambda)$.
\vs2



\subsection{Notation}

Let $\Lambda$ be a finite dimensional hereditary $K$-algebra of finite representation type. Here is a summary of well-known facts and our notation. See \cite{IOTW1}, \cite{IOTW3} for more details. Also \cite{DR} is the classical reference for valued quivers.

Since $\Lambda$ is of finite representation type, the quiver of $\Lambda$ is a valued quiver which is a disjoint union of Dynkin quivers. Recall that the \emph{quiver} $Q$ for the algebra $\Lambda$ is a directed graph with one vertex for every (isomorphism class of) simple module $S_i$, $i=1,\cdots,n$ with one arrow $i\to j$ if $\Ext^1_\Lambda(S_i,S_j)\neq0$. The quiver $Q$ has \emph{valuation} given by $f_i=\dim_KF_i$ where $F_i=\End_\Lambda(S_i)$ at each vertex $i$ and valuation $(d_{ij},d_{ji})$ on any arrow $i\to j$ where $d_{ij}=\dim_{F_j}\Ext^1_\Lambda(S_i,S_j)$ and $d_{ji}=\dim_{F_i}\Ext^1_\Lambda(S_i,S_j)$. Thus $d_{ij}f_j=d_{ji}f_i$.

Given any $\Lambda$-module $M$, the \emph{dimension vector} $\undim M$ is the vector in $\NN^n$ whose $i$-th coordinate is $\dim_{F_i}\Hom_\Lambda(P_i,M)$ where $P_i$ is the projective cover of $S_i$ with endomorphism ring canonically identified with $F_i=\End_\Lambda(S_i)$. A \emph{virtual representation} is a homomorphism between projective modules $p:P\to P'$ (thought of as an object of the derived category of $mod\text-\Lambda$) with morphisms given by homotopy classes of chain maps. Up to isomorphism, the indecomposable virtual representations are presentations of indecomposable modules and shifted indecomposable projective modules $P_i[1]:=(P_i\to 0)$. The dimension vector of a virtual representation $P\to P'$ is defined to be $\undim P'-\undim P$. Then the dimension vector of the minimal presentation of any module is equal to the dimension vector of the module.

The \emph{Euler matrix} $E$ is the $n\times n$ integer matrix with entries
\[
	E_{ij}=\dim_K\Hom_\Lambda(S_i,S_j)-\dim_K\Ext^1_\Lambda(S_i,S_j)
\]
Then, the \emph{Euler-Ringel form} $\<\cdot,\cdot\>:\ZZ^n\times\ZZ^n\to \ZZ$, defined by $\<v,w\>=v^tEw$, satisfies
\[
	\<\undim M,\undim N\>= \dim_K\Hom_\Lambda(M,N)-\dim_K\Ext^1_\Lambda(M,N).
\]
Let $\Phi^+(Q)$ be the set of \emph{positive roots} of $Q$. These are the dimension vectors of the indecomposable $\Lambda$-modules and we denote by $M_\alpha$ the unique indecomposable module with dimension vector $\alpha$. If $\pi_i=\undim P_i$ then we call $\{-\pi_i\}$ the \emph{negative projective roots}. These are the dimension vectors of the virtual representations $P_i[1]=(P_i\to 0)$ which we also denote by $M_{-\pi_i}$. We say that $\beta$ is an \emph{almost positive root} if it is either a positive root or a negative projective root. Thus $M_\beta$ has been defined for all almost positive roots $\beta$. In the sequel we use the notation $|P[1]|=P$ and $|-\beta|=\beta$. So, $|M_\beta|=M_{|\beta|}$.

\begin{defn}
For any two almost positive roots $\alpha,\beta$, let 
\[
hom(\alpha,\beta)=\begin{cases} \dim_K\Hom_\Lambda(M_{|\alpha|},M_{|\beta|}) & \text{if $\alpha,\beta$ are either both positive or both negative} \\
\dim_K\Ext_\Lambda(M_{\alpha},M_{|\beta|}) & \text{if $\alpha$ is positive and $\beta$ is negative} \\
  0 & \text{otherwise}
    \end{cases}
\]
\[
ext(\alpha,\beta)=\begin{cases} hom(\alpha,-\beta) & \text{if } \beta\in \Phi^+(Q)\\
  0  & \text{otherwise}
    \end{cases}
\]
We say that $\alpha,\beta$ are \emph{hom-orthogonal} if $hom(\alpha,\beta)=0=hom(\beta,\alpha)$ and \emph{ext-orthogonal} if $ext(\alpha,\beta)=0=ext(\beta,\alpha)$. 
\end{defn}

\begin{defn} The \emph{cluster complex} ${\xcolor{blue}\Sigma(\Lambda)}$ of $\Lambda$, which we also denote $\Sigma(Q)$ since it depends only on the valued quiver $Q$, is defined to be the abstract $(n-1)$-dimensional simplicial complex given as follows.
\begin{enumerate}
\item The vertices of $\Sigma(\Lambda)$ are the almost positive roots of $Q$ which, by definition, are the positive roots and the negative projective roots.
\item The $k$-simplices of $\Sigma(\Lambda)$ are $k+1$ tuples of pairwise ext-orthogonal almost positive roots. 
\end{enumerate}
\end{defn}

The vertices of $\Sigma(\Lambda)$ correspond to the indecomposable objects of the cluster category of $\Lambda$ \cite{BMRRT} and the $k$-simplices correspond to partial cluster tilting objects in the cluster category.

Since $\Lambda$ is of finite representation type, it is well-known (\cite{IOTW1}, \cite{IOTW3}, \cite{R}) that $\Sigma(\Lambda)=\Sigma(Q)$ is a finite complex whose geometric realization $|\Sigma(Q)|$ is homeomorphic to the unit sphere $S^{n-1}\subseteq\RR^n$ and the dual complex is a generalized associahedron (\cite{FZ-Y},\cite{MRZ}). 

Although we will define the picture group for arbitrary Dynkin quivers, we give the definition of the unipotent groups $U_Q(\ZZ)$ only in the simply laced case since the unipotent groups are being considered only for motivational purposes. (See \cite{Hum:AlgGrps} for an explanation of the general case.)

\begin{defn}\label{def: unipotent group of Q}
Given any quiver $Q$ of type $A,D,E$ with root system $\Phi(Q)$, let $U_Q(\ZZ)$ be the group given by generators and relations as follows.

\underline{Generators}: There is one generator $X(\alpha)$ for every positive root $\alpha\in\Phi^+$. If $\alpha$ is a sum of $k$ simple roots then we define the \emph{length} of $X(\alpha)$ to be $k$.

\underline{Relations}: a) $X(\alpha),X(\beta)$ commute if $\alpha+\beta$ is not a root.

b) If $\alpha+\beta\in\Phi^+$ then $X(\alpha)X(\beta)=X(\beta) X(\alpha+\beta)^\vare X(\alpha)$ where $\vare=1$ if $\<\alpha,\beta\>=0$ and $\vare=-1$ if $\<\alpha,\beta\>\neq 0$.
\end{defn}

\begin{prop}
For a simply laced Dynkin quiver $Q$, the unipotent group $U_Q(\ZZ)$ is a torsion-free nilpotent group with nilpotent basis $\{X(\beta):\beta\in\Phi^+\}$. Furthermore, up to isomorphism, $U_Q(\ZZ)$ depends only on the underlying unoriented Dynkin diagram.
\end{prop}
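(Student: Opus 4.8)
The plan is to recognize the defining relations as the Chevalley commutator relations for the positive unipotent subgroup in type $Q$, and then to lean on the standard structure theory of such groups. First I would rewrite relation (b). Since $[x,y]=y^{-1}xyx^{-1}$, the identity $X(\alpha)X(\beta)=X(\beta)X(\alpha+\beta)^\vare X(\alpha)$ is equivalent to $[X(\alpha),X(\beta)]=X(\alpha+\beta)^\vare$, and together with (a) this is precisely the simply-laced Chevalley relation: in type $A,D,E$, when $\alpha+\beta\in\Phi^+$ the only root of the form $i\alpha+j\beta$ with $i,j\ge 1$ is $\alpha+\beta$ itself, so the commutator formula has a single term. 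The numerical input I would use throughout is that the length $\ell(\alpha)$ (the number of simple summands) is additive, $\ell(\alpha+\beta)=\ell(\alpha)+\ell(\beta)$, and is bounded above by the height $h$ of the highest root.

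Nilpotency comes next. I would filter $U_Q(\ZZ)$ by the subgroups $N_k=\langle X(\alpha):\ell(\alpha)\ge k\rangle$. Using the commutator form of (b) and standard commutator identities, one checks that each $N_k$ is normal and that $[N_i,N_j]\subseteq N_{i+j}$, since a commutator of generators of lengths $i$ and $j$ is either trivial or a single generator $X(\alpha+\beta)^{\pm1}$ of length $i+j$. As $N_{h+1}=1$, this exhibits a central series, so $U_Q(\ZZ)$ is nilpotent of class at most $h$.

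For the nilpotent basis and torsion-freeness I would order $\Phi^+=\{\beta_1,\dots,\beta_m\}$ by nondecreasing length and run the Hall collection process: each use of (b) to sort generators replaces a pair by generators of strictly greater total length, so collection terminates (lengths are bounded) and yields a normal form $X(\beta_1)^{a_1}\cdots X(\beta_m)^{a_m}$ for every element; equivalently the set map $\ZZ^m\to U_Q(\ZZ)$, $(a_i)\mapsto\prod X(\beta_i)^{a_i}$, is surjective. To obtain \emph{uniqueness} I would pass to a concrete model: the assignment $X(\alpha)\mapsto x_\alpha(1)$, the root element of the Chevalley group of type $Q$, respects all the relations and so defines a homomorphism onto the integral unipotent radical $U(\ZZ)$. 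Since $U$ is an affine space over $\ZZ$ in the root coordinates, the composite $\ZZ^m\to U_Q(\ZZ)\to U(\ZZ)$ is the coordinate bijection; hence the surjection $\ZZ^m\to U_Q(\ZZ)$ is also injective, the normal form is unique, and $U_Q(\ZZ)\cong U(\ZZ)\cong\ZZ^m$ as a set. Uniqueness of the normal form together with the descending chain $N_1\supseteq N_2\supseteq\cdots$ shows that $\{X(\beta):\beta\in\Phi^+\}$ is a nilpotent (Mal'cev) basis, and torsion-freeness is immediate since the coordinates range over $\ZZ$. This uniqueness step is the main obstacle: existence of a normal form is routine collection, but ruling out hidden relations requires either the faithful model above or a confluence/Diamond-Lemma verification, and I would take the model as the cleaner route.

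Finally, orientation independence. The root system $\Phi(Q)$, the predicate that $\alpha+\beta$ is a root, and the lengths all depend only on the underlying diagram; the sole orientation-dependent datum is the sign $\vare=\vare(\alpha,\beta)$ read off from the Euler form $\langle\alpha,\beta\rangle$. I would show that any two sign systems produced by two orientations of the same diagram are related by generator sign changes, i.e.\ replacing each $X(\beta)$ by $X(\beta)^{\eta(\beta)}$ for suitable $\eta(\beta)\in\{\pm1\}$ carries one presentation to the other. This is the standard fact that admissible sign systems for a Chevalley basis in a fixed root system form a single orbit under sign changes of the basis vectors; the point to verify is that each orientation yields an admissible system, which follows in the simply-laced case from $\langle\alpha,\beta\rangle+\langle\beta,\alpha\rangle=(\alpha,\beta)=-1$ whenever $\alpha+\beta\in\Phi^+$. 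The resulting relabeling is then an isomorphism of the presented groups, giving the last claim.
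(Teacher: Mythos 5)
Your argument is correct in substance but routes the two claims somewhat differently from the paper. For the first statement the paper, like you, appeals to the collection process (citing Hall and \cite{IOr}) and stops there; you go further and supply the uniqueness of the normal form by mapping onto the integral unipotent radical via $X(\alpha)\mapsto x_\alpha(1)$, which is the right way to rule out hidden relations and is a genuine improvement in completeness over the paper's one-line appeal to a ``standard argument.'' For the second statement the paper does something more elementary and self-contained: it reduces to reversing a single arrow $i\to j$ and exhibits the explicit sign change $\varphi(X(\beta))=X(\beta)^{\delta(\beta)}$ with $\delta(\beta)=(-1)^{b_ib_j}$, verifying by a direct parity computation that $\vare(\alpha,\beta)'=\vare(\alpha,\beta)\delta(\alpha)\delta(\beta)\delta(\alpha+\beta)$. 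You instead invoke the general fact that admissible Chevalley sign systems form a single orbit under sign changes of basis vectors. That buys generality but at a cost: your justification that each orientation yields an \emph{admissible} system --- namely the identity $\<\alpha,\beta\>+\<\beta,\alpha\>=(\alpha,\beta)=-1$ --- only establishes the antisymmetry $\vare(\beta,\alpha)=-\vare(\alpha,\beta)$ in the relevant sense; admissibility also requires the cocycle (Jacobi-type) conditions. These do hold here because $\vare(\alpha,\beta)=(-1)^{\<\alpha,\beta\>}$ with $\<\cdot,\cdot\>$ bilinear (the Frenkel--Kac cocycle construction), but you should say so explicitly, since the same bilinearity is also what makes your faithful model $X(\alpha)\mapsto x_\alpha(1)$ well defined in the first part. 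If you prefer to avoid that machinery entirely, the paper's one-arrow-at-a-time computation is the cleaner path for orientation independence.
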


\begin{proof}
The first statement is a standard argument since the commutator of any two elements is a product of generators of larger length or their inverses. (See the ``collection process'' in \cite{Hall} or in \cite{IOr}.) Thus, any product of generators and their inverses can be rearranged in canonical order: in the order that the roots appear in the Auslander-Reiten quiver of $Q$. 

To prove the second statement, let $Q'$ be obtained from $Q$ by reversing one of the arrows, say $i\to j$. Then an isomorphism $\varphi:U_Q(\ZZ)\to U_{Q'}(\ZZ)$ is given by $\varphi(X(\beta))=X(\beta)^{\delta(\beta)}$ where $\delta(\beta)=(-1)^{b_ib_j}$ if $\beta=(b_1,\cdots,b_n)$. This gives an isomorphism since, for $Q'$, we have $\<\alpha,\beta\>'=\<\alpha,\beta\>+a_ib_j-a_jb_i$ which has the same parity as $\<\alpha,\beta\>+(a_i+b_i)(a_j+b_j)+a_ia_j+b_ib_j$ if $\alpha=(a_1,\cdots,a_n)$. So,
\[
	\vare(\alpha,\beta)'=(-1)^{\<\alpha,\beta\>'}=\vare(\alpha,\beta)\delta(\alpha)\delta(\beta)\delta(\alpha+\beta)
\]
which implies that $\varphi([X(\alpha),X(\beta)])=[\varphi (X(\alpha)),\varphi( X(\beta))]$.
\end{proof}

\begin{defn}[Semi-invariant picture]\label{definition of s-i picture L(Q)} For any hereditary algebra $\Lambda$ of finite representation type we define the (semi-invariant) \emph{picture} $L(\Lambda)\subset S^{n-1}$ to be the image of the geometric realization $|\Sigma(\Lambda)^{n-2}|$ of the $(n-2)$-skeleton of $\Sigma(\Lambda)$ under the natural mapping $\pi\circ\lambda:|\Sigma(\Lambda)|\to S^{n-1}$ given by the composition of the mapping
$
	\lambda: |\Sigma(\Lambda)|\to \RR^n\backslash 0
$
which is linear on each simplex and the inclusion map on the vertex set, with the projection map $\pi:\RR^n\backslash 0\onto S^{n-1}$ given by $\pi(x)=x/||x||$.
\end{defn}

By Theorem \ref{thm: spherical semi-invariant picture} there is another description of this picture given by semi-invariants: $L(\Lambda)=S^{n-1}\cap \bigcup D(\beta)$ where $D(\beta)$ is the domain of the semi-invariant $c_\beta$ described in subsection \ref{ss: semi-invariants}. We show in Theorem \ref{thm: spherical semi-invariant picture} that $\pi\circ\lambda:|\Sigma(\Lambda)|\to S^{n-1}$ is a homeomorphism giving an embedding $|\Sigma(\Lambda)^{n-2}|\into S^{n-1}$ whose image $L(\Lambda)$ is a ``picture'' for the group $G_0(\Lambda)$ as defined below.


\begin{defn}[Pictures for a group]\label{def of picture for a group G=(X,Y)}
Let $G$ be a group given by generators and relations: $G=\<\cX|\cY\>$ where each $y\in\cY$ is a word in $\cX\cup\cX^{-1}$. The elements $y\in\cY$ are called \emph{relators} and the corresponding relation is $y=1$. For $k\ge2$, a $k$-dimensional \emph{picture} for $G$ is defined to be
\begin{enumerate}
\item  a $(k-1)$-dimensional \emph{subcomplex} $L$ of a triangulated $k$-sphere $S^k$ together with 
\item \emph{orientations} of the normal bundles in $S^k$ of all $(k-1)$-simplices and all $(k-2)$-simplices of $L$ and 
\item \emph{labels} $x(\sigma)\in \cX$ for each $(k-1)$-simplex $\sigma$ in $L$.
%
%
\item For every $(k-2)$-simplex $\rho$ of $L$, the $(k-1)$-simplices $\sigma_i$ of $L$ which contain $\rho$, say there are $s$ of them, have a specified \emph{numbering} $\sigma_1,\sigma_2,\cdots,\sigma_s$ in agreement with the cyclic ordering given by the normal orientation of $\rho$ given in (2), so that
\[
	\prod x(\sigma_i)^{\vare_i}\in\cY\cup\{xx^{-1}\,|\,x\in\cX\}
\]
where $\vare_i=+1$ if the positive side of $\sigma_i$ faces $\sigma_{i+1}$ and $\vare_i=-1$ if the negative side of $\sigma_i$ faces $\sigma_{i+1}$. We use the notation $y(\rho)=\prod x(\sigma_i)^{\vare_i}=x(\sigma_1)^{\vare_1}\cdots x(\sigma_s)^{\vare_s}$.
\end{enumerate}
\end{defn}

Notice that not all elements of $\cX$ or $\cY$ need occur. In fact only a finite number of the elements of $\cX,\cY$ can occur in any picture.

The simplest example is the empty subset of $S^k$ which is a picture for any presented group $G=\<\cX\,|\,\cY\>$ since each simplex of the empty set has all required properties.

\begin{eg}We give three examples of pictures for the same group
\[
	U_{A_2}(\ZZ)=\< x,y,z\,|\, xyx^{-1}z^{-1}y^{-1}, yzy^{-1}z^{-1},xzx^{-1}z^{-1}\>
\]
where $x=x_{01},y=x_{12},z=x_{02}$. We use the conventions:
\begin{enumerate}
\item[(0)] The relator corresponding to a commutator relation $[a,b]=w$ for any word $w$ in $\cX\cup \cX^{-1}$ will be $aba^{-1}w^{-1}b^{-1}$.
\item We suppress bivalent vertices having the relation $aa^{-1}=1$ for any $a$.
\item We use curvature to indicate the normal orientation of each face: The positive side is in the direction of curvature. We place the label $x(\sigma)$ on the positive side of $\sigma$.
\item Segments of any smooth curve have the same label $x(\sigma)$.
\item When no two elements of $\cY\coprod \cY^{-1}$ are conjugate, the normal orientation and cyclic ordering at codimension 2 simplicies $\rho$ are uniquely determined when they exist. Convention (0) implies that relators always start and end in the ``outermost'' regions abutting each vertex as indicated in the second picture. For example, the top vertex is oriented counterclockwise with relator $xyx^{-1}z^{-1}y^{-1}$.
\end{enumerate}
\end{eg}

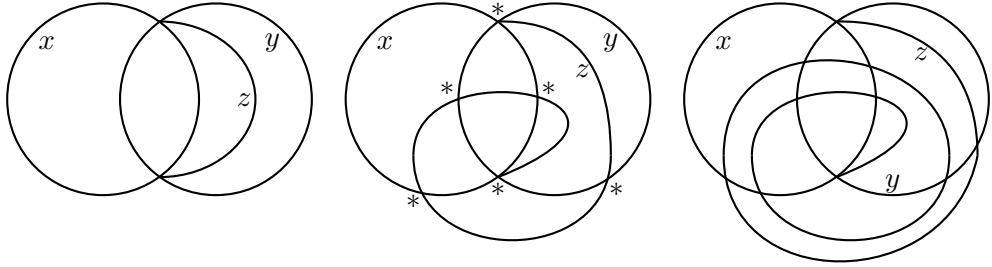
\begin{figure}[ht]\label{fig:3 pictures}
\begin{tikzpicture}[scale=.75]
\begin{scope}[xshift=-6cm]
\clip(-3.5,-1.5)rectangle(3,3);
\draw[thick] (-1,1) circle[radius=1.7];
\draw[thick] (1,1) circle[radius=1.7];
\draw (-2,2) node{$x$};
\draw (2,2) node{$y$};
\begin{scope}
\clip(2,-1)rectangle(0,3);
\draw[thick] (0,1) ellipse[x radius=1.7,y radius=1.38];
\draw (1.5,1) node{$z$};
\end{scope}
\end{scope}
\begin{scope}[xshift=6cm]
\clip(-3.5,-2)rectangle(3,3);
\draw[thick] (-1,1) circle[radius=1.7];
\draw[thick] (1,1) circle[radius=1.7];
\draw (-2,2) node{$x$};
\draw (1,-.5) node{$y$};
\begin{scope}
\draw[thick] (0,-.38) .. controls (3.5,1) and (-1.5,2) .. (-1.5,0); \draw[thick] (2,0) .. controls (2,-2) and (-1.5,-2) .. (-1.5,0); 
\draw[thick] (2,0) .. controls (2,2) and (-2,2.5) .. (-2,0); 
\draw[thick] (2.5,0) .. controls (2,-2.5) and (-2,-2.5) .. (-2,0); 
\draw[thick] (2.5,0) .. controls (2.5,2) and (1,2.4) .. (0,2.38); 
\draw (1.5,1.8) node{$z$};
\end{scope}
\end{scope}
\coordinate (A) at (-1.5,-.8);
\coordinate (Bn) at (-.9,1.2);
\coordinate (Bp) at (.9,1.2);
\coordinate (C) at (0,2.6);
\coordinate (D) at (0,-.6);
\coordinate (E) at (2.1,-.6);
\begin{scope}
\clip(-3.5,-1.6)rectangle(3,3);
\foreach \z in {A,Bn,Bp,C,D,E}
\draw[fill] (\z) node{$\ast$};
\draw[thick] (-1,1) circle[radius=1.7];
\draw[thick] (1,1) circle[radius=1.7];
\draw (-2,2) node{$x$};
\draw (2,2) node{$y$};
\begin{scope}
\draw[thick] (0,-.38) .. controls (3.5,1) and (-1.5,2) .. (-1.5,0); \draw[thick] (2,0) .. controls (2,-2) and (-1.5,-2) .. (-1.5,0); 
\draw[thick] (2,0) .. controls (2,2) and (1,2.4) .. (0,2.38); 
\draw (1.5,1.5) node{$z$};
\end{scope}
\end{scope}
\end{tikzpicture}
%
	\caption{Three pictures for $U_{A_2}(\ZZ)$ using all three generators $x,y,z$. The first picture uses one of the relations, the others use all three relations.}
	\label{three pictures}
\end{figure}

\begin{defn}\label{def of picture group of a picture}
Let $L\subset S^k$ be a picture for a presented group $G=\<\cX|\cY\>$. Then we define the \emph{picture group} of $L$ to be the group $G_0(L):=\<\cX_0\,|\,\cY_0\>$ where $\cX_0\subseteq \cX$ and $\cY_0\subseteq \cY$ are the sets of labels which actually occur in the picture. More precisely, $\cX_0$ is the set of all labels $x(\sigma)$ of all $(k-1)$-simplices $\sigma$ of $L$ and $\cY_0\subseteq \cY$ is the set of all words $y(\rho)$ in $\cX_0\coprod \cX_0^{-1}$ given by reading the elements of $\cX_0$ which occur as labels $x(\sigma_i)^{\varepsilon_i}$ of the $(k-1)$-simplices $\sigma_i$ containing the same $(k-2)$-simplex $\rho$ of $L$ as explained in Definition \ref{def of picture for a group G=(X,Y)}(4) above. 
\end{defn}


For the three pictures $L_1,L_2,L_3$ in Figure \ref{three pictures}, the picture groups are not the same. The picture group for $L_1$ is $
G_0(L_1)=\<x,y,z\,|\, xyx^{-1}z^{-1}y^{-1}\>\cong \<x,y\>=F_2$, the free group on two generators, and the picture groups for the other two are equal to the original group: $G_0(L_2)=G_0(L_3)=U_{A_2}(\ZZ)$.

\begin{rem}\label{rem: universality of picture group}
If $L$ is a picture for $G=\<\cX\,|\,\cY\>$ then there is a canonical homomorphism $G_0(L)\to G$ from the picture group of $L$ to $G$ induced by the inclusion $\cX_0\into \cX$.
\end{rem}

We now describe a method for producing a group $G$ and a picture $L$ for that group at the same time so that $G=G_0(L)$.

Let $L$ be a one-dimensional subcomplex of any triangulation of the 2-sphere $S^2$ which, when considered as a graph, contains no leaves. (Every vertex of $L$ is adjacent to at least two {edges}.) Choose a normal orientation of each edge and vertex in $L$. Choose one edge adjacent to each vertex. Let $x:L_1\onto \cX$ by any surjective mapping of the set of edges of $L$ to any finite set $\cX$. For each vertex $v\in L_0$, let $y(v)$ be the product of the labels $x(e_i)^{\vare_i}$ on the edges adjacent to $v$ starting with the chosen adjacent edge and going either clockwise or counterclockwise according to the orientation of $v$, with exponent $\vare_i=\pm1$ according to the orientation of $e_i$. Then $L$ is a picture for the group $G_0=\<x(e),e\in L_1\,|\,y(v),v\in L_0\>$ and $G_0$ is the picture group of $L\subset S^2$. Figure \ref{figureone} gives an example of a group defined in this way.

More generally we have

\begin{prop}\label{prop: L determines a group G}
Let $L$ be a codimension one subcomplex of a triangulated $k$-sphere $S^k$ with normal orientations on its $(k-1)$-simplices and labels in a set $\cX$ on the $(k-1)$-simplices so that every $(k-2)$-simplex of $L$ lies on the boundary of at least two $(k-1)$-simplices of $L$. Then $L$ is a picture for some group $G=\<\cX\,|\,\cY\>$ with generating set $\cX$.
\end{prop}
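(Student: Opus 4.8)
The plan is to observe that the hypotheses already furnish almost all of the data required by Definition \ref{def of picture for a group G=(X,Y)}, and that the one genuinely free ingredient — the relator set $\cY$ — can simply be \emph{defined} to be the collection of words that $L$ forces upon us. Indeed, item (1) of the definition is exactly the hypothesis that $L$ is a $(k-1)$-dimensional subcomplex of the triangulated $S^k$; the labels $x(\sigma)\in\cX$ of (3) are given; and the normal orientations of the $(k-1)$-simplices required in (2) are given. What is missing is: normal orientations of the $(k-2)$-simplices (the rest of (2)), the cyclic numbering at each $(k-2)$-simplex (part of (4)), and the set of relators $\cY$. Since $G$ is ours to build, I would supply the first two geometrically and then take $\cY$ to be precisely the set of words read off the completed picture.

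First I would orient the $(k-2)$-simplices. Fix a $(k-2)$-simplex $\rho$ of $L$. Its normal bundle in $S^k$ has rank $k-(k-2)=2$ and is a bundle over the simplex $\rho$, which is contractible; hence the bundle is trivial and in particular orientable, and I choose an orientation. An oriented rank-$2$ normal plane carries a well-defined sense of rotation, so it orients the linking circle of $\rho$ in $S^k$. The $(k-1)$-simplices of $L$ containing $\rho$ meet this linking circle in finitely many points — one per simplex, since distinct $(k-2)$-faces of a simplex are distinct — and the orientation of the circle therefore induces a cyclic ordering of these simplices. By hypothesis there are $s\ge 2$ of them; choosing one to be first yields a numbering $\sigma_1,\dots,\sigma_s$ in agreement with the cyclic order, exactly as demanded by (4).

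Next I would manufacture the relators. With the numbering in hand and the given normal orientations of the $\sigma_i$, set $\vare_i=+1$ if the positive side of $\sigma_i$ faces $\sigma_{i+1}$ and $\vare_i=-1$ otherwise (indices read cyclically modulo $s$), and put $y(\rho)=\prod_{i=1}^s x(\sigma_i)^{\vare_i}$, precisely as in Definition \ref{def of picture for a group G=(X,Y)}(4). I then define $\cY$ to be the set of all the words $y(\rho)$ obtained as $\rho$ ranges over the $(k-2)$-simplices of $L$, with those of the form $xx^{-1}$ omitted (they are permitted to lie in the exceptional set $\{xx^{-1}\mid x\in\cX\}$), and I set $G=\langle \cX \mid \cY\rangle$. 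With these choices every clause of the definition of a picture holds tautologically, so $L$ is a picture for $G$, and $G$ has generating set $\cX$ as claimed.

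I do not expect a serious obstacle; the proposition is essentially definitional — the higher-dimensional analogue of the explicit $k=2$ construction described just before the statement. The only points needing care are that the rank-$2$ normal bundle of each $\rho$ is orientable (immediate, since its base simplex is contractible) and that the word $y(\rho)$ depends on the chosen starting simplex only up to cyclic permutation, i.e. up to conjugacy; this is harmless because I merely need $y(\rho)\in\cY$ and $\cY$ is mine to define. The role of the hypothesis that each $(k-2)$-simplex meets at least two $(k-1)$-simplices is to forbid ``free faces'' (the higher-dimensional version of leaves in a graph) and to guarantee that each relator $y(\rho)$ has length at least $2$.
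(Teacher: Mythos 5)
Your proposal is correct and follows essentially the same route as the paper: choose a normal orientation of each $(k-2)$-simplex (the paper simply asserts this choice; you justify it via triviality of the rank-$2$ normal bundle over the contractible simplex), read off the cyclic word $y(\rho)$ from the labels and orientations of the adjacent $(k-1)$-simplices, and define $\cY$ to be the set of these words so that the definition of a picture holds tautologically. The paper likewise notes that each relator is only well defined up to cyclic permutation and inversion, which is harmless for the same reason you give.
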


\begin{proof}
The relations of $G$ are given as follows. For each codimension 2 simplex $\rho$ of $L$, choose a normal orientation of $\rho$ in $S^k$. This gives a cyclic ordering to the $(k-1)$-simplices of $L$ which contain $\rho$. Number these $\sigma_1,\sigma_2,\cdots$. The labels and orientations of the faces $\sigma_i$ give a word $y=\prod x_i^{\vare_i}$ in the letters $\cX\coprod \cX^{-1}$. We take these words as the relators of the group. Although each relator is only well defined up to cyclic orientation and inversion, the corresponding relation $y=1$ is essentially well-defined. By definition, $L$ is then a picture for $G=\<\cX\,|\,\cY\>$ where $\cY=\{y\}$ is the set of words chosen in this way.
\end{proof}

%
\begin{figure}[htbp]
\begin{center}
%
%
\begin{tikzpicture}
\draw[thick] (-2,0)-- (2,0)-- (0,3)--(-2,0) (0,0)--(1,1.5)--(-1,1.5)--(0,0);
\draw (-1,0) node[above]{$x$};
\draw (1,0) node[above]{$z$};
\draw (-1.5,0.8) node[right]{$x$};
\draw (-.51,0.8) node[right]{$y$};
\draw (.52,0.8) node[left]{$x$};
\draw (0,1.5) node[below]{$z$};
\draw (1.5,0.8) node[left]{$z$};
\draw (-.51,2.2) node[right]{$y$};
\draw (.51,2.2) node[left]{$y$};
\end{tikzpicture}
\caption{This graph with indicated labels and normal orientation given by placing the labels on the positive side of each edge and taking positive (counterclockwise) orientation at each vertex, determines the group
\[
G_0=\<x,y,z\,|\, xyz^{-1}y^{-1},yzx^{-1}z^{-1},zxy^{-1}x^{-1}\>\qquad\qquad
\]
which is the fundamental group of the complement of the trefoil knot.}
\label{figureone}
\end{center}
\vspace{-12pt}
\end{figure}
%

We will use semi-invariants to provide a system of labels and normal orientations for  $L(\Lambda)\subseteq S^{n-1}$. This will simultaneously define a group $G_0(\Lambda)$ and show that $L(\Lambda)$ is an $(n-1)$-dimensional picture for this group.

\subsection{Semi-invariants}\label{ss: semi-invariants}

For every positive root $\beta\in\Phi^+(Q)$, let $M_\beta$ be the unique indecomposable $\Lambda$-module with dimension vector $\beta$. Then the (integral) \emph{support} of $\beta$ is defined to be the set of all dimension vectors $\undim V:=\undim P_0-\undim P_1$ of all virtual representations $V=(p:P_1\to P_0)$ where $P_0,P_1$ are projective $\Lambda$-modules so that 
\[
	\Hom_\Lambda(p,M_\beta):\Hom_\Lambda(P_0,M_\beta)\to \Hom_\Lambda(P_1,M_\beta)
\]
is an isomorphism. The determinant of this linear map is called the (value at $V$ of the determinantal) \emph{virtual semi-invariant} of determinantal \emph{(det-)weight} $\beta$. (See \cite{IOTW3}.)

\begin{rem}\label{eq: virtual stability theorem} The \emph{real support} or \emph{domain} of $\beta$, denoted $D(\beta)$, is defined to be the closure in $\RR^n$ of the set of all vectors in $\QQ^n$ an integer multiple of which lies in the integral support of $\beta$. The \emph{virtual stability theorem} \cite{IOTW3}, Theorem 3.1.1, states that
\[
	D(\beta)=\{v\in \RR^n\,|\, \<v,\beta\>=0\text{ and} \<v,\beta'\>\le 0\ \text{for all } \beta'\subseteq\beta\}
\]
where $\beta'\subseteq \beta$ means that $M_\beta$ contains a submodule isomorphic to $M_{\beta'}$. This formula implies in particular that $D(\beta)$ depends only on the valued quiver $Q$ and positive root $\beta$.
\end{rem}

Note that $D(\beta)$ is the closure of a convex open subset of the hyperplane
\[
	H(\beta)=\{v\in \RR^n\,|\, \<v,\beta\>=0\}.
\]
This hyperplane has a normal orientation. The positive side is given by
\[
	H_+(\beta)=\{v\in \RR^n\,|\, \<v,\beta\>\ge0\}.
\]
Thus, each $D(\beta)$ is a normally oriented codimension one subspace of $\RR^n$.

\begin{lem}\label{lem: faces of top dim simplices are in D(|c-vector|)}\cite{IOTW3}
For every cluster tilting object $T_1\oplus\cdots\oplus T_n$ in the cluster category of $\Lambda$ there are unique roots $\gamma_1,\cdots,\gamma_n\in\Phi(Q)$ so that
\[
	\<\undim T_i,\gamma_j\>=\delta_{ij}\dim_K\End_\Lambda(T_i).
\]
For each $j$, the vectors $\undim T_i$ for $i\neq j$ lie in $D(|\gamma_j|)$ and span the hyperplane $H(\gamma_j)$.
\end{lem}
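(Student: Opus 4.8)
The plan is to separate the purely linear-algebraic content (existence, uniqueness, and the spanning statement) from the representation-theoretic content (that each $\gamma_j$ is a root and that the faces $\undim T_i$ land in the domain $D(|\gamma_j|)$), and to obtain the latter two by induction along the mutation graph, starting from the standard cluster tilting object $\bigoplus_i P_i[1]$.

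For existence and uniqueness I would first note that the dimension vectors $\undim T_1,\dots,\undim T_n$ form a $\ZZ$-basis of $\ZZ^n$: this holds for $\bigoplus_i P_i[1]$, whose dimension vectors $-\pi_i$ form a basis because the Cartan matrix is invertible, and it is preserved under mutation, which acts on dimension vectors through $GL_n(\ZZ)$. Since the Euler--Ringel form $\<\cdot,\cdot\>$ is nondegenerate --- the Euler matrix $E$ is unitriangular for an ordering of the simples compatible with the Auslander--Reiten quiver, hence invertible --- the system $\<\undim T_i,\gamma_j\>=\delta_{ij}\dim_K\End_\Lambda(T_i)$ has, for each $j$, a unique solution $\gamma_j\in\QQ^n$, giving existence and uniqueness at the level of vectors. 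The spanning statement is then immediate: for $i\neq j$ we have $\<\undim T_i,\gamma_j\>=0$, so the $n-1$ linearly independent vectors $\undim T_i$ ($i\neq j$) all lie in the hyperplane $H(\gamma_j)=\{v:\<v,\gamma_j\>=0\}$ and hence span it, and $H(\gamma_j)=H(|\gamma_j|)$ because rescaling the second argument does not change the hyperplane.

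The main obstacle is to show that each $\gamma_j$ is a root, so that $|\gamma_j|\in\Phi^+(Q)$ and $D(|\gamma_j|)$ is defined. I would prove this by induction along the (connected, in finite type) mutation graph. For the base object $\bigoplus_i P_i[1]$ one checks directly that $\gamma_j=-\alpha_j$, where $\alpha_j=\undim S_j$ is the $j$-th simple root: indeed $\<-\pi_i,-\alpha_j\>=\<\undim P_i,\undim S_j\>=\dim_K\Hom_\Lambda(P_i,S_j)=\delta_{ij}f_i$, and $f_i=\dim_K\End_\Lambda(P_i)=\dim_K\End_\Lambda(P_i[1])$ since $Q$ is acyclic. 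For the inductive step, mutation at $k$ fixes $\undim T_i$ for $i\neq k$ and alters $\undim T_k$ via the exchange triangle; comparing the defining equations shows that for $j\neq k$ the difference $\gamma'_j-\gamma_j$ is right-orthogonal to every $\undim T_i$ with $i\neq k$, hence lies on the line $\RR\gamma_k$, while $\gamma'_k=-\gamma_k$. Writing out the remaining ($i=k$) equation pins down the coefficient and exhibits $\gamma_j\mapsto\gamma'_j$ as the standard ($c$-vector) reflection in $\gamma_k$, which carries $\Phi(Q)$ to itself; thus the $\gamma'_j$ are again roots. The delicate point is bookkeeping the scalars $\dim_K\End_\Lambda(T_i)$ and the signs in the valued (non-simply-laced) case, so that the transformation is genuinely an integral reflection of the root system.

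Finally, for the domain membership I would invoke the virtual stability theorem (Remark~\ref{eq: virtual stability theorem}): since $\<\undim T_i,|\gamma_j|\>=0$ is already known, it remains to verify, for $i\neq j$, that $\<\undim T_i,\beta'\>\le 0$ for every $\beta'\subseteq|\gamma_j|$. For the base object this is immediate, each $|\gamma_j|=\alpha_j$ being simple with no proper submodule. In the inductive step I would propagate these inequalities through the exchange triangle, using that the summands of a cluster tilting object are pairwise ext-orthogonal; concretely $\<\undim T_i,\beta'\>=\dim_K\Hom_\Lambda(T_i,M_{\beta'})-\dim_K\Ext^1_\Lambda(T_i,M_{\beta'})\le 0$ is exactly the assertion that $M_{|\gamma_j|}$ is semistable for the weight $\<\undim T_i,-\>$. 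Matching these sign conditions against the homological orthogonality built into the cluster tilting object is where the real work lies; everything else reduces to the linear algebra above or to formal mutation bookkeeping.
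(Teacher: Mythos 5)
First, a point of comparison: the paper does not prove this lemma at all --- it is quoted from \cite{IOTW3} --- so there is no in-paper argument to measure yours against. On its own terms, your linear-algebra layer is correct and complete: the $\undim T_i$ form a $\ZZ$-basis (true for $\bigoplus P_i[1]$ and preserved by exchange triangles), the Euler matrix is invertible, so each $\gamma_j$ exists and is unique as a vector, and the spanning claim for $H(\gamma_j)=H(|\gamma_j|)$ follows. The base case $\gamma_j=-\alpha_j$ for $\bigoplus P_i[1]$ also checks out.

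The genuine gaps are exactly at the two places you flag, and they are not bookkeeping. In the inductive step, the map $\gamma_j\mapsto\gamma_j'=\gamma_j+c_j\gamma_k$ that you extract from the defining equations (with $c_j=\<\undim T_k',\gamma_j\>/\dim_K\End(T_k)$, using $\<\undim T_k',\gamma_k\>=-\dim_K\End(T_k)$) is \emph{not} the Weyl reflection $s_{|\gamma_k|}$ applied uniformly to all $j$: the $c$-vector mutation rule is piecewise linear, and whether $c_j$ equals the relevant Cartan integer depends on the sign of $\gamma_k$ and on which exchange triangle computes $\undim T_k'$. Proving that this piecewise-linear rule carries roots to roots is essentially the theorem that $c$-vectors in finite type are (sign-coherent) roots; asserting it as ``the standard reflection, which carries $\Phi(Q)$ to itself'' assumes the conclusion. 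Likewise the final step, propagating $\<\undim T_i,\beta'\>\le0$ for all $\beta'\subseteq|\gamma_j|$ through mutation, is announced but not performed. Both gaps close at once if you bypass mutation: the right perpendicular category of the $n-1$ summands $T_i$, $i\neq j$, is a wide subcategory of rank one, hence has a unique simple object $M_\beta$; take $|\gamma_j|=\beta$. Then $\<\undim T_i,\beta\>=0$ for $i\neq j$ is the definition of perpendicularity, and for any submodule $M_{\beta'}\subseteq M_\beta$ left-exactness of $\Hom(T_i,-)$ gives $\Hom(T_i,M_{\beta'})=0$, so $\<\undim T_i,\beta'\>=-\dim_K\Ext^1_\Lambda(T_i,M_{\beta'})\le0$ (with the obvious variant for shifted projectives), which is exactly membership in $D(\beta)$ by the virtual stability theorem. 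Only the normalization $\<\undim T_j,\gamma_j\>=\dim_K\End_\Lambda(T_j)$, which fixes the sign $\gamma_j=\pm\beta$, then remains to be checked, and that is a genuinely local (rank-one) computation.
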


Since $D(|\gamma_i|)$ is convex it contains all nonnegative linear combinations of $\undim T_j,j\neq i$.

\begin{thm}\label{thm: spherical semi-invariant picture}
Let $\Lambda$ be a hereditary algebra of finite representation type. Let $L(\Lambda)$ be the semi-invariant picture for $\Lambda$ and $S^{n-1}$ the unit sphere in $\RR^n$. Then
\[
	L(\Lambda)=\bigcup_{\beta\in\Phi^+(Q)}D(\beta)\cap S^{n-1}.
\]
\end{thm}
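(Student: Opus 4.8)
The plan is to split the statement into two assertions: that the radial map $\pi\circ\lambda\colon|\Sigma(\Lambda)|\to S^{n-1}$ of Definition \ref{definition of s-i picture L(Q)} is a homeomorphism, and that under it the image $L(\Lambda)$ of the $(n-2)$-skeleton is exactly $\bigcup_{\beta}D(\beta)\cap S^{n-1}$. Since every set in sight is a union of rays through the origin, I would work in $\RR^n$ with the cones $C(T):=\lambda(\sigma_T)=\{\sum_i t_i\,\undim T_i : t_i\ge 0\}$ attached to the closed maximal simplices $\sigma_T$ of $\Sigma(\Lambda)$, indexed by the cluster tilting objects $T=T_1\oplus\cdots\oplus T_n$, and show that they form a complete simplicial fan whose codimension-one cones are the domains $D(\beta)$.

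First I would record non-degeneracy. By Lemma \ref{lem: faces of top dim simplices are in D(|c-vector|)} the matrix $\big[\<\undim T_i,\gamma_j\>\big]=\mathrm{diag}\big(\dim_K\End_\Lambda(T_i)\big)$ is invertible, so $\{\undim T_i\}$ is a basis with (scaled) dual basis $\{\gamma_j\}$. Hence each $C(T)$ is a full-dimensional simplicial cone, $\pi\circ\lambda$ carries each $\sigma_T$ homeomorphically onto a spherical simplex, and expanding $v=\sum_i t_i\,\undim T_i$ gives $\<v,\gamma_j\>=t_j\dim_K\End_\Lambda(T_j)$, so that
\[
C(T)=\{v\in\RR^n : \<v,\gamma_j\>\ge 0\ \text{for all } j\},\qquad C(T)^\circ=\{v : \<v,\gamma_j\> > 0\ \text{for all } j\}.
\]

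For the homeomorphism I would show $\pi\circ\lambda$ is a local homeomorphism. Two clusters sharing a codimension-one face differ by mutation at the missing summand $T_j$, and the $c$-vector $\gamma_j$ changes sign under this mutation; by sign-coherence the two cones therefore lie on opposite sides of the common wall $H(\gamma_j)$, so the two spherical simplices cover a neighborhood of the shared facet and $\pi\circ\lambda$ is injective on stars. A local homeomorphism from the closed manifold $|\Sigma(\Lambda)|\cong S^{n-1}$ to $S^{n-1}$ is a covering, hence a homeomorphism (the target is simply connected for $n\ge 3$, and $n\le 2$ is direct). It follows that $S^{n-1}$ is the disjoint union of $L(\Lambda)$ and the open spherical simplices $\pi(C(T)^\circ)$.

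It remains to identify $L(\Lambda)$ with $\bigcup_\beta D(\beta)\cap S^{n-1}$. The inclusion $\subseteq$ is immediate from Lemma \ref{lem: faces of top dim simplices are in D(|c-vector|)}: the top cells of $\Sigma^{n-2}$ are the facets of the $\sigma_T$, and the facet opposite $T_j$ maps to $\mathrm{cone}(\undim T_i:i\ne j)\subseteq D(|\gamma_j|)$ with $|\gamma_j|\in\Phi^+(Q)$, while lower skeleta map into the closures of these. For the reverse inclusion, by the disjoint decomposition above it suffices to prove that each open chamber $C(T)^\circ$ is disjoint from every $D(\beta)$; this is the step I expect to be the main obstacle. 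By Remark \ref{eq: virtual stability theorem}, $v\in D(\beta)$ means $M_\beta$ is $v$-semistable, and the task is to show that a weight $v$ with $\<v,\gamma_j\> > 0$ for all $j$ semistabilizes no indecomposable. Writing $\beta=\sum_j c_j\gamma_j$, the relation $\<v,\beta\>=\sum_j c_j\<v,\gamma_j\>=0$ forces the $c_j$ to have strictly mixed signs; reconciling this with the submodule inequalities $\<v,\beta'\>\le 0$ that define $D(\beta)$ is exactly where the semi-invariant theory of \cite{IOTW3} enters, guaranteeing that such a mixed-sign weight cannot lie on the wall $D(\beta)$. Granting this, $\bigcup_\beta D(\beta)\cap S^{n-1}$ avoids every open chamber $\pi(C(T)^\circ)$ and hence lies in $L(\Lambda)$, which completes the equality.
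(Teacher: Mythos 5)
Your overall architecture (reduce to the simplicial fan of cones over maximal simplices, prove the inclusion $\subseteq$ via Lemma \ref{lem: faces of top dim simplices are in D(|c-vector|)}, then prove $\supseteq$ by showing open chambers miss every $D(\beta)$) matches the paper's, and the $\subseteq$ direction is fine. But the step you yourself flag as ``the main obstacle'' is left genuinely unproven, and your proposed route to it does not work as stated. Writing $\beta=\sum_j c_j\gamma_j$ and observing that $\<v,\beta\>=0$ forces mixed signs among the $c_j$ gives no contradiction: every point of the codimension-one set $D(\beta)$ satisfies $\<v,\beta\>=0$, and mixed signs in some dual basis is not an obstruction to membership in $D(\beta)$. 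Saying that ``the semi-invariant theory of \cite{IOTW3} enters, guaranteeing that such a mixed-sign weight cannot lie on the wall'' names no actual mechanism, so the reverse inclusion is not established.

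The paper's argument at exactly this point is different and is the real content of the proof. Since $D(\beta)$ is the closure of its rational points, one may take $w$ rational with $w/\|w\|\notin L(\Lambda)$, so that $mw=\sum a_i\,\undim T_i$ with all $a_i$ \emph{positive integers} for some cluster tilting object $T_1\oplus\cdots\oplus T_n$. The key input is that $\bigoplus T_i^{a_i}$ is the \emph{generic} module of dimension vector $mw$, and membership of a dimension vector in $D(\beta)$ (the condition that $\Hom_\Lambda(p,M_\beta)$ be an isomorphism for the generic presentation $p$) passes to the direct summands of the generic decomposition. Hence $mw\in D(\beta)$ would force $\undim T_i\in D(\beta)$ for all $i$, putting $n$ linearly independent vectors inside the hyperplane $H(\beta)$ --- a contradiction. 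This generic-decomposition argument is what you need to supply in place of the mixed-sign heuristic. Separately, your first section (local homeomorphism via mutation and sign-coherence of $c$-vectors) is more machinery than the paper uses: it simply cites the known homeomorphism $|\Sigma(Q)|\cong S^{n-1}$ from \cite{IOTW1}, \cite{IOTW3}, \cite{R}; your sketch there is plausible but imports sign-coherence, which is itself a nontrivial theorem not developed in the paper.
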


\begin{proof} Let $D(\Lambda)\subset \RR^n$ be the union of all $D(\beta)$ where $\beta\in\Phi^+(Q)$. Then the statement of the theorem is that $L(\Lambda)=D(\Lambda)\cap S^{n-1}$. It is clear that $L(\Lambda)$ is a subset of $D(\Lambda)\cap S^{n-1}$ since $D(\Lambda)$ contains the $(k-2)$-skeleton of $\Sigma(\Lambda)$. Conversely, suppose that $v\in D(\Lambda)\cap S^{n-1}$ and $v\notin L(\Lambda)$. Since $L(\Lambda)$ is a closed set and every point in $D(\Lambda)$ is a limit of rational points, there is a rational vector $w\in D(\Lambda)$ so that $w/||w||$ is not in $L(\Lambda)$. By definition of $L(\Lambda)$ this implies that some positive scalar multiple of $w$ has the form $mw=\sum a_i\undim T_i$ for some cluster tilting object $T_1\oplus\cdots\oplus T_n$ where $a_i$ are positive integers. But, $\bigoplus T_i^{a_i}$ is the generic module of dimension vector $mw$. So, $\sum a_i\undim T_i\in D(\beta)$ implies $\undim T_i\in D(\beta)$ for all $i$. But this is impossible since $\undim T_i$ are linearly independent and $D(\beta)$ is a subset of a hyperplane through the origin.
\end{proof}

\begin{cor}\label{cor: L(Lambda) is a picture}
The semi-invariant picture $L(\Lambda)\subset S^{n-1}$ is an $(n-1)$-dimensional picture for a group with generators $x(\beta)$ for $\beta\in \Phi^+(Q)$. Also, $L(\Lambda)$ together with its normal orientation and system of labels depends only on the underlying valued quiver $Q$ of $\Lambda$.
\end{cor}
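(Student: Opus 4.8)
The plan is to deduce the corollary from Proposition \ref{prop: L determines a group G} by exhibiting $L(\Lambda)$, together with the required labels and normal orientations, as an admissible codimension-one subcomplex of a triangulated $(n-1)$-sphere. First I would transport the simplicial structure of $\Sigma(\Lambda)$ to $S^{n-1}$ along the homeomorphism $\pi\circ\lambda$ of Definition \ref{definition of s-i picture L(Q)}, so that $L(\Lambda)$ becomes exactly the image of the $(n-2)$-skeleton and hence a codimension-one subcomplex of a triangulated $S^{n-1}$ with $k=n-1$. The top-dimensional ($(n-2)$-dimensional) cells of $L(\Lambda)$ are then the images of the codimension-one faces of the top simplices of $\Sigma(\Lambda)$.

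Next I would produce the labels and normal orientations from the semi-invariant data. By Theorem \ref{thm: spherical semi-invariant picture} we have $L(\Lambda)=\bigcup_{\beta\in\Phi^+(Q)}D(\beta)\cap S^{n-1}$, and by Lemma \ref{lem: faces of top dim simplices are in D(|c-vector|)} each $(n-2)$-simplex $\sigma$, being the facet of a top simplex obtained by deleting one vertex $\undim T_j$, lies in the single hyperplane $H(|\gamma_j|)$, i.e. in $D(|\gamma_j|)$. I would set $x(\sigma)=x(|\gamma_j|)$ and orient the normal bundle of $\sigma$ by the positive side $H_+(\beta)$ of that hyperplane. Since a facet has codimension one it cannot lie in the codimension-two set $H(\beta)\cap H(\beta')$ for distinct hyperplanes, so the ambient hyperplane of $\sigma$ is unique; because distinct positive roots of a Dynkin root system define distinct hyperplanes (the Euler matrix $E$ is invertible and positive roots are not scalar multiples of one another), the root $\beta$ and hence the label is determined by $\sigma$ alone. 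The point that needs the most care is consistency: one must check that the two top simplices sharing a facet $\sigma$ assign the same $\beta$, which holds because both read off the one ambient hyperplane containing $\sigma$, and that $x(\sigma)$ and its orientation are therefore functions of the wall rather than of the chosen top cell.

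It then remains to verify the hypothesis of Proposition \ref{prop: L determines a group G}, namely that every $(n-3)$-simplex $\rho$ of $L(\Lambda)$ is a face of at least two $(n-2)$-simplices. This I would deduce from the fact that $|\Sigma(\Lambda)|\cong S^{n-1}$ is a closed manifold, so $\Sigma(\Lambda)$ is a pseudomanifold without boundary and the link of any $(n-3)$-simplex is a circle; such a link has at least three vertices, each giving an $(n-2)$-simplex of $L(\Lambda)$ containing $\rho$, so $\rho$ is a face of at least three (in particular at least two) walls. With this, Proposition \ref{prop: L determines a group G} applies directly and shows that $L(\Lambda)$ is an $(n-1)$-dimensional picture for a group $G=\<\cX\,|\,\cY\>$ with generating set $\cX=\{x(\beta):\beta\in\Phi^+(Q)\}$.

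Finally, for the dependence statement I would observe that every ingredient is built from $Q$ alone: the complex $\Sigma(\Lambda)=\Sigma(Q)$ is defined purely in terms of the roots of $Q$ and ext-orthogonality, the index set $\Phi^+(Q)$ of labels depends only on $Q$, and by Remark \ref{eq: virtual stability theorem} each domain $D(\beta)$ depends only on the valued quiver $Q$ and the positive root $\beta$. Hence the geometric realization of the walls, the assignment of labels $x(\beta)$, and the normal orientation coming from $H_+(\beta)$ are all unchanged under any hereditary algebra $\Lambda$ with underlying valued quiver $Q$, which gives the asserted independence of $\Lambda$.
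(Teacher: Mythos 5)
Your proposal is correct and follows essentially the same route as the paper, which simply invokes Proposition \ref{prop: L determines a group G} together with the fact that the walls $D(\beta)\cap S^{n-1}$ are normally oriented and labeled by positive roots, with the $Q$-dependence coming from Remark \ref{eq: virtual stability theorem}. You have merely filled in details the paper leaves implicit (uniqueness of the ambient hyperplane of each wall, and the pseudomanifold argument that each codimension-two simplex lies on at least two walls), all of which are sound.
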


\begin{proof} This follows from Proposition \ref{prop: L determines a group G} since the subsets $D(\beta)\cap S^{n-1}\subseteq L(\Lambda)$ are normally oriented and labeled with positive roots $\beta\in \Phi^+(Q)$.
\end{proof}

Since $L(\Lambda)$ depends only on $Q$ we write $L(\Lambda)=L(Q)$.

\begin{summ} Section \ref{sec1} constructs the spherical semi-invariant picture $L(\Lambda)=D(\Lambda)\cap S^{n-1}$ where $D(\Lambda)$ is the union of domains $D(\beta)$ of virtual semi-invariants of det-weight $\beta$. These sets are normally oriented and labelled $\beta$. So, $L(Q)$ is a picture for some group with generators $x(\beta)$.
\end{summ}

\section{Picture group $G_0(Q)$}\label{sec2}

Let $G_0(Q)=G_0(L(Q))$ denote the picture group of the subcomplex $L(Q)\subseteq S^{n-1}$.
\vs2

The generators of the picture group are, by definition, the labels of the walls in $L(Q)$. Since we sometimes think of $L(Q)$ as an $(n-2)$-dimensional subcomplex of $S^{n-1}$ and sometimes as an $(n-1)$-dimensional subcomplex of $\RR^n$, we will refer to the codimension instead of the dimension of its pieces. The walls are the codimension-one sets. Since these walls are $D(\beta)$ for all positive roots $\beta$ of $Q$, we have a generator ${\xcolor{blue}x(\beta)}$ for each $\beta\in\Phi^+(Q)$. 

\subsection{Simplices of $L(Q)$}

We now consider a codimension $p\ge2$ simplex $\rho$ of $L(Q)=L(\Lambda)$. In this section we are only interested in the case $p=2$, but the general case is needed for the next section. By definition of $L(\Lambda)$, the vertices of any simplex $\rho$ form a partial cluster tilting object $T_1\oplus\cdots\oplus T_{n-p}$ in the cluster category of $\Lambda$. By Lemma \ref{lem: faces of top dim simplices are in D(|c-vector|)}, the codimension 1 simplices of $L(Q)$ which contain $\rho$ are contained in $D(\beta_j)$, $j>n-p$, for some completion $T_1\oplus\cdots\oplus T_n$ of the partial cluster tilting object to a full cluster tilting object where $\beta_j=|\gamma_j|\in\Phi^+(Q)$ in the notation of the lemma. Furthermore, the condition $T_i\in D(\beta_j)$ for $i=1,2,\cdots,n-p$ is equivalent to the condition that $M_{\beta_j}$ lies in the right hom-ext perpendicular category $|T|^\perp$ of the underlying module $|T|$ of $T= T_1\oplus\cdots\oplus T_{n-p}$. Since $T$ has $n-p$ components, $|T|^\perp$ is a (finitely generated) wide subcategory of $mod\text-\Lambda$ of rank $p$. 
We recall from \cite{InTh} that a subcategory of $mod\text-\Lambda$ is called a \emph{wide subcategory} if it is an abelian subcategory which is closed under extensions and which is exactly embedded in $mod\text-\Lambda$. We consider wide subcategories which are finitely generated which means that there is one object $M$ so that every other object is a quotient of $M^k$ for some $k$. The wide subcategory has rank $p$ if the minimal generator $M$ has $p$ direct summands. One of the basic theorem about finitely generated wide subcategories is that they are subcategories $\cW$ so that $\cW=(^\perp \cW)^\perp=\,^\perp(\cW^\perp)$.


\begin{lem}\label{lem: equivalent conditions on gamma=sum rj aj}
Let $T\!=\!T_1\oplus\cdots\oplus T_{n-p}$ be a partial cluster tilting object, $\rho$ the simplex in $L(Q)$ spanned by $\undim T_i$ and $M_{\alpha_1},\cdots,M_{\alpha_p}$ the simple objects of the wide subcategory $|T|^\perp$. Then, for any positive root $\gamma\in\Phi^+(Q)$, the following are equivalent.
\begin{enumerate}
\item The indecomposable module $M_\gamma$ lies in $|T|^\perp$.
\item The modules $|T_i|$, $i=1,\cdots,n-p$, lie in $\,^\perp M_\gamma$.
\item $\rho\subseteq D(\gamma)$.
\item $\gamma$ has the form $\gamma=\sum r_i\alpha_i$ where $r_i\ge0$.
\end{enumerate}
\end{lem}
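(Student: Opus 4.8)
The statement bundles four conditions, and my plan is to prove the three separate equivalences $(1)\Leftrightarrow(2)$, $(2)\Leftrightarrow(3)$ and $(1)\Leftrightarrow(4)$, each with its own tool, rather than chase a single cycle.

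The equivalence $(1)\Leftrightarrow(2)$ is purely formal. Since $|T|=\bigoplus_i|T_i|$ and the functors $\Hom_\Lambda(-,M_\gamma)$ and $\Ext^1_\Lambda(-,M_\gamma)$ are additive, the vanishing $\Hom_\Lambda(|T|,M_\gamma)=0=\Ext^1_\Lambda(|T|,M_\gamma)$ that defines $M_\gamma\in|T|^\perp$ holds if and only if $\Hom_\Lambda(|T_i|,M_\gamma)=0=\Ext^1_\Lambda(|T_i|,M_\gamma)$ for every $i$, which is exactly $|T_i|\in{}^\perp M_\gamma$ for all $i$.

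For $(2)\Leftrightarrow(3)$ I would first note that each $D(\gamma)$ is a cone cut out by the homogeneous conditions of the virtual stability theorem (Remark \ref{eq: virtual stability theorem}), so $\rho\subseteq D(\gamma)$ is equivalent to $\undim T_i\in D(\gamma)$ for every vertex of $\rho$. Forward: if $|T_i|\in{}^\perp M_\gamma$ then $\<\undim T_i,\gamma\>=\dim\Hom-\dim\Ext=0$, and for any submodule $M_\delta\subseteq M_\gamma$ left exactness of $\Hom_\Lambda(|T_i|,-)$ gives an injection $\Hom_\Lambda(|T_i|,M_\delta)\hookrightarrow\Hom_\Lambda(|T_i|,M_\gamma)=0$, whence $\<\undim T_i,\delta\>=-\dim\Ext^1_\Lambda(|T_i|,M_\delta)\le0$; the theorem then places $\undim T_i$ in $D(\gamma)$. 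The reverse implication is the one genuine calculation: fix $i$; since $\<\undim T_i,\gamma\>=0$ already forces $\dim\Hom=\dim\Ext$, it suffices to prove $\Hom_\Lambda(|T_i|,M_\gamma)=0$. If some $f\colon|T_i|\to M_\gamma$ were nonzero, set $I=\im f$ and apply $\Hom_\Lambda(|T_i|,-)$ to $0\to\ker f\to|T_i|\to I\to0$; because $|T_i|$ is the module underlying a cluster-tilting summand it is \emph{rigid}, $\Ext^1_\Lambda(|T_i|,|T_i|)=0$, and $\Lambda$ is hereditary, so the long exact sequence yields $\Ext^1_\Lambda(|T_i|,I)=0$ and hence $\<\undim T_i,\undim I\>=\dim\Hom_\Lambda(|T_i|,I)\ge1$. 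But $I$, and therefore each of its indecomposable summands, is a submodule of $M_\gamma$, so summing the stability inequalities gives $\<\undim T_i,\undim I\>\le0$, a contradiction.

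Finally I would treat $(1)\Leftrightarrow(4)$ through the wide subcategory $\cW=|T|^\perp$ with simple objects $M_{\alpha_1},\dots,M_{\alpha_p}$. For $(1)\Rightarrow(4)$, an object $M_\gamma\in\cW$ has a finite composition series in the abelian category $\cW$ with factors among the $M_{\alpha_j}$, and since $\cW$ is exactly embedded this is an honest filtration in $mod\text-\Lambda$, so $\gamma=\sum_j r_j\alpha_j$ with the nonnegative multiplicities $r_j$. The converse $(4)\Rightarrow(1)$ is where I expect the real work, since a dimension vector by itself does not detect membership in $\cW$: here I would invoke that a finitely generated wide subcategory of the module category of a finite-type hereditary algebra is again equivalent to $mod\text-\Lambda'$ for a hereditary $\Lambda'$ of finite type, with Euler (Tits) form the restriction of $\<\cdot,\cdot\>$ to the span of the $\alpha_j$. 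Writing $\gamma=\sum_j r_j\alpha_j$ with $r_j\ge0$ in these coordinates, $\gamma$ satisfies $q_{\Lambda'}(\gamma)=q_Q(\gamma)=1$ and is positive, hence is a positive root of $\cW$; the associated indecomposable object of $\cW$ is an indecomposable $\Lambda$-module of dimension vector $\gamma$, which by uniqueness of indecomposables in finite type must be $M_\gamma$, giving $M_\gamma\in\cW$. The crux is precisely this last step — translating ``$\gamma$ lies in the positive cone of the relative simples'' into ``$M_\gamma$ lies in $\cW$'' — which rests on identifying the positive roots of $\cW$ with the $Q$-roots in that cone via the restricted positive-definite Tits form, together with the uniqueness of indecomposables.
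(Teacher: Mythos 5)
Your proof is essentially correct, and for the two implications where the paper is terse you supply genuinely different, more detailed arguments. For $(4)\Rightarrow(1)$ the paper simply computes $\<\undim T_i,\gamma\>=\sum r_j\<\undim T_i,\alpha_j\>=0$ and invokes finite representation type (the point being that in finite type $\<\alpha,\beta\>=0$ forces both $hom$ and $ext$ to vanish, as recorded in the Remark preceding Definition \ref{def of noncrossing}); you instead pass through the identification of $\cW=|T|^\perp$ with $mod\text-\Lambda'$ for a hereditary $\Lambda'$ of finite type and apply Gabriel's theorem to the restricted Tits form. Both are valid: the paper's route is shorter, while yours makes explicit the root-theoretic fact that $\Phi^+(\alpha_\ast)$ is the positive root system of the wide subcategory, which the paper in effect uses repeatedly later. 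Likewise your proof of $(3)\Rightarrow(2)$, via rigidity of $|T_i|$ and the image of a hypothetical nonzero map to $M_\gamma$, is an honest argument for a step the paper only asserts.

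One case is overlooked. When $T_i=P_j[1]$ is a shifted projective, $\undim T_i=-\pi_j$ is a \emph{negative} vector, so the identity $\<\undim T_i,\undim I\>=\dim\Hom_\Lambda(|T_i|,I)$ and the stability inequalities $\<\undim T_i,\delta\>\le 0$ that you sum over the indecomposable summands of $I$ both come out with the wrong sign. The paper flags exactly this point (``when $T_i$ is projective, $\undim T_i\in D(\gamma)$ if and only if $-\undim T_i\in D(\gamma)$''). The repair is immediate: for $v=-\pi_j$ the conditions $\<v,\gamma'\>=-\dim_K\Hom_\Lambda(P_j,M_{\gamma'})\le0$ hold automatically, so $-\pi_j\in D(\gamma)$ if and only if $\<-\pi_j,\gamma\>=-\dim_K\Hom_\Lambda(P_j,M_\gamma)=0$, which is precisely $P_j\in{}^\perp M_\gamma$ since $\Ext^1_\Lambda(P_j,-)=0$. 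With that one sentence added, your argument is complete.
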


\begin{proof} 
(1) and (2) are clearly equivalent. (2) and (3) are equivalent since $|T_i|\in \,^\perp M_\gamma$ is equivalent to the statement that $\undim T_i\in D(\gamma)$. One needs to observe that, when $T_i$ is projective, $\undim T_i\in D(\gamma)$ if and only if $-\undim T_i\in D(\gamma)$. So, restricting to the positive vector $\undim |T_i|$ does not hurt. Also, (1),(2),(3) imply (4) since $M_{\alpha_i}$ are the unique simple objects in the category $|T|^\perp$. To see $(4)\then(1)$, suppose that $\gamma=\sum r_j\alpha_j\in \Phi^+(Q)$. Then 
\[
	\<\undim T_i,\gamma\>=\sum r_j\<\undim T_i,\alpha_j\>=0
\]
Since $\Lambda$ is of finite representation type, this implies that $M_\gamma\in |T|^\perp$. 

So, all four statement are equivalent.
\end{proof}

We denote by ${\xcolor{blue}\Phi^+(\alpha_\ast)}$ the set of all $\gamma\in\Phi^+(Q)$ which can be written as $\gamma=\sum r_i\alpha_i$ where $r_i\ge0$.

\begin{lem}\label{lem: which D(beta) occur twice}\label{lem: hom-orthogonal pairs of roots}
Let $T\!=\!T_1\oplus\cdots\oplus T_{n-p}$ be a partial cluster tilting object, $\rho$ the simplex in $L(Q)$ spanned by $\undim T_i$ and $M_{\alpha_1},\cdots,M_{\alpha_p}$ the simple objects of the wide subcategory $|T|^\perp$.

(a) The interior of the simplex $\rho$ with vertices $\undim T_i$ lies in the interior of each $D(\alpha_j)$ but it lies on the boundary of $D(\gamma)$ for any $\gamma=\sum r_j\alpha_j$ which is not one of the $\alpha_j$.

(b) Let $\gamma_1,\cdots,\gamma_p\in \Phi^+(\Lambda)$. Then $M_{\gamma_j}$ are the simple objects of a wide subcategory of $mod\text-\Lambda$ of rank $p$ if and only if they are pairwise hom-orthogonal. 
\end{lem}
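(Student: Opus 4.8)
The plan is to prove (a) from the virtual stability theorem of Remark \ref{eq: virtual stability theorem} together with a sign computation for the Euler form, and to prove (b) by producing, in the nontrivial direction, an exceptional sequence and taking its extension closure. For (a), recall that by Lemma \ref{lem: equivalent conditions on gamma=sum rj aj} we already know $\rho\subseteq D(\gamma)$ for every $\gamma\in\Phi^+(\alpha_\ast)$, so in both assertions $\rho$ lies in $D(\gamma)$ and the only question is interior versus boundary. Since $D(\gamma)$ is (the closure of a convex open subset of) the hyperplane $H(\gamma)$ cut out by the inequalities $\<v,\gamma'\>\le0$ for submodules $M_{\gamma'}\subseteq M_\gamma$, a point $v$ with $\<v,\gamma\>=0$ lies in the interior of $D(\gamma)$ iff $\<v,\gamma'\><0$ for every proper nonzero submodule $M_{\gamma'}\subsetneq M_\gamma$. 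I would evaluate this on an interior point $v=\sum_i c_i\,\undim T_i$ (all $c_i>0$) of $\rho$. The point is that for each $i$ one has $\<\undim T_i,\gamma'\>\le0$: when $T_i$ is a module this follows by applying $\Hom_\Lambda(|T_i|,-)$ to $0\to M_{\gamma'}\to M_\gamma\to M_\gamma/M_{\gamma'}\to0$ and using $M_\gamma\in|T|^\perp$ (which forces $\Hom_\Lambda(|T_i|,M_{\gamma'})=0$ and $\Ext_\Lambda(|T_i|,M_{\gamma'})\cong\Hom_\Lambda(|T_i|,M_\gamma/M_{\gamma'})$), and when $T_i$ is a shifted projective $P_i[1]$ it follows from $\<-\pi_i,\gamma'\>=-\dim_K\Hom_\Lambda(P_i,M_{\gamma'})\le0$.

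For the first assertion ($\gamma=\alpha_j$), a proper nonzero submodule $M_{\gamma'}\subsetneq M_{\alpha_j}$ cannot lie in $\cW=|T|^\perp$, since $M_{\alpha_j}$ is simple in the exactly embedded abelian subcategory $\cW$ (a monomorphism in $mod\text-\Lambda$ between objects of $\cW$ is one in $\cW$); hence $M_{\gamma'}\notin|T|^\perp$, so some $i$ has $\Hom_\Lambda(|T_i|,M_{\gamma'})\ne0$ or $\Ext_\Lambda(|T_i|,M_{\gamma'})\ne0$, and the computation above turns this into a strict inequality $\<\undim T_i,\gamma'\><0$ for that $i$. With all $c_i>0$ this gives $\<v,\gamma'\><0$, so $v$ is interior to $D(\alpha_j)$. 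For the second assertion, write $\gamma=\sum r_j\alpha_j$ with $\gamma$ not equal to any $\alpha_k$; then $M_\gamma$ is a non-simple object of $\cW$, so it has a simple subobject $M_{\alpha_k}\subsetneq M_\gamma$ in $\cW$. Since $\rho\subseteq D(\alpha_k)\subseteq H(\alpha_k)$ we get $\<v,\alpha_k\>=0$ for $v\in\rho$, so the inequality attached to the proper submodule $M_{\alpha_k}$ is tight and $v$ lies on the boundary of $D(\gamma)$.

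For (b), the forward direction is immediate: the simple objects of a rank $p$ wide subcategory are $p$ pairwise non-isomorphic simples of an abelian category, so Schur's lemma (and exact embedding, which identifies $\Hom_\cW$ with $\Hom_\Lambda$) makes them pairwise hom-orthogonal. For the converse I would build a wide subcategory directly. Each $M_{\gamma_i}$ is exceptional since $\Lambda$ has finite representation type. Hom-orthogonality gives $\<\gamma_i,\gamma_j\>=-\dim_K\Ext_\Lambda(M_{\gamma_i},M_{\gamma_j})\le0$ and symmetrically, so the symmetrized (Tits) form satisfies $(\gamma_i,\gamma_j)\le0$ for $i\ne j$; positive-definiteness of this form in finite type then bounds $(\gamma_i,\gamma_j)$ from below and forces, for each pair, $\Ext$ to vanish in at least one direction. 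Orienting each pair by the nonvanishing $\Ext$ gives a directed graph which must be acyclic: a directed cycle $\gamma_{i_1}\to\cdots\to\gamma_{i_m}\to\gamma_{i_1}$ would make $\delta=\gamma_{i_1}+\cdots+\gamma_{i_m}\ne0$ satisfy $(\delta,\delta)\le0$, each consecutive term contributing $(\gamma_{i_a},\gamma_{i_{a+1}})\le-1$, exactly the affine $\widetilde A_{m-1}$ degeneration, contradicting positive-definiteness. Choosing a linear extension of this acyclic orientation and reindexing, we get $\Hom_\Lambda(M_{\gamma_j},M_{\gamma_i})=0=\Ext_\Lambda(M_{\gamma_j},M_{\gamma_i})$ for $i<j$, i.e.\ $(M_{\gamma_1},\dots,M_{\gamma_p})$ is an exceptional sequence. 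Its extension closure $\cW$ is then a wide subcategory of rank $p$ whose simple objects are exactly the $M_{\gamma_j}$, by the standard theory of exceptional sequences and perpendicular categories (\cite{InTh}).

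I expect the main obstacle to be the converse in (b): turning pairwise hom-orthogonality into an honest exceptional-sequence ordering. The two delicate points are (i) the acyclicity argument, which rests on positive-definiteness of the Tits form and requires a little care in the valued (non-simply-laced) case to check that the cyclic sum still degenerates, and (ii) the assertion that the extension closure of an exceptional sequence is abelian with the prescribed simples, which I would rather cite from \cite{InTh} than reprove. Part (a), by contrast, is routine sign bookkeeping once the stability theorem and the perpendicular-category description of $\rho$ are in hand.
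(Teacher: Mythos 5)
Your proof is correct and follows essentially the same route as the paper: part (a) via the virtual stability theorem, showing strict inequalities $\<v,\gamma'\><0$ for proper subroots of a simple $\alpha_j$ and a tight inequality $\<v,\alpha_k\>=0$ for a non-simple $\gamma$, and part (b) by ordering the hom-orthogonal roots into an exceptional sequence and invoking the theory of perpendicular/wide subcategories. The only difference is that you actually prove the acyclicity of the Ext-quiver (via positive definiteness of the Tits form), a step the paper simply asserts from finite representation type; this is a welcome addition rather than a divergence.
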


\begin{proof}
(a) Take any fixed $v\in\,int\,\rho$. Then, $v$ lies in $D(\gamma)$ if and only if $\rho\subseteq D(\gamma)$. This happens if and only if $\gamma=\sum r_j\alpha_j$ for some $r_j\ge0$. It follows that $v$ would not be contained in $D(\beta)$ if $\beta$ were a subroot of any $\alpha_j$. So, take $\beta$ which is not a subroot of any $\alpha_j$. By the virtual stability theorem Remark \ref{eq: virtual stability theorem}, this implies that $
	\<v,\beta\><0
$. Since this is an open condition, we have $\<w,\beta\><0$ for all $w$ in some neighborhood of $v$. Therefore, $v$ lies in the interior of each $D(\alpha_j)$.

Any nontrivial linear combination $\gamma=\sum r_j\alpha_j$, will contain some $\alpha_j$ as a subroot. Also, $\alpha_j,\gamma$ will be linearly independent. So, the hyperplanes $H(\alpha_j),H(\gamma)$ intersect transversely along a codimension 2 subspace which contains the simplex $\rho$. Since $\<v,\alpha_j\>\le 0$ for all $v\in D(\gamma)$, the set $D(\gamma)$ is restricted to the negative side of $H(\alpha_j)$. So, $\rho$ lies on the boundary of $D(\gamma)$ as claimed.

(b) We prove only the sufficiency of this condition as it is clearly necessary. Since $\Lambda$ is of finite representation type, we can number the roots so that $ext(\gamma_i,\gamma_j)=0$ for $i<j$. Then, reversing the order gives an exceptional sequence $M=(M_{\gamma_p}, \cdots,M_{\gamma_1})$ making $\cA=(^\perp M)^\perp$ into a rank $p$ wide subcategory with complete exceptional sequence $M$. Since the $\gamma_j$ are hom-orthogonal, $M_{\gamma_j}$ are the simple objects of $\cA$.
\end{proof}

We will use the notation $\cA b(\gamma_\ast)=(^\perp M)^\perp$ for the wide subcategory of part (b). By Lemma \ref{lem: equivalent conditions on gamma=sum rj aj}, $\Phi^+(\gamma_\ast)$ is the set of dimension vectors of indecomposable objects of $\cA b(\gamma_\ast)$. We call $\cA b(\gamma_\ast)$ the \emph{wide subcategory spanned by $\gamma_\ast$} since ``generated'' is not the right word.







\subsection{Picture group} We now describe the picture group $G_0(Q)$ (Definition \ref{def of picture group of a picture}) for the semi-invariant picture $L(Q)$.

\begin{thm}\label{thm: presentation of the picture group determined by the spherical semi-invariant picture} If $Q$ is a valued Dynkin quiver, the picture group $G_0(Q)$ determined by the spherical semi-invariant picture $L(Q)$ has the presentation:
\begin{enumerate}
\item $G_0(Q)$ has one generator $x(\beta)$ for every positive root $\beta\in\Phi^+(Q)$.
\item For each pair $(\alpha,\beta)$ of hom-orthogonal roots in $\Phi^+(Q)$ so that $ext(\alpha,\beta)=0$, we have the relation:
\[
	x(\alpha)x(\beta)=\prod x(r_i\alpha+s_i\beta)
\]
where the product is over all positive roots of the form $r_i\alpha+s_i\beta$ in increasing order of the ratio $r_i/s_i$ (going from $0/1$ to $1/0$).
\end{enumerate}
\end{thm}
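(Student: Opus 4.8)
The plan is to read the presentation straight off the geometry of $L(Q)$ using the picture–group recipe of Definition \ref{def of picture group of a picture}. Part (1) is immediate from Corollary \ref{cor: L(Lambda) is a picture}: the walls (codimension–one pieces) of $L(Q)$ are exactly the normally oriented domains $D(\beta)$ labelled $x(\beta)$, one for each $\beta\in\Phi^+(Q)$, so these are precisely the generators. For part (2), Definition \ref{def of picture group of a picture} says the relators $\cY_0$ are produced by encircling each codimension–two simplex $\rho$ of $L(Q)$ and reading the labels and signs of the walls crossed. Thus the problem splits into two tasks: (i) classify the codimension–two simplices, and (ii) compute the word read around each one.

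For task (i) I would use the analysis preceding Lemma \ref{lem: equivalent conditions on gamma=sum rj aj}: a codimension–two simplex $\rho$ is spanned by $\undim T_1,\dots,\undim T_{n-2}$ for a partial cluster tilting object whose perpendicular category $|T|^\perp$ is a rank–two wide subcategory, and by Lemma \ref{lem: hom-orthogonal pairs of roots}(b) its simple objects are $M_\alpha,M_\beta$ for a hom–orthogonal pair $\alpha,\beta\in\Phi^+(Q)$. Conversely every hom–orthogonal pair arises this way: since $ext(\alpha,\beta)=0$ together with hom–orthogonality makes $(M_\beta,M_\alpha)$ an exceptional pair, one completes it to a full exceptional sequence and passes to the cluster category to recover such a $T$. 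Because $|T|^\perp$ is the module category of an acyclic rank–two quiver, at least one of $ext(\alpha,\beta),ext(\beta,\alpha)$ vanishes, so ordering the pair by $ext(\alpha,\beta)=0$ is exactly what pins $\alpha$ to the ``$1/0$ end'' and $\beta$ to the ``$0/1$ end.'' By Lemma \ref{lem: equivalent conditions on gamma=sum rj aj} the walls meeting $\rho$ are then precisely the $D(\gamma)$ with $\gamma\in\Phi^+(\alpha_\ast)=\{r\alpha+s\beta\in\Phi^+(Q)\}$.

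Task (ii) is the heart of the argument, and I would carry it out in the two–dimensional normal plane $N$ to $\rho$. By Lemma \ref{lem: hom-orthogonal pairs of roots}(a) the point $\rho$ lies in the interior of $D(\alpha)$ and $D(\beta)$ but on the boundary of every other $D(\gamma)$; hence in $N$ the two simple walls $D(\alpha),D(\beta)$ appear as full lines through the origin (two antipodal points on $S^1\subseteq N$) while each non–simple $D(\gamma)$ appears as a single ray (one point). Using the coordinates $w\mapsto(\langle w,\alpha\rangle,\langle w,\beta\rangle)$ on $N$, the wall $H(\gamma)\cap N$ is the line $r\langle w,\alpha\rangle+s\langle w,\beta\rangle=0$, and these lines rotate monotonically as $r/s$ increases from $0/1$ to $1/0$; the subroot inequalities of Remark \ref{eq: virtual stability theorem} then single out the correct half–ray. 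Consequently, encircling the origin of $N$ crosses the walls in the order dictated by $r/s$, and tracking the normal orientations $H_+(\gamma)=\{\langle\,\cdot\,,\gamma\rangle\ge 0\}$ identifies the word read off as $x(\alpha)x(\beta)\bigl(\prod x(r_i\alpha+s_i\beta)\bigr)^{-1}$, which is the stated relation. When the pair is ext–orthogonal in both directions, $\Phi^+(\alpha_\ast)=\{\alpha,\beta\}$ and this degenerates to the commutator relation $x(\alpha)x(\beta)=x(\beta)x(\alpha)$.

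The hard part will be the sign and orientation bookkeeping in task (ii): one must verify, uniformly across the four rank–two types $A_1\times A_1$, $A_2$, $B_2$, $G_2$ and for the non–symmetric Euler form attached to an arbitrary valuation, that every factor on the right appears with exponent $+1$ and in the exact ratio order, with no cancellation. The monotonic rotation of the lines $r\langle w,\alpha\rangle+s\langle w,\beta\rangle=0$ settles the ordering cleanly, so the genuine work is to match the half–ray selected by the subroot inequalities to the normal orientation of each wall and to confirm positivity of all exponents; I would do this by the explicit computation in the coordinates above, having first reduced everything to the rank–two wide subcategory $|T|^\perp$.
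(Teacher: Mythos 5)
Your proposal is correct and follows essentially the same route as the paper: the paper also classifies the codimension-two simplices via Lemma \ref{lem: which D(beta) occur twice} (rank-two wide subcategories spanned by a hom-orthogonal pair) and then reads the relator by projecting to the plane with Euler-form coordinates $v\mapsto(\langle v,\beta\rangle,\langle v,\alpha\rangle)$, where the rays $D(r\alpha+s\beta)$ appear in the fourth quadrant in increasing order of slope proportional to $r/s$. The orientation bookkeeping you flag as the remaining work is exactly what the paper disposes of with Figure \ref{fig:relation around rho} and the case list in Example \ref{eg: examples of relations}.
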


\begin{proof} Each codimension one face simplex of $L(Q)$ lies in $D(\beta)$ for some positive root $\beta$ and is labeled $x(\beta)$. By Lemma \ref{lem: which D(beta) occur twice}, the relation which occurs around a codimension two simplex $\rho$ of $L(Q)$ is a word in $x(r\alpha+s\beta)$ in which the letters $x(\alpha),x(\beta)$ occur twice and the other letters occur once. In the semi-simple case where $M_\alpha,M_\beta$ do not extend each other, the only $D(\gamma)$ containing $\rho$ are $D(\alpha),D(\beta)$ which meet transversely with $\rho$ in their intersection. So, the relation around $\rho$ is $x(\alpha)x(\beta)=x(\beta)x(\alpha)$ in this case.

If $\Ext^1_\Lambda(M_\beta,M_\alpha)\neq0$ then there are extensions $M_\gamma$ where $\gamma=r\alpha+s\beta$. (Example \ref{eg: examples of relations} below gives a case by case description.) Figure \ref{fig:relation around rho} shows where $D(r\alpha+s\beta)$ occur. They are oriented counterclockwise as shown in the figure and the slope of the positive normal direction is proportional to $r/s$. Therefore, the sets $D(r\alpha+s\beta)$ are in cyclic order according to this slope and we get the relation (2).
\end{proof}

\begin{figure}[htbp]
\begin{center}
%
{
\setlength{\unitlength}{2cm}
{\mbox{
\begin{picture}(4,2)
      \thicklines
\put(1,0){
      \qbezier(-.5,1)(1,1)(2.5,1) 
     \qbezier(1,0)(1,1)(1,2) 
     \qbezier(1,1)(1.5,.5)(2,0) 
     \qbezier(1,1)(1.6,.5)(2.2,0) 
     \qbezier(1,1)(1.7,.5)(2.4,0) 
     \put(1.1,1.8){$D(\beta)$}
     \put(1.1,0.1){$D(\beta)$}
     \put(2,1.1){$D(\alpha)$}
     \put(-.4,1.1){$D(\alpha)$}
     \put(2,.4){$D(r\alpha+s\beta)$}
  %
    \put(.85,.83){$\rho$}
    }
\end{picture}}
}}
\caption{Image of $L(Q)$ under the projection $\RR^n\to \RR^2$ given by $v\mapsto (\<v,\beta\>,\<v,\alpha\>)$. By definition, $D(\alpha)$, $D(\beta)$ map to the $x$ and $y$ axes. In the non-semisimple case, $\<\alpha,\beta\>=0$ and $\<\beta,\alpha\>< 0$, all sets $D(r\alpha+s\beta)$ for $r,s>0$ map to the fourth quadrant as shown. The slope of these lines increase with the ratio $r/s$ and therefore, read counterclockwise, the lines in Quadrant IV are in order of $r/s$.}
\label{fig:relation around rho}
\end{center}
\end{figure}

\begin{eg}\label{eg: examples of relations}
There are only six types of relations which occur in the presentation given in Theorem \ref{thm: presentation of the picture group determined by the spherical semi-invariant picture}. This is because the wide category $(^\perp M)^\perp$ is equivalent to the module category of a hereditary algebra of finite representation type with two vertices. And there are only four possibilities as listed below. (But Cases (3) and (4) have two subcases depending on whether the arrow points towards the short root or the long root. So, the total is six.)
\begin{enumerate}
\item $A_1\times A_1$. This corresponds to the case when the modules $M_\alpha,M_\beta$ do not extend each other and the wide category that they generate is semi-simple. So, $\Phi^+(\alpha,\beta)=\{\alpha,\beta\}$ and the relation is:
\[
	x(\alpha)x(\beta)=x(\beta)x(\alpha).
\]
\item $A_2$. Here $\Ext^1_\Lambda(M_\beta,M_\alpha)$ is one dimensional over both $F_\beta=\End_\Lambda(M_\beta)$ and $F_\alpha=\End_\Lambda(M_\alpha)$. The wide category has 3 indecomposable objects forming an exact sequence $M_\alpha\to M_{\alpha+\beta}\to M_\beta$ and $G_0(Q)$ has relation:
\[
	x(\alpha)x(\beta)=x(\beta)x(\alpha+\beta)x(\alpha).
\]
\item $B_2\cong C_2$. In this case, either $\Ext^1_\Lambda(M_\beta,M_\alpha)$ is $1$-dimensional over $F_\beta$ and $2$-dimensional over $F_\alpha$ or vise versa. In the first case, where $\beta$ is the long root, we have $\Phi^+(\alpha,\beta)=\{\alpha,\beta,\alpha+\beta,2\alpha+\beta\}$ and the relation is
\[
	x(\alpha)x(\beta)=x(\beta)x(\alpha+\beta)x(2\alpha+\beta)x(\alpha).
\]
\item $G_2$. Here $\Ext^1_\Lambda(M_\beta,M_\alpha)$ is 1-dimensional over $F_\beta$ and 3-dimensional over $F_\alpha$ or vise versa. There are six positive roots giving the relation:
\[
	x(\alpha)x(\beta)=x(\beta)x(\alpha+\beta)x(3\alpha+2\beta)x(2\alpha+\beta)x(3\alpha+\beta)x(\alpha).
\]
\end{enumerate}
In all cases there are irreducible morphism between the corresponding modules in the opposite order than how they appear in the relations. For example, in Case (4) there are irreducible morphisms
\[
	M_\alpha\to M_{3\alpha+\beta}\to M_{2\alpha+\beta}\to M_{3\alpha+2\beta}\to M_{\alpha+\beta}\to M_{\beta}.
\]
\end{eg}

If we compare these relations with the Chevalley relations for the generators of the maximal unipotent subgroup $U_Q$ of the algebraic group of the underlying Dynkin diagram of $Q$, we see that there is an epimorphism $G_0(Q)\onto U_Q(\ZZ)$ when $Q$ has two vertices. (Send $x(\beta)$ to $\epsilon_\beta(1)$ in the notation of \cite{Hum:AlgGrps}, section 33. Send $x(\beta)$ to $X(\beta)$ in the notation of Definition \ref{def: unipotent group of Q}.)

\begin{summ} In Section \ref{sec2} (Theorem \ref{thm: presentation of the picture group determined by the spherical semi-invariant picture}) we gave a presentation of the picture group $G_0(Q)$ which is determined by the labeled picture $L(Q)$ constructed in Section \ref{sec1}.
\end{summ}

\section{Picture space $X(Q)$}\label{sec3}

In {Section \ref{sec3}} we will construct the picture space $X(\Lambda)$ assuming that $\Lambda$ is a hereditary algebra of finite representation type. This will be a finite CW-complex together with a system of closed codimension-one subsets $J(\beta)\subset X(Q)$ for all $\beta\in\Phi^+(Q)$. Since $X(\Lambda)$ will depend only on the underlying valued quiver $Q$ of $\Lambda$, we will write $X(Q)=X(\Lambda)$.

\subsection{Local properties of $D(\beta)$}

The construction of the space $X(Q)$ depends on the local properties of the sets $D(\beta)$ as given in Proposition \ref{prop: D(beta,rho)} below for $\beta$ in a wide subcategory $\cA b(\alpha_\ast)$ spanned by a pairwise hom-orthogonal set of positive roots $\alpha_\ast=\{\alpha_1,\cdots,\alpha_p\}$. Roughly speaking, it says that the intersection pattern of these sets depends only on the valued quiver of $\cA b(\alpha_\ast)$ which we now define.

\begin{defn}\label{def: Q(alpha)}
For any set  of pairwise hom-orthogonal positive roots $\alpha_\ast=\{\alpha_1,\cdots,\alpha_p\}$, let $Q(\alpha_\ast)$ denote the quiver with one vertex for each $\alpha_j$ with valuation $f_j=hom(\alpha_j,\alpha_j)$ and an arrow $i\to j$ whenever $ext(\alpha_i,\alpha_j)\neq0$ with valuation $(d_{ij},d_{ji})$ so that $d_{ij}f_j=d_{ji}f_i=ext(\alpha_i,\alpha_j)$. Then $Q(\alpha_\ast)$ depends only on the numbers $\<\alpha_i,\alpha_j\>$ since $f_j=\<\alpha_j,\alpha_j\>$ and $ext(\alpha_i,\alpha_j)=-\<\alpha_i,\alpha_j\>$ when $i\neq j$.
\end{defn}

\begin{lem}\label{lem: partial cluster spanning perp Ab(a)}
Let $\alpha_\ast=\{\alpha_1,\cdots,\alpha_p\}\subseteq \Phi^+(Q)$ be a set of hom-orthogonal roots for the underlying valued quiver $Q$ of a hereditary algebra $\Lambda$ of finite representation type. Then
\begin{enumerate}
\item There exists a partial cluster tilting object $T=T_1\oplus\cdots\oplus T_{n-p}$ in the cluster category of $\Lambda$ so that $\cA b(\alpha_\ast)=|T|^\perp$.
\item $
\,^\perp\RR\alpha_\ast=\{w\in\RR^n\,|\,\<w,\alpha_j\>=0 \text{ for all }j\}
$ is the linear span of the roots $\undim T_i$.
\item There is a unique linear map $\pi_{\alpha_\ast}:\RR^n\to \RR\alpha_\ast$ having $\,^\perp\RR\alpha_\ast$ as kernel so that $\pi_{\alpha_\ast}$ is the identity map on $\RR\alpha_\ast$.
\item For each $x\in\RR^n$, $\pi_{\alpha_\ast}(x)\in \RR\alpha_\ast$ is the unique vector so that $\<x,\alpha_j\>= \<\pi_{\alpha_\ast}(x),\alpha_j\>$ for all $j$.
\end{enumerate}
\end{lem}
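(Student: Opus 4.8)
The plan is to handle the four assertions in order, treating (1) as the representation-theoretic input and (2)--(4) as linear algebra governed by the Euler form. For (1), I would first apply Lemma~\ref{lem: hom-orthogonal pairs of roots}(b): since $\alpha_\ast$ is hom-orthogonal, $M_{\alpha_1},\dots,M_{\alpha_p}$ are the simple objects of the rank $p$ wide subcategory $\cW=\cA b(\alpha_\ast)$, and (reordering so $ext(\alpha_i,\alpha_j)=0$ for $i<j$) they form a complete exceptional sequence $M$ of $\cW$. The essential step is to produce the complementary object. I would pass to the left perpendicular ${}^\perp\cW$, which for a hereditary algebra of finite type is again a finitely generated wide subcategory, of rank $n-p$, equivalent to $\mathrm{mod}\text-\Lambda'$ for a hereditary $\Lambda'$ of that rank (perpendicular calculus for wide subcategories, cf.\ \cite{InTh}). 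Taking $T=T_1\oplus\cdots\oplus T_{n-p}$ to be the projective generator of ${}^\perp\cW$, each $T_i$ is exceptional and $\Ext^1_\Lambda(T_i,T_j)=\Ext^1_{\Lambda'}(T_i,T_j)=0$ because ${}^\perp\cW$ is exactly embedded and extension-closed; thus $T$ is a partial cluster tilting object. Finally I would verify $|T|^\perp=\cW$: the inclusion $\cW\subseteq |T|^\perp$ is immediate from $T\in{}^\perp\cW$, while the reverse holds because $T$ generates ${}^\perp\cW$ as a wide subcategory, so $|T|^\perp=({}^\perp\cW)^\perp=\cW$, the last equality being the self-perpendicularity $\cW=({}^\perp\cW)^\perp$ recalled in the text.

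For (2), with such a $T$ fixed, Lemma~\ref{lem: equivalent conditions on gamma=sum rj aj} (equivalence of its conditions (1) and (2)) gives $\langle\undim T_i,\alpha_j\rangle=0$ for all $i,j$, since each $M_{\alpha_j}\in|T|^\perp$; hence every $\undim T_i$ lies in ${}^\perp\RR\alpha_\ast$. Now ${}^\perp\RR\alpha_\ast$ is the kernel of $w\mapsto(\langle w,\alpha_j\rangle)_j\in\RR^p$, whose associated vectors are $E\alpha_1,\dots,E\alpha_p$. Since the Euler matrix $E$ is nonsingular (it is triangular with nonzero diagonal entries $f_i$ in a topological ordering of the acyclic quiver) and the $\alpha_j$ are linearly independent as the simple roots of $\cW$, these functionals are independent, so $\dim{}^\perp\RR\alpha_\ast=n-p$. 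As the $\undim T_i$ are $n-p$ linearly independent vectors (as for any partial cluster tilting object, cf.\ the proof of Theorem~\ref{thm: spherical semi-invariant picture}) sitting inside this $(n-p)$-dimensional space, they span it.

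For (3) and (4) the crux is the internal decomposition $\RR^n={}^\perp\RR\alpha_\ast\oplus\RR\alpha_\ast$. By (2) the dimensions already sum to $n$, so I only need trivial intersection. Here I would use that the Gram matrix $(\langle\alpha_j,\alpha_k\rangle)_{jk}$ is exactly the Euler matrix of $\cW$ in the basis of its simple roots (diagonal $f_j>0$, off-diagonal $-ext(\alpha_j,\alpha_k)$), hence nonsingular; so if $w=\sum_k c_k\alpha_k$ satisfies $\langle w,\alpha_j\rangle=0$ for all $j$, then $\sum_k c_k\langle\alpha_k,\alpha_j\rangle=0$ forces every $c_k=0$ and $w=0$. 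This gives the direct sum, and $\pi_{\alpha_\ast}$ is then the projection onto $\RR\alpha_\ast$ along ${}^\perp\RR\alpha_\ast$, unique by the decomposition, proving (3). Statement (4) is then immediate: writing $x=u+\pi_{\alpha_\ast}(x)$ with $u\in{}^\perp\RR\alpha_\ast$ yields $\langle x,\alpha_j\rangle=\langle\pi_{\alpha_\ast}(x),\alpha_j\rangle$ for all $j$, and any two vectors of $\RR\alpha_\ast$ with this property differ by an element of $\RR\alpha_\ast\cap{}^\perp\RR\alpha_\ast=0$.

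I expect the main obstacle to be step (1), specifically the clean identification $|T|^\perp=\cW$, since it is the only place requiring genuine perpendicular-category machinery rather than elementary linear algebra; once the complementary partial cluster tilting object is in hand, everything else reduces to the nonsingularity of the relevant Euler matrices.
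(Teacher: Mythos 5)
Your proposal is correct and follows essentially the same route as the paper: the paper's proof consists precisely of citing Lemma \ref{lem: hom-orthogonal pairs of roots} to make $\cA b(\alpha_\ast)$ a rank $p$ wide subcategory, passing to ${}^\perp\cA b(\alpha_\ast)$ (a wide subcategory of rank $n-p$, hence with a cluster tilting object $T$), and dismissing (2)--(4) as ``basic linear algebra.'' You have simply filled in that linear algebra (nonsingularity of the Euler and Gram matrices, the resulting direct sum decomposition) and the perpendicular-calculus identity $|T|^\perp=({}^\perp\cW)^\perp=\cW$, all correctly.
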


\begin{proof}
By Lemma \ref{lem: hom-orthogonal pairs of roots}, $\cA b(\alpha_\ast)$ is a wide subcategory of rank $p$. Therefore, $\,^\perp \cA b(\alpha_\ast)$ is a wide subcategory of $mod\text-\Lambda$ of rank $n-p$ which therefore has a cluster tilting object $T$ as claimed. The rest is basic linear algebra.
\end{proof}

\begin{lem}\label{lem: factorization lemma for pi}
For $\beta_\ast$ a hom-orthogonal set of roots in $\Phi^+(\alpha_\ast)$, $\pi_{\beta_\ast}:\RR^n\to\RR\beta_\ast$ factors uniquely through $\pi_{\alpha_\ast}:\RR^n\to \RR\alpha_\ast$ as $\pi_{\beta_\ast}=\left(\pi_{\beta_\ast}|_{\RR\alpha_\ast}\right)\circ\pi_{\alpha_\ast}$.
\end{lem}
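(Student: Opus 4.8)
The plan is to verify the factorization pointwise, using the intrinsic characterization of these projections supplied by Lemma \ref{lem: partial cluster spanning perp Ab(a)}(4) rather than their linear-algebra definition via kernels. First I would check that the composite is even well-defined. Since $\beta_\ast\subseteq\Phi^+(\alpha_\ast)$, each $\beta_j$ is a nonnegative combination $\beta_j=\sum_i r_{ji}\alpha_i$, so $\RR\beta_\ast\subseteq\RR\alpha_\ast$ and the restriction $\pi_{\beta_\ast}|_{\RR\alpha_\ast}\colon\RR\alpha_\ast\to\RR\beta_\ast$ makes sense. Moreover $\beta_\ast$ is hom-orthogonal by hypothesis, so Lemma \ref{lem: partial cluster spanning perp Ab(a)} applies to $\beta_\ast$ as well, and $\pi_{\beta_\ast}$ is defined with the same universal property; in particular part (4) of that lemma holds for $\pi_{\beta_\ast}$.

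For the uniqueness of the factorization, I would note that $\pi_{\alpha_\ast}$ restricts to the identity on $\RR\alpha_\ast$ and is therefore surjective onto $\RR\alpha_\ast$. Hence if $f\colon\RR\alpha_\ast\to\RR\beta_\ast$ satisfies $f\circ\pi_{\alpha_\ast}=\pi_{\beta_\ast}$, then for $v\in\RR\alpha_\ast$ we have $f(v)=f(\pi_{\alpha_\ast}(v))=\pi_{\beta_\ast}(v)$, which forces $f=\pi_{\beta_\ast}|_{\RR\alpha_\ast}$. This pins down the only possible map and reduces the lemma to the single identity $\pi_{\beta_\ast}(x)=\pi_{\beta_\ast}(\pi_{\alpha_\ast}(x))$ for all $x\in\RR^n$.

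The heart of the argument is then a short computation. By Lemma \ref{lem: partial cluster spanning perp Ab(a)}(4) applied to $\beta_\ast$, the vector $\pi_{\beta_\ast}(x)$ is the unique element of $\RR\beta_\ast$ agreeing with $x$ against every $\beta_j$; since $\pi_{\alpha_\ast}(x)\in\RR\alpha_\ast$, it therefore suffices to show $\<\pi_{\alpha_\ast}(x),\beta_j\>=\<x,\beta_j\>$ for each $j$. Writing $\beta_j=\sum_i r_{ji}\alpha_i$ and using bilinearity of the Euler--Ringel form together with the defining property $\<\pi_{\alpha_\ast}(x),\alpha_i\>=\<x,\alpha_i\>$ from part (4) for $\pi_{\alpha_\ast}$, I obtain
\[
\<\pi_{\alpha_\ast}(x),\beta_j\>=\sum_i r_{ji}\<\pi_{\alpha_\ast}(x),\alpha_i\>=\sum_i r_{ji}\<x,\alpha_i\>=\<x,\beta_j\>.
\]
Thus $\pi_{\alpha_\ast}(x)$ and $x$ have identical $\beta_j$-pairings, and the uniqueness in Lemma \ref{lem: partial cluster spanning perp Ab(a)}(4) yields $\pi_{\beta_\ast}(\pi_{\alpha_\ast}(x))=\pi_{\beta_\ast}(x)$, as required.

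There is no serious obstacle here: once the Euler-form characterization of the projections is adopted, everything collapses to the bilinearity of $\<-,-\>$. The only point requiring a moment's care is confirming that $\pi_{\beta_\ast}$ is legitimately defined, that is, that the hypotheses of Lemma \ref{lem: partial cluster spanning perp Ab(a)} hold for $\beta_\ast$ — which is exactly the standing assumption that $\beta_\ast$ is a hom-orthogonal set of roots, guaranteeing in addition that the $\beta_j$ are linearly independent so that the uniqueness in part (4) is available.
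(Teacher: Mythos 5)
Your proof is correct, but it takes a different (and slightly longer) route than the paper. The paper's own argument is a one-line kernel computation: since $\RR\beta_\ast\subseteq\RR\alpha_\ast$, taking left-perpendiculars reverses the inclusion, so $\ker\pi_{\alpha_\ast}={}^\perp\RR\alpha_\ast\subseteq{}^\perp\RR\beta_\ast=\ker\pi_{\beta_\ast}$, and the standard fact that a linear map factors uniquely through a surjection whose kernel it annihilates finishes the proof (the factoring map is forced to be the restriction because $\pi_{\alpha_\ast}$ is the identity on $\RR\alpha_\ast$). You instead invoke the Euler-form characterization of Lemma \ref{lem: partial cluster spanning perp Ab(a)}(4) and verify the identity $\pi_{\beta_\ast}\circ\pi_{\alpha_\ast}=\pi_{\beta_\ast}$ pointwise via bilinearity of $\<\cdot,\cdot\>$; the key computation $\<\pi_{\alpha_\ast}(x),\beta_j\>=\<x,\beta_j\>$ for $\beta_j=\sum_i r_{ji}\alpha_i$ is right, and your uniqueness argument via surjectivity of $\pi_{\alpha_\ast}$ onto $\RR\alpha_\ast$ is also correct. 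The trade-off: the paper's kernel argument is more economical and makes the factorization transparent as pure linear algebra, while yours makes explicit \emph{why} the kernels nest, namely because the pairings against the $\beta_j$ are determined by the pairings against the $\alpha_i$ — which is arguably the more informative statement and is the form in which the fact gets used later (e.g., in Remark \ref{rem: relative D}). Both are complete proofs.
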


\begin{proof}
Since $\RR\beta_\ast\subseteq\RR\alpha_\ast$, $\,^\perp \RR\alpha_\ast=\ker\pi_{\alpha_\ast}\subseteq\,^\perp\RR\beta_\ast=\ker\pi_{\beta_\ast}$. The lemma follows.
\end{proof}

\begin{defn}\label{def: instead of D(beta,rho)}
Let $\alpha_\ast=\{\alpha_1,\cdots,\alpha_p\}$ be any hom-orthogonal set of positive roots and let $\beta$ be any element of $\Phi^+(\Lambda)$. Define the following subsets of $\RR^n$ and $\RR\alpha_\ast$.
\[
D^{\alpha_\ast}(\beta)=\begin{cases} \{x\in\RR^n\,|\, \<x,\beta\>=0\text{ and $\<x,\beta'\>\le 0$ $\forall\beta'\subseteq\beta$ s.t. $\beta'\in\Phi^+(\alpha_\ast)$} 
	\} & \text{if } \beta\in \Phi^+(\alpha_\ast)\\
   \emptyset & \text{if }\beta\notin \Phi^+(\alpha_\ast)
    \end{cases}
    \]
$\ D_{\alpha_\ast}(\beta)=D^{\alpha_\ast}(\beta)\cap \RR\alpha_\ast$.
\end{defn}

\begin{rem}\label{rem: relative D}By Lemma \ref{lem: partial cluster spanning perp Ab(a)}(4), the conditions defining $D^{\alpha_\ast}(\beta)$: $\<x,\beta\>=0$ and $\<x,\beta'\>\le 0$ are equivalent to the same conditions on $\pi_{\alpha_\ast}(x)$. Therefore:

\noi$\ 
D^{\alpha_\ast}(\beta)=\pi_{\alpha_\ast}^{-1}D_{\alpha_\ast}(\beta).
$

Let $\beta_\ast$ be a hom-orthogonal set of roots in $\Phi^+(\alpha_\ast)$. If $\beta\in \Phi^+(\beta_\ast)$ then $D^{\beta_\ast}(\beta)$ contains $D^{\alpha_\ast}(\beta)$ since it is given by a subset of the conditions which define $D^{\alpha_\ast}(\beta)$. By Lemma \ref{lem: factorization lemma for pi}, $\pi_{\beta_\ast}:\RR^n\to \RR\beta_\ast$ factors through $\RR\alpha_\ast$. So, $D_{\alpha_\ast}(\beta)\subseteq \RR{\alpha_\ast}$ is contained in the inverse image of $D_{\beta_\ast}(\beta)$ under the induced map $\RR{\alpha_\ast}\to\RR{\beta_\ast}$.
\end{rem}

\begin{prop}\label{prop: D(beta,rho)}Let $\alpha_\ast=\{\alpha_1,\cdots,\alpha_p\}$ and let $T=T_1\oplus\cdots\oplus T_{n-p}$ be a partial cluster tilting object in the cluster category of $\Lambda$ so that $\cA b(\alpha_\ast)=|T|^\perp$. Let $\rho\subset \,^\perp\RR\alpha_\ast$ be the $(n-p-1)$-simplex in the cluster complex $\Sigma(\Lambda)$ spanned by the almost positive roots $\undim T_i$ and let $v$ be any point in the interior of $\rho$. Thus $v=\sum v_i \undim T_i$ where $v_i>0$ for each $i$. Then, for any $\beta\in \Phi^+(Q)$, $D^{\alpha_\ast}(\beta)$, defined above, is equal to the set of all vectors $x\in \RR^n$ so that $v+\epsilon x\in D(\beta)$ for all sufficiently small $\epsilon>0$.
\end{prop}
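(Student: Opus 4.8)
The plan is to read off $D(\beta)$ from the virtual stability theorem (Remark \ref{eq: virtual stability theorem}) as a closed convex polyhedral cone, cut out by the homogeneous equality $\<x,\beta\>=0$ together with the homogeneous inequalities $\<x,\beta'\>\le 0$ for the subroots $\beta'\subseteq\beta$. The set of $x$ with $v+\epsilon x\in D(\beta)$ for all small $\epsilon>0$ is then exactly the cone of feasible directions of $D(\beta)$ at the point $v$, and the goal is to show it equals $D^{\alpha_\ast}(\beta)$. The single fact I lean on throughout is that, by Lemma \ref{lem: equivalent conditions on gamma=sum rj aj} combined with Lemma \ref{lem: which D(beta) occur twice}(a), one has $v\in D(\gamma)$ iff $\gamma\in\Phi^+(\alpha_\ast)$ (equivalently $\rho\subseteq D(\gamma)$), and in that case every vertex $\undim T_i$ of $\rho$ also lies in $D(\gamma)$. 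I split into the two cases $\beta\notin\Phi^+(\alpha_\ast)$ and $\beta\in\Phi^+(\alpha_\ast)$.

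First I dispose of $\beta\notin\Phi^+(\alpha_\ast)$, where $D^{\alpha_\ast}(\beta)=\emptyset$ by definition. Here $v\notin D(\beta)$, and since $D(\beta)$ is closed some open ball about $v$ misses it; hence for every $x$ and every sufficiently small $\epsilon>0$ the point $v+\epsilon x$ lies outside $D(\beta)$. So the feasible-direction cone is empty, matching $D^{\alpha_\ast}(\beta)$.

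Now suppose $\beta\in\Phi^+(\alpha_\ast)$, so $v\in D(\beta)$ and $\<v,\beta\>=0$. Expanding the defining conditions of $D(\beta)$ along the ray $v+\epsilon x$, the equality $\<v+\epsilon x,\beta\>=0$ holds on an interval of $\epsilon$ exactly when $\<x,\beta\>=0$, while for each subroot $\beta'\subseteq\beta$ the inequality $\<v,\beta'\>+\epsilon\<x,\beta'\>\le 0$ constrains $x$ precisely when the constraint is \emph{active} at $v$, i.e.\ when $\<v,\beta'\>=0$; for strictly-satisfied subroots ($\<v,\beta'\><0$) continuity makes the inequality hold for small $\epsilon$, and as there are finitely many subroots a single threshold $\epsilon_0$ serves all of them. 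Thus the feasible-direction cone is $\{x:\<x,\beta\>=0,\ \<x,\beta'\>\le 0\text{ for all active }\beta'\subseteq\beta\}$, and everything reduces to identifying the active subroots.

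The main obstacle is exactly this identification: I must show that for $\beta'\subseteq\beta$ one has $\<v,\beta'\>=0$ iff $\beta'\in\Phi^+(\alpha_\ast)$ (and $\<v,\beta'\><0$ otherwise). The ``if'' direction is immediate, since $v$ lies in $\,^\perp\RR\alpha_\ast$ forces $\<v,\gamma\>=0$ for every $\gamma\in\Phi^+(\alpha_\ast)\subseteq\RR\alpha_\ast$. For the converse I use that $\beta\in\Phi^+(\alpha_\ast)$ gives $\undim T_i\in D(\beta)$ for every $i$, hence $\<\undim T_i,\beta'\>\le 0$ for every subroot $\beta'\subseteq\beta$; writing $v=\sum v_i\undim T_i$ with all $v_i>0$, the sum $\<v,\beta'\>=\sum_i v_i\<\undim T_i,\beta'\>$ of nonpositive terms vanishes iff every $\<\undim T_i,\beta'\>=0$. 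When it vanishes I combine this with $\<\undim T_i,\beta''\>\le 0$ for the deeper subroots $\beta''\subseteq\beta'\subseteq\beta$ (still legitimate since submodule inclusion is transitive, so $\beta''\subseteq\beta$) to get $\undim T_i\in D(\beta')$ for all $i$, i.e.\ $\rho\subseteq D(\beta')$; Lemma \ref{lem: equivalent conditions on gamma=sum rj aj} then forces $\beta'\in\Phi^+(\alpha_\ast)$. With the active subroots identified as exactly those in $\Phi^+(\alpha_\ast)$, the feasible-direction cone becomes precisely $D^{\alpha_\ast}(\beta)$. I expect the delicate point to be resisting the temptation to sign $\<v,\beta'\>$ for arbitrary roots---it need not be nonpositive for roots unrelated to $\beta$---so it is essential to stay inside the subroot poset of $\beta$ and exploit $v\in D(\beta)$ to obtain the nonpositivity. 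The summands $T_i$ that are shifted projectives $P_k[1]$ need only a one-line separate check: there $\<\undim T_i,\beta'\>=-(\beta')_k\le 0$ and vanishing again means $M_{\beta'}\in|T_i|^\perp$, so the same conclusion applies.
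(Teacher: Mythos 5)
Your proof is correct and follows essentially the same route as the paper's: both reduce to the virtual stability description of $D(\beta)$, use Lemma \ref{lem: equivalent conditions on gamma=sum rj aj} to dispose of the case $\beta\notin\Phi^+(\alpha_\ast)$, and then identify the binding constraints by showing $\<v,\beta'\><0$ for subroots $\beta'\subseteq\beta$ outside $\Phi^+(\alpha_\ast)$. The only (immaterial) difference is that you establish this strict inequality by decomposing $v=\sum v_i\undim T_i$ into nonpositive summands, whereas the paper argues directly that $\<v,\beta'\>=0$ would force $v\in D(\beta')$, contradicting $\beta'\notin\Phi^+(\alpha_\ast)$.
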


\begin{proof} Let $D^{\alpha_\ast}(\beta)'$ denote the subset of $\RR^n$ defined by the $\epsilon$ condition.
Since $D(\beta)$ is a closed set, the condition $v+\epsilon x\in D(\beta)$ for small $\epsilon$ implies that $v\in D(\beta)$. This implies that $\rho\subset D(\beta)$ which holds if and only if $\beta\in \Phi^+(\alpha_\ast)$ by Lemma \ref{lem: equivalent conditions on gamma=sum rj aj}. So, $D^{\alpha_\ast}(\beta)'$ is nonempty only in this case.

Now assume that $\beta\in \Phi^+(\alpha_\ast)$ so that $D^{\alpha_\ast}(\beta)$ and $D^{\alpha_\ast}(\beta)'$ are both nonempty.

Let $x\in D^{\alpha_\ast}(\beta)'$. Then $v+\epsilon x\in D(\beta)$ for small $\epsilon>0$. So $\<v+\epsilon x, \beta\>=0$ and $\<v+\epsilon x,\beta'\>\le0$ for all $\beta'\subseteq \beta$. Since $\<v,\beta\>=0=\<v,\beta'\>$ these conditions imply $\epsilon\<x,\beta\>=0$ and $\epsilon\<x,\beta'\>\le0$. Since $\epsilon>0$, this implies $x\in D^{\alpha_\ast}(\beta)$. 

Conversely, let $x\in D^{\alpha_\ast}(\beta)$. Then $\<v+\epsilon x,\beta\>=\<v,\beta\>+\epsilon\<x,\beta\>=0$. Let $\gamma\subset\beta$ be any subroot. If $\gamma\in\Phi^+(\alpha_\ast)$ then $\<v+\epsilon x,\gamma\>\le0$ for all $\epsilon>0$. If $\gamma\notin\Phi^+(\alpha_\ast)$ then we know that $v\notin D(\gamma)$. Therefore $\<v,\gamma\><0$. (Otherwise, $\<v,\gamma\>=0$ and $\<v,\gamma'\>\le0$ for all $\gamma'\subseteq\gamma\subset \beta$ making $v\in D(\gamma)$.) So, $\<v+\epsilon x,\gamma\><0$ for sufficiently small $\epsilon$ which implies that $v+\epsilon x$ lies in $D(\beta)$ for sufficiently small positive $\epsilon$. So, $x\in D^{\alpha_\ast}(\beta)'$ and the two sets are equal.
\end{proof}


Recall that the cluster complex $\Sigma(Q)=\Sigma(\Lambda)$ is a simplicial complex whose geometric realization is $|\Sigma(Q)|\cong S^{n-1}$. Since $S^{n-1}$ is a manifold, the dual cell decomposition is an $(n-1)$-dimensional CW complex. We attach a single $n$-cell to this dual cell complex to get an $n$-dimensional CW complex which we denote by $E(Q)$. Thus $E(Q)\cong D^n$. Proposition \ref{prop: D(beta,rho)} gives us an equivalence between certain $p$-cells in this cell decomposition. 

\begin{figure}[ht]
\begin{center}
\begin{tikzpicture}[scale=.6]
%
\clip (-8,-6) rectangle (7,3);
\begin{scope}
\draw[very thick] (0,0)--(0,-2)--(-1.5,1.8) (4,2)--(0,-2);
\draw[very thick] (-4,-1)--(0,-2);
\draw[very thick] (1.5,-1.5)--(-4,-1);
\draw[very thick] (4,2)--(1.5,-1.5)--(0,-2);
\end{scope}
\begin{scope}[xshift=2cm,yshift=1cm]
\draw[fill=gray!15!white] (-1.25,-1.75)--(-.17,-.83)--(0,0)--(-1.17,.17)--(-2.75,-.15)--cycle;
\draw[fill,thick] (-1.17,.17) circle[radius=2pt]--(0,0);
\draw[fill,thick] (-.17,-.83) circle[radius=2pt]--(0,0);
\draw[thick] (-2,-1)--(-.17,-.83)--(-1.25,-1.75);
\draw[thick] (-2,-1)--(-1.17,.17)--(-2.75,-.15);
\end{scope}
\begin{scope}[xshift=-2cm,yshift=-.5cm]
\draw[fill=gray!15!white] (0,0)--(.17,.67)--(1.25,1.35)--(2.75,-.25)--(1.17,-.33);
\end{scope}
\begin{scope}
\begin{scope}[xshift=-2cm,yshift=-.5cm]
\draw[fill,thick] (.17,.67) circle[radius=2pt]--(2,.5);
\draw[fill,thick] (1.17,-.33) circle[radius=2pt]--(2,.5);
\draw[thick] (0,0)--(.17,.67)--(1.25,1.35) ;
\draw[thick] (0,0)--(1.17,-.33)--(2.75,-.25) ;
\end{scope}
\begin{scope}
\draw[thin,color=black] (0,0)--(-4,-1)--(1.5,-1.5)--cycle;
\draw[thin,color=black] (0,0)--(-1.5,1.8)--(-4,-1);
\draw[thin,color=black] (0,0)--(4,2)--(-1.5,1.8);
\draw[fill,thick] (.75,-.75) circle[radius=2pt]--(0,0);
\draw[fill,thick] (-.75,.85) circle[radius=2pt]--(0,0);
\draw[fill,color=brown] (0,0) circle[radius=2pt];
\draw[fill,thick,color=brown] (-2,-.5) circle[radius=2pt]--(0,0);
\draw[fill,thick,color=brown] (2,1) circle[radius=2pt]--(0,0);
\end{scope}
\end{scope}
\begin{scope}
\draw[very thick] (0,2.5)--(-1.5,1.8);
\draw[very thick,color=red] (-1.5,1.8)--(4,2); 
\draw[very thick,color=red] (-1.5,1.8)--(-4,-1); 
\draw[very thick] (-4,-1)--(0,2.5);
\draw[very thick,color=red] (1.5,-1.5)--(-4,-1); 
\draw[very thick] (1.5,-1.5)--(0,2.5);
\draw[very thick,color=red] (4,2)--(1.5,-1.5); 
\end{scope}
\begin{scope}[xshift=0cm,yshift=-4cm] 
\draw[thick,<-] (-1.7,.3)--(-3,1) node[above]{$S^{p-1}$};
\draw (3,-.5) node[right]{$\RR\alpha_\ast\cong \RR^p$};
\draw[thick,color=red] (-4,-1)--(1.5,-1.5)--(4,1)--(-1.5,1.5)--cycle;
\draw[very thick,color=blue] (-4.4,-1.1)--(4.8,1.2);
\draw (5,.7) node{$D_{\alpha_\ast}(\beta)$};
\draw[very thick,color=black] (-1.7,1.7)--(1.7,-1.7);
\draw[fill] (.8,-.8) circle[radius=2pt];
\draw[fill] (-.8,.8) circle[radius=2pt];
\draw[fill] (1.92,.47) circle[radius=2pt];
\draw[fill] (-1.92,-.47) circle[radius=2pt];
\begin{scope}[rotate=10]
\draw (0,0) ellipse [x radius=2cm,y radius=1cm];
\end{scope}
\end{scope}
\draw[<-,thick] (0,1.3).. controls (-1.5,1.3) and (-1.5,3)..(-3,2);
\draw (-3,2) node[left]{$\rho$};
\draw[<-,thick] (-1.1,.3).. controls (-1.4,1) and (-2.5,1)..(-3.2,1);
\draw (-3.2,1) node[left]{$E(\rho)$};
\draw[<-,thick] (-1.5,-.37).. controls (-2,.1) and (-2.5,.1)..(-3.7,0);
\draw (-3.7,0) node[left]{$J(\beta,\rho)$};
\draw[thick,->] (3,-1)--(3,-2); 
\draw (3,-1.5) node[right]{$\pi_{\alpha_\ast}$};
\begin{scope}[xshift=.5cm,yshift=-1.75cm]
\draw[color=red] (-5,0) node[left]{$Lk(\rho)$};
\draw[thick,<-] (-2,.5)..controls (-2.2,0) and (-3,0)..(-5,0) ;
\end{scope}
\begin{scope}[xshift=.5cm,yshift=-3.5cm]
\draw (-5,0) node[left]{$\pi_{\alpha_\ast}(Lk(\rho))$};
\draw[thick,<-] (-3.5,-.5)..controls (-4,0) and (-5,0)..(-5,0) ;
\end{scope}
\draw[very thick] (0,0.07)--(0,2.5)--(4,2); 
\end{tikzpicture}
\caption{The dual cell $E(\rho)$ projects homeomorphically onto a $p$-cell in $\RR\alpha_\ast\cong \RR^p$. The link $Lk(\rho)$ of $\rho$ maps to a triangulated $(p-1)$-sphere in $\RR\alpha_\ast$ which by normalization (dividing by lengths of vectors) maps to the unit sphere $S^{p-1}$ giving the isomorphism $Lk(\rho)\cong \Sigma(Q(\alpha_\ast))$ of Corollary \ref{cor: Lk(r)=S(Q(a))}.}
\label{fig: dual cell and projection}
\end{center}
\end{figure}
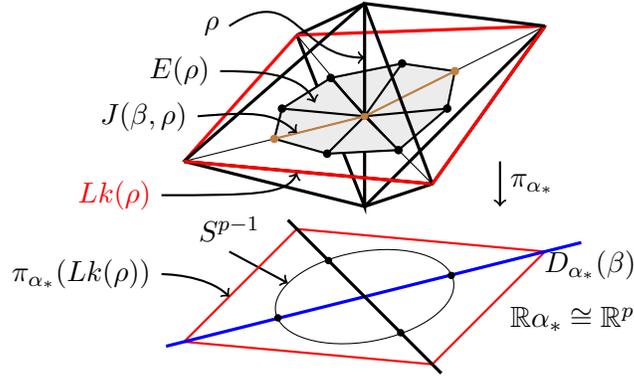

\begin{defn} We define $E(\rho)$, $J(\beta,\rho)$ and $Lk(\sigma)$. 

The \emph{dual cell} $E(\rho)$ to the $(n-p-1)$-simplex $\rho$ in $|\Sigma(Q)|$ is the $p$-dimensional triangulated space (a $p$-cell by Remark \ref{rem: naturality of varphi}(b)) which is the union of all simplices $\tau$ in the first barycentric subdivision of $|\Sigma(Q)|$ so that $\tau\cap \rho$ is the barycenter of $\rho$. This implies that the other vertices of $\tau$ are barycenters of simplices $\sigma$ which contain $\rho$. 

For every $\beta\in\Phi^+(Q)$ let $J(\beta,\rho)=E(\rho)\cap D(\beta)$. This is the subcomplex of $E(\rho)$ consisting of all simplices $\tau$ whose vertices are barycenters of simplicies $\sigma$ which are contained in $D(\beta)$. Since $D(\beta)$ meets $E(\rho)$ if and only if $D(\beta)$ contains $\rho$, it follows from Lemma \ref{lem: equivalent conditions on gamma=sum rj aj} that $J(\beta,\rho)$ is nonempty if and only if $\beta\in\Phi^+(\alpha_\ast)$.

We use the general fact that $E(\rho)$ is simplicially isomorphic to the cone on the first barycentric subdivision of the {link} of $\rho$ in $\Sigma(Q)$. Recall that the \emph{closed star} of a simplex $\sigma$ in any simplicial complex $K$ is defined to be the union of the set of all simplices $\tau$ of $K$ which contain $\sigma$ and the \emph{link} $Lk(\sigma)$ of $\sigma$ is the union of all simplices in the star of $\sigma$ which are disjoint from $\sigma$. In the case at hand, the link of $\rho$ is the simplicial subcomplex of $\Sigma(Q)$ whose vertices are the almost positive roots $\gamma_i$ which are ext-orthogonal to the vertices $\undim T_1,\cdots,\undim T_{n-p}$ of $\rho$. These vertices $\gamma_i$ span a simplex in $Lk(\rho)$ if and only if they are ext-orthogonal.
\end{defn}

\begin{eg}\label{eg: A2 figure}
We illustrate these terms and concepts on the quiver $A_2: 1\ot 2$.
\begin{enumerate}
\item $\Sigma(A_2)$ is a pentagon with five vertices and 5 edges. These vertices are the almost positive roots $\alpha,\beta,\gamma,-\alpha,-\beta$ connected in a cycle. The simple roots are $\alpha,\gamma$. This is the left part of Figure \ref{Fig:A2 pentagon}.
\item Figure \ref{Fig:A2 pentagon}, right represents the cone on the first barycentric subdivision of $\Sigma(A_2)$. Except for the cone point $\emptyset$, each point $b_\rho$ is the barycenter of a simplex $\rho$ in $\Sigma(A_2)$. The point $b_\rho$ is labeled by the set of vertices of $\rho$.
\item The entire object (solid pentagon) is $E(\emptyset)$ and $E(\alpha),E(\beta),E(\gamma),E(-\alpha),E(-\beta)$ are the stars of the original vertices. The new vertices (black spots in Figure \ref{Fig:A2 pentagon}) are the dual cells $E(\rho_i)$ of the original 2-simplices $\rho_i$ of $\Sigma(A_2)$ (Figure \ref{Fig:A2 pentagon}, left). 
\end{enumerate}
\end{eg}
\begin{figure}[ht]
\begin{center}
\begin{tikzpicture}[scale=.7]
\clip (-6,-1.7) rectangle (8,3.7);
\draw[color=blue] (2,0) circle[radius=3pt] (6,1)circle[radius=3pt];
\draw[color=red] (2,2) circle[radius=3pt] (4,-1)circle[radius=3pt];
\draw (4,3) circle[radius=3pt];
\begin{scope}[xshift=-5cm,scale=.7]
	\draw (0,0) node[left]{$\alpha$};
	\draw (0,2) node[left]{$\beta$};
	\draw (2,-1) node[below]{$-\beta$};
	\draw (2,3) node[above]{$\gamma$};
	\draw (4,1) node[right]{$-\alpha$};
	\draw[thick] (0,0)--(0,2)--(2,3)--(4,1)--(2,-1)--(0,0);
\end{scope}
\begin{scope}[xshift=2cm,scale=1]
\draw (1.7,1)-- (0,1) node[left]{$\{\alpha,\beta\}$};
\draw[fill] (0,1) circle[radius=3pt];
\draw (1.7,1)-- (1,-.5)(1,-.7) node[left]{$\{\alpha,-\beta\}$};
\draw[fill] (1,-.5) circle[radius=3pt];
\draw (1.7,1)-- (1,2.5)(1,2.7) node[left]{$\{\beta,\gamma\}$};
\draw[fill] (1,2.5) circle[radius=3pt];
\draw (1.7,1)-- (3,2) (3,2.2) node[right]{$\{\gamma,-\alpha\}$};
\draw[fill] (3,2) circle[radius=3pt];
\draw (1.7,1)-- (3,0)(3,-.2) node[right]{$\{-\beta,-\alpha\}$};
\draw[fill] (3,0) circle[radius=3pt];
\draw (1.7,1)-- (0,0);
	\draw (0,0) node[left]{$\alpha$} ;
\draw[very thick,color=red] (1.7,1)-- (0,2);
	\draw (0,2) node[left]{$\beta$};
\draw[very thick,color=red] (1.7,1)-- (2,-1);
	\draw (2,-1) node[below]{$-\beta$};
\draw (1.7,1)-- (2,3);
	\draw (2,3) node[above]{$\gamma$};
\draw (1.7,1)-- (4,1);
	\draw (4,1) node[right]{$-\alpha$};
	\draw[thick] (0,0)--(0,2)--(2,3)--(4,1)--(2,-1)--(0,0);
	\draw[very thick,color=blue] (1,-.5)--(0,0)--(0,1);
	\draw[color=blue] (-1.4,0)node[left]{$E(\alpha)$}
	(3.6,0.4)node[right]{$E(-\alpha)$};
	\draw[color=blue,->] (-1.4,0)--(-.1,.5);
	\draw[color=red] (-1.4,2)node[left]{$J(\alpha,\emptyset)$};
\draw[color=red,->] (-1.4,2)..controls (-.8,1.6) and (.2,1.2)..(.7,1.5);
\draw[very thick,color=blue](3,2)--(4,1)--(3,0);
\draw (1.7,1) node{$\emptyset$};
\end{scope}
\end{tikzpicture}
\caption{$\Sigma(A_2)$ is the (boundary of the) pentagon on the left. The right hand figure is the cone on the barycentric subdivision of $\Sigma(A_2)$.}\label{fig: two pentagons}\label{Fig:A2 pentagon}
\end{center}
\end{figure}
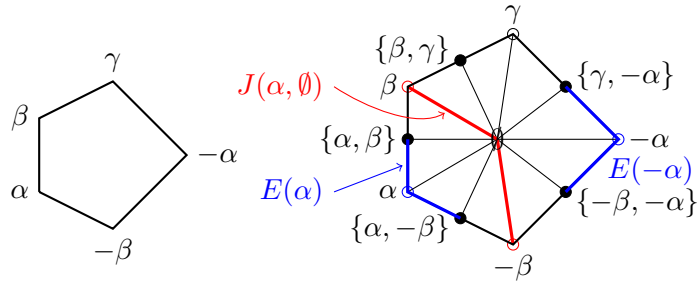
\begin{cor}\label{cor: Lk(r)=S(Q(a))} 
Let $\alpha_\ast=\{\alpha_1,\cdots,\alpha_p\}$ be a set of hom-orthogonal roots of $Q$, let $T=T_1\oplus\cdots\oplus T_{n-p}$ be a partial cluster tilting object so that $\cA b(\alpha_\ast)=|T|^\perp$. Let $\rho$ be the $(n-p-1)$-simplex in $\Sigma(\Lambda)$ spanned by $\undim T_i$. Then there is a simplicial isomorphism \[\varphi_\rho: Lk(\rho)\cong \Sigma(Q(\alpha_\ast))\] which is uniquely determined by the property that, for every vertex $\gamma$ of $Lk(\rho)$, $\varphi_\rho(\gamma)$ is a positive scalar multiple of $\pi_{\alpha_\ast}(\gamma)\in\RR\alpha_\ast$. 
\end{cor}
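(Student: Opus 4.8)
The plan is to realize the link as the semi-invariant picture of the wide subcategory $\cA b(\alpha_\ast)$ via the projection $\pi_{\alpha_\ast}$, using the local structure result Proposition \ref{prop: D(beta,rho)}. First I would record the algebraic input: the finitely generated wide subcategory $\cA b(\alpha_\ast)=|T|^\perp$, being abelian, exactly embedded and of finite representation type with simple objects $M_{\alpha_1},\dots,M_{\alpha_p}$, is equivalent to $mod\text-\Lambda'$ for a hereditary algebra $\Lambda'$ of finite type whose valued quiver is exactly $Q(\alpha_\ast)$ (Definition \ref{def: Q(alpha)}). This equivalence identifies $\RR\alpha_\ast$ with the root space $\RR^p$ of $Q(\alpha_\ast)$ by sending $\alpha_j$ to the $j$-th simple root, carries the restricted Euler form to the Euler form of $Q(\alpha_\ast)$, and matches the subobject relation $\beta'\subseteq\beta$ computed inside $\cA b(\alpha_\ast)$ with the subroot relation for $Q(\alpha_\ast)$. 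Comparing the defining inequalities in Definition \ref{def: instead of D(beta,rho)} with the virtual stability theorem (Remark \ref{eq: virtual stability theorem}) applied to $Q(\alpha_\ast)$, this shows that $D_{\alpha_\ast}(\beta)$ is precisely the domain $D(\beta)$ computed in $Q(\alpha_\ast)$, for every $\beta\in\Phi^+(\alpha_\ast)$.

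Next I would transport the geometry. By Proposition \ref{prop: D(beta,rho)}, for an interior point $v$ of $\rho$ the germ of each wall $D(\beta)$ at $v$ equals $D^{\alpha_\ast}(\beta)=\pi_{\alpha_\ast}^{-1}D_{\alpha_\ast}(\beta)$; thus the entire arrangement $\{D(\beta)\}$ near $\rho$ is the $\pi_{\alpha_\ast}$-pullback of the arrangement $\{D_{\alpha_\ast}(\beta)\}$ in $\RR\alpha_\ast$. Since $\ker\pi_{\alpha_\ast}=\,^\perp\RR\alpha_\ast$ is the linear span of the vertices $\undim T_i$ of $\rho$ (Lemma \ref{lem: partial cluster spanning perp Ab(a)}(2)), this product form makes $\pi_{\alpha_\ast}$ restrict to a homeomorphism of the dual cell $E(\rho)$ onto a $p$-cell in $\RR\alpha_\ast$, and of its boundary sphere $Lk(\rho)$ onto a triangulated $(p-1)$-sphere surrounding the origin. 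Normalizing by vector length sends this onto $S^{p-1}\subset\RR\alpha_\ast$, carrying the walls of $L(Q)$ through $\rho$ exactly onto $\bigcup_{\beta\in\Phi^+(\alpha_\ast)}D_{\alpha_\ast}(\beta)\cap S^{p-1}$, which by the first step is the semi-invariant picture $L(Q(\alpha_\ast))$.

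It then remains to pass from this identification of pictures to a simplicial isomorphism. Applying Theorem \ref{thm: spherical semi-invariant picture} to $Q(\alpha_\ast)$, the map $\pi\circ\lambda$ is a homeomorphism $|\Sigma(Q(\alpha_\ast))|\cong S^{p-1}$ whose complementary chambers are the images of the top simplices (clusters) of $\Sigma(Q(\alpha_\ast))$. On the other side the top simplices of $Lk(\rho)$ are the ext-orthogonal completions of $\rho$, i.e. the clusters of $\Lambda$ containing $T$, which are the chambers of $L(Q)$ meeting $\rho$. Because the normalized $\pi_{\alpha_\ast}$ matches the two wall arrangements, it matches their chambers, hence their dual simplicial complexes, giving $\varphi_\rho\colon Lk(\rho)\cong\Sigma(Q(\alpha_\ast))$. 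On a vertex $\gamma$ this sends $\gamma$ to the almost positive root of $Q(\alpha_\ast)$ on the ray through $\pi_{\alpha_\ast}(\gamma)$; for $\gamma\in\Phi^+(\alpha_\ast)$ one has $\pi_{\alpha_\ast}(\gamma)=\gamma$, so these map to the positive-root vertices, and the homeomorphism forces the negative projective roots of $Q$ in $Lk(\rho)$ to account for those of $Q(\alpha_\ast)$. Since a simplicial map between triangulations of $S^{p-1}$ is determined by its vertex values, and each $\varphi_\rho(\gamma)$ is pinned down as the unique vertex on the ray of $\pi_{\alpha_\ast}(\gamma)$, the stated normalization property determines $\varphi_\rho$ uniquely.

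I expect the main obstacle to be the first step: verifying that the restricted Euler form and the internal subobject relation of the wide subcategory reproduce the root data and subroot relation of $Q(\alpha_\ast)$, so that $D_{\alpha_\ast}(\beta)$ really is the semi-invariant domain of $Q(\alpha_\ast)$. The delicate point is that $\beta'\subseteq\beta$ must be read as a subobject inside $\cA b(\alpha_\ast)$ rather than in $mod\text-\Lambda$; this is exactly what the exact-embedding hypothesis on wide subcategories guarantees, and it is where the identification would fail for a merely abelian subcategory. The transversality underlying the homeomorphism $\pi_{\alpha_\ast}|_{E(\rho)}$ is a secondary technical point, but it is essentially handed to us by the product form of the local arrangement in Proposition \ref{prop: D(beta,rho)}.
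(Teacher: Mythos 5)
Your proposal is correct and rests on the same two pillars as the paper's proof: the projection $\pi_{\alpha_\ast}$ and the local description of the walls in Proposition \ref{prop: D(beta,rho)}. The difference is in how the simplicial isomorphism is extracted. The paper works vertex-first: it completes $\gamma\oplus T$ to a full cluster and invokes Lemma \ref{lem: faces of top dim simplices are in D(|c-vector|)} to produce $p-1$ walls $D(\beta_i)$ containing both $\rho$ and $\gamma$, so that $\pi_{\alpha_\ast}(\gamma)$ lies on a one-dimensional stratum of the arrangement in $\RR\alpha_\ast$ and is therefore a positive multiple of an almost positive root; it then characterizes simplices of $Lk(\rho)$ by the condition that a generic positive combination of their vertices misses every wall, and matches this against ext-orthogonality in $Q(\alpha_\ast)$. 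You instead go chamber-first: you identify $\cA b(\alpha_\ast)$ with $mod\text-\Lambda'$ for $Q(\alpha_\ast)$ so that $D_{\alpha_\ast}(\beta)$ becomes the semi-invariant domain of $Q(\alpha_\ast)$ (a step the paper leaves implicit but certainly uses), match the two wall arrangements globally, and dualize the resulting chamber bijection. What your route buys is a cleaner conceptual statement ($Lk(\rho)$ \emph{is} the picture of $Q(\alpha_\ast)$) and an explicit treatment of the subobject-relation issue; what it costs is that you must know up front that the normalized $\pi_{\alpha_\ast}$ is a homeomorphism $|Lk(\rho)|\to S^{p-1}$, which you assert from the product form rather than prove (injectivity on the link needs that $\ker\pi_{\alpha_\ast}$ is exactly the span of the vertices of $\rho$ and that the star of $\rho$ is a neighborhood of its interior). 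The paper avoids needing this topological input by arguing purely combinatorially, and indeed recovers it only afterwards as Remark \ref{rem: naturality of varphi}(b); if you fill in that one step, your argument is complete.
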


\begin{proof}
By definition of $Lk(\rho)$, a root $\gamma$ of $Q$ lies in $Lk(\rho)$ if and only if $M_\gamma\oplus T_1\oplus \cdots\oplus T_{n-p}$ is a partial cluster tilting object. This can be completed to a complete cluster tilting object by adding $p-1$ summands. By Lemma \ref{lem: faces of top dim simplices are in D(|c-vector|)}, we obtain $p-1$ roots $\beta_i$ so that $D(\beta_i)$ contains $\rho$, $\gamma$ and all but one of these new summands. This implies that $v+\epsilon( \gamma-v)$ lies in each of these $D(\beta_i)$ for all small $\epsilon>0$ where $v\in int\,\rho$. So, $\gamma-v\in D^{\alpha_\ast}(\beta)=\pi_{\alpha_\ast}^{-1}D_{\alpha_\ast}(\beta_i)$. Since $\pi_{\alpha_\ast}(v)=0$,  this implies that $\pi_{\alpha_\ast}(\gamma)$ lies in each $D_{\alpha_\ast}(\beta_i)$. So, it is a scalar multiple of an almost positive root in $\Phi(\alpha_\ast)$ which we define to be $\varphi_\rho(\gamma)$.

To see that $\varphi_\rho$ takes simplices to simplices, take a maximal simplex in $Lk(\rho)$ spanned by $p$ vertices $\gamma_1,\cdots,\gamma_p$. Then, for sufficiently small $\epsilon_j>0$, we have that $v+\sum \epsilon_j (\gamma_j-v)$ does not lie in $D(\beta_i)$ for any $\beta_i$ since it lies in the interior of a top dimensional simplex of the cluster complex. This condition characterizes which sets of vertices in $Lk(\rho)$ form a simplex.

By Proposition \ref{prop: D(beta,rho)}, this implies that, when $r_j>0$, $\sum r_j \varphi_\rho(\gamma_j)$ does not lie in $D_{\alpha_\ast}(\beta_i)$ for any $\beta_i\in\Phi^+(\alpha_\ast)$. This is equivalent to the condition that $\varphi_\rho(\gamma_j)$ are ext-orthogonal and therefore form a simplex in $\Sigma(Q(\alpha_\ast))$. So, $\varphi_\rho$ is a simplicial isomorphism.
\end{proof}

\begin{rem}\label{rem: naturality of varphi}
(a) By Proposition \ref{prop: D(beta,rho)}, Corollary \ref{cor: Lk(r)=S(Q(a))} implies that a vertex $\gamma$ of $Lk(\rho)$ lies in $D(\beta)$ for some $\beta\in\Phi^+(\alpha_\ast)$ if and only if $\varphi_\rho(\gamma)$ lies in $D_{\alpha_\ast}(\beta)$.

(b) Since $E(\rho)$ is homeomorphic to the cone on $|Lk(\rho)|\cong|\Sigma(Q(\alpha_\ast))|\cong S^{p-1}$ which is a $p$-disk, this implies that $E(\rho)$ is a $p$-cell.

(c) We also obtain as a consequence the following naturality condition on $\varphi_\rho$. Let $\tau$ be a simplex in $Lk(\rho)$ and $\sigma=\rho\ast\tau$, the \emph{join} of $\rho$ and $\tau$, which in this case is the smallest simplex in $\Sigma(Q)$ containing $\rho$ and $\tau$. Then $Lk(\sigma)\subseteq Lk(\rho)$. Take $\varphi_\sigma:Lk(\sigma)\cong \Sigma(Q(\beta_\ast))$ where $\beta_\ast$ gives the simple objects in $|\sigma|^\perp$. Let $\tau'=\varphi_\rho(\tau)$ and let $Lk'(\tau')$ be the link of $\tau'$ in $\Sigma(Q(\alpha_\ast))$. Then, $\beta_\ast$ also gives the simple objects in the right perpendicular category of $|\tau'|$ in $\cA b(\alpha_\ast)$. So, we get two isomorphisms $Lk(\sigma)\cong \Sigma(Q(\beta_\ast))$. We need to know that they agree. Equivalently, the following diagram commutes.
\[
\xymatrix{
&Lk(\sigma)\ar[d]^{\varphi_\rho|_{Lk(\tau)}}\ar[r]^\subset\ar[dl]_{\varphi_\tau} &
	Lk(\rho)\ar[d]\\
\Sigma(Q(\beta_\ast))&Lk'(\sigma)\ar[l]_(.4){\varphi_{\tau'}}\ar[r]^\subset& 
	\Sigma(Q(\alpha_\ast))
	}
\]
It suffices to show that the triangle commutes. But this follows from Corollary \ref{cor: Lk(r)=S(Q(a))} above since each vertex $\gamma$ of $Lk(\sigma)$ maps by $\varphi_\tau$ to the unique vertex of $\Sigma(Q(\beta_\ast))$ which is proportional to $\pi_{\beta_\ast}(\gamma)$ which comes from $\pi_{\alpha_\ast}(\gamma)\propto\varphi_\rho(\gamma)$ by  Remark \ref{rem: relative D}.
\end{rem}

\begin{cor}
Let $\rho,\rho'$ be two $(n-p-1)$-simplices spanned by the dimension vectors of the components of two partial cluster tilting object $T=T_1\oplus\cdots\oplus T_{n-p}$ and $T'=T_1'\oplus\cdots\oplus T_{n-p}$ in the cluster category of $\Lambda$ so that $\cA b(\alpha_\ast)=|T|^\perp=|T'|^\perp$. Then there is a simplicial isomorphism $\psi_\rho:E(\rho)\cong E(\rho')$ which sends $J(\beta,\rho)$ onto $J(\beta,\rho')$. Furthermore, if $\rho\subseteq\sigma=\rho\ast\tau$ so that $E(\sigma)\subset E(\rho)$, then the isomorphism $\psi_\rho$ restricts to the isomorphism $\psi_\sigma:E(\sigma)\cong E(\sigma')$ where $\sigma'=\rho'\ast\tau$. We also note that $J(\beta,\sigma)=J(\beta,\rho)\cap E(\sigma)$.
\end{cor}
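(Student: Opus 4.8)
The plan is to transport all the structure through the common model $\Sigma(Q(\alpha_\ast))$. By Corollary \ref{cor: Lk(r)=S(Q(a))}, applied once to $\rho$ and once to $\rho'$, we have simplicial isomorphisms $\varphi_\rho\colon Lk(\rho)\cong\Sigma(Q(\alpha_\ast))$ and $\varphi_{\rho'}\colon Lk(\rho')\cong\Sigma(Q(\alpha_\ast))$. The essential point is that the target $\Sigma(Q(\alpha_\ast))$ depends only on $\alpha_\ast$ (Definition \ref{def: Q(alpha)}), and $\alpha_\ast$ is nothing but the set of simple objects of the common wide subcategory $\cA b(\alpha_\ast)=|T|^\perp=|T'|^\perp$; thus both links are identified with the \emph{same} complex. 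Since $E(\rho)$ is the cone, with cone point the barycenter $b_\rho$, on the first barycentric subdivision of $Lk(\rho)$, and likewise $E(\rho')$ on $Lk(\rho')$, I would define $\psi_\rho$ to be the cone on the composite $\varphi_{\rho'}^{-1}\circ\varphi_\rho\colon Lk(\rho)\cong Lk(\rho')$, sending $b_\rho\mapsto b_{\rho'}$. This is a simplicial isomorphism $E(\rho)\cong E(\rho')$ by construction.

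To see that $\psi_\rho$ carries $J(\beta,\rho)$ onto $J(\beta,\rho')$, recall that a simplex of $E(\rho)$ lies in $J(\beta,\rho)$ precisely when each of its vertices is the barycenter $b_\sigma$ of a simplex $\sigma=\rho\ast\tau$ with $\sigma\subseteq D(\beta)$; since $\rho\subseteq D(\beta)$ whenever $\beta\in\Phi^+(\alpha_\ast)$ by Lemma \ref{lem: equivalent conditions on gamma=sum rj aj}, this amounts to the condition that every vertex $\gamma$ of $\tau$ lies in $D(\beta)$. By Remark \ref{rem: naturality of varphi}(a), a vertex $\gamma$ of $Lk(\rho)$ lies in $D(\beta)$ if and only if $\varphi_\rho(\gamma)\in D_{\alpha_\ast}(\beta)$, and the same holds for $\rho'$ and $\varphi_{\rho'}$. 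As $D_{\alpha_\ast}(\beta)\subseteq\RR\alpha_\ast$ depends only on $\alpha_\ast$, the composite $\varphi_{\rho'}^{-1}\circ\varphi_\rho$ matches the vertices of $Lk(\rho)$ lying in $D(\beta)$ with the vertices of $Lk(\rho')$ lying in $D(\beta)$, so $\psi_\rho$ restricts to a bijection $J(\beta,\rho)\cong J(\beta,\rho')$.

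For the restriction statement I would invoke the naturality of the $\varphi$'s recorded in Remark \ref{rem: naturality of varphi}(c). Writing $\tau'=\varphi_\rho(\tau)$, the cell $E(\sigma)$ for $\sigma=\rho\ast\tau$ is the subcone of $E(\rho)$ corresponding, under the cone on $\varphi_\rho$, to the dual cell of $\tau'$ in $\Sigma(Q(\alpha_\ast))$, and the identical description holds for $E(\sigma')\subseteq E(\rho')$ with $\sigma'=\rho'\ast\tau$. Moreover $|\sigma|^\perp=|\sigma'|^\perp=\cA b(\beta_\ast)$ for a common $\beta_\ast$, namely the simple objects of the right perpendicular category of $\tau'$ inside $\cA b(\alpha_\ast)$, so Corollary \ref{cor: Lk(r)=S(Q(a))} supplies $\varphi_\sigma,\varphi_{\sigma'}$ and hence $\psi_\sigma$. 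The commuting triangle of Remark \ref{rem: naturality of varphi}(c) says exactly that $\varphi_\sigma$ and $\varphi_{\sigma'}$ are the restrictions of $\varphi_\rho$ and $\varphi_{\rho'}$ to the relevant sublinks, which forces $\psi_\rho|_{E(\sigma)}=\psi_\sigma$. Finally, the identity $J(\beta,\sigma)=J(\beta,\rho)\cap E(\sigma)$ is immediate from $J(\beta,-)=E(-)\cap D(\beta)$ together with $E(\sigma)\subseteq E(\rho)$.

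I expect the restriction/naturality step to be the main obstacle: one must check that $\tau$, its image $\tau'$, the sub--wide-category $\cA b(\beta_\ast)=|\sigma|^\perp$, and the two cone decompositions all fit together coherently, which is precisely the content of the commutative diagram in Remark \ref{rem: naturality of varphi}(c). By contrast, the construction of $\psi_\rho$ and its compatibility with the sets $J(\beta,-)$ follow quite directly from Corollary \ref{cor: Lk(r)=S(Q(a))} and Remark \ref{rem: naturality of varphi}(a), once one notes that $\Sigma(Q(\alpha_\ast))$ and $D_{\alpha_\ast}(\beta)$ are intrinsic to $\alpha_\ast$ and therefore shared by $\rho$ and $\rho'$.
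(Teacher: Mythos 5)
Your proposal is correct and follows essentially the same route as the paper's proof: both build $\psi_\rho$ as the cone on $\varphi_{\rho'}^{-1}\circ\varphi_\rho$ using the common model $\Sigma(Q(\alpha_\ast))$ from Corollary \ref{cor: Lk(r)=S(Q(a))}, both use Remark \ref{rem: naturality of varphi}(a) to see that $J(\beta,-)$ is the cone on the preimage of $D_{\alpha_\ast}(\beta)$, and both invoke Remark \ref{rem: naturality of varphi}(c) for the restriction to $E(\sigma)$.
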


\begin{proof} 
We use the general fact that $E(\rho)$ is the cone on the first barycentric subdivision of the \emph{link} $Lk(\rho)$ of $\rho$ in $\Sigma(Q)$. By Corollary \ref{cor: Lk(r)=S(Q(a))}, $Lk(\rho),Lk(\rho')$ are both isomorphic to $\Sigma(Q(\alpha_\ast))$. So, $Lk(\rho)\cong Lk(\rho')$ and, therefore, $E(\rho)\cong E(\rho')$. We refer to the elements of $E(\rho)$ corresponding to vertices of $Lk(\rho)$ as the \emph{corners} of the cell $E(\rho)$. (The vertex $\gamma$ of $Lk(\rho)$ corresponds to the barycenter of the $(n-p)$-simplex $\rho\ast\gamma$.)

By Remark \ref{rem: naturality of varphi}(a), the set $J(\beta,\rho)\subseteq E(\rho)$ is the cone on the inverse image of the subsets $D_{\alpha_\ast}(\beta)\subseteq \Sigma(Q(\alpha_\ast))$ under the isomorphism $\partial E(\rho)\cong Lk(\rho)\cong \Sigma(Q(\alpha_\ast))$. So, $\psi_\rho:E(\rho)\to E(\rho')$ must send $J(\beta,\rho)$ to $J(\beta,\rho')$.

By Remark \ref{rem: naturality of varphi}(c), the subset $Lk(\rho\ast\tau)\subseteq Lk(\rho)$ maps to $Lk(\rho'\ast\tau)$ under $\varphi_{\rho'}^{-1}\varphi_\rho$ and the induced map is equal to $\varphi_{\rho'\ast\tau}^{-1}\varphi_{\rho\ast\tau}$. The two maps agree on where they send each vertex of $Lk(\rho\ast\tau)$. Therefore they agree on where they send each corner of $E(\rho\ast\tau)$ under the map $\psi_{\rho\ast\tau}$. So, $\psi_{\rho\ast\tau}$ agrees with $\psi_\rho$.
\end{proof}

\subsection{Construction of the picture space}

\begin{defn}\label{def: picture space}
The \emph{picture space} $X(Q)$ is defined to be the CW complex obtained from $E(Q)$ by identifying $p$-cells $\psi_\rho:E(\rho)\cong E(\rho')$ using the simplicial isomorphisms given by the corollary above. The compatibility of the map $\psi_\rho$ with $\psi_{\sigma}:E(\sigma)\cong E(\sigma')$ that we just proved implies that the identifications on the $p$-cells agrees with the identifications on lower cells. So, $X(Q)$ is a well defined CW complex constructed one cell at a time by induction on dimension of cells.
\end{defn}

\begin{thm}\label{thm: picture space}
The picture space $X(Q)$ is an $n$-dimensional CW complex with one cell of dimension $k$ for every set $\alpha_\ast=\{\alpha_1,\cdots,\alpha_k\}$ of pairwise hom-orthogonal roots in $\Phi^+(Q)$. Denote it $e^k_{\alpha_\ast}$. The closure of the cell $e^k_{\alpha_\ast}$ is a subcomplex of $X(Q)$ isomorphic to $X(Q(\alpha_\ast))$. In particular, $X(Q)$ is the closure of the single cell $e^n_{\epsilon_1,\cdots,\epsilon_n}$ where $\epsilon_i$ are the simple roots in $\Phi^+(Q)$. The cell $e^p_{\beta_\ast}$ is in the boundary of $e^k_{\alpha_\ast}$ if and only if $\beta_\ast\subseteq\Phi^+(\alpha_\ast)$ and $p<k$.
\end{thm}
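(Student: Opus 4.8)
The plan is to read the cell structure of $X(Q)$ directly off the quotient construction of Definition \ref{def: picture space}, and then deduce the closure and boundary statements from the local analysis of dual cells in Corollary \ref{cor: Lk(r)=S(Q(a))} and Remark \ref{rem: naturality of varphi}. First I would enumerate the cells. Before the identifications, the cells of $E(Q)$ are the dual cells $E(\rho)$, one for each simplex $\rho$ of $\Sigma(Q)$, the top $n$-cell being $E(\emptyset)$; by Remark \ref{rem: naturality of varphi}(b) an $(n-p-1)$-simplex $\rho$ spanned by the $n-p$ components $\undim T_i$ of a partial cluster tilting object $T$ has a $p$-dimensional dual cell. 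The construction identifies $E(\rho)$ with $E(\rho')$ exactly when $|T|^\perp=|T'|^\perp$, so the $p$-cells of $X(Q)$ biject with the rank-$p$ finitely generated wide subcategories $\cA b(\alpha_\ast)$. By Lemma \ref{lem: hom-orthogonal pairs of roots}(b) these biject with the sets $\alpha_\ast=\{\alpha_1,\dots,\alpha_p\}$ of pairwise hom-orthogonal positive roots (the simple objects of the subcategory), and Lemma \ref{lem: partial cluster spanning perp Ab(a)}(1) shows each $\alpha_\ast$ is realized by some $\rho$; this produces exactly one cell $e^p_{\alpha_\ast}$ per hom-orthogonal set. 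Since such a set has at most $n$ roots, the unique $n$-element set being the simple roots $\{\epsilon_1,\dots,\epsilon_n\}$ (for which $|T|^\perp=mod\text-\Lambda$ and $\rho=\emptyset$), the complex is $n$-dimensional with a single top cell $e^n_{\epsilon_1,\dots,\epsilon_n}$.

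Next I would prove the boundary criterion. Fix $\rho$ with $|T|^\perp=\cA b(\alpha_\ast)$. Since $E(\rho)$ is the cone on the barycentric subdivision of $Lk(\rho)$, its proper faces are the dual cells $E(\sigma)$ for simplices $\sigma\supsetneq\rho$, and each such $\sigma$ is a join $\sigma=\rho\ast\tau$ with $\tau$ a nonempty simplex of $Lk(\rho)$. Writing $\tau'=\varphi_\rho(\tau)$ under the isomorphism $\varphi_\rho:Lk(\rho)\cong\Sigma(Q(\alpha_\ast))$ of Corollary \ref{cor: Lk(r)=S(Q(a))}, Remark \ref{rem: naturality of varphi}(c) identifies the simple objects $\beta_\ast$ of $|\sigma|^\perp$ with the simples of the right perpendicular category of $|\tau'|$ inside $\cA b(\alpha_\ast)$. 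Hence $\beta_\ast$ is hom-orthogonal, $\beta_\ast\subseteq\Phi^+(\alpha_\ast)$, and the face $E(\sigma)$ represents $e^q_{\beta_\ast}$ with $q=|\beta_\ast|<p$. Conversely, any hom-orthogonal $\beta_\ast\subseteq\Phi^+(\alpha_\ast)$ with $|\beta_\ast|<p$ is the simple set of a rank-$q$ wide subcategory of $\cA b(\alpha_\ast)$, hence (applying Lemma \ref{lem: partial cluster spanning perp Ab(a)}(1) inside $Q(\alpha_\ast)$) the perpendicular of some simplex $\tau'$ of $\Sigma(Q(\alpha_\ast))$; pulling $\tau'$ back by $\varphi_\rho^{-1}$ gives $\tau\subseteq Lk(\rho)$ and $\sigma=\rho\ast\tau\supsetneq\rho$, so $e^q_{\beta_\ast}$ lies in the boundary of $e^p_{\alpha_\ast}$. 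The excluded case $q=p$ forces $\cA b(\beta_\ast)=\cA b(\alpha_\ast)$, i.e. $\beta_\ast=\alpha_\ast$, which is exactly why the condition is $p<k$.

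Finally I would assemble the closure statement. For fixed $\rho$ with $|T|^\perp=\cA b(\alpha_\ast)$, the simplicial isomorphism $\varphi_\rho$ extends over cones to a cellular isomorphism of the closed dual cell $E(\rho)$ with $E(Q(\alpha_\ast))$, carrying the face $E(\rho\ast\tau)$ to the dual cell of $\varphi_\rho(\tau)$ in $\Sigma(Q(\alpha_\ast))$. By the previous paragraph the gluing relation $|\rho\ast\tau_1|^\perp=|\rho\ast\tau_2|^\perp$ used to fold the faces of $E(\rho)$ inside $X(Q)$ corresponds precisely to the relation defining $X(Q(\alpha_\ast))$, and the compatibility of the maps $\psi$ proved in the corollary preceding Definition \ref{def: picture space}, together with Remark \ref{rem: naturality of varphi}(c), shows the two families of identifications agree as maps and not merely as relations. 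Passing to quotients then yields the cellular isomorphism $\overline{e^k_{\alpha_\ast}}\cong X(Q(\alpha_\ast))$; it is a subcomplex of $X(Q)$ since, by the boundary criterion, $\overline{e^k_{\alpha_\ast}}$ is the union of the cells $e^q_{\beta_\ast}$ with $\beta_\ast\subseteq\Phi^+(\alpha_\ast)$. Taking $\alpha_\ast=\{\epsilon_1,\dots,\epsilon_n\}$, so that $\rho=\emptyset$ and $E(\emptyset)=E(Q)$, recovers all of $X(Q)$ as $\overline{e^n_{\epsilon_1,\dots,\epsilon_n}}$. I expect the main obstacle to be exactly this last step: the combinatorial bijection of cells is immediate from $\varphi_\rho$, but one must verify that the perpendicular-category bookkeeping and the gluing maps transport correctly, which is the precise content of Remark \ref{rem: naturality of varphi}(c) and the compatibility of the $\psi_\rho$.
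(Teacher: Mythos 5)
Your proposal is correct and follows essentially the same route as the paper: enumerating cells by the wide subcategories $\cA b(\alpha_\ast)$ (equivalently hom-orthogonal sets), using Corollary \ref{cor: Lk(r)=S(Q(a))} to identify $\partial E(\rho)$ with $sd\,\Sigma(Q(\alpha_\ast))$, and invoking Remark \ref{rem: naturality of varphi} together with the compatibility of the $\psi_\rho$ to match the gluing recipes for $\overline{e^k_{\alpha_\ast}}$ and $X(Q(\alpha_\ast))$. You spell out the boundary criterion and the agreement of identifications in somewhat more detail than the paper's terse argument, but the decomposition and the key lemmas are the same.
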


\begin{proof} The cell $e^k_{\alpha_\ast}$ is the one obtained by identifying all $E(\rho)$ where $\cA b(\alpha_\ast)=|\rho|^\perp$, equivalently $\rho$ is a cluster tilting object inside the cluster category of $\,^\perp\cA b(\alpha_\ast)$. So, the cells of $X(Q)$ are indexed by all such sets $\alpha_\ast$ which are the spanning sets of wide subcategories $\cA b(\alpha_\ast)$. When $\rho\subset\sigma$ then $\sigma=\rho\ast\tau$ and $E(\sigma)\subset E(\rho)$. So, $|\sigma|^\perp=\cA b(\beta_\ast)\subseteq \cA b(\alpha_\ast)$. And conversely ($|\sigma|^\perp\subseteq |\rho|^\perp$ iff $\rho\subseteq \sigma$). But $E(\rho)$ is the cone on $\partial E(\rho)\cong sd\,Lk(\rho)\cong sd\,\Sigma(Q(\alpha_\ast))$ by Corollary \ref{cor: Lk(r)=S(Q(a))}. And, $X(Q(\alpha_\ast))$ is obtained from $sd\,\Sigma(Q(\alpha_\ast))$ by the same recipe as $X(Q)$: 
\[
X(Q(\alpha_\ast))=\coprod_{\tau} E_{\alpha_\ast}(\tau)/\sim
\]
where $E_{\alpha_\ast}(\tau)$ is the cell in $sd\,\Sigma(Q(\alpha_\ast))$ dual to $\tau$. The subscript indicates that we are working in the quiver $Q(\alpha_\ast)$. In the larger quiver, we have $E_{\alpha_\ast}(\tau)=E(\rho\ast\tau)$. Thus the cells of $X(Q(\alpha_\ast))$ correspond to those cells of $X(Q)$ which are identified with $E(\rho\ast\tau)$ for various $\tau$. These are exactly the simplices which contain $\rho$ and the cells are identified in the same way in both cell complexes by Remark \ref{rem: naturality of varphi}(b). 
\end{proof}


\begin{eg}
Continuing with Example \ref{eg: A2 figure} of the quiver $A_2: 1\ot 2$:
\begin{enumerate}
\item[(4)] Since $|\rho_i|^\perp=0$, the new vertices (spots) are all identified to one point $e^0$ in $X(A_2)$. The 1-cells $E(\alpha),E(-\alpha)$ (blue in Figures \ref{Fig:A2 pentagon} and \ref{Fig:XA2}) are identified since $|\alpha|^\perp=|-\alpha|^\perp=\cA b(\gamma)$ and $E(\beta),E(-\beta)$ are also identified since $\beta^\perp=\cA b(\alpha)$. After identifications, these are no longer disks and they are labeled $e_\alpha^1,e_\beta^1$ in Figure \ref{Fig:XA2}. Finally, $\gamma^\perp=\cA b(\beta)$ and the line segment $E(\gamma)$ becomes the loop $e_\gamma^1$. 
\item[(5)] $J(\alpha)=J(\alpha,\emptyset)$ (red in Figures \ref{Fig:A2 pentagon} and \ref{Fig:XA2}) is the cone on the two points $\beta,-\beta$, $J(\beta)$ is the cone on the point $\gamma$ and $J(\gamma)$ is the cone on the points $\alpha,-\alpha$.
\end{enumerate}
\begin{figure}[htbp]
\begin{center}
%
\begin{tikzpicture}[scale=.7]
\draw[color=blue] (6.9,1) circle[radius=3pt] (-4.1,1) circle[radius=3pt];
\draw[color=red] (-2,-1) circle[radius=3pt];
\draw[color=blue] (-6,0.5) node[left]{$e_\alpha^1$};
\draw[color=blue] (7,-1) node[right]{$e_\alpha^1$};
\draw (-3.2,-1) node[above]{$e_\beta^1$};
\draw[color=red] (-.8,1.6) node[left]{$J(\alpha)$};
\draw[very thick, color=blue] (6,0) ellipse[x radius=1cm,y radius=2.5cm]; 
\begin{scope}
\clip (-5,-3) rectangle (-7,3);
	\draw[very thick, color=blue] (-5,0) ellipse[x radius=1cm,y radius=2.5cm];
\end{scope}
\begin{scope}
\clip (-5,-3) rectangle (-3,3);
	\draw[thin, color=blue] (-5,0) ellipse[x radius=1cm,y radius=2.5cm];
\end{scope}
\draw[color=blue] (-4,.8) node[left]{$\alpha$};
\draw[color=blue] (7,1) node[right]{$-\alpha$};
\draw[thick] (-5.9,-1) .. controls (-4,2) and (-3,2.5) .. (-2,2.5);
\draw[thin] (-2,2.5) .. controls (-1,2.5) and (0,2)..(.9,1);
\draw[thick] (-5.9,-1) .. controls (-4.5,-2) and (-4,-2.5) .. (-3,-2.5);
\draw[thin] (-3,-2.5) ..controls (-2,-2.5) and (-1,-2)..(.9,1);
\begin{scope}[xshift=4cm] 
\begin{scope}
\draw[thick,color=red] (-6,-1)..controls (-5.5,1)and (-4.5,2.5)..(-4,2.5);
\draw[thick,color=red] (-6,-1)..controls (-5.5,-1.5)and (-4.5,-2.5)..(-4,-2.5);
\end{scope}
\begin{scope}
\clip (-4,-3) rectangle (-2,3);
	\draw[thin, color=red] (-4,0) ellipse[x radius=1cm,y radius=2.5cm];
\end{scope}
\end{scope}
\draw (-5,2.5)--(6,2.5);
\draw (-5,-2.5)--(6,-2.5);
\begin{scope}[xshift=.1cm,yshift=.7cm]
\draw[ thick] (-6,-1.7)--(5,-1.7);
\draw[fill] (-6,-1.7) circle[radius=3pt]node[left]{$e^0$};
\draw[fill] (5,-1.7) circle[radius=3pt]node[right]{$e^0$};
\end{scope}
\draw[color=red] (-2,-.7) node[right]{$\beta$};
\draw[color=red] (-2,-1.3) node[left]{$-\beta$};
\draw[thin] (-4.1,1)--(5.1,1);
\draw[thick] (5.1,1)--(6.9,1);
\draw (.9,1) node{$\emptyset$};
%
\begin{scope}[xshift=5.1cm,yshift=-1cm]
\draw[thick,fill,color=gray!5!white] (0,0) .. controls (-2,4) and (-4,2) .. (0,0);
\draw[thick] (0,0) .. controls (-2,4) and (-4,2) .. (0,0);
\end{scope}
\draw[thick] (2.8,1)--(3.65,1);
\draw[thick] (2,2.5) .. controls (2.2,2.5) and (2.5,2.3)..(2.9,1.25);
\draw (2.9,1.28) circle[radius=3pt] ;
\draw[thin] (2,2.5)..controls (1.8,2.5) and (1.6,2.3) ..(.9,1);
\draw[thin] (.9,1)..controls (2.3,2.3) and (2.8,2.5)..(3.2,2.5);
\draw[thick] (3.2,2.5)..controls (3.7,2.5) and (4.2,2.3)..(5.1,-1);
\draw[thick] (5.1,-1) .. controls (4.5,-2) and (4,-2.5)..(3.5,-2.5);
\draw[thin](3.5,-2.5)..controls (3,-2.5) and (2.5,-2.3).. (.9,1);
\draw[thin] (.9,1)..controls (1.1,2.3) and (1.2,2.5)..(1.3,2.5);
\draw[thick] (1.3,2.5)..controls (1.7,2.5)and (2.5,-.8)..(5.1,-1);
\draw (2.8,1.5)node[right]{$\gamma$};
\draw (3.7,.4)node{\tiny$e_\gamma^1$};
\end{tikzpicture}
\caption{The space $X(A_2)$ is a torus with one boundary labeled $e(\gamma)$. It is given by pasting together the two blue end circles of this cylinder which has one disk cut out. As CW-complex, $X(A_2)=e^0\cup e_\alpha^1\cup e_\beta^1\cup e_\gamma^1\cup e_\emptyset^2$.}
\label{Fig:XA2}
\end{center}
\end{figure}
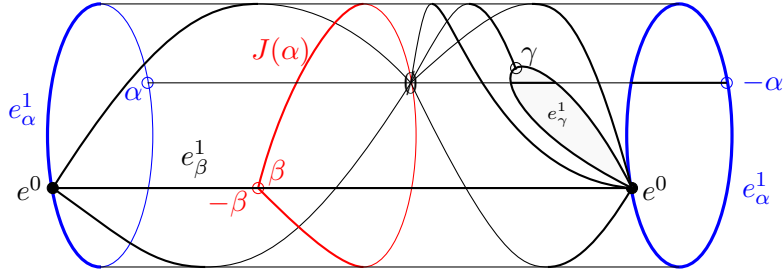
\end{eg}

We will examine in more detail the structure of the space $X(Q)$ and $J(\beta)\subset X(Q)$ one cell at a time by induction on dimension. We use the notation $X(Q)^k$ for the $k$-skeleton of $X(Q)$ and $J(\beta)^{k-1}=J(\beta)\cap X(Q)^k$.

The cell complex $X(Q)$ has a single 0-cell (vertex) $e^0$. It has a 1-cell $e_\beta^1$ for every positive root $\beta\in\Phi^+(Q)$. The endpoints of each 1-cell are attached to the unique 0-cell. This gives a 1-dimensional CW complex $X(Q)^1$ whose fundamental group is the free group with generators $x(\beta)$ where $\beta\in \Phi^+(Q)$. The generator $x(\beta)$ is represented by the cell $e^1_\beta$. 

Before attaching more cells to $X(Q)^1$, we give a recursive description of the sets $J(\beta)^k$ in terms of the attaching maps of the cells.

\begin{prop}\label{def: J(beta)} 
The $0$-dimensional subset $J(\beta)^0\subset X(Q)^1$ consists of the center point of the cell $e^1_\beta$. Given $J(\beta)^{k-1}\subset X(\beta)^k$ for $k\ge1$ and attaching maps $\eta_i:S^{k}=\partial D^{k+1}\to X(Q)^{k}$, the set $J(\beta)^{k}$ is the union of $J(\beta)^{k-1}$ and certain subsets of each cell as follows. For each $(k+1)$-cell $e_i^{k+1}$, $J(\beta)^k\cap e_i^{k+1}$ is the cone of the inverse image of $J(\beta)^{k-1}$ under the attaching map $\eta_i:S^k\to X(Q)^k$, assuming the image of $\eta_i$ meets $J(\beta)^{k-1}$. Otherwise $J(\beta)^k\cap e_i^{k+1}$ is empty.
\end{prop}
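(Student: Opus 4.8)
The plan is to induct on $k$, exploiting that each cell of $X(Q)$ is the image of a dual cell $E(\rho)$, that $E(\rho)$ is a cone on its boundary, and that $J(\beta)$ restricts to a cone on each such cell. The base case $k=0$ I would settle by a direct rank-one computation. By Theorem \ref{thm: picture space} each $1$-cell $e^1_\gamma$ is the image of a dual cell $E(\rho)$ with $|\rho|^\perp=\cA b(\gamma)$, so $Q(\gamma)$ is a single vertex and, by Corollary \ref{cor: Lk(r)=S(Q(a))}, $\partial E(\rho)\cong\Sigma(Q(\gamma))\cong S^0$. Lemma \ref{lem: equivalent conditions on gamma=sum rj aj} shows $J(\beta,\rho)=E(\rho)\cap D(\beta)$ is nonempty only for $\beta\in\Phi^+(\gamma)=\{\gamma\}$, and in $\RR\gamma\cong\RR$ the set $D_\gamma(\gamma)=\{x:\<x,\gamma\>=0\}$ is just the origin, whose inverse image in $S^0$ is empty. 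Hence $J(\gamma,\rho)$ is the cone on the empty set, i.e. the center point of the cell, giving $J(\beta)^0$ = center of $e^1_\beta$.

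For the inductive step ($k\ge1$) I fix a $(k+1)$-cell $e^{k+1}_{\alpha_\ast}$, realized as the image of $E(\rho)$ for any $\rho$ with $|\rho|^\perp=\cA b(\alpha_\ast)$. Since $E(\rho)$ is the cone on $\partial E(\rho)\cong sd\,\Sigma(Q(\alpha_\ast))\cong S^k$ with cone point the barycenter of $\rho$ (the center of the cell), and since the Corollary preceding Definition \ref{def: picture space} identifies $J(\beta,\rho)$ as the cone on the inverse image of $D_{\alpha_\ast}(\beta)$ under $\partial E(\rho)\cong\Sigma(Q(\alpha_\ast))$, the set $J(\beta,\rho)$ is exactly the cone, with that same cone point, on its own boundary $J(\beta,\rho)\cap\partial E(\rho)$. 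Identifying $\partial E(\rho)$ with the attaching sphere $S^k$ and $\eta_i$ with the restriction of the quotient map, the assertion of the proposition reduces to the single identity
\[
J(\beta,\rho)\cap\partial E(\rho)=\eta_i^{-1}\big(J(\beta)^{k-1}\big),
\]
for then the image of $J(\beta,\rho)$ in the cell is precisely the cone on $\eta_i^{-1}(J(\beta)^{k-1})$.

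The main obstacle is verifying this boundary identity, i.e. that the cellular identifications on $\partial E(\rho)$ respect membership in $D(\beta)$. I would argue as follows. A point $z\in\partial E(\rho)$ lies in some face $E(\sigma)$ with $\sigma=\rho\ast\tau\supsetneq\rho$, and its image $\eta_i(z)$ lies in the cell $e^{k'}_{\beta_\ast}$, where $\beta_\ast$ gives the simple objects of $|\sigma|^\perp\subseteq\cA b(\alpha_\ast)$. The Corollary preceding Definition \ref{def: picture space} supplies both the relation $J(\beta,\sigma)=J(\beta,\rho)\cap E(\sigma)$ and the fact that the isomorphisms $\psi_\rho$ carry $J(\beta,\rho)$ onto $J(\beta,\rho')$; together these show that $J(\beta)\cap e^{k'}_{\beta_\ast}$ is the common image of the sets $J(\beta,\sigma)$. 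Consequently $z\in J(\beta,\rho)$ iff $z\in J(\beta,\sigma)=J(\beta,\rho)\cap E(\sigma)$ iff $\eta_i(z)\in J(\beta)\cap e^{k'}_{\beta_\ast}\subseteq J(\beta)^{k-1}$. A short secondary induction on the dimension of the face $E(\sigma)$ (so that $\eta_i$ is a homeomorphism on each open cell and the boundary case is already disposed of) makes this precise and yields the displayed equality.

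Finally I would check the dichotomy. If $\beta\notin\Phi^+(\alpha_\ast)$ then Lemma \ref{lem: equivalent conditions on gamma=sum rj aj} gives $\rho\not\subseteq D(\beta)$, so $J(\beta,\rho)=E(\rho)\cap D(\beta)=\emptyset$ and $J(\beta)^k\cap e^{k+1}_{\alpha_\ast}=\emptyset$, while $\eta_i$ misses $J(\beta)^{k-1}$. If instead $\beta\in\Phi^+(\alpha_\ast)$, then $D_{\alpha_\ast}(\beta)$ is a codimension-one cone through the origin in $\RR\alpha_\ast\cong\RR^{k+1}$, which for $k\ge1$ meets the unit sphere; hence $J(\beta,\rho)\cap\partial E(\rho)\neq\emptyset$ and the image of $\eta_i$ genuinely meets $J(\beta)^{k-1}$. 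Thus the conditions ``$\eta_i$ meets $J(\beta)^{k-1}$'' and ``$\beta\in\Phi^+(\alpha_\ast)$'' coincide, matching exactly the hypothesis stated in the proposition and completing the induction.
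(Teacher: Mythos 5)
Your proof is correct and follows exactly the route the paper intends: the paper states this proposition without proof, treating it as an immediate reformulation of the preceding corollary (that $J(\beta,\rho)$ is the cone on its trace in $\partial E(\rho)\cong\Sigma(Q(\alpha_\ast))$, that $J(\beta,\sigma)=J(\beta,\rho)\cap E(\sigma)$, and that the identifications $\psi_\rho$ preserve the $J(\beta,\rho)$), and your induction is precisely the careful assembly of those facts. Your base-case computation ($D_\gamma(\gamma)=\{0\}$, so the cone is on the empty set) and your verification that ``$\eta_i$ meets $J(\beta)^{k-1}$'' is equivalent to ``$\beta\in\Phi^+(\alpha_\ast)$'' are both sound.
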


Continuing with the construction of $X(Q)$, we take one 2-cell $e^2_{\alpha,\beta}$ for every unordered pair of hom-orthogonal roots $\alpha,\beta$. This 2-cell is attached to the 1-skeleton $X(Q)^1$ using the relation (2) in Theorem \ref{thm: presentation of the picture group determined by the spherical semi-invariant picture} corresponding to the pair $\{\alpha,\beta\}$. In Example \ref{eg: examples of relations}, Case (1), this gives a torus $S^1\times S^1$ with $J(\alpha)=S^1\times\ast$ and $J(\beta)=\ast\times S^1$. In Case (2) we get a torus with one boundary component given by the 1-cell $e^1_{\alpha+\beta}$. Cases (3) and (4) also give closed subsets of tori. To be more precise, we define each 2-cell to be a convex polygon with $m+2$ sides where $m$ is the number of elements of $\Phi^+(\alpha,\beta)$ ($m=2,3,4,6$ in Cases (1), (2), (3), (4), respectively). The attaching map sends these $m+2$ sides to the 1-cells corresponding to the letters in the relation (2) in Theorem \ref{thm: presentation of the picture group determined by the spherical semi-invariant picture}.

For all $k$, the cell complex $X(Q)$ will have one $k$-cell $e^k_{\alpha_\ast}$ for every unordered set of $k$ pairwise hom-orthogonal roots $\alpha_\ast=\{\alpha_1,\alpha_2,\cdots,\alpha_k\}$.

\begin{summ}In Section \ref{sec3} we constructed the picture space $X(Q)$. It is a quotient space of the cone on the first barycentric subdivision of the cluster complex $\Sigma(Q)$ under certain identifications. The space $C(sd\,\Sigma(Q))$ is decomposed as a union of cells $E(\rho)$ for all simplices $\rho$ in $\Sigma(Q)$ (plus one big cell $E(\emptyset):=C(sd\,\Sigma(Q))$) and $E(\rho)$ is identified with $E(\rho')$ if and only if the right perpendicular categories of $|\rho|,|\rho'|$ agree.
\end{summ}


\section{Homology of $X(A_n)$}\label{sec4}

\vs2

In this section we compute the homology of $X(A_n)$ with {any} orientation. The first steps in the computation work in general. The cellular chain complex of $X(Q)$ has a weight filtration {for any modulated quiver $Q$ of finite representation type} and the homology of the associated graded complex is equal to the homology of the actual complex in type $A_n$ by Corollary \ref{cor:homology is given by basic ss sets}. We also show that the integral homology of $X(A_n)$ has no torsion. So the cohomology is free abelian with the same rank. The cup product structure will be determined in Section \ref{sec5}.

\subsection{Cellular chain complex of $X(Q)$}

First we recall the basic construction of the cellular chain complex $C_\ast(X)$ of a CW-complex $X$. For more details see \cite{Hatcher}. In degree $k$, $C_k(X)$ is the free abelian group generated by the set of oriented $k$-cells of $X$ modulo the relation that $e'=-e$ if $e'$ is the cell $e$ with orientation reversed. A standard notation for $k$-cells is: $e^k_\beta$ where $\beta$ is an element of some indexing set. Recall that each $k$-cell $e_\beta^k$ is the image of a $k$-disk $E(\beta)\cong D^k$ under a continuous mapping $E(\beta)\to X$ called the \emph{characteristic map} of $e_\beta^k$. The restriction of this map to $S_\beta^{k-1}=\partial E(\beta)$ has image in $X^{k-1}$ and is called the \emph{attaching map} of the cell $e_\beta^k$ and denoted by $\eta_\beta:S_\beta^{k-1}\to X^{k-1}$.

The boundary map $d:C_k(X)\to C_{k-1}(X)$ is given by $de^k_\beta=\sum_\alpha n_{\alpha\beta}e^{k-1}_\alpha$ where $n_{\alpha\beta}\in\ZZ$ is called the \emph{incidence number} of $e^k_\beta$ and $e^{k-1}_\alpha$. This number is defined to be the degree of the composite mapping
\[
	S_\beta^{k-1}\xrightarrow{\eta_\beta}X^{k-1}\to X^{k-1}/X^{k-2}\to S_\alpha^{k-1}
\]
where $S^{k-1}_\alpha$ is the $(k-1)$-sphere $S^{k-1}_\alpha=E(\alpha)/S_\alpha^{k-2}$ and the mapping $X^{k-1}\to S_\alpha^{k-1}$ is the unique map having the property that the composition $E(\alpha)\to X^{k-1}\to E(\alpha)/S_\alpha^{k-2}$ is the quotient map and $E(\alpha')\to X^{k-1}\to E(\alpha)/S_\alpha^{k-2}$ has image one point for any $\alpha'\neq\alpha$. Another description of the same mapping: when $X^{k-2}$ is collapsed to one point we get $X^{k-1}/X^{k-2}=\vee S^{k-1}_\alpha$, a bouquet of $k-1$-spheres, and we project to the $S^{k-1}_\alpha$ summand.

In good cases, such as in the example of the picture space $X=X(Q)$, the boundary of each $k$-cell $e^k_\beta$ is a union of $k-1$-cells and the incidence number is just the number of times that isomorphic copies of the $k-1$-disk $E(\alpha)$ occurs in the boundary sphere of the disk $E(\beta)$. We will show (Proposition \ref{prop:formula for d<beta>}) that these incidence numbers are $0$ or $\pm1$.

\subsubsection{Description of $C_k(Q,\ZZ)$}
Recall that the $k$-cells of $X(Q)$ are indexed by all sets of $k$ pairwise hom-orthogonal positive roots $\beta_i$ of $Q$. Let $[\beta_1,\cdots,\beta_k]\in C_k(Q,\ZZ)$ denote the corresponding free generator of the cellular chain complex $C_\ast(Q,\ZZ)$ of $X(Q)$. We understand the order of the $\beta_i$ to be given, up to even permutation. Under an odd permutation, the sign changes. Thus:
\[
	[\beta_{\sigma(1)},\cdots,\beta_{\sigma(k)}]=\sgn(\sigma)[\beta_1,\cdots,\beta_k]
\]
This generator has \emph{degree} $k$. We often call this generator (with either sign) a \emph{cell} and denote it by $[\beta_\ast]$. We define the \emph{weight} of the cell $[\beta_\ast]=[\beta_1,\cdots,\beta_k]$ and the \emph{weight} of the set $\beta_\ast=\{\beta_1,\cdots,\beta_k\}$ to be the sum $wt[\beta_\ast]=wt(\beta_\ast):=\sum \beta_i\in\NN^n$ of the vectors $\beta_i$. Given two weights $w,w'$ we say that $w\le w'$ if $w_i\le w_i'$ for $i=1,\cdots,n$ and $w<w'$ if $w\le w'$ and $w\neq w'$. Note that if $w<w'$ then $w$ comes before $w'$ in lexicographic order.

Given the set $\beta_\ast=\{\beta_1,\cdots,\beta_k\}$, recall that $\cA b(\beta_\ast)$ is the abelian subcategory of $mod\text-\Lambda$ {whose simple objects are the modules $M_{\beta_i}$} and $\Phi^+(\beta_\ast)$ is the set of all positive roots which can be written as nonnegative integer linear combinations of the roots $\beta_i$. {Thus $\Phi^+(\beta_\ast)$ is the set of dimension vectors of indecomposable objects in $\cA b(\beta_\ast)$.}

For each $\alpha\in \Phi^+(\beta_\ast)$, recall (Definition \ref{def: instead of D(beta,rho)}) that $D_{\beta_\ast}(\alpha)=\{x=\sum v_i\beta_i\in \RR\beta_\ast\,|\, \left<x,\alpha\right>=0\text{ and }\left<x,\alpha'\right>\le0\ \forall \alpha'\subseteq \alpha\text{ s.t. } \alpha'\in\Phi^+(\beta_\ast)\}$. {In particular, a positive root $\gamma\in \Phi^+(\beta_\ast)$ lies in $D_{\beta_\ast}(\alpha)$ if and only if $M_\alpha\in M_\gamma^\perp$.} Then, by Theorem \ref{thm: spherical semi-invariant picture}, the union of the $D_{\beta_\ast}(\alpha)$ intersected with the unit sphere $S^{k-1}$ in $\RR\beta_\ast$ is the spherical semi-invariant picture for the hereditary abelian category $\cA b(\beta_\ast)$. By Theorem \ref{thm: picture space}, the boundary of the cell $e_{[\beta_\ast]}^k$ is the union of cells $e_{[\alpha_\ast]}^p$ over all hom-orthogonal subsets of $\Phi^+(\beta_\ast)$ having less then $k$ elements. The following lemma gives the list of all such cells for $p=k-1$.

\begin{lem}\label{lem: first weight lemma}
Let $\beta_\ast=\{\beta_1,\cdots,\beta_k\}$ be a set of hom-orthogonal positive roots. Then there is a 1-1 correspondence between positive roots $\gamma\in \Phi^+(\beta_\ast)$ and $k-1$ element hom-orthogonal subsets $\alpha_\ast=\{\alpha_1,\cdots,\alpha_{k-1}\}$ of $\Phi^+(\beta_\ast)$. The correspondence is given by {$M_\gamma^\perp\cap \cA b(\beta_\ast)=\cA b(\alpha_\ast)$}. Furthermore, $\gamma$ is in the interior of $D_{\beta_\ast}(\alpha_i)$ for all $i$. Finally, $wt(\alpha_\ast)\ge wt(\beta_\ast)$ if and only if $M_\gamma$ is not a projective object in the abelian category $\cA b(\beta_\ast)$.
\end{lem}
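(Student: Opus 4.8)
The plan is to reduce every assertion to the representation theory of the wide subcategory $\cA b(\beta_\ast)$, which is equivalent to the module category of a hereditary algebra of finite representation type with $k$ simple objects $M_{\beta_1},\dots,M_{\beta_k}$. Throughout I write $E=M_\gamma$ and $E^\perp=M_\gamma^\perp\cap\cA b(\beta_\ast)$, and I use that over a representation-finite hereditary algebra every indecomposable is exceptional (a rigid brick). For the bijection (assertions one and two): since $E$ is exceptional, $E^\perp$ is a wide subcategory of rank $k-1$, and by Lemma~\ref{lem: which D(beta) occur twice}(b) its simple objects form a $(k-1)$-element hom-orthogonal set $\alpha_\ast\subseteq\Phi^+(\beta_\ast)$; this is the map $\gamma\mapsto\alpha_\ast$. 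To invert it I would use that perpendicular categories in a hereditary category are rank-additive, so ${}^\perp\cA b(\alpha_\ast)$ has rank $1$, and a rank-one wide subcategory in finite type is $\add$ of a single brick $M_\gamma$. The identity ${}^\perp(E^\perp)=\add(E)$ (valid because $E$ is a brick lying in ${}^\perp(E^\perp)$) gives both injectivity and the equality $M_\gamma^\perp\cap\cA b(\beta_\ast)=\cA b(\alpha_\ast)$: the inclusion $\cA b(\alpha_\ast)\subseteq E^\perp$ of rank-$(k-1)$ wide subcategories becomes, after applying ${}^\perp(-)$, a containment of rank-one categories, hence an equality.

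Assertion three is the relative form of Lemma~\ref{lem: which D(beta) occur twice}(a): regard $M_\gamma$ as a partial cluster tilting object in $\cA b(\beta_\ast)$ spanning the $0$-simplex $\rho=\{\undim M_\gamma\}$, whose right perpendicular category has simples $M_{\alpha_1},\dots,M_{\alpha_{k-1}}$. Part (a) of that lemma then places $\undim M_\gamma$ in the interior of each $D_{\beta_\ast}(\alpha_i)$. Mere membership is the displayed remark preceding the lemma; what upgrades it to the interior is precisely that the $\alpha_i$ are the \emph{simple} objects of $M_\gamma^\perp$ rather than proper positive combinations.

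The weight statement is the heart, and I would split it by projectivity. First note that $M_\gamma$ is projective in $\cA b(\beta_\ast)$ iff $\Ext^1(M_\gamma,-)$ vanishes there, iff $M_\gamma\cong P_{i_0}$ for some vertex $i_0$; and that the comparison $wt(\alpha_\ast)\ge wt(\beta_\ast)$ in $\NN^n$ may be tested in the \emph{internal} $\beta$-coordinates, since a nonnegative combination of the vectors $\beta_i\in\NN^n$ is automatically nonnegative in $\NN^n$. In internal coordinates $wt(\beta_\ast)=(1,\dots,1)$, so the inequality says that every simple $M_{\beta_v}$ occurs as a composition factor of $\bigoplus_j M_{\alpha_j}$. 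In the projective case $M_\gamma=P_{i_0}$ one has $P_{i_0}^\perp=\{X:X_{i_0}=0\}=\mathrm{mod}\,k\big(Q(\beta_\ast)\setminus i_0\big)$, whose simple objects are exactly $\{\beta_i:i\ne i_0\}$; hence $wt(\alpha_\ast)=wt(\beta_\ast)-\beta_{i_0}$, which fails to be $\ge wt(\beta_\ast)$ because $\beta_{i_0}\in\NN^n$ is nonzero.

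It remains to show that $M_\gamma$ \emph{not} projective forces $wt(\alpha_\ast)\ge wt(\beta_\ast)$, and this is the step I expect to be the main obstacle. I would argue the contrapositive. If some simple $M_{\beta_v}$ is a composition factor of no $M_{\alpha_j}$, then, since every object of $E^\perp$ is an iterated extension of its simples, $M_{\beta_v}$ is a composition factor of no object of $E^\perp$, i.e.\ $E^\perp\subseteq\{X:X_v=0\}=P_v^\perp$. These are rank-$(k-1)$ wide subcategories, so applying ${}^\perp(-)$ gives $\add(P_v)\subseteq\add(E)$ with $P_v,E$ indecomposable, whence $E\cong P_v$ is projective. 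Thus non-projectivity of $M_\gamma$ yields full internal support of $\{\alpha_j\}$, which is the desired inequality. The points that need careful verification are the rank-additivity of perpendicular categories in the finite-type hereditary setting and the identification $\dim\Hom(P_v,X)=[X:M_{\beta_v}]=X_v$, both of which are standard.
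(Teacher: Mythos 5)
Your proof is correct and follows essentially the same route as the paper's: the bijection via perpendicular-category calculus (${}^\perp(E^\perp)=\add E$ and rank additivity), the interior claim by citing Lemma \ref{lem: which D(beta) occur twice}(a), and the weight statement by showing that a missing internal composition factor forces $E^\perp\subseteq P_v^\perp$ and hence $M_\gamma\cong P_v$, which is exactly the paper's ``non-sincere implies $\gamma=\pi_i$'' step. You supply more detail than the paper's terse proof (notably handling the projective direction explicitly in $\NN^n$ so that only the easy direction of the internal-versus-ambient weight comparison is needed), but the underlying argument is the same.
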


\begin{proof}
The formula {$M_\gamma^\perp\cap \cA b(\beta_\ast)=\cA b(\alpha_\ast)$} gives the 1-1 correspondence. Assume for simplicity of notation that $k=n$ and $\beta_i$ are simple roots. Then $wt(\beta_\ast)=(1,1,\cdots,1)$ and $wt(\alpha_\ast)\ge wt(\beta_\ast)$ if and only if $\sum \alpha_j$ is sincere, i.e., there is no index $i$ so that the $i$-th coordinate of each $\alpha_j$ is zero. But, if this happens then the $i$-th projective root $\pi_i$ is left perpendicular to all $\alpha_j$ which implies $\gamma=\pi_i$. So, the last statement holds. The statement that $\gamma$ lies in the interior of each $D_{\beta_\ast}(\alpha_i)$ was already shown in Lemma \ref{lem: which D(beta) occur twice}.
\end{proof}

\subsection{Weight filtration of $C_\ast(Q;\ZZ)$ for Dynkin quivers} We will now determine the incidence numbers $n_{[\alpha_\ast][\beta_\ast]}$.

\begin{prop}\label{prop:formula for d<beta>}
The boundary of $[\beta_\ast]=[\beta_1,\cdots,\beta_k]$ in the chain complex $C_\ast(Q;\ZZ)$ is given by
\[
	d[\beta_\ast]=\sum n_{[\alpha_\ast][\beta_\ast]} [\alpha_\ast]
\]
where the sum is over all $\alpha_\ast$ which are hom-orthogonal subsets of $\Phi^+(\beta_\ast)$ having $k-1$ elements and with coefficient $n_{[\alpha_\ast][\beta_\ast]}=\pm1$ or $0$, where either
\begin{enumerate}
\item $wt(\alpha_\ast)=wt(\beta_\ast)$ in which case one of the roots $\alpha_i$ is equal to the sum of two of the roots $\beta_j$ and the remaining $\alpha$'s are equal to the remaining $\beta$'s, or
\item 
$wt(\alpha_\ast)>wt(\beta_\ast)$, or
\item $wt(\alpha_\ast)\not \ge wt(\beta_\ast)$ in which case $n_{[\alpha_\ast][\beta_\ast]}=0$.
\end{enumerate}
Furthermore, in cases (1) and (2), $n_{[\alpha_\ast][\beta_\ast]}=\pm1$ is the sign of the change of basis matrix from the basis $[\beta_1,\cdots,\beta_k]$ to the basis $[\alpha_1,\cdots,\alpha_{k-1},\gamma]$, each ordered up to even permutation, where $\gamma$ is the unique positive root so the {$M_\gamma^\perp\cap \cA b(\beta_\ast)=\cA b(\alpha_\ast)$}.
\end{prop}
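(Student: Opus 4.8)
\emph{Reduction and the covering cells.} My plan is to compute each incidence number as a signed count of dual cells in the attaching sphere, and to identify that sign with a change-of-basis determinant. First I would use Theorem \ref{thm: picture space}: the closure of $e^k_{\beta_\ast}$ is $X(Q(\beta_\ast))$ with $e^k_{\beta_\ast}$ as its unique top cell, so I may assume $k=n$, that the $\beta_i$ are the simple roots of $Q'=Q(\beta_\ast)$, and that $\cA b(\beta_\ast)$ is the whole module category. Then the attaching map of the top cell is the restriction to $\partial E(\emptyset)=|\Sigma(Q')|\cong S^{n-1}$ of the quotient map $E(Q')\to X(Q')$. By the construction of $X(Q')$, the open cell $e^{n-1}_{\alpha_\ast}$ is the common image of the open dual cells $E(\rho)^\circ$ over all vertices $\rho$ of $\Sigma(Q')$ with $|\rho|^\perp=\cA b(\alpha_\ast)$, each such $E(\rho)$ mapping homeomorphically onto $e^{n-1}_{\alpha_\ast}$. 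Hence $n_{[\alpha_\ast][\beta_\ast]}$ is the sum over these vertices $\rho$ of local degrees, each $\pm1$.

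\emph{Counting the vertices and the weight dichotomy.} Next I would show there are at most two such vertices. Since $\cA b(\alpha_\ast)$ has rank $n-1$, the category $\cW={}^\perp\cA b(\alpha_\ast)$ is a rank-one wide subcategory with a single indecomposable $M_\gamma$, where $\gamma$ is the positive root of Lemma \ref{lem: first weight lemma} with $M_\gamma^\perp=\cA b(\alpha_\ast)$. Any vertex $\rho$ with $|\rho|^\perp=\cA b(\alpha_\ast)$ satisfies $M_{|\rho|}\in{}^\perp(M_{|\rho|}^\perp)=\cW$, forcing $M_{|\rho|}=M_\gamma$ and $|\rho|=\gamma$. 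Thus $\rho=\gamma$ always occurs, while the negative projective root $\rho=-\gamma$ occurs if and only if $M_\gamma$ is projective in $\cA b(\beta_\ast)$. By Lemma \ref{lem: first weight lemma} this is exactly the dichotomy $wt(\alpha_\ast)\not\ge wt(\beta_\ast)$ (one covering cell would be missing) versus $wt(\alpha_\ast)\ge wt(\beta_\ast)$, which separates case (3) from cases (1)--(2).

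\emph{The sign.} The crux is a single uniform computation: the local degree of the attaching map on $E(\rho)$, where $\rho$ has dimension vector $\delta$ (equal to $\gamma$ or $-\gamma$), equals $\sgn\det$ of the change of basis from $[\beta_1,\dots,\beta_n]$ to $[\alpha_1,\dots,\alpha_{n-1},\delta]$. To obtain this I use the projection $\pi_{\alpha_\ast}\colon\RR^n\to\RR\alpha_\ast$ of Lemma \ref{lem: partial cluster spanning perp Ab(a)}, whose kernel is the line $\RR\gamma$ and which by Figure \ref{fig: dual cell and projection} carries $E(\rho)$ homeomorphically onto the model cell for $e^{n-1}_{\alpha_\ast}$; thus the local degree is the orientation sign of $\pi_{\alpha_\ast}|_{E(\rho)}$. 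Orienting $E(\emptyset)\cong\RR^n$ by $[\beta_\ast]$ and $\RR\alpha_\ast$ by $[\alpha_\ast]$, the barycenter of the vertex $\rho$ sits at the direction $\delta/\|\delta\|$, so the outward radial normal along $E(\rho)$ has positive $\delta$-component; comparing the induced boundary orientation of $E(\rho)$ with $[\alpha_\ast]$ across the kernel line $\RR\delta$ yields precisely $\sgn\det([\beta_\ast]\to[\alpha_\ast,\delta])$. Feeding in the two cases: when $M_\gamma$ is not projective only $\delta=\gamma$ contributes, giving $n_{[\alpha_\ast][\beta_\ast]}=\sgn\det([\beta_\ast]\to[\alpha_\ast,\gamma])$ as asserted in (1)--(2); when $M_\gamma$ is projective the vertices $\gamma$ and $-\gamma$ give determinants differing by the sign of negating the last basis vector, hence opposite signs, and they cancel to $0$, giving case (3).

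\emph{Structure of case (1), and the main obstacle.} To see that the equality $wt(\alpha_\ast)=wt(\beta_\ast)$ forces one $\alpha_i=\beta_{j_1}+\beta_{j_2}$ with the remaining $\alpha$'s equal to the remaining $\beta$'s, I reduce to simple roots as in Lemma \ref{lem: first weight lemma}, where $wt(\beta_\ast)=(1,\dots,1)$: then $n-1$ roots of $\Phi^+(\beta_\ast)$, each dominating some simple, can sum to the all-ones vector only if $n-2$ of them are distinct simples and the last is the sum $\beta_{j_1}+\beta_{j_2}$ of the two remaining ones. I expect the genuine difficulty to lie in the third paragraph: rigorously fixing the boundary-orientation conventions on $E(\rho)\subset S^{n-1}$ and proving that the radial normal aligns in sign with $\delta$, so that the local degree is exactly the change-of-basis sign. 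Everything else—the vertex count, the weight dichotomy via Lemma \ref{lem: first weight lemma}, and the mass-counting argument for case (1)—is comparatively routine.
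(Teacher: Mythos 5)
Your proposal is correct and follows essentially the same route as the paper's proof: reduce to the cluster complex of $\cA b(\beta_\ast)$, use Lemma \ref{lem: first weight lemma} to sort the weight cases by whether $M_\gamma$ is projective, observe that the projective case contributes the two vertices $\pi_i$ and $-\pi_i$ whose local degrees (change-of-basis signs differing by negation of the last vector) cancel, and read off the sign in the remaining cases from the induced boundary orientation of the disk $E(\beta_\ast)$. Your write-up is in fact more explicit than the paper's about the local-degree formulation of the incidence number and the counting of dual cells covering $e^{k-1}_{\alpha_\ast}$, but the underlying argument is the same.
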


\begin{proof} (1) is the only way that the $k-1$ positive roots can add up to $\sum\beta_i$. By Lemma \ref{lem: first weight lemma}, (1) and (2) occur when the corresponding module $M_\gamma$ is not projective. (3) occurs when $M_\gamma$ is projective. Then $[\alpha_\ast]=[\beta_1,\cdots,\widehat{\beta_i},\cdots,\beta_k]$ which is right perpendicular to the projective root $\gamma=\pi_i$. But these terms occur twice as summands of $d\beta_\ast$ with opposite sign corresponding to the vertices $\pi_i$ and $-\pi_i$ in the spherical semi-invariant picture for $\beta_\ast$. So, they cancel. The signs comes from the definition of induced orientation on the boundary of a disk. The plane $\RR\alpha_\ast$ plays the role of the tangent plane to the unit sphere in $\RR\beta_\ast$ at the vector $\gamma$. The induced orientation is $\epsilon(\beta_\ast,\alpha_\ast)$. The two cancelling terms have signs given by the bases $[\beta_1,\cdots,\widehat{\beta_i},\cdots,\beta_k,\pi_i]$ and $[\beta_1,\cdots,\widehat{\beta_i},\cdots,\beta_k,-\pi_i]$ which are opposite.

The formula for the sign is the standard convention for the orientation of the boundary of an oriented manifold which in this case is the $k$-disk $E(\beta_\ast)$.
\end{proof}

We define a cell $[\beta_\ast]=[\beta_1,\cdots,\beta_k]$ to be \emph{minimal} if the sum of any two of the roots $\beta_i$ is not a root.

{
\begin{cor} Let $[\beta_\ast]=[\beta_1,\cdots,\beta_k]$ be an ordered set of hom-orthogonal roots for $\Phi^+(Q_\vare)$. Then the following are equivalent.
\begin{enumerate}
\item $\cA b(\beta_\ast)$ is semi-simple. 
\item $d[\beta_\ast]=0$ where $d$ is the boundary map of the chain complex $C_\ast(Q;\ZZ)$.
\item $[\beta_\ast]$ is minimal.
\end{enumerate}
\end{cor}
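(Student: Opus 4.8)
The plan is to establish the cycle of implications $(1)\Rightarrow(2)\Rightarrow(3)\Rightarrow(1)$, drawing on the boundary formula of Proposition \ref{prop:formula for d<beta>}, the weight bookkeeping of Lemma \ref{lem: first weight lemma}, and the classification of rank two wide subcategories in Example \ref{eg: examples of relations}. For $(1)\Rightarrow(2)$, I would argue as follows. If $\cA b(\beta_\ast)$ is semi-simple then every short exact sequence in it splits, so every object is projective; in particular every indecomposable $M_\gamma$ with $\gamma\in\Phi^+(\beta_\ast)$ is a projective object of $\cA b(\beta_\ast)$. By the final assertion of Lemma \ref{lem: first weight lemma}, projectivity of $M_\gamma$ is equivalent to $wt(\alpha_\ast)\not\ge wt(\beta_\ast)$ for the associated $(k-1)$-element subset $\alpha_\ast$, and this is exactly case (3) of Proposition \ref{prop:formula for d<beta>}, in which the incidence number $n_{[\alpha_\ast][\beta_\ast]}$ vanishes. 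Since every summand indexing $d[\beta_\ast]$ is of this form, we get $d[\beta_\ast]=0$.

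For $(2)\Rightarrow(3)$ I would argue by contraposition, exploiting the weight filtration to isolate the lowest-weight part of the boundary. Suppose $[\beta_\ast]$ is not minimal, so $\beta_j+\beta_l$ is a root for some pair. By Proposition \ref{prop:formula for d<beta>}(1) this contributes a summand $[\alpha_\ast]$ of weight $wt(\alpha_\ast)=wt(\beta_\ast)$ with coefficient $\pm1$, where $\alpha_\ast=\{\beta_j+\beta_l\}\cup\{\beta_m:m\neq j,l\}$. The summands of $d[\beta_\ast]$ of minimal weight $wt(\beta_\ast)$ are precisely these case (1) terms, since case (2) terms have strictly larger weight and case (3) terms vanish. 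Distinct pairs yield distinct $\alpha_\ast$: among the elements of $\alpha_\ast$ expressed in the basis $\beta_\ast$, the merged root is the unique one that is a sum of two basis vectors, and the $\beta_i$ are linearly independent, so the merged root recovers the pair. Hence no cancellation occurs among the case (1) terms, and none can be cancelled by the higher-weight case (2) terms. The weight-$wt(\beta_\ast)$ component of $d[\beta_\ast]$ is therefore a nonzero sum of distinct free generators, so $d[\beta_\ast]\neq0$.

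For $(3)\Rightarrow(1)$, suppose $[\beta_\ast]$ is minimal. For each pair $i\neq j$ I would examine the rank two wide subcategory $\cA b(\beta_i,\beta_j)\subseteq\cA b(\beta_\ast)$, which is the module category of a hereditary algebra on two vertices (Definition \ref{def: Q(alpha)}). If $ext(\beta_i,\beta_j)\neq0$ or $ext(\beta_j,\beta_i)\neq0$, then by the case analysis of Example \ref{eg: examples of relations} (types $A_2$, $B_2$, $G_2$) the sum $\beta_i+\beta_j$ is a positive root of that subcategory, hence lies in $\Phi^+(\beta_\ast)$, contradicting minimality. Therefore $ext(\beta_i,\beta_j)=0=ext(\beta_j,\beta_i)$ for all $i\neq j$; since the $\beta_i$ are already hom-orthogonal, the quiver $Q(\beta_\ast)$ has no arrows and is a disjoint union of vertices of type $A_1$, whose module category (and hence $\cA b(\beta_\ast)$) is semi-simple.

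I expect the main obstacle to be the no-cancellation argument inside $(2)\Rightarrow(3)$: one must verify both that the case (1) terms are separated from all other nonzero terms and that they do not cancel among themselves. The weight filtration handles the separation from case (2) terms for free, so the real content is that distinct pairs $\{\beta_j,\beta_l\}$ give distinct free generators $[\alpha_\ast]$. This reduces to the linear independence of the $\beta_i$ in $\RR^n$, equivalently the nondegeneracy of the simplex they span, which lets the lowest-weight component of $d[\beta_\ast]$ be read off as a genuine nonzero chain.
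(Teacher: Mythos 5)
Your proposal is correct and takes essentially the same route as the paper: the same cycle $(1)\Rightarrow(2)\Rightarrow(3)\Rightarrow(1)$, with $(1)\Rightarrow(2)$ via the vanishing of all incidence numbers in the semi-simple case, $(2)\Rightarrow(3)$ via the appearance of an equal-weight term in $d[\beta_\ast]$ when some $\beta_i+\beta_j$ is a root, and $(3)\Rightarrow(1)$ via the fact that a non-split extension of hom-orthogonal modules produces the root $\beta_i+\beta_j$. Your explicit no-cancellation argument (isolating the weight-$wt(\beta_\ast)$ component and using linear independence of the $\beta_i$ to see that distinct pairs give distinct generators) merely spells out a step the paper's one-line version of $(2)\Rightarrow(3)$ leaves implicit.
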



\begin{proof}
$(1)\then (2)$: When $\cA b(\beta_\ast)$ is semi-simple, there are no $[\alpha_\ast]$ as described in the Proposition. So, $d[\beta_\ast]=0$.

$(2)\then (3)$: If the sum of two of the roots $\beta_i$ were a root then we would get at least one term in the expansion of $d[\beta_\ast]$ by Proposition \ref{prop:formula for d<beta>}.

$(3)\then (1)$: If $\cA(\beta_\ast)$ were not semi-simple, two of the roots would extend each other. Say, $M_{\beta_i}\to E\to M_{\beta_j}$. Since $\Hom_\Lambda(M_{\beta_i},M_{\beta_j})=0$, the middle term is indecomposable and $\alpha=\beta_i+\beta_j=\undim E$ would be a root contradicting (3).
\end{proof}
}

\begin{cor}
The chain complex $C_\ast(Q;\ZZ)$ is filtered by weight in the sense that the additive subgroup generated by the cells of weight $\ge w$ form a subcomplex $C_\ast(Q;\ZZ)_w$.\qed
\end{cor}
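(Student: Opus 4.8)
The plan is to read this off directly from the boundary formula of Proposition \ref{prop:formula for d<beta>}, which has already done all the substantive work. Since $C_\ast(Q;\ZZ)_w$ is by definition the additive subgroup generated by those cells $[\beta_\ast]$ with $wt(\beta_\ast)\ge w$, and since the boundary map $d$ is additive, it suffices to verify that $d[\beta_\ast]\in C_\ast(Q;\ZZ)_w$ for each such generating cell. Equivalently, I must check that every cell $[\alpha_\ast]$ occurring with nonzero coefficient in $d[\beta_\ast]$ itself satisfies $wt(\alpha_\ast)\ge w$.

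First I would fix a generator $[\beta_\ast]=[\beta_1,\cdots,\beta_k]$ with $wt(\beta_\ast)\ge w$ and expand $d[\beta_\ast]=\sum n_{[\alpha_\ast][\beta_\ast]}[\alpha_\ast]$ via Proposition \ref{prop:formula for d<beta>}. The trichotomy in that proposition classifies each summand by comparing $wt(\alpha_\ast)$ with $wt(\beta_\ast)$: in case (1) we have $wt(\alpha_\ast)=wt(\beta_\ast)$, in case (2) we have $wt(\alpha_\ast)>wt(\beta_\ast)$, and in case (3), where $wt(\alpha_\ast)\not\ge wt(\beta_\ast)$, the coefficient $n_{[\alpha_\ast][\beta_\ast]}$ vanishes. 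Thus $d$ never strictly lowers the weight of a cell: every surviving summand has $wt(\alpha_\ast)\ge wt(\beta_\ast)$.

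Combining these observations, every $[\alpha_\ast]$ with $n_{[\alpha_\ast][\beta_\ast]}\neq0$ satisfies $wt(\alpha_\ast)\ge wt(\beta_\ast)\ge w$, so $d[\beta_\ast]$ lies in $C_\ast(Q;\ZZ)_w$. By additivity this yields $d\bigl(C_\ast(Q;\ZZ)_w\bigr)\subseteq C_\ast(Q;\ZZ)_w$, proving that $C_\ast(Q;\ZZ)_w$ is a subcomplex. There is no genuine obstacle here: the entire content is the weight-monotonicity of $d$ recorded in Proposition \ref{prop:formula for d<beta>}, and the corollary is merely the formal packaging of that monotonicity as a filtration. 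The one point deserving care is that $\ge$ denotes the coordinatewise partial order on $\NN^n$, not a total order; it is precisely the cancellation of the potentially weight-lowering case (3) terms that makes the statement nontrivial, and that cancellation is exactly what the proposition supplies.
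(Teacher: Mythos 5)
Your proof is correct and is exactly the argument the paper intends: the corollary carries a \qed because it follows immediately from the weight trichotomy in Proposition \ref{prop:formula for d<beta>}, which is precisely the monotonicity you invoke. Nothing is missing.
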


\begin{defn}\label{def: subquotient complex of wt w}
For any quiver $Q$ and weight $w$ we define the subquotient complex:
\[
	C_\ast(Q;\ZZ)_{(w)}:=C_\ast(Q;\ZZ)_w/\sum_{w'>w}C_\ast(Q;\ZZ)_{w'}
\]
\end{defn}



We also need the following theorem from \cite{K},\cite{DW2} but only for $Q$ of type $A_n$.

\begin{thm}[Generic decomposition theorem]\label{thm: generic decomposition}
Let $Q$ be a quiver of Dynkin type. Then any $w\in \NN^n$ can be written uniquely as a positive linear combination of ext-orthogonal positive roots: $w=\sum m_i\alpha_i$.
\end{thm}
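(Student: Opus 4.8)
The plan is to prove both existence and uniqueness by identifying the decomposition $w=\sum m_i\alpha_i$ with the \emph{generic decomposition} of $w$, that is, the Krull--Schmidt decomposition of a representation lying in the dense orbit of the representation variety $\operatorname{Rep}(Q,w)$. Since $Q$ is Dynkin, Gabriel's theorem supplies finitely many indecomposables $M_\alpha$, one for each $\alpha\in\Phi^+(Q)$, so there are only finitely many isomorphism classes of representations of any fixed dimension vector $w$. As $\operatorname{Rep}(Q,w)$ is a product of $\Hom$-spaces over the arrows of $Q$ it is an irreducible affine variety, and being a finite union of orbits under the base-change group $\prod_i GL(w_i)$ it has a unique dense open orbit. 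I take $V$ to be a representation lying in this orbit.

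For existence I would invoke the standard identification of the codimension of the orbit $\mathcal{O}_V$ in $\operatorname{Rep}(Q,w)$ with $\dim_K\Ext^1_\Lambda(V,V)$. Since $\mathcal{O}_V$ is dense, this codimension is zero, so $\Ext^1_\Lambda(V,V)=0$. Decomposing $V=\bigoplus_i M_{\alpha_i}^{m_i}$ by Krull--Schmidt with the $\alpha_i$ distinct positive roots gives $w=\sum m_i\alpha_i$, and the vanishing
\[
	\Ext^1_\Lambda(V,V)=\bigoplus_{i,j}\Ext^1_\Lambda(M_{\alpha_i},M_{\alpha_j})^{m_im_j}=0
\]
forces $ext(\alpha_i,\alpha_j)=0$ for all $i,j$; in particular the $\alpha_i$ are pairwise ext-orthogonal, as required.

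For uniqueness, suppose $w=\sum m_i\alpha_i$ is \emph{any} decomposition into ext-orthogonal positive roots, and set $V=\bigoplus_i M_{\alpha_i}^{m_i}$. The off-diagonal terms $\Ext^1_\Lambda(M_{\alpha_i},M_{\alpha_j})$ with $i\ne j$ vanish by ext-orthogonality, while the diagonal terms vanish because over a Dynkin quiver every indecomposable $M_{\alpha_i}$ is exceptional, hence rigid: $\Ext^1_\Lambda(M_{\alpha_i},M_{\alpha_i})=0$. Thus $\Ext^1_\Lambda(V,V)=0$, so $\mathcal{O}_V$ is open and, by irreducibility of $\operatorname{Rep}(Q,w)$, it is the unique dense orbit. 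Consequently any such $V$ is isomorphic to the generic representation independently of the chosen decomposition, and Krull--Schmidt recovers the multiset $\{(\alpha_i,m_i)\}$ from the isomorphism type of $V$. This gives uniqueness.

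The step requiring the most care is the orbit-codimension formula $\operatorname{codim}\mathcal{O}_V=\dim_K\Ext^1_\Lambda(V,V)$ together with the rigidity of each indecomposable summand, and this is precisely where the Dynkin hypothesis is essential: for non-Dynkin quivers imaginary or isotropic roots produce positive-dimensional families of indecomposables and non-rigid generic summands, and the summands in the canonical decomposition need only be Schur roots rather than genuine (real, rigid) roots. In the valued (non-simply-laced) setting one replaces $\operatorname{Rep}(Q,w)$ by the representation variety of the corresponding $K$-species and runs the identical argument once the codimension formula is available there; since the authors only require type $A_n$, the simply-laced argument above is entirely sufficient.
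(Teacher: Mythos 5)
Your proof is correct, but there is nothing in the paper to compare it against: the authors do not prove this theorem, they quote it as a known result with references to Kac \cite{K} and Derksen--Weyman \cite{DW2} (and they only need it in type $A_n$, where they later give an explicit combinatorial description of the decomposition in Theorem \ref{thm: description of generic decomposition of w}). What you have written is essentially the standard argument underlying those references: irreducibility of $\operatorname{Rep}(Q,w)$ plus finite representation type gives a unique dense orbit, the codimension formula $\operatorname{codim}\mathcal{O}_V=\dim_K\Ext^1_\Lambda(V,V)$ shows the generic module is rigid, and Krull--Schmidt converts rigidity into a pairwise ext-orthogonal decomposition of $w$; conversely any ext-orthogonal decomposition assembles into a rigid module (using that Dynkin indecomposables are exceptional), whose orbit is then open, hence the dense one, giving uniqueness. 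All steps check out, including the translation between the paper's $ext(\alpha,\beta)$ and $\dim\Ext^1_\Lambda(M_\alpha,M_\beta)$. The only place where a little care is genuinely needed is the one you flag yourself: the orbit-counting and codimension arguments are cleanest over an algebraically closed ground field and for simply laced $Q$, whereas the paper's $\Lambda$ is a general hereditary algebra of finite representation type; one either passes to the associated $K$-species or observes that the statement is purely about positive roots and the Euler--Ringel form, so it can be checked after base change. Since the paper only applies the theorem in type $A_n$, your closing remark disposes of this adequately.
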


The decomposition $w=\sum m_i\alpha_i$ is called the \emph{generic decomposition} of $w$.

\subsection{Semi-simple categories in type $A$}
We now describe quivers of type $A_n$ and their weights. Let {$Q_\vare$ be the quiver of type $A_n$
\[
	Q_\vare:\ 1-2-\cdots-n
\]
with orientation of the arrows given by a \emph{sign function} $\vare=(\vare_1,\vare_2,\cdots,\vare_{n-1})\in\{+,-\}^{n-1}$ as follows. The arrow $i\ot(i+1)$ points left if $\vare_i=+$ and points right $i\to (i+1)$ if $\vare_i=-$.
} We also use the notation
\[
	\beta_{ij}=e_{i+1}+e_{i+2}+\cdots+e_j \quad 0\le i<j\le n
\]
for the positive roots of type $A_n$ where $e_j$ is the $j$-th simple root of the root system $A_n$.


\begin{defn}\label{def: admissible and basic weights}
We define a weight $w$ to be \emph{admissible} if there is at least one cell of weight $w$, i.e., $w$ is a sum of hom-orthogonal roots. An admissible weight $w$ is \emph{basic} if there is exactly one cell of weight $w$.
\end{defn}

The plan is as follows.

\begin{enumerate}
\item We give a numerical characterization of admissible weights in Lemma \ref{lem:characterization of admissible weights}.
\item We characterize basic weights in Proposition \ref{prop:characterization of basic weights}. Basic cells are cycles: $de^k_\alpha=0$.
\item (key step) For every nonbasic admissible weight we show, in Lemma \ref{lem: subquotient of nonbasic is acyclic}, that the subquotient complex $C(Q)_{(w)}=C(Q;\ZZ)_{(w)}$ of Definition \ref{def: subquotient complex of wt w} is acyclic.
\item For $w$ a basic weight of degree $k$, the corresponding subquotient complex $C(Q)_{(w)}$ is $\ZZ$ in degree $k$ and zero elsewhere.
\item We conclude in Corollary \ref{cor:homology is given by basic ss sets} that the cohomology of $X(Q)$ is freely generated by the set of basic cells which we identify by their weight.
\item Finally, we enumerate the set of basic weights in Theorem \ref{thm: rank of cohomology is ballot nos}.
\end{enumerate}


\begin{rem} For $Q$ a quiver of finite representation type, in particular for $Q=Q_\vare$, if $\alpha,\beta\in \Phi^+(Q)$, $hom(\alpha,\beta)=0$ if and only if $\<\alpha,\beta\>\le0$ and $ext(\alpha,\beta)=0$ if and only if and $\<\alpha,\beta\>\ge0$. Thus $\alpha,\beta$ are hom-ext-orthogonal if and only if $\<\alpha,\beta\>=0=\<\beta,\alpha\>$.
\end{rem}

\begin{defn}\label{def of noncrossing}
We say that the {half-open} intervals $(i,j],(k,\ell]$ are \emph{noncrossing} if $i,j,k,\ell$ are distinct and one of the following holds
	\begin{enumerate}
	\item $k<i<j<\ell$ and $\varepsilon_i=\varepsilon_j$.
	\item $i<k<\ell<j$ and $\varepsilon_k=\varepsilon_\ell$.
	\item $i<k<j<\ell$ and $\varepsilon_k\neq\varepsilon_j$.
 	\item $k<i<\ell<j$ and $\varepsilon_i\neq\varepsilon_\ell$.
	\item $j< k$ or $\ell< i$.
\end{enumerate}
\end{defn}

\begin{lem}\label{lem: hom-orthogonal roots in An}
Let $(i,j], (k,\ell]$ be half-open intervals in $(0,n]$.
\begin{enumerate}
\item[(a)] When $i,j,k,\ell$ are distinct the following are equivalent.
\begin{enumerate}
\item[(i)] $\beta_{ij},\beta_{k\ell}$ are hom-orthogonal.
\item[(ii)] $\beta_{ij},\beta_{k\ell}$ are ext-orthogonal.
\item[(iii)] $(i,j],(k,\ell]$ are {noncrossing}.
\end{enumerate}
\item[(b)] When $i,j,k,\ell$ are not distinct then one of the following holds.
\begin{itemize}
	\item $\beta_{ij},\beta_{k\ell}$ are hom-orthogonal but not ext-orthogonal and either $j=k$ or $i=\ell$.
	\item $\beta_{ij},\beta_{k\ell}$ are ext-orthogonal but not hom-orthogonal and either $i=k$ or $j=\ell$.
\end{itemize}
\end{enumerate}
\end{lem}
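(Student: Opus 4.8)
The plan is to reduce the entire statement to a sign computation for the Euler--Ringel form, using the Remark immediately preceding the lemma: for positive roots $\alpha,\beta$ of $Q_\vare$ one has $hom(\alpha,\beta)=0 \iff \langle\alpha,\beta\rangle\le 0$ and $ext(\alpha,\beta)=0 \iff \langle\alpha,\beta\rangle\ge 0$. Hence $\beta_{ij},\beta_{k\ell}$ are hom-orthogonal iff $\langle\beta_{ij},\beta_{k\ell}\rangle\le 0$ and $\langle\beta_{k\ell},\beta_{ij}\rangle\le 0$, ext-orthogonal iff both of these are $\ge 0$, and hom-ext-orthogonal iff both vanish. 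So everything becomes a statement about the two integers $\langle\beta_{ij},\beta_{k\ell}\rangle$ and $\langle\beta_{k\ell},\beta_{ij}\rangle$, which I would compute directly from the Euler matrix $E$: for the linear quiver $Q_\vare$ its only nonzero entries are $E_{aa}=1$ together with $E_{a,a+1}=-[\vare_a={-}]$ and $E_{a+1,a}=-[\vare_a={+}]$ (here and below $[\,\cdot\,]$ is $1$ if the enclosed relation holds and $0$ otherwise).

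Writing $I=(i,j]$ and $J=(k,\ell]$ for the supports, one gets $\langle\beta_{ij},\beta_{k\ell}\rangle=|I\cap J|$ minus off-diagonal corrections coming from the $-1$ entries of $E$ at vertices where $I$ and $J$ are adjacent. The key point, and the step that makes the whole case analysis collapse, is that summing over the overlap the diagonal $+1$'s and the interior adjacency $-1$'s telescope, leaving a value depending only on the signs $\vare$ at the two \emph{interface} endpoints, namely the middle two among the four sorted endpoint values. For instance in the nesting $i<k<\ell<j$ the computation gives $\langle\beta_{ij},\beta_{k\ell}\rangle=1-[\vare_k={-}]-[\vare_\ell={+}]$ and $\langle\beta_{k\ell},\beta_{ij}\rangle=1-[\vare_k={+}]-[\vare_\ell={-}]$, and the overlapping and the opposite nesting configurations are identical in spirit (with interface pair $(k,j)$, resp.\ $(i,\ell)$, and with the appropriate left/right indicator at each). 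In every case both forms therefore lie in $\{-1,0,1\}$.

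For part (a), with $i,j,k,\ell$ distinct, I would run through the five geometric positions of (iii): disjoint, the two nestings, and the two overlaps. In the disjoint case $j<k$ or $\ell<i$ there is no shared or adjacent vertex, so both forms are $0$. In each nesting the vanishing of both forms is exactly $\vare_k=\vare_\ell$ (resp.\ $\vare_i=\vare_j$), and in each overlap it is exactly $\vare_k\neq\vare_j$ (resp.\ $\vare_i\neq\vare_\ell$) --- precisely the noncrossing conditions (1)--(4). Thus noncrossing forces $\langle\beta_{ij},\beta_{k\ell}\rangle=\langle\beta_{k\ell},\beta_{ij}\rangle=0$, so $\beta_{ij},\beta_{k\ell}$ are simultaneously hom- and ext-orthogonal; while crossing forces the two forms to be $+1$ and $-1$ in some order, so $\beta_{ij},\beta_{k\ell}$ are neither. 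This proves (iii)$\Rightarrow$(i), (iii)$\Rightarrow$(ii) and the failure of (i) and (ii) when (iii) fails; since (i) and (ii) are each equivalent to (iii), they are equivalent to each other.

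For part (b) the endpoints coincide in exactly one way: since $i<j$, $k<\ell$ and the roots are distinct, a short check rules out every double coincidence except $i=k,\,j=\ell$ (equal roots), leaving exactly one of $i=k$, $i=\ell$, $j=k$, $j=\ell$. If $j=k$ or $i=\ell$ the intervals are disjoint and abut at one vertex, so $|I\cap J|=0$ and a single adjacency correction survives, giving $\{\langle\beta_{ij},\beta_{k\ell}\rangle,\langle\beta_{k\ell},\beta_{ij}\rangle\}=\{0,-1\}$: both $\le 0$ but not both $\ge 0$, i.e.\ hom- but not ext-orthogonal. If $i=k$ or $j=\ell$ the intervals are nested sharing an endpoint, and the same telescoping leaves $\{0,+1\}$: ext- but not hom-orthogonal. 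The two remaining coincidence cases follow from the computed ones by the symmetry exchanging the two roots (which swaps the two forms but preserves hom/ext-orthogonality). The only real obstacle is bookkeeping: organizing the adjacency corrections so the telescoping is transparent and checking that the orientation-dependent signs at the interface endpoints match the noncrossing conditions exactly; once the reduction $\langle\beta_{ij},\beta_{k\ell}\rangle\in\{-1,0,1\}$ with its explicit dependence on the interface signs is established, each case is immediate.
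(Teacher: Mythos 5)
Your proposal is correct and follows essentially the same route as the paper: both reduce hom/ext-orthogonality to the signs of $\<\beta_{ij},\beta_{k\ell}\>$ and $\<\beta_{k\ell},\beta_{ij}\>$ via the remark preceding the lemma and then match the vanishing conditions case by case against Definition \ref{def of noncrossing}. The only difference is one of emphasis: the paper gets (i)$\Leftrightarrow$(ii) immediately from the identity $\<\beta_{ij},\beta_{k\ell}\>+\<\beta_{k\ell},\beta_{ij}\>=0$ for distinct endpoints, whereas you recover the same fact implicitly from your explicit interface-sign formulas, which also supply the detail the paper leaves as ``an easy computation'' and ``clear.''
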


\begin{proof}
When $i,j,k,\ell$ are distinct an easy computation gives:
\[
	\<\beta_{ij},\beta_{k\ell}\>+\<\beta_{k\ell},\beta_{ij}\>=0
\]
So, $hom(\beta_{ij},\beta_{k\ell})=ext(\beta_{k\ell},\beta_{ij})$ and $hom(\beta_{k\ell},\beta_{ij})=ext(\beta_{ij},\beta_{k\ell})$ and we see that (1) and (2) are equivalent. Definition \ref{def of noncrossing} lists all possible ways that $i,j,k,\ell$ can be distinct. The values of $\vare$ are those which make $\<\beta_{ij},\beta_{k\ell}\>=0$ in each case. So, (3) is equivalent to (1) and (2). The statement for $i,j,k,\ell$ not distinct is clear.
\end{proof}

The following is the well-known formula for the generic decomposition of any $w\in\NN^n$ for a quiver of type $A_n$.

\begin{thm}\label{thm: description of generic decomposition of w}\cite{A}
Let $Q_\vare$ be a quiver of type $A_n$ with orientation given by $\vare$. Then, for any $w=(w_1,\cdots,w_n)\in\NN^n$ define the intervals $(a_i,b_i]$ of length $b_i-a_i=w_i$ recursively, for $1\le i\le n$, as follows.
\begin{enumerate}
\item $a_1=0$ and $b_1=w_1$.
\item  If $\vare_i=+$ then $a_{i+1}=a_i$ and $b_{i+1}=a_{i}+w_{i+1}$.
\item If $\vare_i=-$ then $b_{i+1}=b_i$  and $a_{i+1}=b_{i}-w_{i+1}$.
\end{enumerate}
Then the number of times that $\beta_{ij}$ occurs in the generic decomposition of $w$ is equal to the number of integers $c$ so that $c\notin (a_i,b_i]$, $c\notin (a_{j+1},b_{j+1}]$ and $c\in(a_k, b_k]$ for all $i<k\le j$.
\end{thm}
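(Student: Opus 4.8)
The plan is to exhibit, directly from the counting rule, an expression $w=\sum_{0\le i<j\le n}m_{ij}\,\beta_{ij}$ with the $m_{ij}$ given by the theorem, and then to invoke the \emph{uniqueness} in the Generic Decomposition Theorem (Theorem~\ref{thm: generic decomposition}): it suffices to check that the weights add up correctly and that the roots occurring with $m_{ij}>0$ are pairwise ext-orthogonal, for then this expression must be the generic decomposition and the $m_{ij}$ are the asserted multiplicities. The organizing device is a reformulation of the counting rule. For each integer $c$ put $S_c=\{k:a_k<c\le b_k\}$, the set of indices whose interval covers $c$, and adopt the boundary conventions $(a_0,b_0]=\emptyset=(a_{n+1},b_{n+1}]$. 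The three conditions defining $m_{ij}$ say exactly that $c\in(a_k,b_k]$ for $i<k\le j$ while $c\notin(a_i,b_i]$ and $c\notin(a_{j+1},b_{j+1}]$; that is, $[i+1,j]$ is a \emph{maximal run} of consecutive indices in $S_c$. So $m_{ij}$ equals the number of integers $c$ for which $[i+1,j]$ is a maximal run of $S_c$.

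First I would verify $\sum m_{ij}\beta_{ij}=w$ by a double count. Since the $i$-th coordinate of $\beta_{i'j'}$ is $1$ exactly when $i'<i\le j'$, the $i$-th coordinate of $\sum m_{i'j'}\beta_{i'j'}$ is $\sum_{i'<i\le j'}m_{i'j'}$, which counts pairs consisting of an integer $c$ and a maximal run $[i'+1,j']$ of $S_c$ containing $i$. For fixed $c$ the maximal runs of $S_c$ are disjoint, so exactly one run passes through $i$ when $i\in S_c$ and none otherwise; hence the count is $\#\{c:a_i<c\le b_i\}=b_i-a_i=w_i$, as required.

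The heart of the matter, and the main obstacle, is pairwise ext-orthogonality, where the orientation $\vare$ must be tracked carefully. I would analyze a single transition $k\to k+1$. When $\vare_k=+$ the intervals share their left endpoint ($a_k=a_{k+1}$), so indices enter or leave $S_c$ only at the top: a value is \emph{born} (in $I_{k+1}\setminus I_k$) exactly for $c\in(b_k,b_{k+1}]$ and \emph{dies} exactly for $c\in(b_{k+1},b_k]$; when $\vare_k=-$ the shared endpoint is on the right and all births and deaths occur at the bottom. Two consequences drive the proof. First, at any one transition the sign of $b_{k+1}-b_k$ (resp.\ of $a_{k+1}-a_k$) is fixed, so either some values are born and none die or vice versa; consequently two distinct runs can never satisfy $j=k$ or $i=\ell$, as that would demand a birth and a death at the same transition. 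This excludes precisely the shared-endpoint configurations that are hom-orthogonal-but-not-ext-orthogonal in Lemma~\ref{lem: hom-orthogonal roots in An}(b). Second, comparing the heights of two runs that meet at a transition pins the sign: a short computation with $I_k\cap I_{k+1}$ shows that whether the surviving run lies above or below the other is governed by $\vare_k$, which is exactly what the sign clauses of Definition~\ref{def of noncrossing} record.

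Carrying this out, two distinct runs give intervals $(i,j]$ and $(k,\ell]$ that are either disjoint (clause (5)), share a left/right endpoint in the ext-orthogonal way $i=k$ or $j=\ell$, or cross/nest. In the crossing case $i<k<j<\ell$ the two interactions occur at the inner transitions $k\to k+1$ and $j\to j+1$, and the height comparison forces $\vare_k\neq\vare_j$ regardless of which height is larger (clause (3)); the nested case $i<k<\ell<j$ forces $\vare_k=\vare_\ell$ (clause (2)), and the remaining two configurations give clauses (1) and (4) by the symmetric argument. In every case the intervals are noncrossing, so by Lemma~\ref{lem: hom-orthogonal roots in An} the corresponding roots are ext-orthogonal. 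Thus $\{\beta_{ij}:m_{ij}>0\}$ is pairwise ext-orthogonal and $w=\sum m_{ij}\beta_{ij}$; by the uniqueness in Theorem~\ref{thm: generic decomposition} this is the generic decomposition, proving the formula. I expect the only delicate part to be the sign bookkeeping of the previous paragraph—matching the geometric crossings to the opposite-sign clauses (3),(4) and checking that births and deaths never coincide at a transition—while the double count and the appeal to uniqueness are routine.
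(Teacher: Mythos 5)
Your proposal is correct and follows essentially the same route as the paper: verify that the roots produced by the counting rule are pairwise ext-orthogonal by reducing to the noncrossing criterion of Lemma \ref{lem: hom-orthogonal roots in An} (the shared-endpoint cases ruled out because a birth and a death cannot occur at the same transition, the crossing/nested cases settled by comparing heights against the sign $\vare_k$ at the inner transitions), then invoke uniqueness in Theorem \ref{thm: generic decomposition}. The only difference is cosmetic: you make explicit the verification that $\sum m_{ij}\beta_{ij}=w$ and the ``maximal run'' reformulation, both of which the paper treats as evident from the graphical description.
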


We refer to each $c$ in the description above as a \emph{height} of $\beta_{ij}$. We are particularly interested in roots $\beta_{ij}$ in the generic decomposition $w$ of minimal and maximal height so that $k\in (i,j]$. We will refer to parts of the proof of this theorem later.

\begin{proof}
We need to show that any two roots $\beta_{ij}$ and $\beta_{k\ell}$ in the decomposition of $w$ given by the theorem are ext-orthogonal. First note that $i\neq \ell$ since $w_i<w_{i+1}$ and $w_\ell>w_{\ell+1}$. Similarly $j\neq k$. So, if $i,j,k,\ell$ are not distinct, then $\beta_{ij}$ and $\beta_{k\ell}$ are ext-orthogonal by Lemma \ref{lem: hom-orthogonal roots in An}. So, we may assume that $i,j,k,\ell$ are distinct. 

If the heights of the two roots are equal then their supports are disjoint and separated, thus noncrossing. So, suppose that the roots have different heights, say $\beta_{ij}$ has height $c_1$ and $\beta_{k\ell}$ has height $c_2<c_1$. There are four cases corresponding to the first four cases of (3) in Lemma \ref{lem: hom-orthogonal roots in An}. We consider only the second case: $i<k<j<\ell$. In that case the existence of $\beta_{k\ell}$ below $\beta_{ij}$ with $i<k<j$ implies that $\vare_k=-$ since, otherwise, $a_k=a_{k+1}$ and any root $\alpha$ which starts at $k$ must be above any root in the decomposition $\beta_{pq}$ with $p<k<q$. Similarly $\vare_j=+$. By (3)(b) in Lemma \ref{lem: hom-orthogonal roots in An}, the roots $\beta_{ij}$ and $\beta_{k\ell}$ are noncrossing. The other three cases are similar.
\end{proof}

The graphical representation of this is given by plotting the point $(k,c)\in\ZZ^2$ for which $1\le k\le n$ and $c\in (a_k,b_k]$. Then we connect any pair of points $(k,c),(k+1,c)$ with the same height and consecutive first coordinate. For example, if $n=7$, $\vare=(+,-,-,+,+,+)$ and $w=(1,2,3,3,2,1,2)$ we get:
\begin{center}
\begin{tikzpicture}
\foreach \x in {1,2,3,4,5,7}\filldraw (\x,.8) circle [radius=2pt];
\foreach \x in {2,3,4}\filldraw (\x,1.6) circle [radius=2pt];
\foreach \x in {3,4,5,6,7}\filldraw (\x,0) circle [radius=2pt];
\begin{scope}
\draw (-.6,2.4) node{$ (a_k,b_k]:$};
\draw (1,2.4) node{\tiny$ (0,1]$};
\draw (2,2.4) node{\tiny$ (0,2]$};
\draw (3,2.4) node{\tiny$ (-1,2]$};
\draw (4,2.4) node{\tiny$ (-1,2]$};
\draw (5,2.4) node{\tiny$ (-1,1]$};
\draw (6,2.4) node{\tiny$ (-1,0]$};
\draw (7,2.4) node{\tiny$ (-1,1]$};
\end{scope}
\begin{scope}[xshift=.4cm]
\draw (-1,0) node{$c=0$};
\draw (-1,.8) node{$c=1$};
\draw (-1,1.6) node{$c=2$};
\end{scope}
	\draw (1,.8)--(5,.8);
	\draw (2,1.6)--(4,1.6);
	\draw (3,0)--(7,0);
	\draw(-1,-1) node[right]{$Q_\vare$:};
\foreach \x in {1,2,...,7}\filldraw (\x,-1) node{$\x$};	
\foreach \x in {1,4,5,6} \draw(\x,-1)node[right]{$\longleftarrow$};	
\foreach \x in {2,3} \draw(\x,-1)node[right]{$\longrightarrow$};
\end{tikzpicture}
\end{center}
giving the generic decomposition $w=\beta_{05}+\beta_{14}+\beta_{27}+\beta_{67}$. The theorem says that, above vertex $k$, there are $w_k$ points with consecutive integer $y$-coordinates with the same lower bound as for $k+1$ when $\vare_{k}=+$ and the same upper bound as for $k+1$ if $\vare_{k}=-$. This gives ext-orthogonal roots adding up to $w$ since, for example, $(0,5],(2,7]$ are noncrossing since $\vare_2\neq\vare_5$ and $(0,5],(1,4]$ are noncrossing since $\vare_1=\vare_4$.

\begin{rem}\label{rem: when numbers are the same}
When $w_k=w_{k+1}$ as in the case $w_3=w_4=3$ in the above example, the intervals are equal $(a_k,b_{k}]=(a_{k+1},b_{k+1}]$ and thus we have parallel line segments connecting all the dots above vertex $k$ to those above vertex $k+1$. Equivalently, no roots of the form $\beta_{ik}$ or $\beta_{kj}$ occur in the generic decomposition of $w$.
\end{rem}

\begin{defn}\label{def: blocks}
Let $[\alpha_\ast]=[\alpha_1,\cdots,\alpha_k]$ be a minimal cell. Then each root $\alpha_s$ is equal to $\beta_{ij}$ for some $0\le i<j\le n$. And the intervals $(i,j]$ are pairwise noncrossing. Let $(p_t,q_t], 1\le t\le m$ be the maximal intervals in the support of $[\alpha_\ast]$ numbered so that\[
	0\le p_1<q_1<p_2<q_2<\cdots<p_m<q_m\le n\,.
\]
Then we define the \emph{blocks} $B_t=B_{p_tq_t}\in \NN^n$ of $[\alpha_\ast]$ to be the portion of the weight of $[\alpha_\ast]$ which has support in $(p_t,q_t]$. We also say that $B_r$ are the \emph{blocks} of $w=wt(\alpha_\ast)$ since they depend only on $w$. Thus $w=\sum_{1\le i\le k} \alpha_i=\sum_{1\le t\le m} B_{p_tq_t}$ is the sum of its blocks. In particular, a weight $w$ is defined to be a \emph{block} if and only if its support is a single interval $(p,q]$. 
\end{defn}


Recall that a weight $w\in\NN^n$ is \emph{admissible} if it is the weight of some hom-orthogonal set $\beta_\ast$. This includes $w=0$ which is the weight of the empty hom-orthogonal set. The weight $w=(1,2,3,3,2,1,2)$ in the above example is not admissible. However, $w'=(1,2,3,3,2,1,1)$ is admissible since $w'=\beta_{05}+\beta_{14}+\beta_{27}$ is a hom-orthogonal decomposition of $w'$.

\begin{lem}\label{lem:characterization of admissible weights}
A weight $w\in\NN^n$ is admissible if and only if $|w_i-w_{i+1}|\le 1$ for all $0\le i\le n$ with the convention that $w_0=0=w_{n+1}$. Furthermore, the generic decomposition of an admissible weight $w$ gives a hom-orthogonal decomposition of $w$.
\end{lem}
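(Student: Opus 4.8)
The plan is to prove both implications by translating the numerical condition $|w_i-w_{i+1}|\le 1$ into a statement about how many roots of the generic decomposition of $w$ can begin or end at a given vertex, and then to use Lemma \ref{lem: hom-orthogonal roots in An} to upgrade ext-orthogonality (which the generic decomposition supplies automatically) to hom-orthogonality.

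For the forward implication (admissible $\then$ condition) I would start from a hom-orthogonal decomposition $w=\sum_s \beta_{i_s j_s}$. Reading off the $k$-th coordinate shows that, for $0\le k\le n$ with the convention $w_0=w_{n+1}=0$,
\[
	w_k-w_{k+1}=\#\{s: j_s=k\}-\#\{s:i_s=k\},
\]
since a root contributes to $w_k$ but not $w_{k+1}$ exactly when it ends at $k$, and to $w_{k+1}$ but not $w_k$ exactly when it starts at $k$. By Lemma \ref{lem: hom-orthogonal roots in An}(b), two distinct roots sharing a starting vertex or an ending vertex are ext-orthogonal but \emph{not} hom-orthogonal; hence in a hom-orthogonal set each of the two counts above is at most $1$, which gives $|w_k-w_{k+1}|\le 1$ (the boundary values $k=0,n$ being automatic since $j_s\ge1$ and $i_s\le n-1$).

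For the reverse implication together with the ``furthermore'' statement, I would invoke the generic decomposition $w=\sum m_s\alpha_s$ into ext-orthogonal roots (Theorem \ref{thm: generic decomposition}) and count using the explicit interval description of Theorem \ref{thm: description of generic decomposition of w}. The key computation is that the number of heights $c$ producing a root with left endpoint $i$ equals $|(a_{i+1},b_{i+1}]\setminus(a_i,b_i]|$, and unwinding the two recursion cases $\vare_i=\pm$ shows this equals $\max(0,w_{i+1}-w_i)$; symmetrically the number of heights giving a root with right endpoint $j$ equals $\max(0,w_j-w_{j+1})$. The hypothesis $|w_i-w_{i+1}|\le 1$ forces every one of these quantities to be $0$ or $1$. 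Consequently at most one root (counted with multiplicity) begins at each vertex and at most one ends at each vertex, so the generic decomposition is multiplicity-free and no two of its roots share a start or an end. The overlap types $j_s=i_t$ and $i_s=j_t$ are also excluded, since by Lemma \ref{lem: hom-orthogonal roots in An}(b) they would contradict ext-orthogonality; hence any two roots have all four endpoints distinct, and Lemma \ref{lem: hom-orthogonal roots in An}(a) then converts their ext-orthogonality into hom-orthogonality. Thus the generic decomposition is a hom-orthogonal decomposition of $w$, which simultaneously exhibits $w$ as admissible and proves the final assertion.

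I expect the main obstacle to be the bookkeeping in the interval computation: one must carefully handle the two orientation cases $\vare_i=+$ and $\vare_i=-$, the boundary vertices $i=0$ and $j=n$ (where $(a_0,b_0]$ and $(a_{n+1},b_{n+1}]$ are read as empty), and the distinction between counting heights and counting distinct roots, in order to see that the single numerical bound $|w_i-w_{i+1}|\le1$ simultaneously kills multiplicities and forbids shared endpoints.
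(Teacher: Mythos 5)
Your proposal is correct and follows essentially the same route as the paper: necessity via the observation that $w_k-w_{k+1}$ counts roots ending at $k$ minus roots starting at $k$ and hom-orthogonality forbids shared endpoints (Lemma \ref{lem: hom-orthogonal roots in An}(b)), and sufficiency by checking that under $|w_i-w_{i+1}|\le 1$ no two roots of the generic decomposition share a start or an end, whence they are noncrossing and hom-orthogonal. You merely spell out the interval bookkeeping from Theorem \ref{thm: description of generic decomposition of w} that the paper's proof leaves implicit.
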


\begin{proof}
The condition is clearly necessary. For example if $w_{i+1}\ge w_i+{2}$ then any $\beta_\ast$ with weight $w$ will have two objects $\beta_{ij}$ and $\beta_{ik}$ one of which is a subroot of the other and are therefore not hom-orthogonal. Conversely, suppose that $|w_i-w_{i+1}|\le 1$ for all $0\le i\le n$. Then no two roots in the generic decomposition of $w$ will start or end at the same place. So, they will be noncrossing.
\end{proof}


\begin{lem}\label{lem:uniqueness of ss sets}
For each admissible weight $w$ there is a unique minimal set $\alpha_\ast$ of weight $w$.
\end{lem}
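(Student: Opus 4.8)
The plan is to identify the unique minimal set of weight $w$ with the \emph{generic decomposition} of $w$ and then read off both existence and uniqueness from the uniqueness clause already built into the Generic Decomposition Theorem (Theorem \ref{thm: generic decomposition}). The bridge between the combinatorial notion of ``minimal cell'' and the representation-theoretic notion of ``ext-orthogonal family'' is the equivalence of minimality, semi-simplicity of $\cA b(\beta_\ast)$, and $d[\beta_\ast]=0$ proved in the Corollary above; this is what lets me translate minimality into a statement about extensions, where Theorem \ref{thm: generic decomposition} can be applied.

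First I would establish existence. Since $w$ is admissible, Lemma \ref{lem:characterization of admissible weights} guarantees that its generic decomposition $w=\sum m_i\alpha_i$ is in fact a \emph{hom-orthogonal} decomposition. Because $hom(\alpha,\alpha)=\dim_K\End_\Lambda(M_\alpha)\ge 1$ for every positive root $\alpha$, no root can occur twice in a hom-orthogonal family, so every multiplicity $m_i$ equals $1$ and the generic decomposition is an honest set $\alpha_\ast$ of pairwise distinct roots. These roots are simultaneously hom-orthogonal and, being a generic decomposition, ext-orthogonal; hence the wide subcategory $\cA b(\alpha_\ast)$ has no extensions among its simple objects $M_{\alpha_i}$ and is therefore semi-simple. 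By the Corollary this means $[\alpha_\ast]$ is minimal, so $\alpha_\ast$ is a minimal set of weight $w$.

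For uniqueness I would start from an arbitrary minimal set $\alpha_\ast'=\{\beta_1,\dots,\beta_k\}$ of weight $w$. By the Corollary, minimality forces $\cA b(\alpha_\ast')$ to be semi-simple, and in a semi-simple wide subcategory the simple objects $M_{\beta_j}$ admit no nontrivial extensions of one another, i.e.\ the $\beta_j$ are pairwise ext-orthogonal. Thus $w=\sum_j\beta_j$ exhibits $w$ as a positive combination of distinct ext-orthogonal roots, each with multiplicity one. The Generic Decomposition Theorem asserts that such an expression is unique, so the multiset $\{\beta_j\}$ must coincide with $\{m_i\alpha_i\}$; in particular every $m_i=1$ and $\alpha_\ast'=\alpha_\ast$. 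Hence the minimal set is unique, and it is precisely the generic decomposition of $w$.

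The one step I would check most carefully is the passage from semi-simplicity of $\cA b(\alpha_\ast')$ to pairwise ext-orthogonality of the $\beta_j$ \emph{in} $mod\text-\Lambda$, since $ext(\beta_i,\beta_j)$ is computed with $\Ext^1_\Lambda$ rather than inside the subcategory. This is guaranteed by the defining property of a wide subcategory (extension-closed and exactly embedded in $mod\text-\Lambda$), which makes the two Ext-groups agree. Alternatively one can bypass the category entirely via the contrapositive of the implication $(3)\Rightarrow(1)$ in the proof of the Corollary: if two of the $\beta_j$ extended each other, the indecomposable middle term would have dimension vector $\beta_i+\beta_j$, a root, contradicting minimality. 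Once ext-orthogonality is in hand no further computation is needed, and the whole statement reduces to the uniqueness clause of Theorem \ref{thm: generic decomposition}.
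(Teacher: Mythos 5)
Your proof is correct and follows exactly the route the paper takes: existence from Lemma \ref{lem:characterization of admissible weights} (the generic decomposition of an admissible weight is hom-orthogonal, hence a minimal cell), and uniqueness from the uniqueness clause of Theorem \ref{thm: generic decomposition} after using the Corollary to convert minimality into ext-orthogonality. The paper's own proof is just these two sentences; you have supplied the details it leaves implicit, including the one genuinely delicate point (that semi-simplicity of the extension-closed subcategory $\cA b(\beta_\ast)$ gives ext-orthogonality in $mod\text-\Lambda$), which you handle correctly.
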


\begin{proof} Existence follows from the previous Lemma \ref{lem:characterization of admissible weights}. Uniqueness follows from the Generic Decomposition Theorem \ref{thm: generic decomposition}. 
\end{proof}

\subsection{Face operators and cut sets}

{We will define ``face operators'' and use them to lay the ground work to prove in the next subsection that the subquotient complex $C(Q)_{(w)}$ for nonbasic $w$ are acyclic. The first step is to show that a weight $w$ is not basic if and only if it is in the image of one of the face operators $\partial_k^\ast$ which we now define.

\begin{defn}
For any sign function $\vare=(\vare_1,\cdots,\vare_{n-1})$ and any $1\le k\le n-1$ let $\partial_k\vare$ and $s_k\vare$ denote the sign functions of lengths $n-2$ and $n$ given by deleting and repeating $\vare_k$ respectively. Thus, $Q_{\partial_k\vare}$ is obtained from $Q_\vare$ by collapsing the $k$-th arrow and $Q_{s_k\vare}$ is obtained from $Q_\vare$ by repeating the $k$-th arrow. The $k$-th \emph{face operator} is defined to be the functor
\[
	\partial_k^\ast:mod\text-KQ_{\partial_k\vare}\to mod\text-KQ_\vare
\]
which takes a representation $M$, repeats the value $M_k$ of $M$ at vertex $k$, then inserts the identity map between the two copies of $M_k$. The $k$-th \emph{degeneracy operator}
\[
	s_k^\ast:mod\text-KQ_{s_k\vare}\to mod\text-KQ_\vare
\]
is defined to be the functor which takes a representation $M$, deletes the vector space $M_{k+1}$ and inserts the linear map $M_{k}\to M_{k+2}$ (or $M_{k+2}\to M_{k}$) given by composing the morphisms $M_{k}\to M_{k+1}\to M_{k+2}$ (or $M_{k}\ot M_{k+1}\ot M_{k+2}$). 
\end{defn}

From this description, the following proposition is clear.

\begin{prop}
The functors $\partial_k^\ast,s_k^\ast$ satisfy the following.
\begin{enumerate}
\item $\partial_k^\ast:mod\text-KQ_{\partial_k\vare}\to mod\text-KQ_\vare$ is a full and faithful exact embedding whose image is equivalent to the wide subcategory of $mod\text-KQ_\vare$ of all representations for which the $k$-th arrow of $Q_\vare$ is an isomorphism $M_k\cong M_{k+1}$.
\item $s_k^\ast:mod\text-KQ_{s_k\vare}\to mod\text-KQ_\vare$ is an exact epimorphism.
\item $\partial_k s_k\vare=\vare=\partial_{k+1}s_k\vare$ and the compositions $s_k^\ast\circ \partial_k^\ast$ and $s_k^\ast\circ \partial_{k+1}^\ast$ are the identity functor on $mod\text-KQ_\vare$.
\end{enumerate}
\end{prop}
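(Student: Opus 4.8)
The plan is to verify the three assertions directly from the vertex-wise descriptions of $\partial_k^\ast$ and $s_k^\ast$, since each functor only manipulates the vector space at one distinguished vertex and the one or two structure maps adjacent to it, leaving every other vertex untouched. I will repeatedly use that kernels, cokernels, images and exactness in a representation category $mod\text-KQ$ are computed vertex by vertex; consequently a functor built vertex by vertex is automatically exact as soon as the operation it performs at each vertex is exact.

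For (1), faithfulness is immediate, since a morphism $f\colon M\to N$ in $mod\text-KQ_{\partial_k\vare}$ is a tuple of linear maps and $\partial_k^\ast f$ merely records the same maps with the component at the doubled vertex repeated, so $f$ is recoverable from $\partial_k^\ast f$. For fullness, let $g\colon \partial_k^\ast M\to \partial_k^\ast N$ be arbitrary; because the $k$-th structure map of both $\partial_k^\ast M$ and $\partial_k^\ast N$ is the identity, the naturality square of $g$ at the $k$-th arrow reads $g_{k+1}\circ\mathrm{id}=\mathrm{id}\circ g_k$, forcing $g_{k+1}=g_k$, so $g$ descends to an $f\colon M\to N$ with $\partial_k^\ast f=g$. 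Exactness follows from the vertex-wise principle above. Finally, the strict image of $\partial_k^\ast$ consists of the representations whose $k$-th map is literally the identity; rescaling the target of that map by its inverse shows that the essential image is exactly the full subcategory $\cW_k$ of $Q_\vare$-representations whose $k$-th arrow is an isomorphism $M_k\cong M_{k+1}$. Being the essential image of a fully faithful exact functor, $\cW_k$ is abelian and exactly embedded, and it is closed under extensions by the five lemma applied vertex-wise at the pair $k,k+1$ (an extension of two isomorphisms is an isomorphism); hence $\cW_k$ is wide and $\partial_k^\ast$ is an equivalence onto it.

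For (2), exactness of $s_k^\ast$ is again the vertex-wise principle: deleting vertex $k+1$ leaves the remaining vertex-wise sequences intact, and replacing two composable structure maps by their composite has no effect on the exactness of the underlying sequences of vector spaces. That $s_k^\ast$ is essentially surjective, hence an epimorphism, drops out of (3) once we know $s_k^\ast\circ\partial_k^\ast=\mathrm{id}$. For (3), the identities $\partial_k s_k\vare=\vare=\partial_{k+1}s_k\vare$ are pure combinatorics: inserting a repeated copy of $\vare_k$ and then deleting either copy returns $\vare$. To see $s_k^\ast\circ\partial_k^\ast=\mathrm{id}$, observe that $\partial_k^\ast$ sends $M$ to the $Q_{s_k\vare}$-representation in which the first of the two doubled arrows is the identity; then $s_k^\ast$ deletes the inserted vertex and composes the two arrows, and composing with that identity returns the original structure map, so $M$ is recovered. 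The computation for $\partial_{k+1}^\ast$ is identical, the identity now sitting on the second of the two doubled arrows.

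The only step that is more than bookkeeping is the identification of the essential image in (1) together with the proof that it is wide; here the five-lemma argument for closure under extensions is the crux, while everything else reduces to tracking which structure map becomes the identity and keeping the vertex labels straight across the insertion or deletion of a vertex.
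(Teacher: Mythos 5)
Your verification is correct and is exactly the direct, vertex-by-vertex check that the paper has in mind: the paper offers no written argument at all (it states only that the proposition ``is clear'' from the description of the functors), so your proposal simply supplies the details the authors omit. All the key points --- fullness via the naturality square at the identity arrow, exactness being computed vertex-wise, the five-lemma argument for closure of the essential image under extensions, and the identification $s_k^\ast\circ\partial_k^\ast=\mathrm{id}$ giving essential surjectivity of $s_k^\ast$ --- are sound.
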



\begin{defn} We define the \emph{resolution set} $R(w)$ of any admissible weight $w$ to be the set of all integers $k$ so that $w_k=w_{k+1}>0$. 
\end{defn}

For any $k\in R(w)$ we will show how to ``cut'' $w$ at the $k$-th arrow $k\to k+1$ (or $k\ot k+1$) to obtain a non-minimal cell with weight $w$. 

\begin{rem}\label{rem: properties of R(w)}Let $I(w)=\{k\,|\, w_k<w_{k+1}\}$, $J(w)=\{k\,|\, w_k>w_{k+1}\}$. If $[\alpha_\ast]=[\alpha_1,\cdots,\alpha_k]$ is any cell of weight $w$ and $\beta_{ij}\in \alpha_\ast$ then we must have
\begin{enumerate}
\item $i\in I(w)\cup R(w)$ and $j\in J(w)\cup R(w)$.
\item If $k\in R(w)$ appears as a subscript of some element of $\alpha_\ast$, it appears exactly twice, once as a right subscript and once as a left subscript. (It must appear the same number of times on both sides since $w_k=w_{k+1}$. Since $\beta_{ik},\beta_{jk}$ are not hom-orthogonal this number is at most one.)
\item If $\beta_{ik},\beta_{kj}$ are both elements of $\alpha_\ast$ then $k\in R(w)$ and $\beta_{ij}$ is hom-orthogonal to all other elements of $\alpha_\ast$. (This follows from the calculation $\<\alpha_p,\beta_{ij}\>=\<\alpha_p,\beta_{ik}\>+\<\alpha_p,\beta_{kj}\>\le0$ and, similarly, $\<\beta_{ij},\alpha_p\>\le0$ for any other element $\alpha_p$ of $\alpha_\ast$.)
\end{enumerate}
\end{rem}

We define the \emph{cut set} $C(\alpha_\ast)$ of $\alpha_\ast$ to be the set of all $k\in R(w)$ which occurs as an subscript of some element of $\alpha_\ast$. By Remark \ref{rem: properties of R(w)}(2), $\beta_{ik},\beta_{kj}$ must both occur as elements of $\alpha_\ast$.

\begin{lem}\label{lem: cut sets give resolutions}
Every subset $S\subseteq R(w)$ is the cut set of a unique hom-orthogonal set $\alpha_\ast$ of weight $w$.
\end{lem}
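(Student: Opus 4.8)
The plan is to exhibit the cut-set map $\alpha_\ast\mapsto C(\alpha_\ast)$ as a bijection from hom-orthogonal sets of weight $w$ onto subsets of $R(w)$, and the first step is to pin down the endpoint data of such a set. For a hom-orthogonal set $\alpha_\ast$ of weight $w$, write $L$ for the set of left subscripts and $Rt$ for the set of right subscripts occurring among its roots (each position occurs at most once in each, by Remark \ref{rem: properties of R(w)}(2) and the counting below). Using Remark \ref{rem: properties of R(w)}(1), the disjointness of $I(w),J(w),R(w)$, and the fact that admissibility forces $w_{k+1}-w_k\in\{-1,0,1\}$ (Lemma \ref{lem:characterization of admissible weights}), I would show that every $k\in I(w)$ lies in $L$ but not $Rt$, every $k\in J(w)$ lies in $Rt$ but not $L$, every position with $w_k=w_{k+1}=0$ lies in neither, and every $k\in R(w)$ lies in both or in neither according as $k\in C(\alpha_\ast)$ or not. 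Hence, if $S=C(\alpha_\ast)$, then necessarily $L=I(w)\cup S$ and $Rt=J(w)\cup S$; moreover the coverage of coordinate $k$ is $\#(L\cap[0,k-1])-\#(Rt\cap[0,k-1])$, and a telescoping count ($+1$ at each $I$-position, $-1$ at each $J$-position, $0$ at each cut) shows this equals $w_k$ automatically. Thus $S$ determines the endpoint sets, and the lemma reduces to the assertion that $L$ and $Rt$ admit exactly one matching into a pairwise noncrossing (equivalently hom-orthogonal, by Lemma \ref{lem: hom-orthogonal roots in An}) family of roots.

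For existence and for tracking the cut set I would induct on $n$ using the face operator $\partial_k^\ast$. If $R(w)=\emptyset$ the only possibility is $S=\emptyset$, and here I first note that a hom-orthogonal set of weight $w$ has empty cut set if and only if it is minimal: two of its roots concatenate at a shared subscript $j$ (so their sum is again a root) only if $j$ is simultaneously a left and a right subscript, which by Remark \ref{rem: properties of R(w)}(1) forces $j\in R(w)$, contradicting $C(\alpha_\ast)=\emptyset$; the converse is immediate. Since the minimal set of weight $w$ is unique (Lemma \ref{lem:uniqueness of ss sets}) and is produced explicitly by the generic decomposition (Theorem \ref{thm: description of generic decomposition of w}), the case $S=\emptyset$ is settled. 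If $R(w)\neq\emptyset$, pick $k\in R(w)$. Because $w_k=w_{k+1}$, a hom-orthogonal set of weight $w$ has $k\notin C(\alpha_\ast)$ exactly when no root uses $k$ as a subscript, i.e. when the $k$-th arrow acts as an isomorphism on every $M_\beta$; these are precisely the objects in the image of $\partial_k^\ast$, so such sets correspond bijectively, and compatibly with cut sets, to hom-orthogonal sets of weight $\partial_k w$ for $Q_{\partial_k\vare}$ (subscripts above $k$ shifting by one, and $R(\partial_k w)$ identifying with $R(w)\setminus\{k\}$). Induction then handles every $S$ with $k\notin S$.

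The remaining case $k\in S$ is where the only real difficulty lies. The merge operation of Remark \ref{rem: properties of R(w)}(3) sends a set with $k\in C(\alpha_\ast)$, replacing its unique pair $\beta_{ik},\beta_{kj}$ by $\beta_{ij}$, to a set with $k\notin C(\alpha_\ast)$, and this map is manifestly injective; the hard part is that it is a bijection. Equivalently, I must show that in a hom-orthogonal set of weight $w$ with $k\notin C(\alpha_\ast)$ there is exactly one root $\beta_{ij}$ with $i<k<j$ (a ``through-root'' at the $k$-th arrow) that may be cut into $\beta_{ik},\beta_{kj}$ while preserving hom-orthogonality. I expect to establish this by a local analysis at the $k$-th arrow based on the explicit height model of the generic decomposition in Theorem \ref{thm: description of generic decomposition of w}: the through-roots at $k$ are the $w_k=w_{k+1}$ horizontal segments over the two columns $k,k+1$, and I would prove that the cuttable segment is forced to be the topmost one when $\vare_k=+$ and the bottom-most one when $\vare_k=-$, since cutting any other segment yields a piece $\beta_{ik}$ or $\beta_{kj}$ that, against the through-root at the neighbouring height, violates one of the sign conditions of Definition \ref{def of noncrossing} (the nesting cases (1)--(2) or the overlap cases (3)--(4), whose $\vare_k$-dependence is exactly what singles out the admissible break). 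Granting this, merge and cut are mutually inverse, so sets with $k\in C(\alpha_\ast)$ biject with sets with $k\notin C(\alpha_\ast)$, hence with $\{S\subseteq R(w):k\in S\}$ by the inductive step. Combined with the previous paragraph, this presents $\alpha_\ast\mapsto C(\alpha_\ast)$ as a bijection onto $2^{R(w)}$, which is the assertion of the lemma. The $\vare$-dependent verification that the break height is unique is the main obstacle, and everything else is bookkeeping around it.
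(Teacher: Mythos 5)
Your overall architecture is sound and genuinely different from the paper's, but it contains a real gap: the entire content of the lemma, in your setup, is concentrated in the claim that for each $k\in R(w)$ and each hom-orthogonal set $\alpha_\ast$ of weight $w$ with $k\notin C(\alpha_\ast)$ there is \emph{exactly one} through-root $\beta_{ij}$ ($i<k<j$) whose replacement by $\beta_{ik},\beta_{kj}$ preserves hom-orthogonality, and you do not prove this. You say you ``expect to establish'' it by a local analysis of the sign conditions in Definition \ref{def of noncrossing} around the height model of Theorem \ref{thm: description of generic decomposition of w}, and you then proceed ``granting this.'' But nothing in the proposal verifies either half: that the extremal segment (topmost for $\vare_k=+$, bottom-most for $\vare_k=-$) really can always be cut against \emph{all} other roots of $\alpha_\ast$ (not just the neighbouring through-root), or that every non-extremal segment really cannot. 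Since you yourself identify this as ``the main obstacle'' and everything else as ``bookkeeping around it,'' the proof is not complete. A secondary, smaller point: your reduction in the first paragraph to ``$L$ and $Rt$ admit exactly one noncrossing matching'' is fine as a reformulation, but it is never used by the rest of the argument and the induction does not discharge it either.

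For comparison, the paper avoids this local sign analysis entirely. Given $S\subseteq R(w)$, it applies degeneracy operators at the positions of $S$ to enlarge the quiver, lifts $w$ by repeating the coordinate at each inserted vertex, and then \emph{decreases} the value at each new vertex by one to get an admissible weight $w''$ on the larger quiver. The set $\beta_w(S)$ is defined as the image of the generic decomposition $\alpha(S)$ of $w''$ under the degeneracy functors; existence of a set with cut set exactly $S$ then follows from exactness of $\partial_k^\ast$ and $s_k^\ast\circ\partial_k^\ast=\mathrm{id}$, and Remark \ref{rem: when numbers are the same} shows no unwanted cuts appear. Uniqueness is obtained by lifting an arbitrary $\alpha_\ast$ with cut set $S$ to a hom- and ext-orthogonal decomposition of $w''$ and invoking the uniqueness of the generic decomposition (Theorem \ref{thm: generic decomposition}). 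In other words, the paper outsources precisely the step you left open to Kac's canonical decomposition theorem. If you want to salvage your route, you must actually carry out the $\vare$-dependent case analysis of Definition \ref{def of noncrossing} for the cut pieces against every other root; alternatively, you could import the paper's lifting trick just for that one step.
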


We will use the notation $\beta_w(S)$ for the unique hom-orthogonal set with cut set $S$.

\begin{proof}
(Existence) Let $S$ be any subset of $R(w)$. Then $S$ corresponds to a set of arrows in $Q_\vare$ so that the value of $w_k$ is the same at the start and end of each of these arrows. Apply the degeneracy operator to each arrow to obtain a larger quiver $Q_{\vare'}$ where $\vare'=s_{j_1}s_{j_2}\cdots s_{j_m}\vare$ where $S=\{j_1,j_2,\cdots,j_m\}$ in increasing order. Then apply face operators to $w$ to repeat the value of $w_k=w_{k+1}$ at the new vertices. For example, if $w=(1,2,3,3,2,1,1)$ and $S=\{3,6\}$, then we get $w'=\partial_3^\ast \partial_6^\ast(w)=(1,2,3,3,3,2,1,1,1)$. (In general, $w'=\partial_{j_1}^\ast \partial_{j_2}^\ast\cdots \partial_{j_m}^\ast w$.) Now decrease the value of $w'$ at the new vertices by one to obtain $w''$ which is still admissible. In the example, $w''=(1,2,3,{\underline2},3,2,1,{\underline0},1)$. 

Let $\alpha(S)=[\alpha_i]$ be the generic decomposition of $w''$. Let $\beta(S)$ be the image of $\alpha(S)$ in $\Phi^+(Q_\vare)$ under the degeneracy operators which delete all the new vertices. Then we claim that $\beta(S)$ is a hom-orthogonal cell for $w$ with cut set $C(\beta(S))=S$. The reason is that $\alpha(S)$ necessarily has elements $\beta_{ij_1}$ and $\beta_{j_1+1,k}$ for some $i,k$ where $j_1$ is the smallest element of $S$. For any other element $\alpha_p$ of $\alpha_\ast$, $s_{j_1+1}^\ast$ must take $\alpha_p$ and $\beta_{ij_1}$ to hom-orthogonal roots since $\alpha_p$ and $\beta_{ij_1}$ are in the image of the face operator $\partial_{j_1+1}^\ast$ which is exact and $s_{j_1+1}^\ast\circ \partial_{j_1+1}^\ast$ is the identity operator. Repeat this argument for the other elements of $S$. This proves that $C(\beta(S))$ contains $S$. To see that $C(\beta(S))=S$ note that for every $k\in R(W)\backslash S$, the subscript $\tilde k$ corresponding to $k$ in $w''$ has the property that $\beta_{i\tilde k}$ and $\beta_{\tilde kj}$ do not occur in any generic decomposition of $w''$ by Remark \ref{rem: when numbers are the same}. Therefore, $k$ is not in $C(\beta(S))$.

(Uniqueness) Let $\alpha_\ast$ be any hom-orthogonal set with weight $w$ and cut set $C(\alpha_\ast)=S\subseteq R(w)$. Then we claim that $\alpha_\ast=\beta(S)$ the set constructed above. The reason is that both sets must lift to $\alpha(S)$ the unique minimal cell of weight $w''$. The lifting is given as follows. For each $\beta_{ij}$ in the set $\alpha_\ast$, if $j$ are not in the set $S$ then we lift $\beta_{ij}$ to $\partial^\ast(\beta_{ij})$ where $\partial^\ast$ is the composition of the face operators which repeat each vertex $j_i$ in $S$. If $j\in S$ then we lift $\beta_{ij}$ to $\partial^\ast(\beta_{ij})$ then decrease the last nonzero coordinate by one. In this way, the liftings of the elements of $\alpha_\ast$ will add up to $w''$ and not to $w'$ in the notation of the existence proof. This procedure lists the elements of $\alpha_\ast$ to roots which are both hom and ext-orthogonal. Therefore, the lifting must be equal to $\alpha(S)$ and $\alpha_\ast$ must be equal to $\beta(S)$.
\end{proof}
}

\begin{eg}In the graphical example for Theorem \ref{thm: description of generic decomposition of w}, $n=7$, $\vare=(+,-,-,+,+,+)$ and $(1,2,3,3,2,1,2)$ is not admissible. But $w=(1,2,3,3,2,1,1)$ would be admissible with generic decomposition $w=\beta_{05}+\beta_{14}+\beta_{27}$. (When we delete the root $\beta_{56}$, the remaining roots remain ext-orthogonal.) In the figure, the isolated dot on the right should be deleted. The resolution set is $R(1233211)=\{3,6\}$ since $w_3=w_4$ and $w_6=w_7$ and $\beta_w(3,6)=(\beta_{05},\beta_{14},\beta_{23},\beta_{36},\beta_{67})$. This comes from the generic decomposition $(\beta_{06},\beta_{15},\beta_{23},\beta_{47},\beta_{89})$ of $w''=(123232101)$. If the sign were $(+,-,+,+,+,+)$, the root $\beta_{14}$ (at the top of the diagram) would have been cut at $3$ instead of $\beta_{27}$ (at the bottom of the diagram).
\end{eg}

\begin{prop}\label{prop:characterization of basic weights} Let $w\in\NN^n$ be an admissible weight. Then the following are equivalent.
\begin{enumerate}
\item $w$ is basic.
\item There exists a unique hom-orthogonal set $\beta_\ast$ with weight $w$.
\item $R(w)$ is empty, i.e., $w_k\neq w_{k+1}$ for all $i$ except in the case $w_k=0=w_{k+1}$. 
\end{enumerate}
\end{prop}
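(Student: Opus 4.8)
The plan is to read the proposition off from the cut set bijection of Lemma \ref{lem: cut sets give resolutions}, reducing it to a cardinality count. The equivalence $(1)\Leftrightarrow(2)$ requires no argument: by Definition \ref{def: admissible and basic weights} a \emph{cell of weight $w$} is exactly a hom-orthogonal set $\beta_\ast$ with $wt(\beta_\ast)=w$, and $w$ is \emph{basic} precisely when there is exactly one such cell, which is verbatim statement $(2)$.

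The substance is therefore $(2)\Leftrightarrow(3)$, which I would prove by counting the hom-orthogonal sets of weight $w$ via their cut sets. Every such set $\alpha_\ast$ has a cut set $C(\alpha_\ast)$ which, by definition, is a subset of $R(w)$. Lemma \ref{lem: cut sets give resolutions} asserts that for each $S\subseteq R(w)$ there is a \emph{unique} hom-orthogonal set $\beta_w(S)$ of weight $w$ with $C(\beta_w(S))=S$. I would package this as a bijection $S\mapsto\beta_w(S)$ from the power set of $R(w)$ onto the collection of hom-orthogonal sets of weight $w$: it is injective because $C(\beta_w(S))=S$ recovers $S$, and surjective because any such $\alpha_\ast$ equals $\beta_w(C(\alpha_\ast))$ by the uniqueness clause of the lemma. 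Hence the number of hom-orthogonal sets of weight $w$ is exactly $2^{|R(w)|}$.

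With this count in hand the rest is immediate: there is exactly one hom-orthogonal set of weight $w$ if and only if $2^{|R(w)|}=1$, that is, $R(w)=\emptyset$. Unwinding $R(w)=\{k\,:\,w_k=w_{k+1}>0\}$, emptiness of $R(w)$ says precisely that $w_k\neq w_{k+1}$ whenever $w_k>0$, which is statement $(3)$. The genuine work is wholly absorbed into Lemma \ref{lem: cut sets give resolutions}; granting that bijection, the only residual points are the trivial observations that a cut set lies in $R(w)$ and that $2^m=1$ forces $m=0$, so I anticipate no real obstacle in this final step.
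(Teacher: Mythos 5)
Your proposal is correct and takes essentially the same route as the paper: both arguments reduce the proposition to Lemma \ref{lem: cut sets give resolutions}, which gives the bijection $S\mapsto\beta_w(S)$ between subsets of $R(w)$ and hom-orthogonal sets of weight $w$. Your packaging of this as the single count $2^{|R(w)|}=1\iff R(w)=\emptyset$ is a slightly tidier presentation of the paper's two-step argument (exhibiting a second cell when $R(w)\neq\emptyset$, and invoking the bijection when $R(w)=\emptyset$), but the substance is identical.
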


\begin{proof} $(2) \then (1)$ follows from the definition of a basic weight (Definition \ref{def: admissible and basic weights}).

$(1)\then (3)$ 
{If $R(w)$ is not empty then, by Lemma \ref{lem: cut sets give resolutions}, there is a cell $[\alpha_\ast]$ with nonempty cut set $R(w)$. Such an $[\alpha_\ast]$ is not ext-orthogonal. So, $w$ is not basic.}

$(3)\then(2)$ 
{By Lemma \ref{lem: cut sets give resolutions}, hom-orthogonal sets are in bijection with subsets of $R(w)$. When $R(w)$ is empty, it has only one subset.} \end{proof}

\begin{cor}
Let $w=B_{ij}$ be a basic weight consisting of one block with support $(i,j]$ and let $\beta_\ast=[\beta_1,\cdots,\beta_k]$ be the unique hom-orthogonal set with weight $w$. {Then $j-i=2k-1$.}\end{cor}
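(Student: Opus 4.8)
The plan is to reduce the statement to a count of interval endpoints and read off $j-i+1=2k$. Write the unique hom-orthogonal set as $\beta_\ast=\{\beta_{s_1t_1},\dots,\beta_{s_kt_k}\}$, so that each root corresponds to the half-open interval $(s_l,t_l]$. Since $w$ is basic, Proposition \ref{prop:characterization of basic weights} gives $R(w)=\emptyset$, and together with admissibility (Lemma \ref{lem:characterization of admissible weights}) this forces $|w_c-w_{c+1}|=1$ whenever $w_c,w_{c+1}$ are not both $0$. Because the support of $w$ is the single interval $(i,j]$, I would first record the boundary values $w_i=w_{j+1}=0$ and hence $w_{i+1}=w_j=1$, and note that every consecutive difference $\Delta_c:=w_{c+1}-w_c$ with $i\le c\le j$ equals $\pm1$.

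Next I would establish the endpoint-counting identity
\[
	\Delta_c=w_{c+1}-w_c=\#\{l:s_l=c\}-\#\{l:t_l=c\},
\]
which holds because $w_m$ counts exactly the roots whose interval contains $m$, so in passing from $c$ to $c+1$ one gains the roots with left endpoint $c$ and loses the roots with right endpoint $c$. The crux is then to show that each count on the right is at most $1$ and that they are never both nonzero. Two roots sharing a left endpoint give nested intervals $(s,t]$ and $(s,t']$, and two sharing a right endpoint give $(s,t]$ and $(s',t]$; by Lemma \ref{lem: hom-orthogonal roots in An}(b) such a pair is ext-orthogonal but \emph{not} hom-orthogonal, contradicting that $\beta_\ast$ is hom-orthogonal. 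Hence $\#\{l:s_l=c\}\le1$ and $\#\{l:t_l=c\}\le1$ for every $c$. Moreover a single vertex $c$ cannot be simultaneously the right endpoint of some $\beta_{sc}$ and the left endpoint of some $\beta_{ct}$ in $\beta_\ast$: by Remark \ref{rem: properties of R(w)}(3) this would force $c\in R(w)$, contradicting $R(w)=\emptyset$. Consequently $\Delta_c=+1$ means precisely one root starts at $c$ (and none ends there), and $\Delta_c=-1$ means precisely one root ends at $c$ (and none starts there).

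Finally I would count. Since $\Delta_c=\pm1$ for every $c$ with $i\le c\le j$, each of the $j-i+1$ vertices $c\in\{i,i+1,\dots,j\}$ is an endpoint of exactly one root. On the other hand, the $k$ roots contribute exactly $k$ pairwise-distinct left endpoints and $k$ pairwise-distinct right endpoints, with no vertex counted on both sides, so the total number of endpoint occurrences is $2k$. Equating the two counts gives $j-i+1=2k$, that is, $j-i=2k-1$.

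The main obstacle is the bookkeeping in the middle step: verifying that at each vertex the two endpoint-counts cannot both be nonzero and that each is at most one. This is exactly where hom-orthogonality (through Lemma \ref{lem: hom-orthogonal roots in An}(b)) and basicness (through $R(w)=\emptyset$ and Remark \ref{rem: properties of R(w)}(3)) are both genuinely used; once that dichotomy is in place, the rest is a linear tally of endpoints.
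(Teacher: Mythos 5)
Your proof is correct and follows essentially the same route as the paper: both compute the quantity $\sum_{i\le c\le j}|w_{c+1}-w_c|$ in two ways, once as the number of terms $j-i+1$ (each term being $1$ by basicness and admissibility) and once as $2k$ (two endpoint contributions per root). The paper simply asserts that ``each root contributes $2$ to this sum,'' whereas you supply the justification that no cancellation or coincidence of endpoints can occur; this is a worthwhile elaboration of the same argument rather than a different one.
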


\begin{proof} The sum $\sum_{i\le t\le j} |w_{t+1}-w_t|$ is equal to $2k$, twice the number of roots, since each root contributes 2 to this sum. Since $w$ consists of one block, the summands $|w_{t+1}-w_t|$ are equal to 1. So, the sum $2k$ is equal to the number of terms which is $j-i+1$.
\end{proof}

\subsection{Non-basic weights}

Let $w$ be a non-basic weight for $Q_\vare$. Then we will show that the subquotient complex $C_\ast(Q_\vare)_{(w)}$ given in Definition \ref{def: subquotient complex of wt w} has zero homology.

\begin{rem}\label{rem: formula which implies acyclic}
By {Lemma \ref{lem: cut sets give resolutions}}, {modulo terms of higher weight,} the boundary of $\beta_w(s_1,\cdots,s_r)$ is equal to the sum 
\[
	d(\beta_w(s_1,\cdots,s_r))=\sum_{i=1}^r \pm\beta_w(s_1,\cdots,\widehat{s_i},\cdots,s_r)
\]
of $r$ terms, each with coefficient plus or minus 1.
\end{rem}

\begin{lem}\label{lem: subquotient of nonbasic is acyclic}
For any non-basic weight $w$, the subquotient complex $C_\ast(Q_\vare)_{(w)}$ is acyclic.
\end{lem}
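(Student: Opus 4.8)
The plan is to identify $C_\ast(Q_\vare)_{(w)}$ with the augmented (reduced) simplicial chain complex of a simplex and then invoke contractibility. Since $w$ is non-basic, Proposition~\ref{prop:characterization of basic weights} tells us the resolution set $R(w)$ is nonempty; write $m=|R(w)|\ge 1$. By Lemma~\ref{lem: cut sets give resolutions}, the cells of weight exactly $w$ — that is, the free generators of the subquotient complex $C_\ast(Q_\vare)_{(w)}$ — are in bijection with the subsets $S\subseteq R(w)$ via $S\mapsto\beta_w(S)$. Each cut raises the number of roots by one, so $\beta_w(S)$ sits in homological degree $d_0+|S|$, where $d_0$ is the degree of the minimal cell $\beta_w(\emptyset)$ (the generic decomposition of $w$). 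Thus, up to the overall shift by $d_0$, the generators of $C_\ast(Q_\vare)_{(w)}$ are exactly the faces of the full simplex $\Delta$ on the vertex set $R(w)$, with the $j$-element subsets giving the $(j-1)$-dimensional faces and $S=\emptyset$ playing the role of the augmentation.

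Next I would read off the differential from Remark~\ref{rem: formula which implies acyclic}: modulo higher weight (which is precisely what is killed in passing to the subquotient), $d\beta_w(S)=\sum_{s\in S}\pm\beta_w(S\setminus\{s\})$, a signed sum over deletions of single elements of the cut set. This is exactly the simplicial boundary operator on $\Delta$, up to the undetermined signs $\pm 1$. Since $\Delta$ is a nonempty simplex, hence contractible, its reduced homology vanishes, and the expected conclusion is that $C_\ast(Q_\vare)_{(w)}$ is acyclic. I view this conceptual identification as the heart of the statement: the cut sets turn the weight-$w$ part of the chain complex into a Boolean lattice with a deletion differential, i.e.\ into a simplex.

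The only genuine obstacle is the signs: Remark~\ref{rem: formula which implies acyclic} pins the boundary coefficients down only up to $\pm 1$, so I cannot quote the standard simplicial computation verbatim. I would finish by exhibiting an explicit contracting homotopy — the cone on a fixed vertex $r_0\in R(w)$. Write $c_{S,s}=\pm 1$ for the coefficient of $\beta_w(S\setminus\{s\})$ in $d\beta_w(S)$, and set $h\beta_w(S)=c_{S\cup\{r_0\},r_0}\,\beta_w(S\cup\{r_0\})$ when $r_0\notin S$ and $h\beta_w(S)=0$ when $r_0\in S$. Computing $dh+hd$, the diagonal term returns $\beta_w(S)$ because the relevant sign squares to $1$, while each off-diagonal term on $\beta_w((S\setminus\{s\})\cup\{r_0\})$ carries a coefficient that the identity $d^2=0$, applied to $\beta_w(S\cup\{r_0\})$ and read on $\beta_w(S\setminus\{s\})$, forces to vanish (after multiplying through by factors that are $\pm 1$). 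Hence $dh+hd=\mathrm{id}$, proving $C_\ast(Q_\vare)_{(w)}$ is acyclic. The sole point demanding care is this sign cancellation; everything else is the formal statement that a simplex is contractible.
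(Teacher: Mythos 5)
Your proof is correct and follows essentially the same route as the paper: both rest on Lemma~\ref{lem: cut sets give resolutions} (cells of weight $w$ biject with subsets of $R(w)$) and Remark~\ref{rem: formula which implies acyclic} (the induced differential deletes one element of the cut set at a time), so that $C_\ast(Q_\vare)_{(w)}$ becomes the chain complex of a simplex on the nonempty vertex set $R(w)$. The only difference is the final step --- the paper runs an induction over subsets $S\subseteq R(w)$ using the filtration by subcomplexes $C_\ast(S)$ and short exact sequences, whereas you write down the cone on a fixed vertex $r_0$ as an explicit contracting homotopy and verify $dh+hd=\mathrm{id}$ using $d^2=0$ to cancel the off-diagonal terms; your sign bookkeeping is valid and in fact somewhat more careful than the paper's.
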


\begin{proof} {This follows from Remark \ref{rem: formula which implies acyclic} by induction on $r$. Let $S$ be any nonempty subset of $R(w)$. Let $C_\ast(S)$ be the subcomplex of $C_\ast(Q_\vare)_{(w)}$ generated by all $\beta_w(T)$ where $T\subseteq S$. Then we claim that $C_\ast(S)$ is acyclic. When $S=\{s_1\}$ has only one element then $C_\ast(S)$ is a chain complex with two generators $\beta_w(\emptyset)$ and $\beta_w(s_1)$ and $d\beta_w(s_1)=\pm\beta_w(\emptyset)$. So, $C_\ast(s_1)$ is acyclic.

When $S$ has at least two elements let $S=T\cup \{s_0\}$. Then $C_\ast(T)$ is a subcomplex of $C_\ast(S)$ which is acyclic by induction on the size of $S$. The quotient complex $C_\ast(S)/C_\ast(T)$ is also acyclic since it is has the same number of generators $\beta_w(T'\cup \{s_0\})$, $T'\subseteq T$, as has $C_\ast(T)$ and satisfies the formula analogous to Remark \ref{rem: formula which implies acyclic}. Therefore the extension $C_\ast(S)$ of $C_\ast(T)$ by $C_\ast(S)/C_\ast(T)$ is acyclic.}
\end{proof}

This proves the following theorem.

\begin{thm}\label{thm: generators of homology of GAn}
The homology of the associated graded complex $\bigoplus_w C_\ast(Q_\vare)_{(w)}$ is freely generated by the basic hom-orthogonal sets $\beta_\ast$. 
\end{thm}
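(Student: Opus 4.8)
The theorem is really an assembly of the weight-by-weight analysis carried out above, so the plan is to compute the homology of the associated graded complex one weight at a time. Since $\bigoplus_w C_\ast(Q_\vare)_{(w)}$ is by construction a direct sum of chain complexes (the induced differential preserves weight, retaining from Proposition \ref{prop:formula for d<beta>} only the case (1) terms, those with $wt(\alpha_\ast)=wt(\beta_\ast)$), homology commutes with the direct sum and it suffices to compute $H_\ast\big(C_\ast(Q_\vare)_{(w)}\big)$ for each weight $w$ separately. There are three cases, according to whether $w$ is inadmissible, admissible but non-basic, or basic.

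First I would dispose of the weights contributing nothing. If $w\in\NN^n$ is not admissible, then by Definition \ref{def: admissible and basic weights} there is no cell of weight $w$, so $C_\ast(Q_\vare)_{(w)}=0$. If $w$ is admissible but not basic, then $R(w)\neq\emptyset$ by Proposition \ref{prop:characterization of basic weights}, and Lemma \ref{lem: subquotient of nonbasic is acyclic} shows that $C_\ast(Q_\vare)_{(w)}$ is acyclic. In either case the summand contributes no homology.

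The only surviving case is $w$ basic. Here Proposition \ref{prop:characterization of basic weights} guarantees a \emph{unique} hom-orthogonal set $\beta_\ast$ of weight $w$. The subquotient $C_\ast(Q_\vare)_{(w)}$ is freely generated by the cells of weight exactly $w$, so it has the single generator $[\beta_\ast]$, concentrated in the single degree $k$ (the number of roots in $\beta_\ast$). A one-generator complex concentrated in a single degree has zero differential, so its homology is $\ZZ$ in degree $k$. As a cross-check, $R(w)=\emptyset$ forces, via Remark \ref{rem: properties of R(w)}(1), that no two roots of $\beta_\ast$ share an endpoint, hence $\beta_\ast$ is minimal and $d[\beta_\ast]=0$ by the corollary identifying minimal cells with cycles. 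Each basic weight thus contributes exactly one free generator $[\beta_\ast]$.

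Assembling the three cases, $H_\ast\big(\bigoplus_w C_\ast(Q_\vare)_{(w)}\big)$ is free abelian with one generator $[\beta_\ast]$ for each basic hom-orthogonal set $\beta_\ast$, as claimed. I expect no genuine obstacle here: the substantive input is the acyclicity Lemma \ref{lem: subquotient of nonbasic is acyclic} (itself resting on the boundary formula of Remark \ref{rem: formula which implies acyclic}), and everything else is bookkeeping. The one point deserving care is the claim that for basic $w$ the subquotient is concentrated in a single degree, which is precisely where the uniqueness clause of Proposition \ref{prop:characterization of basic weights} is used.
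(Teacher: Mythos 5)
Your proposal is correct and follows essentially the same route as the paper: the paper states this theorem immediately after Lemma \ref{lem: subquotient of nonbasic is acyclic} with the remark ``This proves the following theorem,'' the intended argument being exactly your weight-by-weight decomposition into the inadmissible, non-basic, and basic cases, with the acyclicity lemma handling the non-basic weights and the uniqueness clause of Proposition \ref{prop:characterization of basic weights} handling the basic ones. Your write-up simply makes explicit the bookkeeping the paper leaves implicit.
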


\begin{cor}\label{cor:homology is given by basic ss sets}
The homology of the space $X(Q_\vare)$ is freely generated by the basic hom-orthogonal sets $\beta_\ast$. Furthermore, $\beta_\ast$ is uniquely determined by its weight which is any basic weight.
\end{cor}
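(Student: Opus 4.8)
The plan is to pass from the homology of the associated graded complex, already computed in Theorem~\ref{thm: generators of homology of GAn}, to the homology of the actual cellular chain complex $C_\ast(Q_\vare;\ZZ)$ by a comparison argument for filtered complexes, exploiting the fact that the basic cells are honest cycles and not merely cycles in the associated graded.

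First I would record the two structural facts that drive the argument. By Proposition~\ref{prop:formula for d<beta>} the boundary map $d$ is weight non-decreasing: every term of $d[\beta_\ast]$ has weight $\ge wt(\beta_\ast)$, and the terms of weight exactly $wt(\beta_\ast)$ (case (1)) constitute the differential $d_0$ of the associated graded. Thus the weight filtration makes $C_\ast(Q_\vare;\ZZ)$ into a filtered complex whose associated graded is $(C_\ast,d_0)=\bigoplus_w C_\ast(Q_\vare)_{(w)}$ in the notation of Definition~\ref{def: subquotient complex of wt w}, and Theorem~\ref{thm: generators of homology of GAn} identifies the homology of this associated graded as the free abelian group on the basic hom-orthogonal sets. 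Second, each basic hom-orthogonal set $\beta_\ast$ is in fact a cycle for the full differential $d$: since a basic weight has a unique hom-orthogonal set (Proposition~\ref{prop:characterization of basic weights}) and each admissible weight has a unique minimal set (Lemma~\ref{lem:uniqueness of ss sets}), the unique hom-orthogonal set of a basic weight must coincide with that minimal set, and a minimal cell satisfies $d[\beta_\ast]=0$ by the equivalence of minimality, semisimplicity of $\cA b(\beta_\ast)$, and $d[\beta_\ast]=0$ established above.

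With these in hand I would argue as follows. The weights form only a poset under $\le$, but each cell carries a single weight vector, so refining $\le$ to any linear order $\preceq$ extending it—lexicographic order works, since $w<w'$ implies $w$ precedes $w'$ lexicographically—yields a bounded decreasing filtration of $C_\ast(Q_\vare;\ZZ)$ whose associated graded pieces coincide cell-for-cell with the $C_\ast(Q_\vare)_{(w)}$; here one uses that $d$ sends a weight-$w$ cell to a combination of cells of weight $\ge w$, so the induced differential on each linearly-graded piece retains exactly the weight-preserving terms and thus agrees with $d_0$. Hence the $E^1$ page of the resulting spectral sequence is precisely $H_\ast(C_\ast,d_0)$, free on the basic cells. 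Now let $F$ denote the free abelian group on the basic cells, viewed as a complex with zero differential and with each generator placed in its own weight, and let $\phi\colon F\to C_\ast(Q_\vare;\ZZ)$ send each basic cell to itself; this is legitimate by the second structural fact and is a filtered chain map. On $E^1$ pages the induced map is exactly the isomorphism of Theorem~\ref{thm: generators of homology of GAn}. By the comparison theorem for bounded filtered complexes, a filtered chain map inducing an isomorphism on $E^1$ induces an isomorphism on homology, so $\phi_\ast\colon F=H_\ast(F)\to H_\ast(X(Q_\vare))$ is an isomorphism. This simultaneously shows $H_\ast(X(Q_\vare))$ is free abelian (hence torsion-free) and that the basic cells form a basis. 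The final clause, that $\beta_\ast$ is determined by its weight and that these are exactly the basic weights, is then immediate from Lemma~\ref{lem:uniqueness of ss sets} together with Proposition~\ref{prop:characterization of basic weights}.

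I expect the main obstacle to be the bookkeeping in the transition from the poset-graded to a linearly-graded filtration: verifying that the two associated graded complexes genuinely agree and that the comparison theorem's hypotheses—boundedness of the filtration, which holds since there are finitely many roots and hence finitely many weights—are met. The essential algebraic input, that the basic cells lift to genuine cycles, is what forces the spectral sequence to degenerate at $E^1$ with no extension problems, and checking it reduces cleanly to the already-established characterizations of minimal and basic weights.
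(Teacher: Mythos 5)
Your proposal is correct and follows essentially the same route as the paper: both arguments rest on the weight filtration of $C_\ast(Q_\vare;\ZZ)$, the computation of the associated graded homology in Theorem \ref{thm: generators of homology of GAn}, and the observation that basic cells are honest cycles because they are minimal. The only difference is packaging — you invoke the comparison theorem for bounded filtered complexes (after refining the weight poset lexicographically, which you correctly verify is compatible with $d$), whereas the paper unrolls the same degeneration argument into an explicit minimal-counterexample computation with a doubly extremal choice of weight.
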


\begin{proof}
Minimal sets are cycles in $C_\ast(Q_\vare)$. By the theorem, the basic hom-orthogonal cells $[\beta_\ast]$ generate the homology of the chain complex. It remains to show that no integer linear combination of such cycles is a boundary.

Suppose not. Let $z$ be an integer linear combination of basic cells of degree $k$ which is the boundary of a $k+1$ chain: $z=dc$. Let $w$ be a weight which is minimal in lexicographic order so that $c_w\neq 0$ where $c_w$ is the component of $c$ of weight $w$. Choose $c$ so that this minimal weight $w$ is maximal in lexicographic order. Then $w$ is non-basic since, otherwise, $dc_w=0$ and $c$ can be replaced with $c-c_w$ contradicting the maximality of the minimal weight $w$. This implies that $z_w=0$. So, the image of $c_w$ in $C_\ast(Q_\vare)_{(w)}$ is a cycle and therefore a boundary. Say, $c_w=dx$ in $C_\ast(Q_\vare)_{(w)}$. In the chain complex $C_\ast(Q_\vare)$, the boundary of $x$ may have higher weight terms. So, $c-dx$ has no terms of weight $w$ but has new higher weight terms. This contradicts the maximality of $w$ in all cases. So, we conclude that $z$ is not a boundary and no linear combination of basic minimal cells is a boundary.

Equivalently, the homology of $C_\ast(Q_\vare)$ is isomorphic to the homology of the associated graded chain complex.
\end{proof}

It remains to determine the list of all basic weights.

\subsection{Basic weights}

By Corollary \ref{cor:homology is given by basic ss sets}, $H_k(X(Q_\vare);\ZZ)$ is free abelian for every $n,k$. Let $r(n,k)$ denote its rank. Then $r(n,k)$ is the number of basic weights $w\in \NN^n$ of degree $k$. We show that these numbers are equal to the ``ballot numbers'' by showing that they satisfy the same recursion.

\begin{defn}\label{def of ballot number}
The \emph{ballot number} $b(j,k)$ is defined to be the number of ways in which $j$ ``yes'' votes and $k$ ``no'' votes can be cast in an ordered sequence in such a way that the number of ``yes'' votes is always greater than or equal to the number of ``no'' votes. In particular $b(j,k)=0$ if $j<k$.
\end{defn}

Since the count starts at $(0,0)$ and votes are cast one at a time by assumption, we have the following recursion: $b(j,k)=0$ unless $j\ge k\ge0$, $b(0,0)=1$ and
\[
	b(j,k)=b(j-1,k)+b(j,k-1)
\]
for $j\ge1$. Recall that the $j$-th \emph{Catalan number} is
\[
	C_j=\frac1{j+1}\binom{2j}j.
\]
It is a well-known property of Catalan numbers that $b(j,j)=C_j$. An extension of this observation is the following recursion.

\begin{lem}\label{lem: recursion for b}
For $m\ge k\ge1$ we have
\[
	b(m,k)=b(m-1,k)+\sum_{j=1}^k b(m-j,k-j)C_{j-1}.
\]
\end{lem}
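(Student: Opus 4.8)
The plan is to peel off the first summand with the elementary recursion and then prove the remaining sum by a lattice-path decomposition. First I would invoke the recursion $b(m,k)=b(m-1,k)+b(m,k-1)$ recorded just above. Cancelling the common term $b(m-1,k)$, the assertion becomes equivalent to
\[
b(m,k-1)=\sum_{j=1}^{k} C_{j-1}\,b(m-j,k-j),
\]
so it suffices to prove this for all $m\ge k\ge 1$. Reindexing by $c=j-1$, the right-hand side is $\sum_{c=0}^{k-1} C_{c}\,b(m-1-c,k-1-c)$, which is the shape the combinatorial argument will produce directly.

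I would then pass to the standard lattice-path model: encode each ``yes'' vote as an up-step $+1$ and each ``no'' vote as a down-step $-1$, so that $b(a,b)$ counts lattice paths from the origin with $a$ up-steps and $b$ down-steps all of whose partial sums are $\ge 0$, each ending at height $a-b$. A path counted by $b(m,k-1)$ thus ends at height $m-k+1$, which is $\ge 1$ since $m\ge k$. Decompose such a path $P$ at its \emph{last} visit to height $0$, say at time $t$. The portion of $P$ up to time $t$ is an arbitrary path from $0$ to $0$ staying $\ge 0$, i.e.\ a Dyck path of some semilength $c$, of which there are $C_c$. The portion after time $t$ stays strictly positive, hence begins with an up-step; deleting that up-step and lowering the rest by one unit yields an arbitrary nonnegative path with $m-1-c$ up-steps and $k-1-c$ down-steps, counted by $b(m-1-c,k-1-c)$. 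Re-attaching the shifted suffix to any Dyck prefix inverts the construction, so this is a bijection, and summing over $0\le c\le k-1$ gives the reduced identity. Translating back through $j=c+1$ and recombining with $b(m,k)=b(m-1,k)+b(m,k-1)$ finishes the proof.

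The step requiring the most care is the analysis of the terminal segment. I must check that it really does begin with an up-step: this is exactly where $m\ge k$ (equivalently, ending height $\ge 1$) is used, for otherwise the last return to $0$ would be the endpoint and no Catalan factor would arise. I also need the ``delete the leading up-step and shift down'' map to be a genuine bijection onto all nonnegative paths of the prescribed step counts, and I should note that the convention $b(a,b)=0$ for $a<b$ makes the boundary terms of the sum vanish automatically, so the range $0\le c\le k-1$ needs no separate boundary discussion.
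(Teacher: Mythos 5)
Your proof is correct; the identity and all the boundary conventions check out. It is, however, a genuinely different decomposition from the paper's. The paper also begins by splitting on the last vote (which is exactly your Pascal-type cancellation, phrased combinatorially), but in the ``last vote is no'' case it cuts the full sequence at the last moment before the end at which the running tally equals its final value $m-k$: the terminal $2j$ votes then form an irreducible tied block (an arch), counted by $C_{j-1}$, and the initial segment is a ballot sequence counted by $b(m-j,k-j)$. You instead pass to $b(m,k-1)$ first and cut at the last return to height $0$, so your Catalan factor is a Dyck \emph{prefix} and your index $c=j-1$ records the semilength of the maximal balanced initial segment rather than of the terminal block. These are not the same bijection in disguise: for $m=k=2$ the sequence ``yes, yes, no, no'' contributes to the summand $j=2$ under the paper's decomposition but to $j=1$ under yours, while ``yes, no, yes, no'' does the opposite. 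Both arguments are equally valid and essentially equally short; the paper's avoids the preliminary cancellation step, while yours isolates more cleanly where the hypothesis $m\ge k$ (final height at least $1$, forcing the suffix to begin with an up-step) enters.
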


\begin{proof}
There are two cases.

Case 1: The last vote cast was ``yes''. There are $b(m-1,k)$ ways this could happen.

Case 2: The last vote cast was ``no''. Consider the difference $m-k\ge0$ between the number of ``yes'' votes and the number of ``no'' votes. In case two this number was $m-k+1\ge 1$ before the last vote. Since this difference starts and ends at a smaller number, this difference must have been equal to $m-k$ at some earlier point. Let $j>0$ be minimal so that the last $2j$ votes were tied $j$ in favor and $j$ against. There are $C_{j-1}$ ways these last $2j$ votes could have been cast since the last vote was ``no'' and the first must have been ``yes''. So, there are $C_{j-1}b(m-j,k-j)$ ways that this could happen. 

Adding up all possible cases, we get the stated recursion.
\end{proof}

By Lemma \ref{lem:characterization of admissible weights} and Proposition \ref{prop:characterization of basic weights}, the basic weights $w\in\NN^n$ of degree $k$ are all sequences of nonnegative integers $w=(w_1,\cdots,w_n)$ satisfying the following conditions where $w_0=0=w_{n+1}$ by convention.
\begin{enumerate}
\item	$w_{i+1}=w_i+1\text{ or }w_{i+1}=\max(w_i-1,0)$ for all $i=0,\cdots,n$.
\item There are exactly $k$ values of $i$ for which $w_{i+1}=w_i+1$.
\end{enumerate}
We recall that a \emph{block} of this weight $w$ is a maximal sequence of consecutive positive coordinates. By condition (1) each block has odd length. For example
\[
	w=(12123210012101)
\]
has three blocks $B_{07},B_{9,12},B_{13,14}$ of lengths $7,3,1$ and degrees $4,2,1$ respectively. There is only one possible block of length 1 and of length 3 which are as given in the example. However, there are two possible blocks of length 5: 12321 and 12121. And there are 5 possible blocks of weight 7 (with the same support):
\[
	1234321,1232321,1212321,1232121,1212121
\]
Also, a block of length $2j+1$ has degree $j+1$.
\begin{lem}\label{lem: Catalan number of blocks of full support}
The number of possible blocks of length $2j+1$ with a given support is given by the Catalan number $C_j$.
So, there are a total of $(n-2j)C_j$ blocks of length $2j+1$.
\end{lem}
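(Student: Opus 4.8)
The plan is to identify each block of length $2j+1$ with a Dyck-type lattice path and then read off the count from the ballot numbers already recorded in this section. First I would fix a support interval $(p,q]\subseteq(0,n]$ with $q-p=2j+1$ and analyze the shape of an arbitrary block $w_{p+1},\ldots,w_q$ supported there. Since a block is a \emph{maximal} run of positive coordinates, maximality forces $w_p=0=w_{q+1}$; and the defining condition (1) for basic weights, combined with positivity of each $w_i$ inside the block, forces $|w_{i+1}-w_i|=1$ for all $p+1\le i\le q-1$, because the alternative $w_{i+1}=\max(w_i-1,0)$ reduces to $w_i-1$ when $w_i\ge1$. The same two facts pin down the endpoints $w_{p+1}=1$ and $w_q=1$, since a positive value reached from $0$ by a single $\pm1$ step must equal $1$, and likewise a positive value from which $0$ is reached by a $\pm1$ step must equal $1$.

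Next I would encode such a block by its sequence of $2j$ signed steps $d_i=w_{i+1}-w_i\in\{+1,-1\}$ for $p+1\le i\le q-1$. Because the block begins and ends at height $1$, there are exactly $j$ up-steps (those $i$ with $w_{i+1}=w_i+1$) and $j$ down-steps. The constraint $w_i\ge1$ throughout the block translates, reading left to right, into the statement that every prefix has at least as many up-steps as down-steps. This is precisely the ballot condition of Definition \ref{def of ballot number} with $j$ ``yes'' votes and $j$ ``no'' votes, so the number of admissible step sequences, and hence the number of blocks with the fixed support, equals $b(j,j)=C_j$ by the identity $b(j,j)=C_j$ noted above. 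This matches the tabulated cases: $C_1=1$ block of length $3$, $C_2=2$ blocks of length $5$, and $C_3=5$ blocks of length $7$, proving the first assertion.

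Finally I would count supports. A support $(p,q]\subseteq(0,n]$ with $q-p=2j+1$ is determined by its left endpoint $p$, which ranges over $0,1,\ldots,n-2j-1$, giving exactly $n-2j$ choices. Multiplying the number of supports by the number $C_j$ of blocks per support yields the total $(n-2j)C_j$, which is the second assertion.

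The only genuine content lies in the translation of the geometric positivity constraint $w_i\ge1$ into the prefix inequality of the ballot problem; once that correspondence is set up, the count is immediate from $b(j,j)=C_j$. I expect the boundary-value bookkeeping, namely that a block must begin and end at height $1$ with all interior steps equal to $\pm1$, to be the step most in need of care, although it is entirely routine given condition (1) and the maximality of blocks.
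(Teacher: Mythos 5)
Your proof is correct and follows essentially the same route as the paper: the paper's proof also identifies blocks with a fixed support with Dyck paths of length $2j$ (via $f(i)=w_{i+1}-1$) and multiplies by the $n-2j$ choices of support. You merely spell out the boundary bookkeeping ($w_{p+1}=w_q=1$, all interior steps $\pm1$) and phrase the count through the ballot identity $b(j,j)=C_j$ rather than citing Dyck paths directly, which is the same bijection.
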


\begin{proof}
There are $n-2j$ interval $(p,q]$ of length $2j+1$ in $(0,n]$ and, for each such interval, there is a 1-1 correspondence between blocks of length $2j+1$ and Dyck paths of length $2j$ given by  $f(i)=w_{i+1}-1$ for $0\le i\le 2j$. So, there are $(n-2j)C_j$ blocks. 
\end{proof}

\begin{lem}\label{lem: recursion for r}
The ranks $r(n,k)$ are uniquely determined by the following recursion: $r(n,0)=1$ for all $n\ge0$ and for $k>0$ we have:
\[
	r(n,k)=\begin{cases} 0 & \text{if } n\le 2k-2\\
  r(n-1,k)+\sum_{1\le j\le k} r(n-2j,k-j)C_{j-1}  & \text{otherwise}
    \end{cases}
\]
where, for convenience of notation, we use the convention that $r(-1,0)=1$.
\end{lem}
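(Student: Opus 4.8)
The plan is to prove the recursion purely combinatorially, using the fact (established by Lemma \ref{lem:characterization of admissible weights} and Proposition \ref{prop:characterization of basic weights}) that $r(n,k)$ equals the number of sequences $w=(w_1,\dots,w_n)$ with $w_0=0=w_{n+1}$ for which every step satisfies $w_{i+1}=w_i+1$ or $w_{i+1}=\max(w_i-1,0)$ and for which exactly $k$ of the steps are ascents. The two degenerate clauses are quick. The value $r(n,0)=1$ is immediate, since degree $0$ forbids all ascents and forces $w\equiv 0$. For the vanishing clause $r(n,k)=0$ when $n\le 2k-2$, I would invoke the block decomposition of Definition \ref{def: blocks}: a block of degree $d$ has support of length $2d-1$, so a degree-$k$ weight with $m$ blocks occupies $\sum_t(2d_t-1)=2k-m$ positive coordinates and needs at least $m-1$ separating zeros between consecutive blocks, whence $n\ge(2k-m)+(m-1)=2k-1$. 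Thus no such weight exists once $n\le 2k-2$.

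For the main recursion I would partition the basic weights of degree $k$ in $\NN^n$ according to the last coordinate $w_n$. If $w_n=0$, deleting it gives a bijection onto the basic weights $(w_1,\dots,w_{n-1})\in\NN^{n-1}$ of degree $k$: the final step $w_n\to w_{n+1}=0$ is a stay at zero, neither an ascent nor part of a block, so admissibility and the ascent count are preserved in both directions. This yields the term $r(n-1,k)$. If instead $w_n>0$, admissibility at the right boundary forces $\max(w_n-1,0)=w_{n+1}=0$, hence $w_n=1$, so position $n$ lies in the final block. Writing its support as $(p,n]$ with $w_p=0$ (or $p=0$), this block has odd length $n-p=2j-1$ and therefore degree $j$ for some $1\le j\le k$, and by Lemma \ref{lem: Catalan number of blocks of full support} there are exactly $C_{j-1}$ choices for it. The prefix $(w_1,\dots,w_p)$ is an independent basic weight of degree $k-j$ ending in the zero $w_p$, which by the $w_n=0$ bijection corresponds to a basic weight in $\NN^{p-1}=\NN^{n-2j}$ of degree $k-j$. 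Since block and prefix are chosen independently, this case contributes $\sum_{j=1}^{k}r(n-2j,k-j)\,C_{j-1}$, and adding the two cases gives the claimed formula.

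The one point I would handle explicitly rather than as a routine check is the empty-prefix boundary of the second case: when the final block fills all of $(0,n]$, so $p=0$ and no prefix precedes it. This occurs exactly when $n=2j-1$ with $j=k$, and the prefix count becomes $r(n-2k,0)=r(-1,0)$; the stated convention $r(-1,0)=1$ is precisely what makes the $C_{k-1}$ single-block weights of full support be counted once. I expect this dovetailing of the trailing-zero bijection with the convention $r(-1,0)=1$ to be the only delicate part of the argument, the remainder being a clean split of each weight into its last block and the basic weight preceding it.
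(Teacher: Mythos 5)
Your proof is correct and follows essentially the same route as the paper's: the same case split on whether $w_n=0$ or $w_n=1$, peeling off the last block of length $2j-1$ (with $C_{j-1}$ choices by Lemma \ref{lem: Catalan number of blocks of full support}) and counting the remaining prefix by $r(n-2j,k-j)$ with the convention $r(-1,0)=1$ absorbing the full-support case. The only differences are cosmetic: you derive the vanishing for $n\le 2k-2$ from the block-length count rather than from counting up/down steps, and you make the trailing-zero bijection and the $p=0$ boundary more explicit than the paper does.
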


\begin{proof} Since $X(Q_\vare)$ is connected, we have $r(n,0)=1$ for $n\ge0$. The convention $r(-1,0)=1$ is used to define the term $r(n-2j,k-j)$ when $n=2k-1$ and $j=k$. To get from $w_0=0$ to $w_{n+1}=0$ with $k$ steps up and $k$ steps down we must have at least $n+1\ge 2k$. So, $r(n,k)=0$ when $n+1<2k$.

Now consider all basic weight $w$ with $n,k\ge1$. There are two cases.

Case 1: $w_n=0$. In that case $(w_1,\cdots,w_{n-1})$ is a basic weight of degree $k$. So, there are $r(n-1,k)$ weights in this case.

Case 2: $w_n=1$. Let $2j-1$ be the length of the last block of $w$. Then $w_{n-2j}=0$ and $w'=(w_1,\cdots,w_{n-2j-1})$ is a basic weight of degree $k-j$. Since there are $C_{j-1}$ possibilities for the last block of $w$ and there are $r(n-2j,k-j)$ possibilities for $w'$ we have $C_{j-1}r(n-2j,k-j)$ possibilities for $w$ in this case. This proves the recursion.
\end{proof}

\begin{thm}\label{thm: rank of cohomology is ballot nos}
Let $Q$ be a quiver of type $A_n$. Then the integral homology group $H_k(X(Q);\ZZ)$ of the picture space $X(Q)$ is a free abelian group with rank equal to the ballot number $b(n-k+1,k)$ for all $k\ge0$.
\end{thm}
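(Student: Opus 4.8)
The plan is to reduce the theorem to a comparison of two recursions already established in the excerpt. By Corollary \ref{cor:homology is given by basic ss sets}, the group $H_k(X(Q);\ZZ)$ is free abelian of rank $r(n,k)$, the number of basic weights $w\in\NN^n$ of degree $k$. This count is independent of the orientation: by Lemma \ref{lem:characterization of admissible weights} admissibility of $w$ is the orientation-free condition $|w_i-w_{i+1}|\le1$, and by Proposition \ref{prop:characterization of basic weights} basicness is the orientation-free condition $R(w)=\emptyset$; the degree $k$ is likewise read off $w$ as the number of indices with $w_{i+1}=w_i+1$. Hence it suffices to prove the purely combinatorial identity $r(n,k)=b(n-k+1,k)$, after which freeness and the rank statement hold for every orientation of $Q$.

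To prove this identity I would set $\rho(n,k):=b(n-k+1,k)$, extended by the convention $\rho(-1,0):=b(0,0)=1$, and show that $\rho$ satisfies the recursion of Lemma \ref{lem: recursion for r}, which determines its solution uniquely. The base case is immediate: $\rho(n,0)=b(n+1,0)=1=r(n,0)$. For the recursive step, fix $k\ge1$ and $n\ge 2k-1$ and put $m=n-k+1$, so that $m\ge k\ge1$ and Lemma \ref{lem: recursion for b} applies, giving
\[
	b(m,k)=b(m-1,k)+\sum_{j=1}^k b(m-j,k-j)C_{j-1}.
\]
The substitution $m=n-k+1$ turns the ballot numbers on the right into $b(m-1,k)=b\big((n-1)-k+1,k\big)=\rho(n-1,k)$ and $b(m-j,k-j)=b\big((n-2j)-(k-j)+1,k-j\big)=\rho(n-2j,k-j)$, so the displayed identity becomes exactly
\[
	\rho(n,k)=\rho(n-1,k)+\sum_{j=1}^k \rho(n-2j,k-j)C_{j-1},
\]
which is the recursion of Lemma \ref{lem: recursion for r}.

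What remains is bookkeeping at the boundary, which is where I expect the only real care to be needed. First, the vanishing range must match: for $k\ge1$ and $n\le 2k-2$ one has $n-k+1\le k-1<k$, so $\rho(n,k)=b(n-k+1,k)=0=r(n,k)$ because ballot numbers vanish when the first argument is smaller than the second; this also confirms $m\ge k$ throughout the recursive range, legitimizing the use of Lemma \ref{lem: recursion for b}. Second, the extreme summand $j=k$ at $n=2k-1$ produces $\rho(-1,0)=b(0,0)=1$, which is precisely the convention $r(-1,0)=1$ used in Lemma \ref{lem: recursion for r}, while for $j<k$ in the recursive range the index $n-2j\ge1$ stays in the ordinary domain. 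With these checks, $\rho$ and $r$ satisfy the same recursion and initial data, hence $r(n,k)=b(n-k+1,k)$ for all $k\ge0$; combining this with Corollary \ref{cor:homology is given by basic ss sets} completes the proof.
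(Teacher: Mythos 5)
Your proposal is correct and is exactly the argument the paper intends: the paper's proof is the one-line statement that Lemmas \ref{lem: recursion for b} and \ref{lem: recursion for r} imply $r(n,k)=b(n-k+1,k)$, and you have simply carried out the substitution $m=n-k+1$ and verified the base cases and boundary conventions that make the two recursions coincide. No gaps; your boundary bookkeeping (the vanishing range $n\le 2k-2$ and the $j=k$ term matching the convention $r(-1,0)=1$) is precisely the care the paper leaves implicit.
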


\begin{proof}
Lemmas \ref{lem: recursion for b} and \ref{lem: recursion for r} imply $
	r(n,k)=b(n-k+1,k)
$. The theorem follows.
\end{proof}

\begin{cor}\label{cor: cohomological dimension of X(An)}
Let $Q$ be any quiver of type $A_n$. Then $H_k(X(Q);\ZZ)=0$ for $k>\frac{n+1}2$ and is nonzero for $0\le k\le\frac{n+1}2$. When $n=2k-1$, $H_k(X(Q))$ is free of rank $C_{k}$.
\end{cor}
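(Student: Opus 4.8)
The plan is to derive the entire corollary directly from Theorem \ref{thm: rank of cohomology is ballot nos}, which already identifies $H_k(X(Q);\ZZ)$ as a free abelian group of rank $b(n-k+1,k)$, so that it suffices to analyze when this ballot number vanishes, when it is positive, and what it equals in the extremal case. Since the group is free abelian, it is zero if and only if its rank is zero, and the entire statement becomes a purely arithmetic question about the function $b(n-k+1,k)$.

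First I would record the vanishing/positivity dichotomy for ballot numbers: $b(j,k)=0$ precisely when $j<k$, and $b(j,k)\ge 1$ whenever $j\ge k\ge 0$. The vanishing half is immediate from Definition \ref{def of ballot number}. For strict positivity when $j\ge k$, the sequence consisting of $j$ ``yes'' votes followed by $k$ ``no'' votes is a valid ballot sequence (at every prefix the number of ``yes'' votes is at least the number of ``no'' votes), so $b(j,k)\ge 1$; alternatively this follows by an easy induction on $j+k$ using the recursion $b(j,k)=b(j-1,k)+b(j,k-1)$. This positivity is really the only point that needs an argument beyond quoting earlier results.

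Next I would reindex by setting $j=n-k+1$. The rank $b(n-k+1,k)$ vanishes exactly when $n-k+1<k$, i.e. when $2k>n+1$, equivalently $k>\frac{n+1}{2}$; and it is positive exactly when $n-k+1\ge k$, i.e. $2k\le n+1$, equivalently $0\le k\le\frac{n+1}{2}$ (with $k$ ranging over integers, so in effect $k\le\lfloor (n+1)/2\rfloor$). Together with freeness this yields the first two assertions of the corollary simultaneously. Finally, for the top case $n=2k-1$ I would observe that $n-k+1=k$, so the rank is $b(k,k)$, which equals the Catalan number $C_k$ by the well-known identity $b(k,k)=C_k$ recorded just before Lemma \ref{lem: recursion for b}. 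This gives the last assertion.

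I do not anticipate any genuine obstacle: once the strict positivity of $b(j,k)$ for $j\ge k$ is in hand, the corollary is a routine arithmetic reindexing of Theorem \ref{thm: rank of cohomology is ballot nos}.
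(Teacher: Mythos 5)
Your proposal is correct and follows essentially the same route as the paper, which likewise deduces the corollary from the rank formula $b(n-k+1,k)$ together with the vanishing of $b(j,k)$ for $j<k$, its positivity for $j\ge k$, and the identity $b(k,k)=C_k$; you merely supply the (easy) positivity argument that the paper asserts without proof. Note that your use of $b(k,k)=C_k$ is the correct identity, consistent with the statement and with the remark preceding Lemma \ref{lem: recursion for b}, whereas the paper's own one-line proof contains a typo reading $b(k,k)=C_{k-1}$.
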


\begin{proof}
This follows from the observation that $b(m,k)=0$ for $k>m$, $b(k,k)=C_{k-1}$ and $b(n,k)\neq0$ for $k\le m$.
\end{proof}

Note that, for any Dynkin quiver $Q$ with $n$ vertices, $X(Q)$ is an $n$-dimensional CW complex. So, we  always have: $H_k(X(Q))=0$ for $k>n$. Returning to type $A_n$ we have:

\begin{rem} Let $Q$ be a quiver of type $A_n$. Then the rank of $H^k(X(Q))$ is given as follows for $n\le 9$.
\[
\begin{array}{cccccccc}
n & \lfloor\frac {n+1}2\rfloor & rk\,H^0& rk\,H^1& rk\,H^2& rk\,H^3& rk\,H^4& rk\,H^5\\
0 & 0 & 1 & 0\\
1 & 1 & 1 & 1\\
2 & 1 & 1 & 2 \\
3 & 2 & 1 & 3 & 2\\
4 & 2 & 1 & 4 & 5\\
5 & 3 & 1 & 5 & 9 & 5\\
6 & 3 & 1 & 6 & 14 & 14\\
7 & 4 & 1 & 7 & 20 & 28 & 14\\
8 & 4 & 1 & 8 & 27 & 48 & 42\\
9 &5 &1 & 9 & 35 & 75 & 90 & 42
\end{array}
\]
These numbers are easy to compute: each nonzero rank is the sum of the number above it and above and to the left of it (similar to Pascal's triangle).
\end{rem}

\begin{summ}In Section \ref{sec4}, we showed that the homology of the space $X(A_n)$ is freely generated by ``basic weights''. These are disjoint unions of ``blocks''. Blocks are enumerated using Catalan numbers and the basic weights are enumerated by ballot numbers.
\end{summ}



\section{Cup product structure}\label{sec5}

We now determine the cup product structure on the cohomology ring $H^\ast(X(Q_\vare);\ZZ)$. We use the fact that $X(Q_\vare)$ is a $K(\pi,1)$ for the picture group $G_0(Q_\vare)$. This is proved in detail in \cite{IT13} for any modulated quiver of finite representation type and in \cite{Cat0} for $\vare=(+,+,\cdots,+)$. So, we deal with the cohomology of the group $G_0(Q_\vare)$ instead of the space $X(Q_\vare)$. Since the homology is freely generated by the set of basic weights $w$, the cohomology is also freely generated as an additive group by the dual elements $w^\ast$. We will show that, as a ring, the cohomology is generated by the duals $w^\ast$ to weights $w$ having only one block. We call such generators \emph{dual blocks}. Theorem \ref{thm: cup product structure of Hast(G_0(An))} gives the complete list of relations: The cup product of dual blocks is nonzero if and only if their ``extended supports'' are pairwise disjoint. 

\subsection{Subgroups of $G_0(Q_\vare)$}

As a special case of Theorem \ref{thm: presentation of the picture group determined by the spherical semi-invariant picture} we have the following description of the picture group $G_0(Q_\vare)$.

\begin{prop}\label{prop: presentation of G_0(Qe)}
For $\vare=(\vare_1,\cdots,\vare_{n-1})$, the group $G_0(Q_\vare)$ has generators $x_{ij}=x(\beta_{ij})$ for $0\le i<j\le n$ modulo the following relations where $[x,y]:=y^{-1}xyx^{-1}$.
\begin{enumerate}
\item $[x_{ij}, x_{k\ell}]=1$ when $\beta_{ij},\beta_{k\ell}$ are hom-ext-orthogonal.
\item $[x_{ij},x_{jk}]=x_{ik}$ if $\vare_j=+$.
\item $[x_{jk},x_{ij}]=x_{ik}$ if $\vare_j=-$.
\end{enumerate}
Consequently, a minimal set of generators for $G_0(Q_\vare)$ is given by $\{x_{p-1,p}\,|\, 1\le p\le n\}$.
\end{prop}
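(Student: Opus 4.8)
The presentation (1)--(3) is the specialization of Theorem~\ref{thm: presentation of the picture group determined by the spherical semi-invariant picture} to the quiver $Q_\vare$. Since $A_n$ is simply laced, every rank-two wide subcategory $\cA b(\alpha,\beta)$ is of type $A_1\times A_1$ or $A_2$, so only Cases (1) and (2) of Example~\ref{eg: examples of relations} can occur. By Lemma~\ref{lem: hom-orthogonal roots in An}, a hom-orthogonal pair $\beta_{ij},\beta_{k\ell}$ with distinct endpoints is also ext-orthogonal and hence commutes, giving relation (1); a hom-orthogonal pair sharing an endpoint $j$ satisfies $\beta_{ij}+\beta_{jk}=\beta_{ik}$ and is of type $A_2$, so the $A_2$ relation $x(\alpha)x(\beta)=x(\beta)x(\alpha+\beta)x(\alpha)$ applies and rearranges to $[x(\alpha),x(\beta)]=x(\alpha+\beta)$. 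The orientation $\vare_j$ at the shared vertex determines which of $\beta_{ij},\beta_{jk}$ lies at the bottom of the extension, yielding relation (2) when $\vare_j=+$ and relation (3) when $\vare_j=-$.

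For the minimality statement I would first show that the $n$ elements $x_{p-1,p}$ generate. Let $H\le G_0(Q_\vare)$ be the subgroup they generate; I claim every $x_{ij}$ lies in $H$, by induction on the length $j-i$. The case $j-i=1$ is immediate. For $j-i\ge2$, choose any $k$ with $i<k<j$. By induction $x_{ik},x_{kj}\in H$, and relation (2) or (3) (according to the sign $\vare_k$) expresses $x_{ij}$ as a commutator of $x_{ik}$ and $x_{kj}$, hence as a word in these two elements and their inverses; therefore $x_{ij}\in H$. Since the $x_{ij}$ generate $G_0(Q_\vare)$, we conclude $H=G_0(Q_\vare)$.

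Minimality then follows by passing to the abelianization. In $G_0(Q_\vare)^{\mathrm{ab}}$ every commutator is trivial, so relation (1) imposes nothing, while relations (2) and (3) both become $x_{ik}=1$ whenever $k-i\ge2$ (an intermediate index $k$ always exists). Thus the only surviving generators are the length-one classes $x_{p-1,p}$, and there are no relations among them, so $G_0(Q_\vare)^{\mathrm{ab}}\cong\ZZ^n$. Because a generating set of a group maps onto a generating set of its abelianization, and $\ZZ^n$ cannot be generated by fewer than $n$ elements, no generating set of $G_0(Q_\vare)$ has fewer than $n$ elements. Combined with the generation argument, the set $\{x_{p-1,p}\,|\,1\le p\le n\}$ is therefore a minimal generating set.

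The only delicate point is the bookkeeping in the first paragraph: matching the two orientation cases of the shared-endpoint $A_2$ relation to the sign $\vare_j$, and confirming that no $B_2$ or $G_2$ relations intervene. Both are forced by the simply-laced type together with Lemma~\ref{lem: hom-orthogonal roots in An}, so I expect no genuine obstacle there; the generation-plus-abelianization argument establishing the ``Consequently'' clause is routine.
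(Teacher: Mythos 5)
Your proposal is correct and matches the paper's (essentially implicit) argument: the paper derives the presentation simply by specializing Theorem \ref{thm: presentation of the picture group determined by the spherical semi-invariant picture} to $Q_\vare$, exactly as in your first paragraph, and asserts the generation claim without proof. Your induction on $j-i$ for generation and the abelianization argument (giving $G_0(Q_\vare)^{\mathrm{ab}}\cong\ZZ^n$) for minimality correctly fill in the routine details the paper omits.
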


As an example, $x_{ij},x_{k\ell}$ commute when their \emph{extended supports} $[i,j],[k,\ell]$ are disjoint.

For every $0\le p<q\le n$ let $\Q_{pq}$ be the full subquiver of the quiver $Q_\vare$ with vertex set $(p,q]:=\{p+1,\cdots,q\}=[p+1,q]$:
\[
\xymatrixrowsep{20pt}\xymatrixcolsep{10pt}
\xymatrix{
\Q_{pq}: & (p+1)\ar@{-}[r] &(p+2)\ar@{-}[r]& (p+3)\ar@{-}[r] & \cdots \ar@{-}[r]& q
	}
\]
where the arrow $(p+i)\to (p+i+1)$ points right if $\vare_{p+i}=-$ and points left $(p+i)\ot (p+i+1)$ if $\vare_{p+i}=+$. Thus $\Q_{pq}\cong Q_{\vare'}$ is a quiver of type $A_{q-p}$ with orientation given by $\vare'=(\vare_{p+1},\cdots,\vare_{q-1})$. Let $G_0(\Q_{pq})$ be the picture group of $\Q_{pq}$. Thus $G_0(\Q_{pq})$ has generators $x_{ij}$ where $p\le i<j\le q$ with those relations listed in Proposition \ref{prop: presentation of G_0(Qe)} having the property that all letter are generators of $G_0(\Q_{pq})$. Let $s_{pq}:G_0(\Q_{pq})\to G_0(Q_\vare)$ be the group homomorphism induced by the inclusion map on generators: $s_{pq}(x_{ij})=x_{ij}$.

More generally, we have:

\begin{defn}
For any subset $J\subseteq \{1,2,\cdots,n\}$ let $\Q_J$ be the full subquiver of $Q_\vare$ with vertex set $J$, i.e., $(Q_\vare)_J$ has one vertex for each $j\in J$ and one arrow for every pair of consecutive integers $j,j+1\in J$. Then $\Q_J$ is a disjoint union of connected subquivers:
\[
	\Q_J=\coprod_{i=1}^m \Q_{p_iq_i}
\]
where $J=\coprod (p_i,q_i]$ is a minimal decomposition of $J$ as a disjoint union of intervals. Let $G_0(\Q_J)$ be the picture group of $\Q_J$. By definition this is generated by all $x_{ij}$ where $(i,j]\subseteq J$ with those relations as listed in Proposition \ref{prop: presentation of G_0(Qe)} all or whose letters are generators of $G_0(\Q_J)$. 
\end{defn}

\begin{lem}\label{lem: rJ circ sJ = id}
There are unique group homomorphisms $s_J:G_0(\Q_J)\to G_0(Q_\vare)$ and $r_J:G_0(Q_\vare)\to G_0(\Q_J)$ given on generators by $s_J(x_{ij})=x_{ij}$ and
\[
	r_J(x_{ij})=\begin{cases} x_{ij} & \text{if } (i,j]\subseteq J\\
    1& \text{otherwise}
    \end{cases}
\] 
Furthermore, the composition $G_0(\Q_J)\xrarrow{s_J} G_0(Q_\vare)\xrarrow{r_J} G_0(\Q_J)$ is the identity.
\end{lem}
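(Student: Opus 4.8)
The plan is to invoke the universal property of group presentations: a set map defined on the generators of a presented group extends to a (necessarily unique) homomorphism precisely when every defining relation of the source is carried to a valid identity in the target. This observation simultaneously yields the asserted uniqueness of $s_J$ and $r_J$ — two homomorphisms agreeing on a generating set coincide — so the entire content of the lemma is the \emph{existence} of $s_J$ and $r_J$ together with the identity $r_J\circ s_J=\mathrm{id}$.

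For $s_J$ I would first note that, by the definition of $G_0(\Q_J)$, each generator $x_{ij}$ of $G_0(\Q_J)$ (those with $(i,j]\subseteq J$) is literally a generator of $G_0(Q_\vare)$, and each defining relation of $G_0(\Q_J)$ is, word for word, one of the defining relations of Proposition \ref{prop: presentation of G_0(Qe)} for $G_0(Q_\vare)$ — namely those all of whose letters lie in $\{x_{ij}:(i,j]\subseteq J\}$. Consequently the assignment $x_{ij}\mapsto x_{ij}$ sends each relation of $G_0(\Q_J)$ to the identical relation, which already holds in $G_0(Q_\vare)$; hence $s_J$ exists.

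For $r_J$ the work is to check that the prescribed assignment sends each of the three families of defining relations of $G_0(Q_\vare)$ to an identity in $G_0(\Q_J)$. The combinatorial fact driving every case is that for $i<j<k$ the supports satisfy $(i,k]=(i,j]\cup(j,k]$, so that $(i,k]\subseteq J$ if and only if both $(i,j]\subseteq J$ and $(j,k]\subseteq J$. For a commutator relation $[x_{ij},x_{k\ell}]=1$ of type (1): if both intervals lie in $J$ the image is the same relation, which is a defining relation of $G_0(\Q_J)$ since both its letters are generators there; if some interval is not contained in $J$ the corresponding generator goes to $1$ and the commutator collapses to $1=1$. For $[x_{ij},x_{jk}]=x_{ik}$ of type (2) (with $\vare_j=+$) and its mirror (3), I would split on whether $(i,j]$ and $(j,k]$ lie in $J$: if both do, then so does $(i,k]$ and the image is exactly the corresponding defining relation of $G_0(\Q_J)$ (the orientation $\vare_j$ is inherited because $j,j+1\in J$); if at least one is not contained in $J$, then $x_{ik}$ and at least one of $x_{ij},x_{jk}$ map to $1$, and both sides reduce to $1$. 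These cases are exhaustive, so $r_J$ exists.

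Finally, on generators with $(i,j]\subseteq J$ we compute $r_J(s_J(x_{ij}))=r_J(x_{ij})=x_{ij}$, whence $r_J\circ s_J=\mathrm{id}_{G_0(\Q_J)}$ since the two homomorphisms agree on a generating set. The only delicate point is the case bookkeeping for $r_J$; the main obstacle, such as it is, is verifying in types (2) and (3) that whenever all three letters survive the surviving relation is genuinely one of the defining relations of $G_0(\Q_J)$, which is precisely where the support identity $(i,k]=(i,j]\cup(j,k]$ and the inheritance of $\vare_j$ are used.
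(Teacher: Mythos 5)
Your proof is correct and takes essentially the same approach as the paper: the paper's proof simply observes that $s_J$ is clearly a homomorphism, that $r_J$ respects the relations because $(i,k]=(i,j]\cup(j,k]$ (illustrating only the one case where $(i,j]\subseteq J$ and $(j,k]\nsubseteq J$), and that $r_J\circ s_J$ is the identity because it is so on generators. Your write-up just makes explicit the case bookkeeping that the paper leaves as ``for example.''
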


\begin{proof} It is clear that $s_J$ defines a homomorphism.
The map $r_J$ defines a homomorphism since it respects the relations of $G_0(Q_\vare)$ since $(i,k]=(i,j]\cup (j,k]$. For example, the relation $[x_{ij},x_{jk}]=x_{ik}$ in $G_0(Q_\vare)$ becomes the relation $x_{ij}x_{ij}^{-1}=1$ in $G_0(\Q_J)$ if $(i,j]\subseteq J$ and $(j,k]\nsubseteq J$. The composition $r_{J}\circ s_J$ is the identity since it is the identity on generators.\end{proof}

Since $s_J:G_0(\Q_J)\into G_0(Q_\vare)$ is a split monomorphism sending each generator of $G_0(\Q_J)$ to a generator of $G_0(Q_\vare)$ with the same name, we will identify $G_0(\Q_J)$ with its image in $G_0(Q_\vare)$ and consider $s_J$ as an inclusion map. For the next statement we use the terminology that two half open intervals $(p_1,q_1]$, $(p_2,q_2]$ are \emph{separated} if the closed intervals $[p_1,q_1]$, $[p_2,q_2]$ are disjoint. Then every subset $J\subseteq (0,n]$ can be expressed uniquely as a union of separated intervals $J=\coprod_{1\le i\le m} (p_i,q_i]$ where $0\le p_1<q_1<p_2<q_2<\cdots<p_m<q_m\le n$.


\begin{prop}\label{prop: product of G_0(Q) inside G_0(An)}
Let $J$ be the union of the separated intervals $(p_i,q_i]$. Then the subgroups $G_0(\Q_{p_i,q_i})$ of $G_0(Q_\vare)$ commute with each other and $G_0(\Q_J)=\prod G_0(\Q_{p_iq_i})$ is their internal direct product. Furthermore, the projection morphism $r_J:G_0(Q_\vare)\onto G_0(\Q_J)$ is equal to the product of projection morphisms:
\[
	r_J=\prod r_{p_iq_i}: G_0(Q_\vare)\onto \prod G_0(\Q_{p_i,q_i})=G_0(\Q_J).
\]
\end{prop}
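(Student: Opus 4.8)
The plan is to verify the three assertions by reducing everything to a computation on the generators $x_{ij}$, using the presentation of Proposition \ref{prop: presentation of G_0(Qe)} together with the split retractions supplied by Lemma \ref{lem: rJ circ sJ = id}. First I would dispose of the commutation claim. A generator of $G_0(\Q_{p_iq_i})$ has the form $x_{ab}$ with $(a,b]\subseteq (p_i,q_i]$, so its extended support $[a,b]$ is contained in $[p_i,q_i]$. Since the intervals are separated, the closed intervals $[p_i,q_i]$ and $[p_{i'},q_{i'}]$ are disjoint for $i\neq i'$; hence any generator of $G_0(\Q_{p_iq_i})$ and any generator of $G_0(\Q_{p_{i'}q_{i'}})$ have disjoint extended supports and therefore commute by the commutation criterion noted after Proposition \ref{prop: presentation of G_0(Qe)}. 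As commutation of two subgroups is detected on generating sets, the subgroups $G_0(\Q_{p_iq_i})$ commute pairwise.

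For the internal direct product statement I would consider the multiplication map $\mu\colon \prod_i G_0(\Q_{p_iq_i})\to G_0(\Q_J)$, which is a well-defined homomorphism precisely because the factors commute. Surjectivity is immediate: every generator $x_{ij}$ of $G_0(\Q_J)$ satisfies $(i,j]\subseteq J=\coprod_t(p_t,q_t]$, and since $(i,j]$ is an interval of consecutive integers it must lie inside a single $(p_t,q_t]$, so $x_{ij}$ lies in one factor. The crux is injectivity, which I would obtain by exhibiting a left inverse built from the retractions. Set $\phi=\left(r_{p_1q_1}|_{G_0(\Q_J)},\dots,r_{p_mq_m}|_{G_0(\Q_J)}\right)$, each component landing in the corresponding factor. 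The key identity is that for $g_i\in G_0(\Q_{p_iq_i})$ one has $r_{p_jq_j}(g_i)=g_i$ when $i=j$, because $r_{p_jq_j}\circ s_{p_jq_j}=\mathrm{id}$ by Lemma \ref{lem: rJ circ sJ = id}, and $r_{p_jq_j}(g_i)=1$ when $i\neq j$, because every generator $x_{ab}$ occurring in $g_i$ has $(a,b]\subseteq(p_i,q_i]$, which is disjoint from $(p_j,q_j]$ by separation, so $(a,b]\not\subseteq(p_j,q_j]$ and $r_{p_jq_j}$ kills it. Thus $\phi\circ\mu=\mathrm{id}$, so $\mu$ is injective, and combined with surjectivity it is an isomorphism, which is exactly the assertion that $G_0(\Q_J)$ is the internal direct product of the $G_0(\Q_{p_iq_i})$.

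Finally, for the factorization of $r_J$, I would note that under the identification $G_0(\Q_J)=\prod_i G_0(\Q_{p_iq_i})$ both $r_J$ and $\prod_i r_{p_iq_i}$ are homomorphisms $G_0(Q_\vare)\to G_0(\Q_J)$, so it suffices to compare them on a generator $x_{ij}$. If $(i,j]\subseteq J$ then $(i,j]$ lies in a unique factor $(p_t,q_t]$, whence $r_{p_tq_t}(x_{ij})=x_{ij}$ while $r_{p_{t'}q_{t'}}(x_{ij})=1$ for $t'\neq t$, so $\prod_i r_{p_iq_i}$ sends $x_{ij}$ to $x_{ij}=r_J(x_{ij})$. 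If $(i,j]\not\subseteq J$ then $(i,j]$ is contained in no $(p_t,q_t]$, so every factor kills $x_{ij}$ and both maps send it to $1=r_J(x_{ij})$. Hence $r_J=\prod_i r_{p_iq_i}$. The only genuine obstacle is the injectivity of $\mu$; once the retraction $\phi$ is set up, the required identity $r_{p_jq_j}(g_i)=\delta_{ij}\,g_i$ is a one-line consequence of Lemma \ref{lem: rJ circ sJ = id} and the separation hypothesis, and the two remaining parts are routine generator-by-generator checks.
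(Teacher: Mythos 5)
Your proof is correct and follows the same approach as the paper: the paper's entire argument is the observation that generators of $G_0(\Q_{p_iq_i})$ and $G_0(\Q_{p_jq_j})$ commute because their extended supports lie in the disjoint closed intervals $[p_i,q_i]$, $[p_j,q_j]$, after which it declares ``the rest is clear.'' You have simply written out that remaining routine part in full, using the split retractions of Lemma \ref{lem: rJ circ sJ = id} to build the inverse of the multiplication map and checking the factorization of $r_J$ generator by generator, all of which is sound.
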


\begin{proof}
For $i\neq j$ the generators of $G_0(\Q_{p_i,q_i})$ commute with those of $G_0(\Q_{p_j,q_j})$ since they have extended supports in $[p_i,q_i]$ which are disjoint. The rest is clear.
\end{proof}

Since each $H_k(G_0(\Q_{p_iq_i}))$ is free abelian, we have the following version of the K\"unneth formula.

\begin{cor}
The homology groups and cohomology ring of $G_0(\Q_J)$ are the graded tensor products of the homology groups and cohomology rings of $G_0(\Q_{p_iq_i})$, e.g., 
\[
	H_k(G_0(\Q_J))\cong \bigoplus_{k_1+\cdots+k_m=k} H_{k_1}(G_0(\Q_{p_1q_1}))\otimes \cdots\otimes H_{k_m}(G_0(\Q_{p_mq_m}))
\]
and similarly for cohomology with multiplication given by the Koszul sign rule.
\end{cor}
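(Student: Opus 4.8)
The plan is to deduce the statement from the classical Künneth theorem, using the direct product decomposition $G_0(\Q_J)=\prod_{i=1}^m G_0(\Q_{p_iq_i})$ just established in Proposition \ref{prop: product of G_0(Q) inside G_0(An)}. The first step is to pass from groups to spaces: since each $X(\Q_{p_iq_i})$ is a $K(\pi,1)$ for $G_0(\Q_{p_iq_i})$ by \cite{IT13}, and since a direct product of groups has classifying space the product of the classifying spaces, there is a homotopy equivalence $X(\Q_J)\simeq \prod_i X(\Q_{p_iq_i})$. In particular the homology and cohomology of $G_0(\Q_J)$ agree with those of this product space, so it suffices to compute the (co)homology of a finite product of finite CW complexes.

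I would reduce to two factors and then induct on $m$. Write $G=G_0(\Q_{p_1q_1})$ and let $H$ be the product of the remaining factors, so $G_0(\Q_J)=G\times H$. Applying Corollary \ref{cor:homology is given by basic ss sets} to the type-$A$ quiver $\Q_{p_1q_1}$, the homology $H_\ast(G)$ is free abelian, and it is finitely generated since $X(\Q_{p_1q_1})$ is a finite CW complex. The algebraic Künneth theorem therefore has vanishing $\Tor$ term and yields
\[
	H_k(G\times H)\cong \bigoplus_{a+b=k} H_a(G)\otimes H_b(H),
\]
and iterating over the $m$ factors produces the displayed homology formula.

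For the cohomology ring the point is that, because the homology of each $G_0(\Q_{p_iq_i})$ is finitely generated and free abelian, the cohomology is free abelian of the same rank in each degree; hence the cohomology cross product $H^\ast(G)\otimes H^\ast(H)\to H^\ast(G\times H)$ is an isomorphism of graded rings, where the left-hand tensor product carries the graded multiplication
\[
	(a\otimes b)(a'\otimes b')=(-1)^{|b|\,|a'|}(aa')\otimes(bb').
\]
This is precisely the Koszul sign rule, coming from the standard Eilenberg--Zilber/cross-product formula. Iterating over the $m$ factors gives the graded tensor product of cohomology rings.

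The step to watch is the hypothesis that makes the Künneth comparison an isomorphism: one must know that the homology of each factor is free abelian, so that the $\Tor$ correction terms vanish in homology and, dually, the cohomology cross product is a ring isomorphism rather than merely a natural map. This freeness is exactly the content of the type-$A$ computation in Corollary \ref{cor:homology is given by basic ss sets}, so there is no genuine obstacle here; the only remaining verification is the naturality of the cross product, which produces the Koszul signs automatically.
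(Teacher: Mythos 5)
Your proposal is correct and follows essentially the same route as the paper, which simply invokes the K\"unneth formula for the direct product decomposition of Proposition \ref{prop: product of G_0(Q) inside G_0(An)} after noting that each $H_k(G_0(\Q_{p_iq_i}))$ is free abelian. You have merely spelled out the standard details (passage to classifying spaces, vanishing of the $\Tor$ terms, and the cross-product ring isomorphism) that the paper leaves implicit.
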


The standard notation for the element of $H_k(G_0(\Q_J))$ corresponding to $a_1\otimes \cdots\otimes a_m$ is $a_1\times \cdots\times a_m$ and similarly for cohomology. If $b_1\times \cdots \times b_m,c_1\times \cdots\times c_m$ are cohomology classes, the Koszul sign rule gives
\[
	(b_1\times \cdots\times b_m)(c_1\times\cdots \times c_m)=(-1)^{\sum_{i<j}\deg b_i\deg c_j} b_1c_1\times\cdots\times b_mc_m
\]
If $\deg a_i=\deg c_i=k_i$ for each $i$, we have the evaluation rule:
\[
	[c_1\times\cdots\times c_m,a_1\times\cdots\times a_m]=(-1)^{\sum_{i<j} k_ik_j}\<c_1,a_1\>\cdots\<c_m,a_m\>
\]

Since picture spaces are $K(\pi,1)$'s for the picture groups, there are continuous mappings $X(\Q_J)\to X(Q_\vare)\to X(\Q_J)$ unique up to homotopy which induce the group homomorphisms $G_0(\Q_J)\to G_0(Q_\vare)\to G_0(\Q_J)$. The retraction $X(Q_\vare)\to X(\Q_J)$ is not easy to describe. But the inclusion map $X(\Q_J)\to X(Q_\vare)$ is easy since $X(Q_\vare)$ is a cell complex with one cell for every wide subcategory of $mod\text-KQ_\vare$ and $mod\text-K\Q_J$ is one of these wide subcategories, it is the abelian subcategory $\cA b(e_i\,|\, i\in J)$ with simple objects $S_i$ where $i\in J$.

\begin{prop}
For any subset $J\subseteq(0,n]$ let $j:X(\Q_J)\into X(Q_\vare)$ be the inclusion map sending $X(\Q_J)$ to the cell of $X(Q_\vare)$ corresponding to the wide subcategory $\cA b(e_i\,|\, i\in J)$ where $e_i=\undim S_i$ is the $i$-th unit vector. Then $\pi_1(X(\Q_J))=G_0(\Q_J)$ and $\pi_1(j)=s_J:G_0(\Q_J)\into G_0(Q_\vare)$.
\end{prop}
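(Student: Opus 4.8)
The plan is to exhibit $X(\Q_J)$ as the subcomplex of $X(Q_\vare)$ whose underlying top cell is $e^{|J|}_{\alpha_\ast}$ for $\alpha_\ast=\{e_i\mid i\in J\}$, and then to read the induced map on $\pi_1$ off the one-skeleton. First I would check that $\alpha_\ast$ is a set of pairwise hom-orthogonal positive roots: for distinct simple modules $\Hom_\Lambda(S_i,S_j)=0=\Hom_\Lambda(S_j,S_i)$, so $hom(e_i,e_j)=0=hom(e_j,e_i)$. Thus $e^{|J|}_{\alpha_\ast}$ is a genuine cell of $X(Q_\vare)$, and by Theorem \ref{thm: picture space} its closure is a subcomplex isomorphic to $X(Q(\alpha_\ast))$.

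The key step is the identification $Q(\alpha_\ast)\cong \Q_J$ as valued oriented quivers. By Definition \ref{def: Q(alpha)}, $Q(\alpha_\ast)$ has one vertex for each $i\in J$ with valuation $f_i=\<e_i,e_i\>=\dim_K\End_\Lambda(S_i)$ and an arrow $i\to j$ whenever $ext(e_i,e_j)=-\<e_i,e_j\>\neq0$. For distinct $i,j\in J$ this extension is nonzero precisely when $S_i,S_j$ extend each other, i.e. when $i,j$ are consecutive integers, and the direction of the arrow records the orientation of the corresponding arrow of $Q_\vare$. Hence $Q(\alpha_\ast)$ is exactly the full subquiver $\Q_J$, so the subcomplex above is $X(\Q_J)$ and $j$ is its inclusion. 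Since $X(\Q_J)$ is itself a picture space it is a $K(\pi,1)$ with $\pi_1=G_0(\Q_J)$ (by \cite{IT13}), which gives the first assertion.

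For the second assertion I would use that $\pi_1$ depends only on the two-skeleton. The generators of $\pi_1(X(\Q_J))=G_0(\Q_J)$ are represented by the $1$-cells $e^1_{\beta_{ij}}$ with $(i,j]\subseteq J$, labelled $x_{ij}$; these $1$-cells index exactly $\Phi^+(\alpha_\ast)$. Under the cellular isomorphism of Theorem \ref{thm: picture space}, which is compatible with the labelling of cells by hom-orthogonal sets of roots, each $e^1_{\beta_{ij}}$ of $X(\Q_J)$ is carried to the cell $e^1_{\beta_{ij}}$ of $X(Q_\vare)$ bearing the same label $x_{ij}$. Therefore $\pi_1(j)$ sends $x_{ij}\mapsto x_{ij}$ for every $(i,j]\subseteq J$, which is precisely the map $s_J$ of Lemma \ref{lem: rJ circ sJ = id}. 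I expect the main obstacle to be exactly this last compatibility: one must verify that the isomorphism of Theorem \ref{thm: picture space} is label-preserving on $1$-cells, equivalently that the identification $Q(\alpha_\ast)\cong\Q_J$ respects arrow orientations, since otherwise $\pi_1(j)$ could differ from $s_J$ by an automorphism permuting the generators.
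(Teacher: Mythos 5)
Your argument is correct and follows essentially the same route as the paper: identify $X(\Q_J)$ with the closed cell of $X(Q_\vare)$ attached to $\cA b(e_i\mid i\in J)$ and read $\pi_1(j)$ off the labelled $1$-cells. The paper's proof is just terser, checking the compatibility only on the minimal generating set $\{x_{i-1,i}\mid i\in J\}$ (which suffices since these generate $G_0(\Q_J)$), whereas you verify it on all $x_{ij}$ with $(i,j]\subseteq J$ and spell out the identification $Q(\alpha_\ast)\cong\Q_J$.
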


\begin{proof}
The isomorphism $G_0(Q_\vare)\cong \pi_1(X(Q_\vare))$ is given by sending the generator $x_{i-1,i}$ of $G_0(Q_\vare)$ to the homotopy class of the oriented loop given by the 1-cell $X(\Q_{i-1,i})$. When $i\in J$, $X(\Q_{i-1,i})\subseteq X(\Q_J)\subseteq X(Q_\vare)$. Therefore the inclusion map $X(\Q_J)\into X(Q_\vare)$ is the identity on the generators $x_{i-1,i}$ of $\pi_1(X(\Q_J))=G_0(\Q_J)$.
\end{proof}

\begin{cor}
The image of the split monomorphism $H_\ast(G_0(\Q_J))\to H_\ast(G_0(Q_\vare))$ induced by $s_J:G_0(\Q_J)\into G_0(Q_\vare)$ is spanned by all basic weights $w$ with support in $J$. 
\end{cor}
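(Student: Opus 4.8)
The plan is to realize $s_{J\ast}$ geometrically and then read off its image on the standard basis of basic cells. Since $X(Q_\vare)$ and $X(\Q_J)$ are $K(\pi,1)$'s for $G_0(Q_\vare)$ and $G_0(\Q_J)$ (as recalled at the start of this section), I identify $H_\ast(G_0(Q_\vare))=H_\ast(X(Q_\vare))$ and likewise for $\Q_J$. By the preceding Proposition, $s_J$ is induced by the cellular inclusion $j:X(\Q_J)\into X(Q_\vare)$, which by Theorem \ref{thm: picture space} (applied to $\alpha_\ast=\{e_i\,|\,i\in J\}$, so that $Q(\alpha_\ast)=\Q_J$) identifies $X(\Q_J)$ with the closure of the cell of $X(Q_\vare)$ corresponding to $\cA b(e_i\,|\,i\in J)$. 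On cellular chains, $j$ therefore sends the generator $[\beta_\ast]$ of $C_\ast(X(\Q_J))$, indexed by a hom-orthogonal set $\beta_\ast$ of positive roots of $\Q_J$, to the generator $[\beta_\ast]$ of $C_\ast(X(Q_\vare))$ indexed by the same set of roots now regarded as roots of $Q_\vare$.

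By Corollary \ref{cor:homology is given by basic ss sets} the homology of each picture space is freely generated by the classes of its basic cells (which are cycles), and each such cell is determined by its weight, a basic weight. The crux is then the claim that the basic cells of $X(\Q_J)$ are exactly the basic cells of $X(Q_\vare)$ whose weight has support in $J$. A hom-orthogonal set $\beta_\ast$ has support in $J$ if and only if every $\beta_i$ is a root of $\Q_J$. Writing $J=\coprod_{1\le i\le m}(p_i,q_i]$ with $q_i<p_{i+1}$, any two consecutive integers $k,k+1$ both lying in $J$ must lie in the same connected component $(p_t,q_t]$: if $k=q_t$ then $k+1=q_t+1\le p_{t+1}$ lies in no later interval. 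Consequently, for roots $\beta_i=\beta_{k\ell}$, $\beta_j=\beta_{\ell m}$ of $\Q_J$, the sum $\beta_i+\beta_j=\beta_{km}$ is a root of $Q_\vare$ if and only if it is a root of $\Q_J$; thus the minimality (semisimplicity) condition defining a basic cell is intrinsic to the roots and agrees in $\Q_J$ and in $Q_\vare$. Equivalently, for a weight $w$ supported in $J$ the resolution set $R(w)$ is the same whether computed for $\Q_J$ or $Q_\vare$, so $w$ is basic for one precisely when it is basic for the other, by Proposition \ref{prop:characterization of basic weights}.

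Combining these, $j$ carries the basic-cell basis of $H_\ast(X(\Q_J))$ bijectively onto the set of basic cells of $X(Q_\vare)$ whose weight is supported in $J$. These latter classes form part of the free basis of $H_\ast(X(Q_\vare))$ from Corollary \ref{cor:homology is given by basic ss sets}, hence are linearly independent; and since $s_J$ is a split monomorphism by Lemma \ref{lem: rJ circ sJ = id}, the induced map $s_{J\ast}$ is injective. Therefore the image of $s_{J\ast}$ is precisely the span of all basic weights $w$ with support in $J$, as claimed. The one delicate point, and the step I expect to require the most care, is the matching of the basic condition across $\Q_J$ and $Q_\vare$; this rests entirely on the separation of the components of $J$, which forbids a root extension in $Q_\vare$ that would bridge two distinct components and thereby fail to be detected inside $\Q_J$.
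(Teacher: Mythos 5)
Your proof is correct and follows essentially the same route as the paper: realize $s_J$ as the cellular inclusion $X(\Q_J)\into X(Q_\vare)$ and match the basic-cell bases of Corollary \ref{cor:homology is given by basic ss sets} on both sides. The only difference is that where the paper appeals to ``a simple dimension count'' to see that the basic generators with support in $J$ span all of $H_\ast(G_0(\Q_J))$, you instead verify directly that the admissibility, resolution-set, and minimality conditions for a weight supported in $J$ agree in $\Q_J$ and in $Q_\vare$ (using the separation of the components of $J$), which gives an explicit bijection of bases and is, if anything, more careful.
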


\begin{proof}
By Theorem \ref{thm: generators of homology of GAn}, the basic generators of the homology of $G_0(Q_\vare)$ are represented by cycles made up of single cells $[\beta_\ast]$ which are the basic hom-orthogonal sets. Such a cell is contained in the space $X(\Q_J)$ if the support of $\beta_\ast$ is contained in $J$. Therefore, any basic generator of the homology of $G_0(Q_\vare)$ with support in $J$ lies in the image of the homology of $G_0(\Q_J)$. A simple dimension count will verify that these generators span all of $H_\ast(G_0(\Q_J))$.
\end{proof}

\subsection{Dual blocks} We now consider blocks $B$ (minimal basic weights) with support $(p,q]$ where $q-p=2k-1$. By Corollary \ref{cor: cohomological dimension of X(An)}, $H_k(G_0(\Q_{pq}))$ has rank $C_k=\frac1{k+1}\binom{2k}k$ and, by Corollary \ref{cor:homology is given by basic ss sets}, it has a basis given by cycles with basic weights (of degree $k$). By Lemma \ref{lem: Catalan number of blocks of full support} there are $C_{k-1}$ such cycles whose weights are blocks with full support $(p,q]$.

\begin{defn}\label{defn: dual blocks}
Let $B_i$, $i=1,\cdots,C_{k-1}$, be the blocks with full support $(p,q]$. Let 
\[
	B_i^\ast\in H^k(G_0(\Q_{pq});\ZZ)=\Hom(H_k(G_0(\Q_{pq}));\ZZ)
\]
be the dual cohomology classes represented by the cocycle sending $B_i$ to 1 and all other basic weight cycles to 0. We define the \emph{dual blocks} of $Q_\vare$ with support in $(p,q]$ to be the images of these cohomology classes under $r_{pq}^\ast:H^k(G_0(\Q_{pq});\ZZ)\to H^k(G_0(Q_\vare);\ZZ)$ for all $p,q$ (with $q-p$ odd). The \emph{degree} of these dual classes is $k=(q-p+1)/2$.
\end{defn}

The main theorem of this section is that the dual blocks $r_{pq}^\ast(B_i^\ast)$ generate the ring $H^\ast(G_0(Q_\vare))$. For fixed $k,p,q$ let $K(\Q_{pq})$ denote the direct summand of $H^k(G_0(\Q_{pq})$ freely generated by the $C_{k-1}$ dual classes $B_i^\ast$ and let $K_{pq}(Q_\vare)=r_{pq}^\ast K(\Q_{pq})\subseteq H^k(G_0(Q_\vare))$. The first statement we need to show is that the cup product of dual blocks is nonzero if and only if their supports are separated.

\begin{lem}\label{lem: cup product is zero}
Let $(p_1,q_1]$, $(p_2,q_2]$ be two intervals of odd length in $(0,n]$. Then the mapping $K_{p_1q_1}(Q_\vare)\otimes K_{p_1q_1}(Q_\vare)\to H^\ast(G_0(Q_\vare))$ given by cup product $a_1\otimes a_2\mapsto a_1 a_2$ is a split monomorphism if $[p_1,q_1]$, $[p_2,q_2]$ are disjoint and is zero otherwise.
\end{lem}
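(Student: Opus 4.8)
The plan is to reduce everything to the two retractions $r_{p_1q_1},r_{p_2q_2}$ and to analyze a single product of dual blocks. Interpreting the tensor factors as $K_{p_1q_1}(Q_\vare)\otimes K_{p_2q_2}(Q_\vare)$, each factor $K_{p_iq_i}(Q_\vare)=r_{p_iq_i}^\ast K(\Q_{p_iq_i})$ is spanned by classes $a_i=r_{p_iq_i}^\ast(B^{(i)\ast})$, where $B^{(1)}=B,\ B^{(2)}=B'$ are blocks with full supports $(p_1,q_1],(p_2,q_2]$ and degrees $k_i=(q_i-p_i+1)/2$. Since $r_{p_iq_i}\circ s_{p_iq_i}=\mathrm{id}$ (Lemma \ref{lem: rJ circ sJ = id}) gives $s_{p_iq_i}^\ast r_{p_iq_i}^\ast=\mathrm{id}$, each $r_{p_iq_i}^\ast$ is a split monomorphism, so it suffices to understand the products $a_1a_2$.

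For the separated case, suppose $[p_1,q_1]\cap[p_2,q_2]=\emptyset$, so that $(p_1,q_1],(p_2,q_2]$ are separated and, by Proposition \ref{prop: product of G_0(Q) inside G_0(An)}, their union $J$ gives $G_0(\Q_J)=G_0(\Q_{p_1q_1})\times G_0(\Q_{p_2q_2})$ with $r_J=r_{p_1q_1}\times r_{p_2q_2}$. Writing $\pi_i$ for the projections we have $r_{p_iq_i}=\pi_i\circ r_J$, hence $a_i=r_J^\ast(\pi_i^\ast B^{(i)\ast})$ and
\[
a_1a_2=r_J^\ast\!\big(\pi_1^\ast B^\ast\cdot\pi_2^\ast B'^\ast\big)=r_J^\ast\!\big(B^\ast\times B'^\ast\big).
\]
By the K\"unneth corollary following Proposition \ref{prop: product of G_0(Q) inside G_0(An)}, the cross product $K(\Q_{p_1q_1})\otimes K(\Q_{p_2q_2})\to H^\ast(G_0(\Q_J))$ is the inclusion of a tensor-product summand, hence split injective; composing with the split monomorphism $r_J^\ast$ (split by $s_J^\ast$) shows the cup product map is a split monomorphism on $K_{p_1q_1}\otimes K_{p_2q_2}$.

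The overlapping case is the crux. Assume $[p_1,q_1]\cap[p_2,q_2]\neq\emptyset$, say $p_1\le p_2\le q_1$. Then $(p,q]:=(p_1,q_1]\cup(p_2,q_2]$ is a single interval with $(p_i,q_i]\subseteq(p,q]$, so applying Lemma \ref{lem: rJ circ sJ = id} inside $\Q_{pq}$ gives restrictions $r^{pq}_{p_iq_i}\colon G_0(\Q_{pq})\to G_0(\Q_{p_iq_i})$ with $r_{p_iq_i}=r^{pq}_{p_iq_i}\circ r_{pq}$ (check on generators). Setting $\tilde a_i=(r^{pq}_{p_iq_i})^\ast(B^{(i)\ast})\in H^{k_i}(G_0(\Q_{pq}))$ yields $a_1a_2=r_{pq}^\ast(\tilde a_1\tilde a_2)$, so it is enough to show $\tilde a_1\tilde a_2=0$. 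Now $\Q_{pq}$ is of type $A_N$ with $N=q-p$, and a direct computation gives, in the staircase configuration $q_2\ge q_1$,
\[
k_1+k_2-\tfrac{N+1}{2}=\tfrac{q_1-p_2+1}{2}>0,
\]
while in the containment configuration $(p_2,q_2]\subseteq(p_1,q_1]$ one gets $k_1+k_2-\tfrac{N+1}{2}=k_2>0$ (and the boundary case $q_1=p_2$ is covered by the first formula). In every configuration $k_1+k_2>\tfrac{N+1}{2}$, so by Corollary \ref{cor: cohomological dimension of X(An)} we have $H^{k_1+k_2}(G_0(\Q_{pq}))=0$; therefore $\tilde a_1\tilde a_2=0$ and $a_1a_2=0$. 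As every element of $K_{p_1q_1}\otimes K_{p_2q_2}$ is a sum of such pure tensors, the cup product map is zero.

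The only genuine work lies in the overlapping case, and the key idea there is to first collapse everything into the single subquiver $G_0(\Q_{pq})$ on the combined support via $r_{pq}^\ast$, and then observe that the product degree $k_1+k_2$ exceeds the cohomological dimension $\lfloor(N+1)/2\rfloor$ of a type $A_N$ picture group; the elementary identity above is what makes the inequality precise across all overlap configurations. I expect this degree bound, rather than a weight-homogeneity argument, to be the cleanest obstacle to overcome, since the individual dual blocks $a_i$ need not be weight-homogeneous in $H^\ast(G_0(Q_\vare))$ and that approach would require tracking the weight behaviour of the retractions $r_{p_iq_i}^\ast$.
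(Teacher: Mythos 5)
Your proof is correct and follows essentially the same route as the paper: in the separated case both arguments factor the product through $r_J^\ast$ and invoke the K\"unneth decomposition of $G_0(\Q_J)=G_0(\Q_{p_1q_1})\times G_0(\Q_{p_2q_2})$, and in the overlapping case both push the product into $H^{k_1+k_2}(G_0(\Q_J))$ for the combined interval $J=(p,q]$ and kill it with the cohomological dimension bound of Corollary \ref{cor: cohomological dimension of X(An)}. The only difference is cosmetic: the paper observes directly that $|J|\le 2k_1+2k_2-2$, whereas you verify the equivalent inequality $k_1+k_2>\tfrac{N+1}{2}$ configuration by configuration.
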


\begin{proof}
Let $J=(p_1,q_1]\cup(p_2,q_2]$. Then the split retractions $r_{p_iq_i}:G_0(Q_\vare)\onto G_0(\Q_{p_iq_i})$ factors through the split retraction $r_J:G_0(Q_\vare)\onto G_0(\Q_J)$. 
When $(p_1,q_1]$, $(p_2,q_2]$ are separated, $G_0(\Q_J)=G_0(\Q_{p_1q_1})\times G_0(\Q_{p_2q_2})$ and the cup product of any two elements $a_i=r^\ast_{p_iq_i}(b_i)\in K_{p_iq_i}(Q_\vare)$, $b_i\in K(\Q_{p_iq_i})$ is given by
\[
	a_1 a_2=r_J^\ast(b_1\times b_2)
\]
Since $r_J$ is a split epimorphism, $r_J^\ast$ is a split monomorphism. So, by the K\"unneth formula, the mapping $K(\Q_{p_1q_1})\otimes K(\Q_{p_2q_2}) \to H^\ast(G_0(Q_\vare))$ which sends $b_1\otimes b_2$ to $r_J^\ast(b_1\times b_2)$ is a split monomorphism. Since $r_{p_iq_i}^\ast:K(\Q_{p_iq_i})\cong K_{p_iq_i}(Q_\vare)$, the lemma follows in this case.

When $(p_1,q_1]$, $(p_2,q_2]$ are not separated, $J$ has length $\le 2k_1+2k_2-2$. By Corollary \ref{cor: cohomological dimension of X(An)}, this gives $H^{k_1+k_2}(G_0(\Q_J))=0$. So, for any $a_i=r_{p_iq_i}^\ast(b_i)\in K_{p_iq_i}(Q_\vare)$, $b_i\in K(\Q_{p_i,q_i})$ 
\[
	a_1 a_2=r_J^\ast(r_1^\ast(b_1) r_2^\ast(b_2))\in r_J^\ast(H^{k_1+k_2}(G_0(\Q_J))=0
\]
where $r_i$ are the split projection maps $r_i:G_0(\Q_J)\to G_0(\Q_{p_iq_i})$.
\end{proof}

A similar argument proves:

\begin{lem}\label{lem: when cup product is nonzero}
Let $(p_i,q_i]$ be $m$ intervals of odd length $q_i-p_i=2k_i-1$ in $(0,n]$. Then the mapping $\bigotimes K_{p_iq_i}(Q_\vare) \to H^{\sum k_i}(G_0(Q_\vare))$ given by cup product $a_1\otimes \cdots\otimes a_m\mapsto a_1 \cdots a_m$ is a split monomorphism if $(p_i,q_i]$ are separated and is zero otherwise.\qed
\end{lem}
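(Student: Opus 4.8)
The plan is to follow the proof of Lemma \ref{lem: cup product is zero} essentially verbatim, treating it as the case $m=2$, and to reduce the general statement to it. I would split into the separated and non-separated cases exactly as there.

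For the separated case, set $J=\bigcup_i(p_i,q_i]$. By Proposition \ref{prop: product of G_0(Q) inside G_0(An)} the subgroups $G_0(\Q_{p_iq_i})$ commute and $G_0(\Q_J)=\prod_i G_0(\Q_{p_iq_i})$ is their internal direct product, with $r_J=\prod_i r_{p_iq_i}$. In particular each retraction factors as $r_{p_iq_i}=\pi_i\circ r_J$, where $\pi_i:G_0(\Q_J)\onto G_0(\Q_{p_iq_i})$ is the $i$-th projection. Writing $a_i=r_{p_iq_i}^\ast(b_i)$ for $b_i\in K(\Q_{p_iq_i})$ and using that $r_J^\ast$ is a ring homomorphism, I obtain
\[
a_1\cdots a_m=r_J^\ast\!\big(\pi_1^\ast(b_1)\cup\cdots\cup\pi_m^\ast(b_m)\big)=r_J^\ast(b_1\times\cdots\times b_m),
\]
the second equality being the definition of the external cross product. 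Thus the cup-product map factors as $r_J^\ast$ composed with the cross-product map $\bigotimes_i K(\Q_{p_iq_i})\to H^{\sum k_i}(G_0(\Q_J))$.

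It then remains to observe that both maps are split monomorphisms. The K\"unneth corollary following Proposition \ref{prop: product of G_0(Q) inside G_0(An)} (valid since each $H_\ast(G_0(\Q_{p_iq_i}))$ is free abelian) identifies $H^{\sum k_i}(G_0(\Q_J))$ with the graded tensor product, so the cross-product map is the inclusion of the multidegree $(k_1,\dots,k_m)$ summand and restricts to a split monomorphism on $\bigotimes_i K(\Q_{p_iq_i})$. Since $r_J\circ s_J=\mathrm{id}$ by Lemma \ref{lem: rJ circ sJ = id}, the map $r_J^\ast$ is a split monomorphism with retraction $s_J^\ast$. Composing, the cup-product map is a split monomorphism, which settles the separated case.

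For the non-separated case, some pair $(p_i,q_i],(p_j,q_j]$ has $[p_i,q_i]\cap[p_j,q_j]\neq\emptyset$. By graded commutativity and associativity I may reorder the cup product (up to sign) so that $a_i$ and $a_j$ are adjacent, giving $a_1\cdots a_m=\pm\,c\,(a_ia_j)$ for some class $c$; but $a_ia_j=0$ by the $m=2$ statement of Lemma \ref{lem: cup product is zero} applied to this non-separated pair, so $a_1\cdots a_m=0$. Hence the cup-product map vanishes on pure tensors and, being linear, is zero. I do not anticipate a genuine obstacle: the separated case is a mechanical extension of the $m=2$ argument, and the non-separated case is immediate from Lemma \ref{lem: cup product is zero}. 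The only bookkeeping is the sign in the reordering (irrelevant since the pair product vanishes) and the iterated K\"unneth identification, which is routine because all the homology groups involved are free.
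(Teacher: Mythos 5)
Your proof is correct and matches the paper's intent: the paper gives no separate argument for this lemma, stating only that ``a similar argument'' to the $m=2$ case (Lemma \ref{lem: cup product is zero}) proves it, and your separated case is exactly that argument run with $J=\coprod(p_i,q_i]$, the factorization $a_1\cdots a_m=r_J^\ast(b_1\times\cdots\times b_m)$, the K\"unneth identification, and the splitting $r_J\circ s_J=\mathrm{id}$. Your non-separated case, which isolates one overlapping pair by graded commutativity and invokes the $m=2$ vanishing, is a clean (and if anything slightly simpler) instantiation of the paper's cohomological-dimension argument, so no gap remains.
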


When the intervals $(p_i,q_i]$ are separated, we denote the image of the split monomorphism $\bigotimes K_{p_iq_i}(Q_\vare) \into H^{\sum k_i}(G_0(Q_\vare))$ by $K_J(Q_\vare)$ where $J=\coprod (p_i,q_i]$. We also use the notation $K(\Q_J)=(r_J^\ast)^{-1}K_J(Q_\vare)=\bigotimes K(\Q_{p_iq_i})$ which is a direct summand of $H^{\sum k_i}(G_0(\Q_J))$. We call the number $\sum k_i$ the \emph{degree} of $J$. The main theorem of this section can be rephrased to the statement that $H^\ast(G_0(Q_\vare))$ is the direct sum of all $K_j(Q_\vare)$ for all subsets $J$ of $(0,n]$ which are unions of separated intervals of odd length. This includes the empty set where, by convention, we have $K_\emptyset(Q_\vare)=\ZZ=H^0(G_0(Q_\vare))$.

\begin{lem}
Let $I,J$ be subsets of $(0,n]$. Then the image of the composite map
\[
	G_0(\Q_I)\xrarrow{s_I}G_0(Q_\vare)\xrarrow{r_J}G_0(\Q_J)
\]
is equal to $G_0(\Q_{I\cap J})\subseteq G_0(\Q_J)$.
\end{lem}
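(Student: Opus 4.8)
The plan is to reduce the whole statement to a computation on the generating set $\{x_{ij} : (i,j]\subseteq I\}$ of $G_0(\Q_I)$, using the elementary fact that the image of a group homomorphism $f\colon \langle X\rangle \to H$ is the subgroup of $H$ generated by $\{f(x):x\in X\}$. First I would evaluate the composite on a generator. By Lemma \ref{lem: rJ circ sJ = id} we have $s_I(x_{ij})=x_{ij}$, and then $r_J$ sends $x_{ij}$ to itself when $(i,j]\subseteq J$ and to $1$ otherwise. Since $x_{ij}$ is a generator of $G_0(\Q_I)$ we already know $(i,j]\subseteq I$, so the surviving condition $(i,j]\subseteq J$ is equivalent to $(i,j]\subseteq I\cap J$. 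Hence $(r_J\circ s_I)(x_{ij})=x_{ij}$ exactly when $(i,j]\subseteq I\cap J$ and equals $1$ otherwise. It follows immediately that $\im(r_J\circ s_I)$ is the subgroup of $G_0(\Q_J)$ generated by $\{x_{ij} : (i,j]\subseteq I\cap J\}$.

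It then remains to identify this subgroup with $G_0(\Q_{I\cap J})$. By definition $G_0(\Q_{I\cap J})$ is generated precisely by the symbols $x_{ij}$ with $(i,j]\subseteq I\cap J$, and following the identifications fixed just after Lemma \ref{lem: rJ circ sJ = id} it is identified, via the split monomorphism $s_{I\cap J}$, with the subgroup of $G_0(Q_\vare)$ these generate. Since $I\cap J\subseteq J$, this monomorphism factors as $G_0(\Q_{I\cap J})\into G_0(\Q_J)\xrarrow{s_J}G_0(Q_\vare)$; injectivity of the composite $s_{I\cap J}$ forces injectivity of the first factor $G_0(\Q_{I\cap J})\into G_0(\Q_J)$, whose image is exactly the subgroup of $G_0(\Q_J)$ generated by $\{x_{ij}:(i,j]\subseteq I\cap J\}$. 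This is the same subgroup computed above, so $\im(r_J\circ s_I)=G_0(\Q_{I\cap J})$ as subgroups of $G_0(\Q_J)$, which is the assertion.

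The argument is entirely formal and I do not expect a genuine obstacle; the only point demanding care is to keep the three identifications consistent, namely that $G_0(\Q_{I\cap J})$ sits compatibly inside both $G_0(\Q_J)$ and $G_0(Q_\vare)$ (so that the target "$G_0(\Q_{I\cap J})\subseteq G_0(\Q_J)$" is unambiguous), and that the generated subgroup is genuinely isomorphic to $G_0(\Q_{I\cap J})$ rather than a proper quotient. The latter is precisely what the splitting from Lemma \ref{lem: rJ circ sJ = id} guarantees, so no extra relations can collapse; the former is already built into the conventions established after that lemma.
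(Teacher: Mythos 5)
Your argument is correct and is exactly the routine verification the paper has in mind: the paper offers no written proof, dismissing the lemma as an ``easy observation,'' and your computation on generators (the composite kills $x_{ij}$ unless $(i,j]\subseteq I\cap J$, and the splitting $r_{I\cap J}\circ s_{I\cap J}=\mathrm{id}$ guarantees the generated subgroup is a faithful copy of $G_0(\Q_{I\cap J})$ rather than a quotient) is the natural way to fill it in. No gaps.
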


This easy observation implies:

\begin{lem}\label{lem: filtration condition}
Let $I,J$ be subsets of $(0,n]$ and let $k$ be the degree of $J$. The restriction map
\[
	s_I^\ast: K_J(Q_\vare)\to H^k(G_0(\Q_I))
\]
is a split monomorphism if $J\subseteq I$ and is zero otherwise.
\end{lem}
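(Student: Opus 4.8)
The plan is to reduce the statement to the composite $r_J\circ s_I\colon G_0(\Q_I)\to G_0(\Q_J)$ and the preceding lemma. By definition $K_J(Q_\vare)=r_J^\ast K(\Q_J)$, and $r_J^\ast$ is a split monomorphism (since $r_J$ is split epi with section $s_J$), so it restricts to an isomorphism $r_J^\ast\colon K(\Q_J)\xrightarrow{\cong}K_J(Q_\vare)$. Thus every class in $K_J(Q_\vare)$ has the form $r_J^\ast(b)$ with $b\in K(\Q_J)$, and contravariance gives $s_I^\ast(r_J^\ast(b))=(r_J\circ s_I)^\ast(b)$. By the preceding lemma the image of $r_J\circ s_I$ is $G_0(\Q_{I\cap J})$; inspecting the generators $x_{ij}$ shows that $r_J\circ s_I$ sends $x_{ij}\mapsto x_{ij}$ exactly when $(i,j]\subseteq I\cap J$ and to $1$ otherwise, so it factors canonically as $G_0(\Q_I)\xrightarrow{\rho}G_0(\Q_{I\cap J})\xrightarrow{\iota}G_0(\Q_J)$, where $\rho$ is the retraction (a split epimorphism, with section the inclusion $G_0(\Q_{I\cap J})\into G_0(\Q_I)$) and $\iota$ is the inclusion. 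Hence $s_I^\ast|_{K_J(Q_\vare)}=\rho^\ast\circ\iota^\ast\circ(r_J^\ast)^{-1}$, and everything now hinges on the behaviour of $\iota^\ast$ on $K(\Q_J)$.

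If $J\subseteq I$ then $I\cap J=J$, so $\iota=\mathrm{id}$ and $\rho\colon G_0(\Q_I)\onto G_0(\Q_J)$ is the split retraction. Therefore $\rho^\ast$ is a split monomorphism (its left inverse is the map induced by the inclusion, using Lemma \ref{lem: rJ circ sJ = id} inside $\Q_I$). Restricting a split monomorphism to the direct summand $K(\Q_J)$ keeps it a split monomorphism, and precomposing with the isomorphism $(r_J^\ast)^{-1}$ leaves $s_I^\ast|_{K_J(Q_\vare)}$ a split monomorphism, which is the first half of the claim.

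If $J\not\subseteq I$ then $I\cap J\subsetneq J$, and I would prove that $\iota^\ast$ vanishes on $K(\Q_J)$ by duality, which finishes the proof since then $s_I^\ast|_{K_J(Q_\vare)}=0$. The point is that, under the K\"unneth isomorphism $K(\Q_J)=\bigotimes_i K(\Q_{p_iq_i})$, each basis class is the dual of a basic-weight homology basis element of full support $J$ and, by construction in Definition \ref{defn: dual blocks} together with the evaluation (Koszul) rule, annihilates every basic weight whose support is a proper subset of $J$. On the other hand $\iota^\ast$ is dual to $\iota_\ast\colon H_k(G_0(\Q_{I\cap J}))\to H_k(G_0(\Q_J))$, whose image — by the support argument of the corollary describing the image of $H_\ast(G_0(\Q_J))\to H_\ast(G_0(Q_\vare))$, now applied to the inclusion $\Q_{I\cap J}\subseteq\Q_J$ — is spanned by the basic weights with support contained in $I\cap J\subsetneq J$. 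No such weight has full support $J$, so for every $b\in K(\Q_J)$ and $z\in H_k(G_0(\Q_{I\cap J}))$ one has $\langle\iota^\ast b,z\rangle=\langle b,\iota_\ast z\rangle=0$, whence $\iota^\ast b=0$. The main obstacle is exactly this last step: matching the dual-block description of $K(\Q_J)$ with the support description of $\operatorname{im}\iota_\ast$, for which I would lean on Corollary \ref{cor:homology is given by basic ss sets} (identifying the homology basis of each $G_0(\Q_{J'})$ with basic weights supported in $J'$) and verify that the Koszul signs in the evaluation rule do not interfere, since the vanishing occurs factorwise in the component of $J$ where $I\cap J$ fails to be full.
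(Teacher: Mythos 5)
Your proof is correct and takes essentially the same route as the paper's: both reduce $s_I^\ast$ on $K_J(Q_\vare)$ to the restriction induced by the inclusion $G_0(\Q_{I\cap J})\into G_0(\Q_J)$, handle $J\subseteq I$ via the split retraction, and in the other case kill $K(\Q_J)$ because its dual-block generators vanish on every basic weight whose support misses part of $J$. Your pairing-with-$\iota_\ast$ formulation just makes explicit the step the paper dismisses as ``by definition of $K(\Q_{p_iq_i})$'' (legitimately, since all the homology groups are free so cohomology classes are determined by their evaluations).
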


\begin{proof} Let $J=\coprod (p_i,q_i]$. Using the lemma and the fact that $K_J(Q_\vare)\cong K(\Q_J)$ which is a direct summand of $H^k(G_0(\Q_J))$, it suffices to show that the restriction map $s^\ast:H^k(G_0(\Q_J))\to H^k(G_0(\Q_{I\cap J}))$ is a split monomorphism on $K(\Q_J)=\bigotimes K(\Q_{p_iq_i})$ when $J\subseteq I$ and is zero otherwise. The first case is obvious since $I\cap J=J$ and $s:G_0(\Q_J)\to G_0(\Q_J)$ is the identity map in that case. So, it suffices to show that $s^\ast:H^k(G_0(\Q_J))\to H^k(G_0(\Q_{I}))$ is zero when $I$ is a proper subset of $J$. Let $I_i=I\cap (p_i,q_i]$. Then $I\subsetneq J$ implies that $I_i\subsetneq (p_i,q_i]$ for some $i$. In that case, $s_i^\ast:H^{k_i}(G_0(\Q_{p_iq_i}))\to H^{k_i}(G_0(\Q_{I_i}))$ is zero on $K(\Q_{p_iq_i})$ by definition of $K(\Q_{p_iq_i})$. This implies that the induced map on tensor products
\[
	K(\Q_J)=\bigotimes K(\Q_{p_iq_i}) \to \bigotimes H^{k_i}(G_0(\Q_{I_i}))\subseteq H^k(G_0(\Q_J))
\]
must also be zero.
\end{proof}

\begin{thm}\label{thm: block decomposition of cohomology}
For each $k>0$, $H^k(G_0(Q_\vare))$ is the direct sum of $K_J(Q_\vare)$ for all $J\subseteq (0,n]$ of degree $k$ which are unions of separated intervals of odd length.
\end{thm}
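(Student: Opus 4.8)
The plan is to assemble the summands $K_J(Q_\vare)$ into one map and detect it by the restriction maps $s_J^\ast$, exploiting the triangularity supplied by Lemma \ref{lem: filtration condition}. Write $\cJ_k$ for the set of all $J\subseteq(0,n]$ of degree $k$ that are unions of separated intervals of odd length, partially ordered by inclusion. For each $J\in\cJ_k$ the group $K(\Q_J)=\bigotimes_l K(\Q_{p_lq_l})$ is a direct summand of $H^k(G_0(\Q_J))$, so I fix a projection $\pi_J\colon H^k(G_0(\Q_J))\to K(\Q_J)$ with $\pi_J\iota_J=\mathrm{id}$. First I would form the assembly map
\[
\Psi\colon \bigoplus_{J\in\cJ_k}K(\Q_J)\to H^k(G_0(Q_\vare)),\qquad \Psi|_{K(\Q_J)}=r_J^\ast ,
\]
whose image on the summand $K(\Q_J)$ is exactly $K_J(Q_\vare)$ by definition, together with the detection map $R=(R_J)_J\colon H^k(G_0(Q_\vare))\to\bigoplus_{J\in\cJ_k}K(\Q_J)$ with components $R_J=\pi_J\circ s_J^\ast$.

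The key computation is the composite $R\circ\Psi$, viewed as a matrix of maps $K(\Q_{J'})\to K(\Q_J)$ whose $(J,J')$-block is $\pi_J\circ s_J^\ast\circ r_{J'}^\ast$. By Lemma \ref{lem: filtration condition}, $s_J^\ast$ vanishes on $K_{J'}(Q_\vare)=r_{J'}^\ast K(\Q_{J'})$ unless $J'\subseteq J$, so the block is $0$ whenever $J'\not\subseteq J$. On the diagonal, Lemma \ref{lem: rJ circ sJ = id} gives $r_J\circ s_J=\mathrm{id}$, hence $s_J^\ast\circ r_J^\ast=\mathrm{id}$ on $H^k(G_0(\Q_J))$, and the block reduces to $\pi_J\circ\iota_J=\mathrm{id}_{K(\Q_J)}$. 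Thus, ordering $\cJ_k$ by any linear refinement of inclusion, $R\circ\Psi$ is unipotent triangular with identity diagonal blocks, hence an automorphism of $\bigoplus_J K(\Q_J)$. In particular $\Psi$ is a split monomorphism, so its image is a direct summand of $H^k(G_0(Q_\vare))$ and the sum $\sum_J K_J(Q_\vare)$ is direct.

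It remains to see that $\Psi$ is onto, for which I would compare ranks. Each $K(\Q_J)=\bigotimes_l K(\Q_{p_lq_l})$ has rank $\prod_l C_{k_l-1}$, since $K(\Q_{p_lq_l})$ is freely generated by the $C_{k_l-1}$ blocks of full support $(p_l,q_l]$ counted in Lemma \ref{lem: Catalan number of blocks of full support}; moreover this product is exactly the number of basic weights of degree $k$ whose block decomposition has support $J$. As every basic weight of degree $k$ decomposes uniquely into blocks on separated odd intervals, summing over $J\in\cJ_k$ shows that the rank of $\bigoplus_{J\in\cJ_k}K(\Q_J)$ equals the total number of basic weights of degree $k$, which by Corollary \ref{cor:homology is given by basic ss sets} is the rank of $H^k(G_0(Q_\vare))$. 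A split monomorphism between free abelian groups of equal finite rank is an isomorphism onto a full-rank direct summand, hence onto the whole group; therefore $\Psi$ is an isomorphism and $H^k(G_0(Q_\vare))=\bigoplus_{J\in\cJ_k}K_J(Q_\vare)$.

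The main obstacle I anticipate is purely integral bookkeeping: the triangularity of $R\circ\Psi$ makes injectivity and directness automatic, but over $\ZZ$ matching ranks alone does not force surjectivity. The split-monomorphism property furnished by $R$ is precisely what upgrades the finite-index inclusion to an equality, since a full-rank direct summand of a free abelian group is the whole group. The one point requiring care is verifying that the off-diagonal vanishing in Lemma \ref{lem: filtration condition} is governed exactly by the inclusion order on $\cJ_k$ — including genuinely nested pairs of equal degree such as $(0,1]\cup(2,3]\subsetneq(0,3]$, which produce nonzero off-diagonal blocks but leave the diagonal, and hence the invertibility, intact.
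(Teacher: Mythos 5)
Your proof is correct and rests on the same two pillars as the paper's own argument: the triangularity with respect to inclusion supplied by Lemma \ref{lem: filtration condition}, and the rank match between $\bigoplus_J K(\Q_J)$ and $H^k(G_0(Q_\vare))$ coming from the enumeration of basic weights (Lemma \ref{lem: Catalan number of blocks of full support} and Corollary \ref{cor:homology is given by basic ss sets}). The difference lies in the endgame. The paper first proves linear independence of the $K_J(Q_\vare)$ by restricting along $s_I^\ast$ for an extremal $I$, notes that the sum has full rank, and then kills possible torsion in the quotient by a separate mod-$p$ divisibility argument, again invoking Lemma \ref{lem: filtration condition}. You instead package all the restrictions into a single detection map $R$ and observe that $R\circ\Psi$ is unipotent triangular for the inclusion order (using $r_J\circ s_J=\mathrm{id}$ from Lemma \ref{lem: rJ circ sJ = id} for the diagonal blocks), so that $\Psi$ is a split monomorphism from the start; splitness plus equal rank then yields surjectivity with no further torsion analysis, since a full-rank direct summand of a free abelian group is the whole group. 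This is a modest but genuine streamlining: it absorbs the paper's third step into the first. Your explicit attention to nested pairs of equal degree such as $(0,1]\cup(2,3]\subsetneq(0,3]$ is exactly the delicate point --- it is why the off-diagonal blocks are genuinely nonzero, and why any one-step detection argument must restrict along an index $J$ containing no other index of the family (i.e., minimal for inclusion); your write-up is, if anything, more careful on this point than the paper's, which phrases its choice as ``maximal.''
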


\begin{proof}
We show first that the $K_J(Q_\vare)$ are linearly independent. Suppose that $J_i$ are subsets of $(0,n]$ of the required kind and $a_i\in K_{J_i}(Q_\vare)$ so that $\sum a_i=0$. Let $I=J_j$ be maximal. Then, by Lemma \ref{lem: filtration condition}, $s_I^\ast:H^k(G_0(Q_\vare))\to H^k(G_0(\Q_I))$ is a monomorphism on $K_{J_j}(Q_\vare)$ and is zero on all other $K_{J_i}(Q_\vare)$. So, $s_I^\ast(0)=s_I^\ast(a_j)\neq0$ which is not possible.

By counting ranks we now see that, for fixed $k$, $\bigoplus K_J(Q_\vare)$ is subgroup of $H^k(G_0(Q_\vare))$ of full rank. So, it remains to show that the quotient $H^k(G_0(Q_\vare))/\bigoplus K_J(Q_\vare)$, a finite additive group, is zero, i.e., it has no $p$-torsion for any prime $p$. Equivalently, we need to show the following. Let $x\in H^k(G_0(Q_\vare))$ so that $px=\sum a_i$ where $a_i\in K_{J_i}(Q_\vare)$. Then we need to show that, for each $i$, $a_i=pb_i$ for some $b_i\in K_{J_i}(Q_\vare)$. 

To show this, suppose not. Then there is a $j$ so that $a_j$ is not divisible by $p$. Chose $j$ so that $I=J_j$ is maximal. Then we get the equation:
\[
	s_I^\ast(px)=ps_I(x)=s_I^\ast(a_j)+\sum s_I^\ast(pb_i)
\]
where the sum is over all $i$ so that $J_j\subsetneq J_i$. So, $s_I^\ast(a_j)$ is divisible by $p$ in $H^k(G_0(\Q_I))$. But $s_I^\ast:K_{J_j}(Q_\vare)\to H^k(G_0(\Q_I))$ is a split monomorphism. So, $a_j$ must be divisible by $p$ contradicting the choice of $j$. We conclude that $H^k(G_0(Q_\vare))$ is the direct sum of all $K_J(Q_\vare)$ of degree $k$ for $k>0$.
\end{proof}

\subsection{Cohomology of $G_0(Q_\vare)$}

Theorem \ref{thm: block decomposition of cohomology} completes the description of the cohomology of $G_0(Q)$ for any quiver $Q$ of type $A_n$ which we summarize in the following theorem.

\begin{thm}\label{thm: cup product structure of Hast(G_0(An))}
The integral cohomology of the picture group $G_0(Q)$ of any quiver $Q$ of type $A_n$ is generated, as a ring, by the dual blocks $r_{pq}^\ast(B_i^\ast)\in H^k(G_0(Q))$ which satisfy:
\begin{enumerate}
\item $q-p=2k-1$
\item The support of the dual block is $(p,q]$.
\item Given $k,p,q$ there are $C_{k-1}=\frac1k \binom{2k-2}{k-1}$ dual blocks $r_{pq}^\ast(B_i^\ast)$.
\end{enumerate}
The cup product of any collection of dual blocks is nonzero if and only if their extended supports $[p,q]$ are pairwise disjoint. Furthermore, as an additive group, $H^\ast(G_0(Q);\ZZ)$ is freely generated by the nonzero cup products of dual blocks (including the empty product $1\in H^0(G_0(Q))$). 
\end{thm}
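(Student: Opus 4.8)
The plan is to assemble the theorem from the structural results already in hand, principally Theorem \ref{thm: block decomposition of cohomology} and Lemma \ref{lem: when cup product is nonzero}, together with the enumeration Lemma \ref{lem: Catalan number of blocks of full support}; since every quiver of type $A_n$ equals $Q_\vare$ for some sign function $\vare$, it suffices to treat $G_0(Q_\vare)$. The three numerical properties of a single dual block $r_{pq}^\ast(B_i^\ast)$ are immediate: property (1), $q-p=2k-1$, is the Corollary following Proposition \ref{prop:characterization of basic weights}, which forces the degree of a one-block dual class to be $k=(q-p+1)/2$; property (2) holds by construction in Definition \ref{defn: dual blocks}; and property (3), the count $C_{k-1}$, is Lemma \ref{lem: Catalan number of blocks of full support} applied to a fixed support $(p,q]$.

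The cup-product criterion is then exactly Lemma \ref{lem: when cup product is nonzero}: for odd-length intervals $(p_i,q_i]$ the map $\bigotimes_i K_{p_iq_i}(Q_\vare)\to H^{\sum_i k_i}(G_0(Q_\vare))$ induced by cup product is a split monomorphism when the $(p_i,q_i]$ are pairwise separated and is zero otherwise. Because two half-open intervals are separated precisely when their closed (extended) supports $[p_i,q_i]$ are disjoint, this gives the stated nonvanishing rule; split injectivity moreover shows that in the separated case the product lies in a free direct summand, not merely that it is nonzero.

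For ring generation and the free additive basis I would invoke the decomposition $H^k(G_0(Q_\vare))=\bigoplus_J K_J(Q_\vare)$ of Theorem \ref{thm: block decomposition of cohomology}, the sum running over all degree-$k$ subsets $J\subseteq(0,n]$ that are unions of separated odd-length intervals. For $J=\coprod_i(p_i,q_i]$ the summand $K_J(Q_\vare)$ is by definition the image of $\bigotimes_i K_{p_iq_i}(Q_\vare)$ under cup product, and each factor is freely spanned by its dual blocks, so $K_J(Q_\vare)$ is freely generated by the products $\prod_i r_{p_iq_i}^\ast(B_{\ell_i}^\ast)$ as each $\ell_i$ ranges over the $C_{k_i-1}$ blocks of its interval; in particular the dual blocks generate the whole ring. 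Assigning to each nonzero product of dual blocks the pair consisting of the union of its pairwise disjoint extended supports and the chosen block on each interval defines a bijection onto this aggregate basis, with the empty product matched to $J=\emptyset$ and $1\in H^0$, which yields the asserted free additive basis. I expect the only real obstacle to be bookkeeping rather than a new idea: one must check that this assignment is well defined and exhaustive, i.e. that distinct admissible collections give distinct basis vectors and that every $K_J(Q_\vare)$ is hit, which reduces to the remark that the separated interval system is recovered from a product as the union of its extended supports, so no two collections can collide.
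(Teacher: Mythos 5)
Your proposal is correct and follows essentially the same route as the paper's own proof: the paper also assembles the theorem from Definition \ref{defn: dual blocks} and Lemma \ref{lem: Catalan number of blocks of full support} for the enumeration, Lemma \ref{lem: when cup product is nonzero} for the nonvanishing criterion, and Theorem \ref{thm: block decomposition of cohomology} for the free additive basis. Your extra bookkeeping about the bijection between nonzero products and the summands $K_J(Q_\vare)$ is a harmless elaboration of what the paper leaves implicit.
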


\begin{proof}
Since dual blocks are defined (Definition \ref{defn: dual blocks}) to be dual to the basic blocks (basic weights with one block), they are in 1-1 correspondence with these weights which are enumerated in Lemma \ref{lem: Catalan number of blocks of full support}. Lemma \ref{lem: when cup product is nonzero} gives the stated characterization of when cup products of dual blocks are nonzero. Theorem \ref{thm: block decomposition of cohomology} proves the last statement that the nonzero products of dual blocks freely generate the cohomology as a $\ZZ$-module.
\end{proof}

By the universal coefficient theorem for cohomology we can extend this theorem to any coefficient ring $A$ and we observe that the statement does not depend on $\vare$.

\begin{cor}
Let $Q$ be a quiver of type $A_n$. Then the cohomology ring of $G_0(Q)$ with coefficients in any commutative ring $A$ is generated by the dual blocks modulo only the relation that the product of dual weights is zero when their extended supports intersect (and the usual graded commutativity rules). As an $A$-module $H^\ast(G_0(Q);A)$ is freely generated by the nonzero products of dual weights.\qed
\end{cor}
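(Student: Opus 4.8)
The plan is to deduce the corollary from the integral case (Theorem~\ref{thm: cup product structure of Hast(G_0(An))}) by a change-of-coefficients argument, the whole point being that the integral (co)homology is free abelian. First I would recall that $X(Q_\vare)$ is a $K(G_0(Q_\vare),1)$, so $H_k(G_0(Q_\vare);\ZZ)\cong H_k(X(Q_\vare);\ZZ)$ is a finitely generated free abelian group in every degree by Corollary~\ref{cor:homology is given by basic ss sets} and Theorem~\ref{thm: rank of cohomology is ballot nos}; consequently $H^k(G_0(Q_\vare);\ZZ)=\Hom_\ZZ(H_k(G_0(Q_\vare);\ZZ),\ZZ)$ is free abelian as well. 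Then I would invoke the universal coefficient theorem for cohomology: in the short exact sequence
\[
0\to \Ext^1_\ZZ\big(H_{k-1}(G_0(Q_\vare);\ZZ),A\big)\to H^k(G_0(Q_\vare);A)\to \Hom_\ZZ\big(H_k(G_0(Q_\vare);\ZZ),A\big)\to 0
\]
the $\Ext$-term vanishes because $H_{k-1}(G_0(Q_\vare);\ZZ)$ is free abelian, so the natural map to $\Hom_\ZZ(H_k(G_0(Q_\vare);\ZZ),A)$ is an isomorphism. As $H_k(G_0(Q_\vare);\ZZ)$ is free of finite rank, the target is naturally $H^k(G_0(Q_\vare);\ZZ)\otimes_\ZZ A$, and the coefficient map $\ZZ\to A$ induces a natural isomorphism of graded abelian groups $H^\ast(G_0(Q_\vare);\ZZ)\otimes_\ZZ A\xrightarrow{\ \cong\ } H^\ast(G_0(Q_\vare);A)$.

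The substantive point is that this isomorphism is multiplicative. Because cup products are natural with respect to the coefficient pairing $\ZZ\otimes_\ZZ A\to A$, the above map sends $(u\otimes a)(v\otimes b)$ to $ab\,(uv)$, so it is an isomorphism of graded $A$-algebras identifying $H^\ast(G_0(Q_\vare);A)$ with $H^\ast(G_0(Q_\vare);\ZZ)\otimes_\ZZ A$. I would then transport the presentation from the integral case: by Theorem~\ref{thm: cup product structure of Hast(G_0(An))} the ring $H^\ast(G_0(Q_\vare);\ZZ)$ is generated by the dual blocks, the only relations being the vanishing of a product whose extended supports meet (together with graded commutativity), and it is free abelian on the nonzero products of dual blocks. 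Since $-\otimes_\ZZ A$ is right exact and carries free $\ZZ$-modules to free $A$-modules, applying it preserves both the generators-and-relations presentation and the basis of nonzero products, which is exactly the asserted description over $A$.

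Finally, the independence of $\vare$ is already encoded in the combinatorics: the dual blocks are indexed by triples $(p,q,i)$ with $q-p=2k-1$ and $1\le i\le C_{k-1}$ (Lemma~\ref{lem: Catalan number of blocks of full support}), and the nonvanishing criterion for products (Lemma~\ref{lem: when cup product is nonzero} and Theorem~\ref{thm: block decomposition of cohomology}) refers only to the lengths and relative positions of the supporting intervals $[p,q]$, never to the sign function $\vare$. Hence the abstract graded $A$-algebra is the same for every orientation. The only step that requires genuine care is the multiplicativity of the universal coefficient isomorphism, that is, checking that the identification $H^\ast(-;A)\cong H^\ast(-;\ZZ)\otimes_\ZZ A$ respects cup products; once this standard compatibility is recorded, everything else is formal given the integral freeness established in Corollary~\ref{cor:homology is given by basic ss sets}.
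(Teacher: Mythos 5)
Your proposal is correct and follows exactly the route the paper takes: the paper's entire justification is the one-line remark that the universal coefficient theorem extends Theorem \ref{thm: cup product structure of Hast(G_0(An))} to any coefficient ring because the integral homology is free abelian (Corollary \ref{cor:homology is given by basic ss sets}). You have simply written out the details (vanishing of the $\Ext$ term, multiplicativity of the identification $H^\ast(-;A)\cong H^\ast(-;\ZZ)\otimes_\ZZ A$, and preservation of the presentation under $-\otimes_\ZZ A$) that the paper leaves implicit.
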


\begin{cor}
Let $Q$ be a quiver of type $A_n$. Then the cohomology ring $H^\ast(G_0(Q);A)$ for any commutative ring $A$ is independent of the choice of orientation.\qed
\end{cor}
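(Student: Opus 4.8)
The plan is to observe that the preceding Theorem \ref{thm: cup product structure of Hast(G_0(An))} already furnishes a complete presentation of the graded ring $H^\ast(G_0(Q_\vare);A)$, and that every ingredient of this presentation — the generating set, the degrees, and the multiplicative relations — is expressed purely through the combinatorics of intervals in $(0,n]$, with no reference to the sign function $\vare$. The isomorphism will therefore be constructed at the level of cohomology rings; I do \emph{not} attempt to produce an isomorphism of the groups $G_0(Q_\vare)$ themselves, which need not exist.

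First I would pin down a common indexing set for the ring generators. By Lemma \ref{lem:characterization of admissible weights} and Proposition \ref{prop:characterization of basic weights}, the condition for a weight $w\in\NN^n$ to be a single-block basic weight with support $(p,q]$ is purely numerical ($|w_{i+1}-w_i|\le1$, together with $w_k\ne w_{k+1}$ off the zero locus, and support exactly $(p,q]$), and so is independent of $\vare$; by Lemma \ref{lem: Catalan number of blocks of full support} there are exactly $C_{k-1}$ such weights when $q-p=2k-1$. Hence the set of pairs (support interval, single-block basic weight) indexing the dual blocks $r_{pq}^\ast(B_i^\ast)$ is literally the same set for every orientation, giving a canonical bijection between the generating sets of $H^\ast(G_0(Q_\vare);A)$ and $H^\ast(G_0(Q_{\vare'});A)$.

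Next I would extend this bijection to a graded $A$-algebra map. By Theorem \ref{thm: cup product structure of Hast(G_0(An))}, both rings are free $A$-modules with basis the nonzero products of dual blocks, and by Lemma \ref{lem: when cup product is nonzero} a product of dual blocks is nonzero precisely when their extended supports $[p,q]$ are pairwise disjoint, the structure constant then being a single basis element with sign governed only by the Koszul rule on degrees $k=(q-p+1)/2$. Since both the disjointness condition and the degrees are orientation-independent, the multiplication tables of the two rings agree term-by-term under the bijection of generators. Therefore the $A$-linear map matching corresponding basis products is a well-defined graded-ring isomorphism, and I would finish by invoking the universal coefficient theorem (exactly as in the preceding corollary) to pass from $\ZZ$ to an arbitrary commutative ring $A$.

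The only point requiring care — the nearest thing to an obstacle — is verifying that these matchings are genuinely canonical rather than accidental coincidences of ranks: one must confirm that the vanishing or non-vanishing of a product is controlled solely by the support intervals, and that no orientation-dependent sign or torsion can intervene. This is guaranteed because the additive cohomology is free abelian with an orientation-independent basis (Corollary \ref{cor:homology is given by basic ss sets} and Theorem \ref{thm: rank of cohomology is ballot nos}) and the product structure constants lie in $\{0,\pm1\}$ determined by disjointness of the $[p,q]$; with these facts in hand the isomorphism is immediate.
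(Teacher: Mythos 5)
Your proposal is correct and follows essentially the same route as the paper: the corollary is immediate from Theorem \ref{thm: cup product structure of Hast(G_0(An))} and the preceding corollary, since the presentation given there (dual blocks indexed by single-block basic weights with odd-length supports, products vanishing exactly when extended supports intersect, nonzero products forming a free $A$-basis) makes no reference to the sign function $\vare$. Your additional care in checking that the indexing set of generators and the multiplication table are canonically identified across orientations is exactly the content the paper leaves implicit.
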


\begin{eg}
For a quiver $Q$ of type $A_3$ with any orientation, the cohomology ring is $H^\ast(G_0(Q))$ is generated by three dual blocks $a_1,a_2,a_3$ of degree $1$ and one dual block $b$ of degree $2$ modulo the relation that the product of any two generators is zero except for $a_1a_3=-a_3a_1$. So, $H^0(G_0(Q))=\ZZ$, $H^1(G_0(Q))\cong\ZZ^3$ with basis $a_1,a_2,a_3$, $H^2(G_0(Q))\cong\ZZ^2$ with basis $b,a_1a_3$ and $H^k(G_0(Q))=0$ for $k\ge3$.

For a quiver $Q'$ of type $A_5$ with any orientation, the ring $H^\ast(G_0(Q'))$ is generated by $a_1,\cdots,a_5$ of degree 1, $b_1,b_2,b_3$ of degree 2 and $c_1,c_2$ of degree 3.
\begin{enumerate}
\item $H^0(G_0(Q'))=\ZZ$.
\item $H^1(G_0(Q'))\cong\ZZ^5$ with basis $a_1,\cdots,a_5$.
\item $H^2(G_0(Q'))\cong\ZZ^9$ with basis $b_1,b_2,b_3$ and six nonzero products $a_ia_j$ with $|j-i|\ge2$.
\item $H^3(G_0(Q'))\cong\ZZ^5$ with basis $c_1,c_2$, $a_1b_3,b_1a_5$, $a_1a_3a_5$.
\end{enumerate}
\end{eg}

When $Q$ is a quiver of type $A_n$, the group $G_0(Q)$ depends on the orientation of $Q$. This is a computer calculation using GAP which was carried out by D.Ruberman. Although the group depends on the orientation, {as we have shown, the homology and cohomology of the groups are independent of the orientation}. We believe that this holds more generally, i.e., the cohomology of the picture group should be independent of the orientation of the quiver and depend only on the underlying Dynkin diagram. According to He Wang, the difference between these groups can also be detected by the Massey product structure of their cohomology rings. See \cite{SW} for more about this. These are questions for further research.

\begin{summ}Section \ref{sec5} determines the cup product structure of the integral cohomology of $G_0(Q)$ for any quiver $Q$ of type $A_n$ and shows it is independent of the orientation.
\end{summ}

\section*{Acknowledgements}

Research for this paper was generously supported by grants from the National Science Foundation and the National Security Agency. The first author was supported by NSA Grant \#H98230-13-1-0247, the second author was supported by NSF Grants \#DMS-1103813 and \#DMS-0901185 and the third author was supported by NSF Grant \#DMS-1400740. 

Special thanks go to Kent Orr who was a coauthor of previous papers in this series. He and the first author previously used ``pictures'' in their research on link invariants \cite{IOr}. The authors would also like to thank Ira Gessel and Olivier Bernardi for explaining to them the properties of ballot numbers. Discussions with Danny Ruberman on the topology of picture groups and picture spaces were also very helpful. Finally, He Wang gave us a lot to think about with his comments about Massey products.


						\end{document}